\newtheorem{prop}{Proposition}[section]{\bfseries}{\itshape}
\newtheorem{theo}[prop]{Theorem}{\bfseries}{\itshape}
\newtheorem{coro}[prop]{Corollary}{\bfseries}{\itshape}
\newtheorem{lemm}[prop]{Lemma}{\bfseries}{\itshape}
\newtheorem{remk}[prop]{Remark}{\bfseries}{\itshape}
\newcommand{\eqco}{\setcounter{equation}{0}}
\newcommand{\allco}{\eqco}
\renewcommand{\theequation}{\thesection.\arabic{equation}}
\renewcommand{\thefootnote}{}
\newcommand{\lbl}{\label}
\newcommand{\Po}{{\cal P}}
\newcommand{\tPo}{\tilde{{\cal P}}}
\newcommand{\cQ}{{\cal Q}}
\newcommand{\DG}{{\Delta}}
\newcommand{\cR}{{\cal R}}
\newcommand{\Q}{{\end{document}}}
\newcommand{\cW}{{\cal W}}
\newcommand{\cM}{{\cal M}}
\newcommand{\tg}{\tilde{\gamma}}
\newcommand{\Bin}{{\rm Bin}}
\newcommand{\N}{\mathbb{N}}
\newcommand{\Z}{\mathbb{Z}}
\newcommand{\bW}{\mathbb{W}}
\newcommand{\bH}{\mathbb{H}}
\newcommand{\bF}{\mathbb{F}}
\newcommand{\R}{\mathbb{R}}
\newcommand{\E}{\mathbb{E}}
\newcommand{\X}{\mathcal{X}}
\renewcommand{\Pr}{\mathbb{P}}
\renewcommand{\emptyset}{\varnothing}
\newcommand{\bx}{{\bf x}}
\newcommand{\bfo}{{\bf o}}
\newcommand{\bfy}{{\bf y}}
\newcommand{\bz}{{\bf z}}
\newcommand{\bu}{{\bf u}}
\newcommand{\bv}{{\bf v}}
\newcommand{\bw}{{\bf w}}
\newcommand{\bp}{{\bf p}}
\newcommand{\bq}{{\bf q}}
\newcommand{\essinf}{{\rm ess~inf}}
\renewcommand{\E}{\mathbb E \,}
\newcommand{\C}{{\cal C}}
\newcommand{\bN}{{\bf N}}
\newcommand{\FF}{\mathbb{F}}
\newcommand{\tod}{\stackrel{{\cal D}}{\longrightarrow}}
\newcommand{\eqd}{\stackrel{{\cal D}}{=}}
\newcommand{\toP}{\stackrel{{P}}{\longrightarrow}}
\newcommand{\toas}{\stackrel{{{\rm a.s.}}}{\longrightarrow}}
\newcommand{\toL}{\stackrel{{L^2}}{\longrightarrow}}
\newcommand{\toone}{\stackrel{{L^1}}{\longrightarrow}}
\newcommand{\tolone}{\stackrel{{L^1}}{\longrightarrow}}
\newcommand{\tovia}{\rightsquigarrow}
\renewcommand{\leftrightarrow}{\leftrightsquigarrow}
\newcommand{\lb}{\left(}
\newcommand{\rb}{\right)}
\newcommand{\supp}{{\rm supp}}
\newcommand{\fmax}{f_{{\rm max}}}
\newcommand{\alphamax}{{\overline{\alpha}}}
\newcommand{\nalpha}{u}
\newcommand{\rhomax}{\rho_{{\rm max}}}
\newcommand{\phimax}{\phi_{{\rm max}}}
\newcommand{\rhomin}{\rho_{{\rm min}}}
\newcommand{\vold}{\pi_d}  %vol of unit ball
\newcommand{\hH}{\hat{H}}
\newcommand{\cK}{{\cal K}}
\newcommand{\LL}{{\cal L}}
\newcommand{\cH}{{\cal H}}
\renewcommand{\SS}{{\cal S}}
\newcommand{\A}{{\cal A}}
\newcommand{\Cov}{{\rm Cov}}
\newcommand{\essup}{{\rm ess~sup}}
\newcommand{\esinf}{{\rm ess~inf}}
\newcommand{\Var}{{\rm Var}}
\newcommand{\SD}{{\rm SD}}
\newcommand{\var}{{\rm Var}}
\newcommand{\const}{{\rm const.} \times}
\newcommand{\x}{{\bf x}}
\newcommand{\card}{{\rm card}}
\newcommand{\Vol}{{\rm Vol}}
\newcommand{\diam}{{\rm diam}}
\newcommand{\limn}{\lim_{n \to \infty} }
\newcommand{\limsupn}{\limsup_{n \to \infty} }
\newcommand{\limm}{\lim_{m \to \infty} }
\newcommand{\nn}{\nonumber \\}
\newcommand{\Y}{{\cal Y}}
\newcommand{\BB}{{\cal B}}
\newcommand{\cU}{{\cal U}}
\newcommand{\NN}{{\cal N}}
\newcommand{\K}{{\cal K}}
\newcommand{\cF}{{\cal F}}
\newcommand{\W}{{\bf W}}
\newcommand{\V}{{\bf V}}
\newcommand{\w}{{\bf w}}
\newcommand{\e}{{\bf e}}
\newcommand{\tiw}{\tilde{w}}
\newcommand{\tN}{{\tilde{N}}}
\newcommand{\tA}{{\tilde{A}}}
\renewcommand{\th}{{\tilde{h}}}
\newcommand{\tR}{{\tilde{R}}}
\newcommand{\tD}{{\tilde{D}}}
\newcommand{\tH}{H}
\newcommand{\tG}{{\tilde{G}}}
\newcommand{\tq}{{\tilde{\bf q}}}
\newcommand{\tnbq}{{\tilde{q}}}
\newcommand{\tf}{{\tilde{f}}}
\newcommand{\tQ}{C}
\newcommand{\tih}{{\tilde{h}}}
\newcommand{\tp}{{\tilde{\bf p}}}
\newcommand{\bvol}{{\overline{\Vol}}}
\newcommand{\surv}{\texttt{surv}}
\newcommand{\eps}{\varepsilon}
\newcommand{\edm}{\end{displaymath}}
\newcommand{\be}{\begin{equation}}
\newcommand{\ee}{\end{equation}}
\newcommand{\bea}{\begin{eqnarray}}
\newcommand{\eea}{\end{eqnarray}}
\newcommand{\bean}{\begin{eqnarray*}}
\newcommand{\eean}{\end{eqnarray*}}
\newcommand{\1}{{\bf 1}}
\renewcommand{\epsilon}{\varepsilon}
\newcommand{\bbS}{\mathbb{S}}
\newcommand{\dist}{\,{\rm dist}}
\newcommand{\lglg}{\log \log}
\newcommand{\blue}{black}
\begin{document}

\title{
Random Euclidean coverage from within
\thanks{Supported by EPSRC grant EP/T028653/1 }
%about the article that should go on the front page should be
%placed here. General acknowledgments should be placed at the end of the article.}
}
%\subtitle{Do you have a subtitle?\\ If so, write it here}

%\titlerunning{Short form of title}        % if too long for running head

\author{ Mathew D. Penrose \thanks{
	%First Author         \and
        %Second Author %etc.
%
%\authorrunning{Short form of author list} % if too long for running head
%
%\institute{Mathew D. Penrose \at
  Department of
Mathematical Sciences, University of Bath, Bath BA2 7AY, United
Kingdom. 
              %first address \\
              %Tel.: +123-45-678910\\
              %Fax: +123-45-678910\\
             \texttt{m.d.penrose@bath.ac.uk}             %  \\
}
}
%            % \emph{Present address:} of F. Author  %  if needed
           %\and
           %S. Author \at
              %second address
%}

%\date{Received: date / Accepted: date}
%\date{Revised  September-December 2021}
% The correct dates will be entered by the editor

\maketitle

\begin{abstract}
	Let $X_1,X_2, \ldots $ be independent random uniform points in a bounded domain $A \subset \mathbb{R}^d$ with smooth boundary.  Define the {\em coverage threshold} $R_n$ to be the smallest $r$ such that $A$ is covered by the balls of radius $r$ centred on $X_1,\ldots,X_n$.  We obtain the limiting distribution of $R_n$ and also a strong law of large numbers for $R_n$ in the large-$n$ limit.  For example, 
	if $A$ has volume 1 and perimeter $|\partial A|$,
	if $d=3$ then  
	$\Pr[n\pi R_n^3 - \log n - 2 \log (\log n) \leq x]$ converges to $\exp(-2^{-4}\pi^{5/3} |\partial A| e^{-2 x/3})$ and $(n \pi R_n^3)/(\log n) \to 1$ almost surely, and if $d=2$ 
	%and $A$ has unit area 
	then $\Pr[n \pi R_n^2 - \log n - \log (\log n) 
	\leq x]$ converges to $\exp(- e^{-x}- |\partial A|\pi^{-1/2}
	e^{-x/2})$.

	We give similar results for general $d$, and also for the case where $A$ is a polytope.  We also generalize to allow for multiple coverage.  The analysis relies on classical results by Hall and by Janson, along with a careful treatment of boundary effects. For the strong laws of large numbers, we can relax the requirement that the underlying density on $A$ be uniform.
%Insert your abstract here. Include keywords, PACS and mathematical
%subject classification numbers as needed.
%\keywords{coverage threshold \and weak limit \and strong
% law of large numbers \and Boolean model \and Poisson point process}
% \PACS{PACS code1 \and PACS code2 \and more}
% \subclass{
%	 %MSC code1 \and MSC code2 \and more}
%	 %MSC: 
%	 %AMS classifications:
%	 60D05 
%% (Geometric prob and stoch geom)
%	 \and
% 60G55
%	%(Point processes) 
%	\and
% 60F05 %(Central limit and other weak theorems) 
%	\and
% 60F15} %(strong limit theorems)
% %53A05 (surfaces in Euclidean and related spaces)
% %60G57 (random measures - maybe not)
\end{abstract}

%\section{Introduction}
%\label{intro}
%Your text comes here. Separate text sections with
%\section{Section title}
%\label{sec:1}
%Text with citations \cite{RefB} and \cite{RefJ}.
%\subsection{Subsection title}
%\label{sec:2}
%as required. Don't forget to give each section
%and subsection a unique label (see Sect.~\ref{sec:1}).
%\paragraph{Paragraph headings} Use paragraph headings as needed.
%\begin{equation}
%a^2+b^2=c^2
%\end{equation}

%% For one-column wide figures use
%\begin{figure}
%% Use the relevant command to insert your figure file.
%% For example, with the graphicx package use
%  \includegraphics{example.eps}
%% figure caption is below the figure
%\caption{Please write your figure caption here}
%\label{fig:1}       % Give a unique label
%\end{figure}
%
%% For two-column wide figures use
%\begin{figure*}
%% Use the relevant command to insert your figure file.
%% For example, with the graphicx package use
%  \includegraphics[width=0.75\textwidth]{example.eps}
%% figure caption is below the figure
%\caption{Please write your figure caption here}
%\label{fig:2}       % Give a unique label
%\end{figure*}
%
%% For tables use
%\begin{table}
%% table caption is above the table
%\caption{Please write your table caption here}
%\label{tab:1}       % Give a unique label
%% For LaTeX tables use
%\begin{tabular}{lll}
%\hline\noalign{\smallskip}
%first & second & third  \\
%\noalign{\smallskip}\hline\noalign{\smallskip}
%number & number & number \\
%number & number & number \\
%\noalign{\smallskip}\hline
%\end{tabular}
%\end{table}

\section{Introduction}
\label{SecIntro}

This paper is primarily concerned with the following 
{\em random coverage} problem. Given a 
specified compact region $A$ in a $d$-dimensional Euclidean space, 
what is the probability that $A$ is fully covered by a union
of Euclidean  balls of radius $r$ centred on $n$ points placed
independently uniformly at random in $A$, in the large-$n$ limit
with
 $r =r(n)$ becoming  small in an appropriate manner?

  This is a very natural type of question with
 a long history; see for example
\cite{BB,CSKM,Flatto,HallZW,HallBk,Janson,Moran}. Potential applications include
wireless communications
%technology 
\cite{BB,Lan},
ballistics \cite{HallBk}, genomics
\cite{Roy}, statistics \cite{Cuevas},  immunology \cite{Moran},
stochastic optimization \cite{ZZ},
and topological data analysis \cite{BW,KTV}.

In an alternative version of this question,
one considers 
%asks the points are placed uniformly not
%in $B$, but in a larger
a smaller compact region $B \subset A^o$
($A^o$ denotes the interior of $A$), and asks whether
$B$ (rather than all of $A$) is covered.
This version is simpler because boundary effects are avoided. This 
alternative
version of our question was answered  independently
in the 1980s by Hall \cite{HallZW} and Janson 
\cite{Janson}.  Another way to avoid boundary effects would be
to consider coverage of a smooth manifold such as a sphere
(as in \cite{Moran}), and this
was also addressed in \cite{Janson}.

However, the original question does not appear to have been addressed
systematically until now
(at least, not when $d>1$).
Janson \cite[p. 108]{Janson} makes some remarks about the case where $A= [0,1]^d$
and one uses balls of the $\ell_\infty $ norm, but  does not consider
more general classes of $A$ or Euclidean balls.

This question seems well worth addressing. In many of the applications
areas,
 it is natural to consider the influence
 only of the random points placed within the region $A$ rather than
 also hypothetical  points placed outside $A$, for example
in the problem of statistical  set estimation which we shall
discuss below.

We shall express our results in terms of the
 {\em coverage threshold} $R_n$, which we
define to be  the the smallest radius of balls, centred on a
 set $\X_n$ of $n$ independent uniform random points in $A$,
 required to cover $A$. Note that $R_n$ is equal
to the Hausdorff distance between the sets $\X_n$
and $A$.  More generally, for $k \in \N$
 the {\em $k$-coverage threshold} $R_{n,k}$
is the smallest radius required to cover $A$ $k$ times. 
These thresholds are random variables, because the locations of
the centres are random.
We investigate their probabilistic behaviour 
as $n$ becomes large.

We shall determine the limiting behaviour of $\Pr[R_{n,k} \leq r_n]$
for any fixed $k$ and any 
sequence of numbers $(r_n)$, for the case where
$A$ is smoothly bounded (for general $d \geq 2$) or where
$A$ is a polytope 
(for $d=2 $ or $d= 3$).
% We would hope that
%the argument can be adapted to higher-dimensional smoothly bounded sets.
We also obtain similar  results for a high-intensity {\em Poisson}
 sample in $A$, which may be more relevant in some applications,
as argued in \cite{HallZW}.

We also derive strong laws of large numbers
%(SLLNs)
showing that
that $nR_{n,k}^d/\log n$ converges  almost surely to a finite positive limit,
and establishing the value of the limit. These
strong laws carry over
to  more general cases where $k$ may vary with $n$, and  the distribution
of points may be non-uniform. We give results of this type for
 $A$ smoothly
bounded 
(for general $d$), or for $A$ a polytope  (for $d \leq 3$),
or for $A$ a convex polytope  (for $d \geq 4$).
%$B$ in
%and for certain polytopes.
%mainly with $d=2$ or $d=3$. 

We emphasise that in all of these results, the limiting behaviour depends
on the geometry of $\partial A$, the topological boundary of $A$. 
\textcolor{\blue}{
	For example, we shall show} that when $d=3$ and the points are
uniformly distributed over a polyhedron, the limiting behaviour of $R_n$
is determined by the angle of the sharpest edge if this angle is
less than $\pi/2$. If this angle exceeds $\pi/2$
 then the location in $A$ furthest
from the sample $\X_n$ is asymptotically uniformly distributed
over $\partial A$, but if this angle is less than $\pi/2$ the location in 
$A$ furthest from $\X_n$ is asymptotically  uniformly distributed
over the union of those edges  which are sharpest, \textcolor{\blue}{i.e. those
edges which achieve the minimum subtended angle.}

We restrict attention here to coverage by Euclidean balls of equal radius.
The work of \cite{HallZW,Janson} allowed for generalizations such as
other shapes or variable radii, in their versions of our problem.
 We do not attempt to address these generalizations here; in principle
it may be possible, but it would add considerably to the complication
of the formulation of results.

One application lies in statistical {\em set estimation}.  
%Suppose $A=B$ and $f$ is constant on $A$.
 One may wish to estimate the set $A$ from the sample $\X_n$.
 One possible estimator in the literature
is the union of balls of radius $r_n$ centred on the points of $\X_n$,
% $B(X_i,r_n), 1 \leq i \leq n$, 
for some suitable
sequence $(r_n)_{n \geq 1}$ decreasing to zero.
 In particular, when estimating the perimeter of $A $ one may well wish
to take $r_n$  large enough so that these balls fully cover $A$, that is,
$r_n \geq R_n$. For further discussion see Cuevas and 
Rodr\'iguez-Casal \cite{Cuevas}.

We briefly discuss some related concepts.
%which have appeared in the literature.
One of these is the {\em maximal spacing}
\textcolor{\blue}{
of the sample $\X_n$.
	This is defined to be volume of the
largest Euclidean ball that can be fitted into the set $A \setminus \X_n$
%while avoiding the set $\X_n$
(the reason for the terminology becomes apparent from considering the case with
$d=1$).  
More generally, the
{\em maximal $k$-spacing} of the sample
is defined to be volume of the
largest Euclidean ball that can be fitted inside the set $A$
while containing fewer than $k$ points of $\X_n$.}

The maximal spacing also has a long history; see for example
\cite{Aaron,Deheuvels,Henze,Janson}. As described in \cite{Aaron},
there are many statistical applications. 
\textcolor{\blue}{Essentially, it differs from the coverage threshold $R_{n}$
only because of boundary effects (we shall elaborate on this in 
Section \ref{secdefs})},
but  these effects are often  important in determining 
the asymptotic behaviour of the threshold.

Another interpretation of the coverage threshold is via {\em Voronoi cells}.
Calka and Chenavier \cite{Calka} have considered, among other things,
extremes of  circumscribed radii of a Poisson-Voronoi tessellation
on all of  $\R^d$ (the circumscribed radius of a cell 
is the radius of the smallest
ball centred on the nucleus that contains the cell).
To get a finite maximum they consider
the maximum restricted to those cells having non-empty intersection
with  some bounded window $A \subset \R^d$.
 This construction avoids dealing with delicate boundary effects,
and the limit distribution, for large intensity,
is determined in \cite{Calka} using 
results from \cite{Janson}.

It seems at least as natural to consider Voronoi cells
%{\em inside} $A$ 
with respect to the Poisson sample {\em restricted to $A$}.  
A little thought (similar to arguments given in \cite{Calka})
shows that the largest circumradius of the Voronoi cells
\textcolor{\blue}{ {\em inside $A$} (i.e., the intersections 
	with $A$ of the Voronoi cells),}
with respect to the sample $\X_n$, is equal to
$R_n$, and likewise for  a Poisson sample in $A$; thus,  our results add to
those given in \cite{Calka}. 

A somewhat related topic is %that of
the {\em convex hull} 
of the random sample $\X_n$. For $d=2$ with $A$ convex,
 the limiting behaviour
of the Hausdorff distance from this convex hull to $A$ is obtained in
\cite{BSB}.
%for
%various cases, including when $A$ is polygonal and $f$ is uniform on $A$.
The limiting behaviour of the Hausdorff distance from 
$\X_n$ itself to $A$ (which is our $R_n$) is {\em not} the same
 as for the convex hull. 

 \section{Definitions and notation}
 \label{secdefs}

%$B^{(r)} := \{x \in A: B(x,r) \subset A\}$. 
%This is defined as $\theta \tR_n^d$, where
% $\theta $ denotes

Throughout this paper, we work within the following mathematical framework.  Let $d \in \N$. Suppose we have the following ingredients:

\begin{itemize}
	\item A compact, Riemann measurable set
 $A \subset \R^d$ (Riemann measurability of a bounded set
in $\R^d$ amounts to its boundary having zero Lebesgue measure). 
\item
	A Borel probability measure $\mu$ on $A$ with
		probability density function $f$.
	\item
		A specified set $B \subset A$ (possibly $B=A$).
	\item On a common  probability space
		$(\bbS,\cF,\Pr)$, a sequence  
$X_1,X_2,\ldots$ of  independent identically distributed
random $d$-vectors with common probability distribution $\mu$,
and also
		a unit rate Poisson counting
process   $(Z_t,t\geq 0)$, independent of $(X_1,X_2,\ldots)$
%and on the same probability space $(\bbS,\cF,\Pr)$ 
(so $Z_t$ is Poisson distributed with mean $t$ for each $t >0$).
\end{itemize}

For $n \in \N$, $t >0$,  let $\X_n:= \{X_1,\ldots,X_n\}$, and let
  $\Po_t:= \{X_1,\ldots,X_{Z_t}\}$.
These are the point processes that concern us here. Observe that $\Po_t$
is a Poisson point process in $\R^d $ with intensity measure $t \mu$
%where we set $\mu$ to be the distribution of $X_1$
 (see e.g. \cite{LP}).

For $x \in \R^d$ and $r>0$ set $B(x,r):= \{y \in \R^d:\|y-x\| \leq r\}$
where $\|\cdot\|$ denotes the Euclidean norm.
\textcolor{\blue}{(We write $B_{(d)} (x,r)$ for this if we wish 
to emphasise the dimension.)}
For $r>0$, let $A^{(r)}:=\{ x \in A: B(x,r) \subset A^o\}$, the
`$r$-interior' of $A$.

 \textcolor{\blue}{Also, define the set  $A^{[r]}$
 to be
 %as follows.
 %Partition $\R^d$ into hypercubes of side $r$, some of which have a corner
 %at the origin.
%Let $A^{[r]}$ denote the interior  of the closures of cubes
% in the partition which have closure contained in $A^o$.
% Let $A^{[r]}$ denote
 the interior of the union of all hypercubes of the
 form $\prod_{i=1}^d[n_ir,(n_i+1)r] $, with $n_1,\ldots,n_d \in \Z$,
 that are contained in $A^o$ (the set $A^{[r]}$ resembles
  $A^{(r)}$ but is guaranteed
 to be Riemann measurable).}

For any point set $\X \subset \R^d$ and any $D \subset \R^d$ we write
$\X(D)$ for the number of points of $\X$ in $D$,
 and we use below the convention $\inf\{\} := +\infty$.

Given $n, k \in \N$, and 
 $t \in (0,\infty)$,
 define the $k$-coverage thresholds $R_{n,k}$
 and $R'_{t,k}$ 
 by
\bea
R_{n,k} : =
 \inf \left\{ r >0: \X_n   (B(x,r)) \geq k 
~~~~ \forall x \in B \right\};
%~~~ n,k  \in \N.
\label{Rnkdef}
\\
%\eea 
% we define a secondary $k$-coverage threshold
%\bea
R'_{t,k} : = R_{Z_t,k} :=
 \inf \left\{ r >0: \Po_t
(B(x,r)) \geq k 
~~~~ \forall x \in B \right\},
%~~~ t >0,
\label{Rdashdef}
\eea
and  define also the {\em interior $k$-coverage thresholds}
\bea
\tilde{R}_{n,k} : = \inf \left\{ r >0: 
\X_n (B(x,r)) \geq k ~~~ \forall x \in B \cap A^{(r)}
\right\};
%~~~ n,k \in \N.
\label{eqmaxspac}
\\
%\eea
%\bea
\tilde{R}_{Z_t,k} : = \inf \left\{ r >0: 
\Po_t (B(x,r)) \geq k ~~~ \forall x \in B \cap A^{(r)}
\right\}.
%,~~~ n,k \in \N.
\label{eqmaxspacPo}
\eea
Set $R_n : = R_{n,1}$,
and  $R'_t:= R'_{t,1}$, and
 $\tR_n: = \tR_{n,1}$. Then
$R_n$ is the coverage threshold.  
Observe that
$R_n  = \inf \{ r >0: B \subset \cup_{i=1}^n B(X_i,r) \}$.
In the case $B=A$, this agrees with our earlier definition of $R_n$.
%$R_n$ is the Hausdorff distance between the
%sample $\X_n$ and the region $A$.

We are chiefly interested in the asymptotic behaviour of $R_n$ 
 for large $n$.  More generally, we consider $R_{n,k}$ where $k$ may
vary with $n$. 
%We also consider analogous quantities denoted
%$R'_t$ and $R'_{t,k}$ respectively,
%defined similarly using a Poisson sample of points
%(and formally defined later on).
We are especially  interested in the case 
with $B=A$.
%(for which the definition of $R_n$ just given agrees
%with that given earlier on).

Observe that $\tilde{R}_{n,k} $
is the smallest $r$ such that $ B \cap A^{(r)} $
is covered $k$ times by the balls
of radius $r$ centred on the points of $\X_n$. 
%i.e.  set When
%$B=A$ this agrees with the earlier definition of $\tR_n$.
%
%For $r >0$ let $A^{(r)}$ be the set of points
%in $A$ at a Euclidean distance  more than $r$ from the complement     
%of $A$, and let
%Note that $\tR_n$ be the smallest $r$ such that
%$A^{(r)}$ is contained in the union of balls of radius
%$r$ centred on points in $\X_n$.
It can be seen that when $B=A$,
the maximal $k$-spacing of the sample $\X_n$ (defined earlier) is 
equal to $\theta_d \tR_n^d$,
%in the case where $B=A$,
where 
%$\theta_d$ denotes the volume of the unit ball in $\R^d$, that is
$\theta_d := \pi^{d/2}/\Gamma(1 + d/2)$,
the volume of the unit ball in $\R^d$.
% Since $\tR_n$ is defined in terms  
%of coverage of $A^{(r)}$ rather than $A$, by considering
%the maximal spacing one avoids dealing with boundary effects.

%Let  $(Z_t,t\geq 0)$ be a unit rate Poisson counting
%process, independent of $(X_1,X_2,\ldots)$
%and on the same probability space $(\bbS,\cF,\Pr)$ 
%(so $Z_t$ is Poisson distributed with mean $t$ for each $t >0$).

%and also a secondary maximal $k$-spacing
%taking $\inf(\emptyset):=\infty$ if needed.
%Also define
%$\theta_d \tR_{Z_t,k}$, with
We use the Poissonized $k$-coverage threshold
$R'_{t,k}$, and the interior $k$-coverage thresholds
$\tR_{n,k}$ and $\tR_{Z_t,k}$, mainly as 
stepping stones towards deriving results for $R_{n,k}$ and
$\tR_{n,k}$ respectively,
but they are also of interest in their own right. Indeed, some of the literature
\cite{HallBk,HallZW,BB,Lan} is concerned more with $R'_t$ than with $R_n$,
and we have already mentioned the literature on the maximal spacing.

We now give  some further notation used throughout. For
$D \subset \R^d$, let $\overline{D}$ 
and $D^o$
denote the closure of $D$.
and interior of $D$, respectively. 
%For Borel $D \subset \R^d$,
Let $|D|$ denote the Lebesgue  measure (volume) of $D$, and
$|\partial D|$ the perimeter of $D$, i.e. the
$(d-1)$-dimensional Hausdorff measure of
$\partial D$, when these are defined.
\textcolor{\blue}{
	Write $\lglg t$ for $\log (\log t)$,
	$t >1$. 
Let $o$ denote
the origin in $\R^d$.
Set $\bH := \R^{d-1}\times [0,\infty)$ and 
 $\partial \bH := \R^{d-1}\times \{0\}$.}
 
Given two  sets $\X,\Y \subset \R^d$, we
 set  $ \X \triangle \Y := (\X \setminus \Y) 
\cup (\Y \setminus \X)$, the symmetric difference between $\X$ and $\Y$.
Also, we write $\X \oplus \Y$ for the set $\{x+y: x \in \X, y \in \Y\}$.
 Given also $x \in \R^d$ we write $x+\Y$ for $\{x\} \oplus \Y$.

Given $x,y \in \R^d$, we denote by $[x,y]$ the line segment from
$x$ to $y$, that  is, the convex hull of the set $\{x,y\}$.
We write $a \wedge b$ (respectively $a \vee b$) for the minimum
(resp. maximum) of any two numbers $a,b \in \R$.

\textcolor{\blue}{
Given  $m \in \N$ and functions
$f: \N \cap [m,\infty) \to \R$ and
$g: \N \cap [m,\infty) \to (0,\infty)$,
we write $f(n) = O(g(n))$ as $n \to \infty$ 
if  
 $\limsup_{n \to \infty }|f(n)|/g(n) < \infty$.
 We write $f(n)= o(g(n))$ as $n \to \infty$
 if $\lim_{n \to \infty} f(n)/g(n) =0$.
 We write $f(n)= \Theta(g(n))$ as $n \to \infty$
 if both $f(n)= O(g(n))$ and $g(n)= O(f(n))$.
Given $s >0$ and  functions
$f: (0,s) \to \R$
and $g:(0,s) \to (0,\infty)$,
we write $f(r) = O(g(r))$ as $r \downarrow 0$,
or $g(r) = \Omega(f(r))$ as $ r \downarrow 0$,
if 
 $\limsup_{r \downarrow 0} |f(r) |/g(r) < \infty$.
 We write $f(r)= o(g(r))$ as $r \downarrow 0$
 if $\lim_{r \downarrow 0 } f(r)/g(r) =0$,
 and $f(r) \sim g(r)$ as $r \downarrow 0$
 if this limit is 1.}

\section{Convergence in distribution}
\label{secweak}
\allco

The main results of this section are concerned with 
weak convergence for $R_{n,k}$ (defined at (\ref{Rnkdef}))
as $n \to \infty$ with $k$ fixed,
in cases where $f$ is uniform on $A$ and $B=A$.

Our first result concerns the case where
%$f$ is uniform on $A$, and
$A$ has a smooth boundary in the following sense.
We say that $A$ {\em has $C^2$ boundary} if 
%(1) $\partial A$ is a $(d-1)$-dimensional $C^2$ submanifold of $\R^d$.
%Equivalently, 
%we say that $A$ {\em has $C^2$ boundary} if (2)
for each $x \in \partial A$
there exists a neighbourhood $U$ of $x$ and a real-valued function $f$ that
is  defined on an open set in $\R^{d-1}$
%rval 
and twice continuously differentiable, such
that  $\partial A \cap U$, after a rotation, is the graph of the
function $f$.

Given $d \in \N$, define the constant
\bea
c_d := \frac{1}{d!} \left( \frac{\sqrt{\pi} \; \Gamma((d/2)+1) }{ \Gamma((d+1)/2) }
\right)^{d-1}.
\label{cdef}
\eea
Note that $c_1=c_2=1$, and  $c_3 =3 \pi^2/32$.
\textcolor{\blue}{
Moreover, using Stirling's formula one can show that
$c_d^{1/d}  \sim e (\pi/(2d))^{1/2}$ as $d \to \infty$.
}
Given also $k \in \N$, for $d \geq 2$ set
\bea
c_{d,k} := \left( \frac{c_{d-1}}{(k-1)!} \right)
\theta_d^{2-d-1/d} \theta_{d-1}^{2d-3}
\theta_{d-2}^{1-d} 
(1- 1/d)^{d+k-3 + 1/d} 2^{-1+1/d}.
\label{cdkdef}
\eea
\textcolor{\blue}{
By some tedious algebra, one can simplify this to
$$
c_{d,1} =  (d!)^{-1}2^{1-d} \pi^{(d/2)-1} (d-1)^{d+(1/d)-2}\Gamma
\left( \frac{d+1}{2} \right)^{1-d} \Gamma \left( \frac{d}{2} \right)^{d+ 1/d -1},
$$
with $c_{d,k} = c_{d,1}(1-1/d)^{k-1}/(k-1)!$ for $k >1$.}
Note that $c_{2,k}
= 2^{1-k} \pi^{-1/2}/(k-1)! $ and $c_{3,k}= 2^{k-5} 3^{1-k} \pi^{5/3}/(k-1)!$,
and $c_{d,1}^{1/d} \sim e/(2d)^{1/2}$ as $d \to \infty$. 

In all  limiting statements in the sequel,
$n$ takes values in $\N$ while $t$ takes values in $(0,\infty)$.

\begin{theo}
\label{thsmoothgen}
Suppose that $d\geq 2$  and
$f=f_0{\bf 1}_A$, where $A \subset \R^d$ is compact, and
has $C^2$ boundary,
%	$\partial A$ is a $(d-1)$-dimensional  
%	$C^2$ submanifold of $\R^d$,
	and $f_0 := |A|^{-1}$. Assume $B=A$.
	Let $k \in \N$,
	$\zeta \in \R$.
	%(eventually change to $\beta$?).
Then
\bea
	\lim_{n \to \infty} \Pr \left[ \left( \frac{n \theta_d f_0 R_{n,k}^d}{2}
	\right)- 
	\left(\frac{d-1}{d}\right) \log (n f_0) -
\left( d+k-3+1/d \right)
	\lglg n \leq \zeta \right]
\nonumber \\
=
	\lim_{t \to \infty} \Pr \left[ \left( \frac{t \theta_d 
	f_0 (R'_{t,k})^d}{2}
	\right)- 
	\left(\frac{d-1}{d}\right) \log (t f_0) -
\left( d+k-3+1/d \right) 
	\lglg t \leq \zeta \right]
\nonumber \\
=
	\begin{cases}
		\exp \left( - |A| e^{-2 \zeta}
		- c_{2,1}  | \partial A| e^{- \zeta}  
		\right) 
		&{\rm if} ~ d=2, k =1 \\
		\exp \left( - c_{d,k}  | \partial A| e^{- \zeta}  
		\right) 
		&{\rm otherwise.}
	\end{cases}
		~~~~~~~~~~~~~~~
\label{eqmain3}
\eea
\end{theo}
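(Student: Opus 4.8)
\section{Proof strategy for Theorem \ref{thsmoothgen}}

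The plan is to prove the statement first for the Poissonised threshold $R'_{t,k}$ and then transfer it to $R_{n,k}$ by de-Poissonisation. Since $R_{n,k}$ is non-increasing in $n$ and $Z_t$ concentrates around $t$ with fluctuations of order $t^{2/3}$, one sandwiches $R_{n,k}$ between $R'_{t^-,k}$ and $R'_{t^+,k}$ with $t^\pm=n\pm n^{2/3}$ on an event of probability tending to $1$, and because $R_{n,k}^d$ is of order $(\log n)/(n\theta_d f_0)$ on the relevant scale, replacing $t$ by $t^\pm$ perturbs the centring by only $o(1)$; hence the two displayed limits agree. It then suffices to fix $r=r_t$ by $\tfrac12 t\theta_d f_0 r_t^d=\tfrac{d-1}{d}\log(tf_0)+(d+k-3+1/d)\lglg t+\zeta$ and to show $\Pr[R'_{t,k}\le r_t]$ converges to the right-hand side of (\ref{eqmain3}). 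I shall use throughout that, because $f$ is uniform, $\Po_t$ restricted to any region of bounded size inside $A^o$ is a homogeneous Poisson process of intensity $tf_0$.

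For the Poisson process, $\{R'_{t,k}\le r\}$ is the event that the deficient set $V:=\{x\in A:\Po_t(B(x,r))<k\}$ is empty. By the Hall--Janson clump analysis the components of $V$ have diameter $O(r)$ with high probability, so, fixing a large constant $\kappa$, every component either lies in the deep interior $A^{(\kappa r)}$ (where the relevant balls are full $d$-balls contained in $A$) or stays within distance $O(r)$ of $\partial A$. With a suitable buffer the ``interior'' and ``boundary'' component counts are governed by Poisson points in essentially disjoint regions, so a Chen--Stein estimate gives asymptotic independence of the two counts, each converging to a Poisson law; thus $\Pr[R'_{t,k}\le r_t]\to\exp(-\lambda_{\rm int}-\lambda_{\rm bdy})$ and it remains to identify the two means. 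The interior mean is supplied by the theorems of Hall and Janson for $k$-coverage of the region $A^{(r)}$, whose volume tends to $|A|$ and whose points $x$ all have $B(x,r)\subset A$, so that coverage there behaves as in the boundaryless case: the interior threshold obeys $t\theta_d f_0 \tR_{Z_t,k}^d\approx\log(tf_0)+(d+k-2)\lglg t$, whereas our $r_t$ obeys $t\theta_d f_0 r_t^d\approx\tfrac{2(d-1)}{d}\log(tf_0)+2(d+k-3+1/d)\lglg t$. Since $\tfrac{2(d-1)}{d}\ge1$ with equality only when $d=2$, and for $d=2$ the coefficients $2(d+k-3+1/d)=2k-1$ and $d+k-2=k$ coincide only when $k=1$, the interior is $k$-covered with probability tending to $1$ except when $d=2,k=1$, where Janson's planar coverage theorem gives $\lambda_{\rm int}=|A|e^{-2\zeta}$.

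The heart of the proof is the boundary mean $\lambda_{\rm bdy}$. By the $C^2$ hypothesis, within an $O(r)$-neighbourhood of each $y\in\partial A$ the region $A$ coincides with a half-space $\bH$ up to a curvature correction of relative order $r$, which I would show is harmless; it then suffices to count, in $\bH$ with a Poisson process of intensity $tf_0$, the $k$-deficient components of the collar $\{z\in\bH:\dist(z,\partial\bH)\le\kappa r\}$, and these concentrate within $O(r)$ of $\partial\bH$ since the intersection volume $|B(z,r)\cap\bH|$ is minimised when $z\in\partial\bH$. Projecting the Poisson points of height $h\le r$ orthogonally onto $\partial\bH$ turns the coverage of $\partial\bH$ into a $(d-1)$-dimensional Boolean-model coverage problem: the projected process is Poisson of intensity $tf_0 r$ carrying independent marks $h$ uniform on $[0,r]$, a point with mark $h$ having footprint a $(d-1)$-ball of radius $\sqrt{r^2-h^2}$ and hence expected footprint volume $\theta_{d-1}r^{d-1}\int_0^{\pi/2}\cos^d\phi\,d\phi$. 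The identity $\theta_d=2\theta_{d-1}\int_0^{\pi/2}\cos^d\phi\,d\phi$ then shows $\tfrac12 t\theta_d f_0 r^d$ to be precisely the intensity $tf_0 r$ of the projected process multiplied by its expected footprint volume, i.e.\ the natural centring quantity for this $(d-1)$-dimensional problem, which explains the normalisation in (\ref{eqmain3}). Applying the Hall--Janson $k$-coverage theorem in dimension $d-1$ to this Boolean model, and integrating the resulting local clump rate against $(d-1)$-dimensional Hausdorff measure on $\partial A$, yields that the number of $k$-deficient boundary clumps is asymptotically Poisson with mean $c_{d,k}|\partial A|e^{-\zeta}$; here $c_{d-1}$ enters as the base dimension-$(d-1)$ coverage constant, the factor $(1-1/d)^{k-1}/(k-1)!$ in (\ref{cdkdef}) comes from the Poisson tail $\Pr[\mathrm{Po}(\mu)<k]$ with $\mu\sim(1-1/d)\log(tf_0)$, and the extra $\lglg t$ coefficient $1/d$, together with the remaining factors of (\ref{cdkdef}), comes from the random-radius correction, the density of $\sqrt{r^2-h^2}$ having an inverse-square-root singularity at its maximum. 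Combining with the interior contribution gives (\ref{eqmain3}).

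I expect the boundary analysis to be the main obstacle, in three interlocking pieces. First, one must justify the replacement of the curved boundary by a half-space uniformly over $\partial A$ and control the error even at the level of the $\lglg t$ centring, which is delicate since a change of order $r$ in $r^d$ alters the centring by a constant. Secondly, one must build a Chen--Stein/Poisson approximation for the number of boundary clumps that is robust to the non-product dependence near $\partial A$ and to the coupling between a clump's depth and its position along $\partial A$; verifying that positive-height clumps are subdominant, and that each clump projects to a single blob of the $(d-1)$-dimensional vacancy, is the crux here. Thirdly, one must carry out the integral-geometric evaluation converting the dimension-$(d-1)$ Janson constant and the moments of $\sqrt{r^2-h^2}$ into the closed form $c_{d,k}$ of (\ref{cdkdef}). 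Tracking the exact powers of $\lglg t$ throughout is the fussiest part, since the interior and boundary regimes differ there and a single misplaced constant would change which of the two terms in (\ref{eqmain3}) survives.
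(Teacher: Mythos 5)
Your outline matches the paper's own route in its correct parts: the de-Poissonization step, the interior/boundary split with the observation that the interior term survives only when $d=2,k=1$ (Proposition \ref{Hallthm} plays exactly this role in the paper), and the local boundary computation — slicing the balls of a half-space Poisson process by the bounding hyperplane to obtain a $(d-1)$-dimensional Boolean model with radii $r_t\sqrt{1-U^2}$, applying the Hall--Janson theorem in dimension $d-1$, and the bookkeeping of moments and $\lglg$ coefficients that produces $c_{d,k}$ — is precisely the content of the paper's Lemma \ref{lemhalf3a}, including your point that coverage of the hyperplane implies coverage of the $O(r_t)$-collar with high probability.

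The genuine gap is the global boundary step, which you flag as ``the main obstacle'' but for which you supply no mechanism. The difficulty is not the local curvature correction per se (as you note, the per-ball volume error is $O(t r_t^{d+1})=o(1)$ in the exponent); it is that the theorem requires evaluating, to within $o(1)$, a quantity of order one accumulated over the whole curved boundary — and Hall--Janson gives coverage probabilities of fixed compact sets, not a ready-made ``local clump rate'' that can be integrated against Hausdorff measure on $\partial A$ with uniform error control at the level of the $\lglg t$ centring, nor the asymptotic independence between overlapping curved patches and the interior. Your proposed Chen--Stein argument would in effect amount to reproving Janson's theorem in a curved, inhomogeneous setting, with the expected-clump-count (lowest-corner Mecke) computation redone uniformly over $\partial A$; none of this is carried out. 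The paper resolves exactly this with a two-scale construction you would need some analogue of: within each chart it replaces $\partial A$ by a piecewise-affine surface on scale $t^{-\gamma}$ with $1/(2d)<\gamma<1/d$, so the approximation error $O(t^{-2\gamma})$ is $o(r_t)$ and the Poisson points in the sliver between the true and approximating surfaces are discounted by first-moment estimates (Lemmas \ref{lemdiff4}, \ref{lemdiff3}); it then cuts the $O(r_t)$-thick collar over the flat faces into blocks and reassembles them by rigid motions into a single flat slab (the ``induced coverage process'', Lemma \ref{leminduced}), so that Lemma \ref{lemhalf3a} is applied once to a flat region of measure tending to $|\partial A|$, rather than patching infinitely many local half-space approximations; the exceptional regions (face boundaries, the tartan grid, chart boundaries, the interface with the interior) are covered with high probability by crude union bounds (Lemmas \ref{lemcorn3}, \ref{lemtartan}, \ref{lemtQ3}, \ref{dBlem}), and independence across charts is obtained by deleting $t^{-\gamma_0}$-neighbourhoods of the chart boundaries. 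Without such a device, your step ``integrate the resulting local clump rate against $(d-1)$-dimensional Hausdorff measure \ldots yields that the number of deficient boundary clumps is asymptotically Poisson with mean $c_{d,k}|\partial A|e^{-\zeta}$'' asserts the hardest part of the theorem rather than proving it.
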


When $d=2, k=1 $
the exponent in (\ref{eqmain3}) has two terms.
This is because
%This reflects the fact that
the location in $A$ furthest from the sample $\X_n$ might lie
either in the interior of $A$, or on the boundary. 
%The probability that it lies in the interior
%rather than on the boundary, converges  to
% $|A|e^{-2 \zeta}/(c_{2,1} |\partial A|  e^{-\zeta} + |A| e^{-2 \zeta})
% $,
%and likewise for  $\Po_t$.

When $d \geq 3$ or $k \geq 2$, 
 the exponent of (\ref{eqmain3}) 
has only one term, because
 the location in $A$ with furthest $k$-th nearest point of $\X_n$
is located, with probability tending to 1, on the boundary of $A$, 
and likewise for  $\Po_t$.  Increasing either $d$ or $k$
makes it more likely that this location lies on the boundary,
and the exceptional nature of the limit 
in the case $(d,k)=(2,1)$  reflects this.

We also consider the case where $A$ is a polytope, only for
$d=2$ or $d=3$. All polytopes in this paper are assumed to
be bounded, connected, and finite (i.e. have finitely many
faces).

\begin{theo}
\label{thmwksq}
Suppose $d=2$, 
$B=A$, and 
 $f = f_0  {\bf 1}_A$, where $f_0 : = |A|^{-1}$.
	\textcolor{\blue}{Assume that $A$ is compact and polygonal.}
	%either (a) $A$ is polygonal or (b) $A$
	%has $C^2$ boundary. 
Let $|\partial A|$ denote the length of the boundary of $A$.
Let $k \in \N$, $\zeta \in \R$.
 Then 
\bea
\lim_{n \to \infty} 
	\Pr[ n (\pi/2)
	f_0 R_{n,k}^2 -(1/2) \log (n f_0) - (k-1/2) \lglg n \leq \zeta ]
%~~~~~~~~~~~~~~
\nonumber \\
	= \lim_{t \to \infty} \Pr[ t (\pi/2) f_0 (R'_{t,k})^2 -
	(1/2) \log (t f_0) - (k-1/2) \lglg t \leq \zeta ]
%~~~~~~~~~~~~~~
\nonumber \\
= \begin{cases} 
\exp( - |A|  e^{-2 \zeta}  - |\partial A| \pi^{-1/2} e^{-\zeta} )
& \mbox{\rm if } k=1,   
	\\ \exp( - c_{2,k} |\partial A|
	%\pi^{-1/2} 2^{1-k}
	e^{-\zeta} )  
& \mbox{\rm if } k\geq 2.  
\end{cases}
\label{0322b}
\eea
\end{theo}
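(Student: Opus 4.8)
The two displayed limits in (\ref{0322b}) agree by a routine de-Poissonization argument (using that $R_{n,k}$ is non-increasing in $n$, that $Z_t$ concentrates around $t$ with fluctuations of order $\sqrt t$, and that at the critical scale $r=\Theta((t^{-1}\log t)^{1/2})$ a change of $t$ by $O(\sqrt t)$ shifts the centred quantity $t(\pi/2)f_0(R'_{t,k})^2-(1/2)\log(tf_0)-(k-1/2)\lglg t$ by $o(1)$), so the plan is to establish the middle expression, for $R'_{t,k}$, and transfer. Fix $\zeta\in\R$, let $r_t>0$ be defined by $t(\pi/2)f_0 r_t^2=(1/2)\log(tf_0)+(k-1/2)\lglg t+\zeta$, and observe that $\{R'_{t,k}>r_t\}=\{\cV_t\neq\emptyset\}$ where $\cV_t:=\{x\in A:\Po_t(B(x,r_t))<k\}$ is the $k$-vacant set; so it suffices to show $\Pr[\cV_t=\emptyset]$ converges to the right-hand side of (\ref{0322b}). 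Write $\partial A$ as a finite union of relatively open edges together with vertices $v_1,\dots,v_m$ of interior angles $\alpha_1,\dots,\alpha_m\in(0,2\pi)$. I would first record, by a first-moment bound on large vacant components of the kind used in the proof of Theorem \ref{thsmoothgen}, that with probability $\to 1$ every component of $\cV_t$ has diameter $O(r_t)$; then by a standard localization using buffer regions of width $\gg r_t$, it is enough to estimate separately --- and show asymptotically independent, the underlying Poisson process being independent over disjoint regions --- the contributions to $\{\cV_t\neq\emptyset\}$ from components whose core lies (i) in the deep interior of $A$, (ii) near a flat portion of $\partial A$ bounded away from every vertex, or (iii) within a fixed multiple of $r_t$ of some vertex.

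For part (i), restricting $\cV_t$ to $A^{(r_t)}$, the event $\{\cV_t\cap A^{(r_t)}\neq\emptyset\}$ is exactly $\{\tR_{Z_t,k}>r_t\}$, every ball $B(x,r_t)$ there lies inside $A$ and carries the full density $f_0$, and $|A^{(r_t)}|\to|A|$; this is the classical Hall--Janson planar coverage problem, and since $t\pi f_0 r_t^2=\log(tf_0)+(2k-1)\lglg t+2\zeta$ whereas $k$-fold planar coverage has its transition at $t\pi f_0 r_t^2\approx\log(tf_0)+k\lglg t$, the excess $(k-1)\lglg t$ gives $\Pr[\cV_t\cap A^{(r_t)}\neq\emptyset]\to 1-\exp(-|A|e^{-2\zeta})$ for $k=1$ and $\to 0$ for $k\geq 2$ (the $|A|$ and the $e^{-2\zeta}$ exactly as in the interior term of Theorem \ref{thsmoothgen}). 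For part (ii), near a flat portion of $\partial A$, after a rigid motion $A$ locally coincides with the half-plane $\bH$ carrying density $f_0$, and the boundary curvature one must control in the $C^2$ case vanishes identically; hence the half-space scan-statistic analysis underlying Theorem \ref{thsmoothgen} applies edge by edge and yields that the number of locally maximal $k$-vacant regions near the edges converges in distribution to a Poisson variable of mean $\to c_{2,k}|\partial A|e^{-\zeta}$, with $c_{2,k}=2^{1-k}\pi^{-1/2}/(k-1)!$ from (\ref{cdkdef}), so this contribution is $1-\exp(-c_{2,k}|\partial A|e^{-\zeta})$.

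For part (iii), fix $i$ and a constant $M$, and cover $A\cap B(v_i,Mr_t)$ by $O(M^2)$ balls $B(y,r_t/2)$ with centres $y$ in that set; if $\cV_t$ has a point $x$ there then $\Po_t(B(y,r_t/2))\leq\Po_t(B(x,r_t))<k$ for one such $y$. Since $\|y-v_i\|\leq Mr_t$ while the two edges at $v_i$ have length $\gg r_t$, the set $B(y,r_t/2)\cap A$ is the intersection of that ball with the infinite wedge of angle $\alpha_i$ at $v_i$, so $|B(y,r_t/2)\cap A|\geq c_i r_t^2$ for some constant $c_i=c_i(\alpha_i)>0$ (one may take $c_i=\tfrac18(\alpha_i\wedge\pi)$). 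Using $tf_0 r_t^2\sim\pi^{-1}\log t$ and a Poisson tail bound, $\Pr[\Po_t(B(y,r_t/2))<k]\leq C(\log t)^{k-1}t^{-\delta_i}$ for some $\delta_i\in(0,c_i/\pi)$; summing over the $O(M^2)$ balls and the $m$ vertices shows the vertex contribution tends to $0$.

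Assembling the three asymptotically independent pieces, $\Pr[\cV_t=\emptyset]\to\exp(-|A|e^{-2\zeta}\1\{k=1\}-c_{2,k}|\partial A|e^{-\zeta})$, which is (\ref{0322b}) for $R'_{t,k}$, and de-Poissonization then gives it for $R_{n,k}$. The main obstacle is part (ii): proving the exact Poisson limit with the precise constant $c_{2,k}$ --- and hence the $\pi^{-1/2}$, the $1/(k-1)!$, and the $(k-1/2)\lglg t$ centring --- for the number of locally maximal $k$-vacant regions in a boundary layer modelled on $\bH$. This requires the Janson-style machinery (factorial-moment asymptotics for suitable exceedance counts, or a Chen--Stein coupling) together with a precise description of the shape of a typical maximal vacant region touching $\partial\bH$; by comparison the interior piece is an off-the-shelf Hall--Janson estimate and the vertex piece needs only a crude tail bound, which is why the planar polygonal case ends up with exactly the same limit law as the $C^2$ case.
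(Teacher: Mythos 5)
Your architecture matches the paper's (de-Poissonize via the analogue of Lemma \ref{depolem}; split into interior, edge and vertex contributions; crude union/tail bound at the vertices, exactly as in Lemma \ref{lemQ}; Proposition \ref{Hallthm} for the interior with the $(k-1)\lglg t$ excess killing the interior term when $k\geq 2$; independence because the relevant events are measurable with respect to Poisson points in regions separated by more than $2r_t$). But there is a genuine gap at precisely the point you flag as ``the main obstacle'': the edge contribution. The statement that the boundary-layer analysis ``yields a Poisson variable of mean $\to c_{2,k}|\partial A|e^{-\zeta}$'' is the heart of the theorem --- it is where the constant $\pi^{-1/2}2^{1-k}/(k-1)!$ and the $(k-1/2)\lglg t$ centring come from --- and your proposal asserts it rather than proves it, deferring to ``Janson-style machinery'' for locally maximal vacant regions touching $\partial\bH$. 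Appealing to ``the half-space scan-statistic analysis underlying Theorem \ref{thsmoothgen}'' does not close this, because in the paper that theorem itself rests on the same half-space coverage lemma you would need here; nothing already available is being invoked, and redoing Janson's factorial-moment analysis for boundary-touching vacant components is a substantial project, not a routine adaptation.

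For comparison, the paper closes this step (Lemma \ref{lemhalf3a}) without any new Janson-type computation, by two moves you might find useful. First, to cover an edge segment itself (a one-dimensional set), intersect each disc $B(x,r_t)$, $x\in\cU_t$, with the boundary line: the traces form a \emph{one}-dimensional Boolean model whose radii are random chords $r_tY$ with $Y=(1-U^2)^{1/2}$, $U$ uniform on $[0,1]$, and Janson's coverage theorem with random radii (Lemma \ref{lemHall}, applied in dimension $d-1=1$) gives the exact limit; the constant $c_{2,k}$ and the shifted centring drop out of the moment computation $\E[Y^{d-1}]$, $\E[Y^{d-2}]$ and the change from $\lglg\lambda$ to $\lglg t$. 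Second, one must still pass from coverage of the segment to coverage of the $O(r_t)$-thick slab above it; the paper does this by a Mecke-formula first-moment bound on configurations of $d=2$ Poisson points whose disc boundaries meet at a ``lowest uncovered corner'' strictly above the segment (the event in (\ref{0517b2})), showing this has vanishing probability. Your plan needs either these two ingredients or a full Poisson-approximation argument for boundary vacant regions; as written, the decisive limit in case (ii) is unproven.
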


\textcolor{\blue}{
One might seek to extend Theorem \ref{thmwksq}
to a more general class of sets $A$ including
both polygons (covered by Theorem \ref{thmwksq})
and sets with $C^2$ boundary (covered by Theorem
\ref{thsmoothgen}).}
One could take sets $A$ having piecewise
$C^2$ boundary, with the extra condition that the
corners of $A$ are {\em not too pointy}, in the sense
that for each corner $q$, there exists a triangle with vertex
at $q$ that is contained in $A$. We would expect that it is 
possible to extend the result to this more
general class.
%\textcolor{\blue}{A further conjecture in $d=2$ is that Theorem 3.2 could
%be extended to general convex $A$, or even to polyconvex $A$.}

When $d=3$ and $A$ is  polyhedral,
there are several cases to consider, depending on the value
of the angle $\alpha_1$ subtended by the `sharpest edge' of $\partial A$.
\textcolor{\blue}{The angle subtended
by an edge $e$ is defined as follows. Denote
the two faces meeting at $e$ by $F_1,F_2$. Let $p$ be a point
in the interior of the edge, and for $i=1,2$ let
$\ell_i$ be a line segment starting from $p$
that lies within $F_i$ and is perpendicular to the edge $e$.
Let $\theta$ denote the angle between the
 line segments $\ell_1$ and $\ell_2$ (so $0 < \theta < \pi$).
The angle subtended by the edge $e$ is $\theta$ 
 if there is a neighbourhood $U$ of $p$ such that 
 $A \cap U$ is convex, and is $2 \pi - \theta$ if there is no
 such neighbourhood of $p$.}

If $\alpha_1 < \pi/2$ then the location in $A$ furthest from the sample
$\X_n$
is likely to be on a 1-dimensional edge of $\partial A$, while
if $\alpha_1 > \pi/2$ the furthest location from the sample is likely
to be on a 2-dimensional face of $\partial A$, in the limit $n \to \infty$.
If $\alpha_1 = \pi/2$ (for example, for a cube), both of these possibilities
have non-vanishing probability in the limit.

Since  there are several cases to consider, to make the statement of the result
more compact we put it in terms of $\Pr[R_{n,k} \leq r_n]$
for a sequence of constants $(r_n)$. 
%it is greater than st is less than, equal to,

\begin{theo}
\label{thwkpol3}
Suppose $d=3$, $A $ is polyhedral, $B=A$ and $f = f_0 {\bf 1}_A$, where
$f_0 := |A|^{-1}$. Denote the $1$-dimensional edges of $A$ by 
$e_1,\ldots,e_\kappa $.
For each $i \in \{1,\ldots, \kappa \}$, let $\alpha_i$ denote 
the angle that $A$ subtends at edge $e_i$ (with $0 < \alpha_i < 2 \pi$),
and write $|e_i|$ for the length of $e_i$. Assume the edges are listed in
order so that  $\alpha_1 \leq \alpha_2 \leq \cdots \leq \alpha_\kappa$. 
Let $|\partial_1 A|$ denote the total area (i.e., $2$-dimensional Hausdorff
 measure) of all faces of $A$, and let $|\partial_2 A| $
denote the total length of those edges $e_i$ for which $\alpha_i= \alpha_1$. 
Let $\beta \in \R$ and $k \in \N$.  
Let $(r_t)_{t >0}$ be a family  of real numbers satisfying
(as $t \to \infty$)
\bean
 (2 \alpha_1  \wedge \pi ) f_0 t r_t^3
-  \log (t f_0) - \beta
  =  \begin{cases}
  (3k -1)  \lglg t  + o(1)
  & \mbox{ if  }  \alpha_1 \leq \pi/2
\\
(\frac{1+ 3k}{2}) \lglg t + o(1)
  & \mbox{ if  }  \alpha_1 > \pi/2.
\end{cases}
%\label{rt3a} \label{rt3}
\eean
Then
\bea
\lim_{n \to \infty} \Pr[ R_{n,k} \leq r_n ] 
= \lim_{t \to \infty} \Pr[ R'_{t,k} \leq r_t ] 
~~~~~~~~~~~~~~~~~~~~~
~~~~~~~~~~~~~~~~~~~~~
\nonumber \\
= \begin{cases}
\exp \left( - \frac{3^{1-k} \alpha_1^{1/3}|\partial_2 A|
e^{-\beta/3}
}{ (k-1)! (32)^{1/3}}
\right)  & \mbox{\rm if } 
\alpha_1  < \pi/2,  ~ \mbox{\rm or } (\alpha_1 = \pi/2, k >1)
 \\
 \exp \left( - \frac{ 3 \pi^{5/3} 2^k  |\partial_1 A|
e^{-2 \beta/3} }{(k-1)! 3^k 32  }  \right) & \mbox{\rm if }  
\alpha_1 > \pi/2
\\
\exp \left( - \frac{
\pi^{1/3} 
|\partial_2 A|
e^{-\beta/3}
}{4} 
- \frac{ \pi^{5/3} |\partial_1 A|e^{- 2 \beta/3}}{16} \right) & \mbox{\rm if }  
\alpha_1 = \pi/2, k=1.  
 \end{cases}
\label{0529a}
\eea
\end{theo}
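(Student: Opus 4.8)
The plan is to follow the same route as for the smooth case (Theorem \ref{thsmoothgen}) and the polygonal case (Theorem \ref{thmwksq}): first establish the Poissonized statement $\lim_{t\to\infty}\Pr[R'_{t,k}\le r_t]$, then de-Poissonize to recover the $n\to\infty$ statement. For the Poisson version the natural object is the random closed set $V_t := \{x\in A : \Po_t(B(x,r_t))\le k-1\}$ of ``uncovered points'' (points whose $k$-th nearest sample point is farther than $r_t$); then $\{R'_{t,k}\le r_t\}=\{V_t=\emptyset\}$. The strategy is to show that, as $t\to\infty$, the expected number of ``clumps'' of $V_t$ converges and the process of clumps is asymptotically Poisson, so that $\Pr[V_t=\emptyset]\to\exp(-\lambda)$ where $\lambda$ is the limiting clump intensity. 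This is exactly the Poisson-approximation / method-of-moments framework of Janson \cite{Janson} and Hall \cite{HallZW}, which I would invoke for the bulk of the probabilistic work; the new content is the geometry near edges and faces of the polyhedron.

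The decomposition of $A$ is into three regimes: (i) the deep interior $A^{(r_t)}$ minus a neighbourhood of $\partial A$, where the contribution is governed by the classical interior formula and turns out to be negligible under the given scaling of $r_t$ (the $\log(tf_0)$ and $(3k-1)$ or $(1+3k)/2$ iterated-log terms are tuned so the boundary/edge terms dominate); (ii) a tube of width $\Theta(r_t)$ around the relative interiors of the 2-faces, away from the edges, where a point $x$ at depth $s\in[0,r_t]$ below a face sees a half-ball $B(x,r)\cap A$ of volume $\approx \theta_3 r^3$ truncated by a hyperplane, and one integrates $\Pr[\mathrm{Po}(tf_0|B(x,r_t)\cap A|)\le k-1]$ over the face — this produces the $|\partial_1 A|$ term with the half-space constant, matching the $d=3$ specialization of $c_{d,k}$ (note $2\alpha_1\wedge\pi=\pi$ contributes the factor $\pi$ when the face regime is active, i.e. $\alpha_1\ge\pi/2$); (iii) a neighbourhood of each edge $e_i$, where near a point of $e_i$ the set $A$ looks locally like a wedge of dihedral angle $\alpha_i$, so $B(x,r)\cap A$ has volume $\approx$ (fraction depending on $\alpha_i$) times $\theta_3 r^3$, and integrating along $e_i$ gives a term proportional to $|e_i|$ with a constant that is increasing in $\alpha_i$; hence only the sharpest edges ($\alpha_i=\alpha_1$) contribute to leading order, yielding the $\alpha_1^{1/3}|\partial_2 A|$ term. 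Balancing the edge term (scaling like $t r_t^3\sim$ constant$\cdot\log t$, giving $e^{-\beta/3}$) against the face term ($e^{-2\beta/3}$) explains why the threshold between the three cases is precisely $\alpha_1\lessgtr\pi/2$: when $\alpha_1<\pi/2$ the wedge is so thin that edges dominate and faces vanish; when $\alpha_1>\pi/2$ the reverse; when $\alpha_1=\pi/2$, $k=1$, both survive and one gets the sum of the two exponentials. The vertices of the polyhedron contribute a lower-order term and are discarded.

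Concretely, the key steps in order are: (1) reduce to the Poisson model and write $\Pr[R'_{t,k}\le r_t]=\Pr[V_t=\emptyset]$; (2) partition $A$ as interior $\cup$ face-tubes $\cup$ edge-tubes $\cup$ vertex-balls, and show the interior and vertex contributions to $\E|V_t\cap(\cdot)|$ are $o(1)$ under the hypothesized scaling; (3) compute $\E|V_t\cap(\text{face-tube})|$ by a change of variables (height below the face $\times$ position on the face) and a Laplace/Poisson-tail estimate, getting the $|\partial_1 A|$ term; (4) compute $\E|V_t\cap(\text{edge-tube})|$ using the local wedge model, getting the $|\partial_2 A|$ term and verifying the angle-dependent constant is minimized at $\alpha_1$, so non-sharpest edges drop out; (5) upgrade the first-moment computation to a Poisson limit for the number of clumps, via a Chen--Stein or moment argument as in \cite{Janson}, checking that distinct clumps decorrelate (two uncovered points far apart require disjoint empty regions, whose probabilities multiply) and that clumps are well-separated with high probability; (6) de-Poissonize: since $Z_t$ concentrates around $t$ and $R_{n,k}$ is monotone in $n$, a standard sandwiching argument (as used for Theorems \ref{thsmoothgen}--\ref{thmwksq}) transfers the limit to $\Pr[R_{n,k}\le r_n]$. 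The main obstacle I anticipate is step (4)–(5) near the edges: controlling the geometry of $B(x,r)\cap A$ uniformly along an edge (including how the approximation by an exact wedge degrades near the endpoints of the edge, where the edge meets a vertex), and proving the clump-count Poisson limit when clumps may sit on a one-dimensional edge — there the relevant ``volume'' is effectively one-dimensional, the separation estimates are more delicate than in the face or interior regimes, and one must rule out interaction between a clump on an edge and a clump on an adjacent face when $\alpha_1=\pi/2$.
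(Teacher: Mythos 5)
Your high-level picture (Poissonize; interior negligible; a half-space regime along the faces giving the $|\partial_1 A|$, $e^{-2\beta/3}$ term; a wedge regime along the sharpest edges giving the $\alpha_1^{1/3}|\partial_2 A|$, $e^{-\beta/3}$ term; vertices negligible; de-Poissonize at the end) matches the structure of the result, and the de-Poissonization step is exactly the paper's Lemma \ref{depolem}. But there is a genuine gap at the step where the limiting constants are actually produced. The quantity you compute in steps (3)--(4), the expected vacant volume $\E|V_t\cap(\hbox{face-tube})|$ or $\E|V_t\cap(\hbox{edge-tube})|$, does not determine $\Pr[V_t=\emptyset]$: the limit is $\exp(-\lambda)$ with $\lambda$ the limiting \emph{clump} intensity, and passing from vacant volume to clump count requires the mean clump size in the half-space and wedge geometries, which is precisely where the prefactors (the $c_{3,k}$-type constant, the $(k-1)!$ and $3^{1-k}$ factors, and even the $(3k-1)$ versus $(1+3k)/2$ iterated-logarithm centerings) come from. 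You propose to supply this by ``a Chen--Stein or moment argument as in Janson,'' but Janson's theorem (and Hall's) is stated for coverage of a compact set by a stationary Boolean model with untruncated grains; it does not apply to a vacancy process whose grains are balls intersected with a half-space or a dihedral wedge, with position-dependent grain volumes. So the crucial step of your plan rests on a result that is not available in the form you invoke it, and redoing the clump-size/second-moment analysis in a wedge is a substantial piece of work that the proposal neither carries out nor reduces to something known.

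The paper avoids this entirely by a dimension-reduction device you did not use: the traces of the balls $B(X_i,r_t)$ on a face form a $2$-dimensional Boolean model with i.i.d.\ random radii (slices of the ball), and their traces on an edge form a $1$-dimensional Boolean model; Janson's theorem with random radii (Lemma \ref{lemHall}) applies directly to these lower-dimensional coverage problems and yields the exact constants for covering the reduced faces (Lemmas \ref{lemhalf3a}, \ref{lemhalf3}, \ref{lemintface}) and the reduced edges (Lemma \ref{lemportions}), with independence across well-separated faces and edges. The three-dimensional neighbourhoods are then handled not by a clump-count limit but by first-moment ``lowest uncovered corner'' arguments via the Mecke formula and the wedge volume bound (Lemmas \ref{lemwedge}, \ref{lemrind}, \ref{corowedge}, \ref{lemEt2}, \ref{lemflaps}) together with Chernoff covering estimates (Lemma \ref{lem2meta}), which show that covering the skeleton and the interior forces coverage of all of $A$ with probability tending to one. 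If you want to salvage your route, you would either have to import this slicing reduction for the main terms, or genuinely extend the Hall--Janson clump analysis to half-spaces and wedges, including the decoupling of edge clumps from adjacent face clumps in the boundary case $\alpha_1=\pi/2$ that you yourself flag as problematic.
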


We now give a result in general $d$ for $\tR_{n,k}$,
and  for $R_{n,k}$ in the case with $\overline{B} \subset A^o$
(now we  no longer require $B=A$).
These cases are simpler
because boundary effects are avoided. In fact, the result stated below
has some overlap with  already known results; it is convenient to state it
here too for comparison  with the results just given, and because we
shall be using it to prove those results.

\begin{prop}
\label{Hallthm}
Suppose $A$ is compact with $ |A| > 0$
	\textcolor{\blue}{and $f = f_0 {\bf 1}_A$, where
	$f_0 := |A|^{-1}$,}
%	Denote the $1$-dimensional edges of $A$ by 
%	and $f(x)= f_0 := 1/|A|$ for all $x \in A$,
	and $B \subset A$ is Riemann measurable.
	Let $k \in \N$ and 
	$\beta \in \R$.
Then
\bea
\lim_{n \to \infty} \Pr[ n \theta_d f_0  \tR_{n,k}^d   - \log (n f_0)  -
(d+k-2) \lglg n \leq \beta] 
\nonumber \\
=
\lim_{t \to \infty} \Pr[ t \theta_d f_0 (\tR_{Z_t,k})^d   - \log (t f_0)  -
(d+k -2) \lglg t \leq \beta]
\nonumber \\
= \exp(-(c_d / (k-1)!) |B|e^{-\beta} ).
%\nonumber \\
\lbl{1228a}
\eea  
	If moreover $B$ is
closed 
	%and Riemann measurable, 
	with $B \subset A^o$, then
\bea
\lim_{n \to \infty} \Pr[ n \theta_d f_0  R_{n,k}^d   - \log (n f_0)  -
(d+k-2) \lglg n \leq \beta] 
\nonumber \\
=
\lim_{t \to \infty} \Pr[ t \theta_d f_0 (R'_{t,k})^d   - \log (t f_0)  -
(d+k -2) \lglg t \leq \beta]
\nonumber \\
= \exp(-(c_d / (k-1)!) |B|e^{-\beta} ).
%\nonumber \\
\lbl{0114a}
\lbl{0114b}
\eea  
\end{prop}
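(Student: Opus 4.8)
The plan is to reduce Proposition~\ref{Hallthm} to the classical coverage results of Hall~\cite{HallZW} and Janson~\cite{Janson}, which treat exactly the situation where one covers a fixed compact set $B$ strictly inside a larger region from which points are sampled, thereby avoiding boundary effects. The two displays \eqref{1228a} and \eqref{0114a} have the same right-hand side, so the main structural observation is that for large $n$ (or large $t$) the thresholds $\tR_{n,k}$ and $R_{n,k}$ coincide on the relevant scale when $\overline B\subset A^o$: once $r$ is of order $(\log n / n)^{1/d}$, every point of $B$ lies in $A^{(r)}$ (since $\dist(B,\partial A)>0$ is a fixed positive number, while $r\to 0$), so the constraint in \eqref{Rnkdef} and the constraint in \eqref{eqmaxspac} become identical. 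Hence it suffices to prove \eqref{1228a}, and then \eqref{0114a} follows on the event $\{\dist(B,\partial A)>r_n\}$, whose complementary probability is eventually zero.

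First I would set up the Poissonized statement. Writing $\lambda = t f_0$, the process $\Po_t$ restricted to a neighbourhood of $B$ is, up to the density being constant $f_0$ on $A\supset B$, a homogeneous Poisson process of intensity $t f_0$ on that neighbourhood; since $B$ is Riemann measurable with $\overline B\subset A^o$ we may fatten $B$ slightly to a compact $B'$ with $\overline B\subset (B')^o$ and $B'\subset A^o$, and apply the known vacancy/coverage asymptotics for covering $B$ by balls of radius $r$ with centres from a rate-$t f_0$ Poisson process on $B'$. Janson's theorem (and Hall's, for $k=1$) gives precisely that $\Pr[\,\Po_t(B(x,r_t))\ge k\ \forall x\in B\,]\to \exp(-(c_d/(k-1)!)\,|B|e^{-\beta})$ when $t\theta_d f_0 r_t^d - \log(tf_0) - (d+k-2)\lglg t \to \beta$; here the constant $c_d$ at \eqref{cdef} is exactly the geometric constant appearing in Janson's formula (the expected number of ``uncovered components'' per unit volume), and the exponent $(d+k-2)$ of $\lglg t$ is the standard one for $k$-coverage in dimension $d$. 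This yields the middle line of \eqref{1228a}.

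Next I would de-Poissonize to pass from $R'_{Z_t,k}$ to $R_{n,k}$. The standard route is a coupling/monotonicity argument: $\tR_{n,k}$ is nonincreasing in $n$, and $Z_t$ concentrates around $t$ with fluctuations $O(\sqrt{t\log t}) = o(t/\log t)$, which is negligible on the scale at which $\tR_{n,k}^d \sim (\log n)/(n\theta_d f_0)$. Concretely, for any $\eps>0$ one sandwiches $\tR_{n,k}$ between $\tR_{Z_{n(1-\eps)},k}$ and $\tR_{Z_{n(1+\eps)},k}$ with probability tending to one, applies the Poissonized limit to each side with $\beta$ shifted by $\mp\eps$-dependent amounts (absorbed since $\log(n(1\pm\eps)f_0) = \log(nf_0)+O(1)$ is \emph{not} negligible — so one must actually track this more carefully and instead use a finer de-Poissonization where the $\log$ term is matched exactly), and lets $\eps\downarrow 0$. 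A cleaner implementation, which I would use, is to show directly that $\Pr[R_{n,k}\le r_n]$ and $\Pr[R'_{t_n,k}\le r_n]$ differ by $o(1)$ when $t_n = n$, using that the difference in the two point processes is controlled by $|Z_n - n|$ together with a bound showing that adding or removing $o(n/\log n)$ points changes $\tR_{n,k}^d$ by $o((\log n)/n)$ — this last bound follows because the uncovered region, when it is non-empty, typically consists of $\Theta(1)$ small components, and deleting a point can create only a bounded-volume uncovered region near it.

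The step I expect to be the main obstacle is the de-Poissonization with the exact matching of the $\log(nf_0)$ versus $\log(tf_0)$ normalization: because the limit is a Gumbel-type law in which a shift of the centering constant by $O(1)$ changes the limit, one cannot simply run the crude $n(1\pm\eps)$ sandwich and let $\eps\to0$ at the end. One must either (i) invoke a sharp de-Poissonization lemma of the type in \cite{Penrose-book-or-similar} that transfers Poissonized extremal limits to binomial ones with the \emph{same} centering, exploiting that the extremal statistic depends on the Poisson process only through its restriction to a region of volume $o(1)$ near the eventual uncovered point, or (ii) prove the binomial statement directly by a second-moment / Chen--Stein argument, mirroring Janson's proof but with $n$ fixed points rather than a Poisson number. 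Option (i) is shorter and is the route I would take, taking care that the coupling error between $\X_n$ and $\Po_n$ affects the count in any fixed small ball only with probability $O(|Z_n-n|/n) = O(n^{-1/2}\sqrt{\log n})$, which is more than enough. Everything else — the reduction of $R_{n,k}$ to $\tR_{n,k}$ when $\overline B\subset A^o$, and the identification of the constants $c_d$ and the $\lglg$ exponent — is bookkeeping against the cited classical results.
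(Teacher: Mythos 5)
Your overall route---quote the classical Hall/Janson full-space coverage limit for the Poissonized problem, then transfer to the binomial process by a sharp de-Poissonization that keeps the centring terms $\log (nf_0)$ and $\lglg n$ exactly matched---is the same as the paper's: its Lemma \ref{lemHall} packages Janson's theorem, and its Lemma \ref{depolem} is precisely your option (i), implemented with $t(n)=n-n^{3/4}$ so that monotonicity disposes of one direction and a union bound over the $O(n^{3/4})$ extra points (each falling in the ball of radius $r_n$ around the uncovered witness point with probability $\theta_d f_0 r_n^d$) disposes of the other. Your de-Poissonization sketch is sound in substance; the auxiliary heuristic that ``adding or removing $o(n/\log n)$ points changes $\tR_{n,k}^d$ by $o((\log n)/n)$ because the vacant set has $\Theta(1)$ components'' is neither proved nor needed, and the witness-point computation you give under option (i) is the argument to keep.

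The genuine gap is in \eqref{1228a}: you prove it only under the extra hypothesis $\overline{B}\subset A^o$, which is the hypothesis of \eqref{0114a}, not of \eqref{1228a}. The first display is asserted for an arbitrary Riemann measurable $B\subset A$, in particular for $B=A$, which is exactly the case the paper needs later (interior coverage via \eqref{Fnequiv} in the proofs of Theorems \ref{thsmoothgen}, \ref{thmwksq} and \ref{thwkpol3}). When $B$ meets $\partial A$ you cannot ``fatten $B$ slightly to a compact $B'\subset A^o$'', and $\tR_{n,k}$ is then genuinely different from $R_{n,k}$: by \eqref{eqmaxspac} it only demands $k$-coverage of the $t$-dependent set $B\cap A^{(r_t)}$, so the event is not the coverage event of a fixed compact set by a full-space Boolean model, and your argument as written only re-proves \eqref{0114a}. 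The missing step is an inner approximation in the opposite direction from your fattening: choose a compact Riemann measurable $B'\subset A^o$ with $|A\setminus B'|<\eps$; since every $x\in A^{(r_t)}$ has $B(x,r_t)\subset A^o$, coverage of $B\cap B'$ by $\Po_t$ agrees (for large $t$) with coverage by the homogeneous process extended outside $A$, and Janson gives the limit $\exp\bigl(-(c_d/(k-1)!)\,|B\cap B'|\,e^{-\beta}\bigr)$; writing $S_{Z_t,k}$ for the threshold for $B\cap B'$, one has $\{\tR_{Z_t,k}\le r_t\}\subset\{S_{Z_t,k}\le r_t\}$ eventually, while $\Pr[\{S_{Z_t,k}\le r_t\}\setminus\{\tR_{Z_t,k}\le r_t\}]$ is bounded by the probability that $A\setminus B'$ fails to be $k$-covered, whose limit is small with $\eps$; letting $\eps\downarrow 0$ yields \eqref{1228a} in full generality. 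Without this step the statement with $B=A$, which carries the real content of \eqref{1228a}, is unproved.
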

The case $k=1$  of  the second equality of (\ref{0114a})
can be found in \cite{Calka}.
It provides a stronger asymptotic result
 than the one in \cite{Lan}.
A similar statement to the case $k=1$ of
(\ref{1228a})
can be found in \cite{Janson2}.

\begin{remk}
	{\rm	Let  $k \in \N$.
	The  definition  (\ref{Rnkdef}) of
	the coverage threshold $R_{n,k}$ suggests 
	  we think of the number and (random) locations of points
	as being given, and consider the smallest radius of balls
	around those
	points needed to cover $B$ $k$ times.
	%achieve $k$-fold coverage. 
	Alternatively, as in 
	\cite{Janson},  one may
	 think of the radius  of the
	balls as being given, and consider the smallest number of
	balls (with locations generated sequentially at random) needed to
	cover $B$ $k$ times.
%	achieve $k$-fold coverage.
	That is,
	%In other words,
	given $r>0$, define the random variable
	$$
	N(r,k) := \inf \{ n \in \N: \X_n (B(x,r)) \geq k ~~~ \forall x \in B\},
	$$
	and note that $N(r,k) \leq n$ if and only if $R_{n,k} \leq r$.
	In the setting of  Theorem \ref{thsmoothgen}, \ref{thmwksq} 
	or \ref{thwkpol3}, one may 
	obtain a limiting distribution
	for $N(r,k)$ (suitably scaled and centred)
	as $r \downarrow 0$ by using those
	results together with the following (we write $\tod$ for
	convergence in distribution):}
\end{remk}
	
	\begin{prop}
		\label{lemrewrite}
	Let $k \in \N$. 
		Suppose $Z$ is a random variable with a continuous cumulative
	distribution function, and $a, b, c, c' \in \R$ with $a >0, b >0$ are
		such that
	$
		a n R_{n,k}^d - b \log n -c \lglg n \textcolor{\blue}{- c'
		} \tod Z
	$
	as $n \to \infty$.
	Then as $r \downarrow 0$,
	$$
	a r^d  N(r,k) - b \log \left( (b/a) r^{-d} \right)
		- (c+ b)  \lglg (r^{-d}) \textcolor{\blue}{- c'} \tod Z.
	$$
	\end{prop}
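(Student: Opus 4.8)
The plan is to convert the known convergence statement for $R_{n,k}$ into one for $N(r,k)$ by exploiting the duality $N(r,k) \le n \iff R_{n,k} \le r$, which makes $N(r,k)$ essentially the generalized inverse of $n \mapsto R_{n,k}$. First I would rephrase the hypothesis: writing $\phi(n) := a n R_{n,k}^d - b\log n - c\lglg n - c'$, we are told $\phi(n) \tod Z$, with $Z$ having continuous distribution function $F$. Fix $z \in \R$ and a sequence $r = r(n) \downarrow 0$; I want to choose $n = n(r)$ so that the event $\{a r^d N(r,k) - b\log((b/a)r^{-d}) - (c+b)\lglg(r^{-d}) - c' \le z\}$ is sandwiched, up to $o(1)$ probability, between events of the form $\{N(r,k) \le n\} = \{R_{n,k} \le r\}$.

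The computational heart is the following asymptotic inversion. Suppose $a n R_{n,k}^d \approx b\log n + c\lglg n + c' + \zeta$ for a constant $\zeta$; solving for $n$ (to leading order $n \approx (b/(a r^d))\log(1/r^d)$, then iterating) gives $n \approx (b/(ar^d))\big(\log((b/a)r^{-d}) + (c/b)\lglg(r^{-d}) + \cdots\big)$, and substituting this $n$ back into $a r^d n$ produces exactly $b\log((b/a)r^{-d}) + (c+b)\lglg(r^{-d}) + c' + \zeta + o(1)$ — the extra $b\,\lglg(r^{-d})$ term arising from the $\log n \sim \log(r^{-d})$ factor multiplying the $\log\log$ correction. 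So I would set, for given $z$, $n^{\pm}(r)$ to be the integers nearest to the solution of $a r^d m = b\log((b/a)r^{-d}) + (c+b)\lglg(r^{-d}) + c' + z \pm \eps$ for small $\eps > 0$; a short computation shows $\phi(n^{\pm}(r)) = z \pm \eps + o(1)$ as $r \downarrow 0$, where here one uses that $a r^d$ times one unit of $n$ is $o(1)$, so rounding is harmless, and that $n^{\pm}(r) \to \infty$.

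With this in hand: on the one hand $\{a r^d N(r,k) - (\text{centring}) \le z\}$ implies $N(r,k) \le n^{+}(r)$ for $r$ small (since $a r^d \lfloor\cdot\rfloor$ differs from $a r^d(\cdot)$ by $o(1)$), hence $R_{n^{+}(r),k} \le r$, hence $\phi(n^{+}(r)) \le z + \eps + o(1)$, i.e. $\phi(n^{+}(r)) \in (-\infty, z+2\eps]$ eventually; taking $r \downarrow 0$ along $\N$-indexed values of $n^{+}$ and using $\phi(n) \tod Z$ gives $\limsup \Pr[\cdots \le z] \le F(z+2\eps)$. The reverse inequality is symmetric using $n^{-}(r)$: $\{R_{n^{-}(r),k} \le r\} = \{N(r,k) \le n^{-}(r)\}$ forces $a r^d N(r,k) - (\text{centring}) \le z$ up to $o(1)$, yielding $\liminf \Pr[\cdots \le z] \ge F(z - 2\eps)$. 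Letting $\eps \downarrow 0$ and invoking continuity of $F$ at $z$ gives convergence to $F(z)$, which is the claim. I expect the only real obstacle to be bookkeeping in the asymptotic inversion — making sure the iterated substitution genuinely produces the stated centring constants (in particular the shift $c \rightsquigarrow c + b$ and the appearance of $\log((b/a)r^{-d})$ rather than $\log(r^{-d})$) and that discretization of $n$ and the passage from the continuum variable $r \downarrow 0$ to sequences along which $\phi(n) \tod Z$ applies are handled cleanly; a monotonicity remark — $R_{n,k}$ is nonincreasing in $n$, and $N(r,k)$ nonincreasing in $r$ — is what licenses the sandwiching and the reduction to countably many values.
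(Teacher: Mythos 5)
Your proposal is correct and follows essentially the same route as the paper: invert via the duality $N(r,k)\le n \iff R_{n,k}\le r$, define $n(r)$ as (approximately) the solution of the centring equation, check that $\log n = \log(r^{-d})+\lglg(r^{-d})+O(1)$ so the centring shifts $c\rightsquigarrow c+b$ and $\lglg n = \lglg(r^{-d})+o(1)$, and finish with continuity of the limit distribution function. The only difference is cosmetic: the paper takes $n_m$ to be an exact floor and lets the threshold $t_m\to t$, so the event identity $\{a r_m^d N(r_m,k)-\mathrm{centring}\le t_m\}=\{R_{n_m,k}\le r_m\}$ holds exactly and no $\eps$-sandwich is needed, whereas you sandwich with $n^{\pm}(r)$ and send $\eps\downarrow 0$.
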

	\textcolor{\blue}{
	For example, using Theorem \ref{thsmoothgen}, and applying
	Proposition \ref{lemrewrite} with 
	$a = \theta_d f_0/2$, $b= (d-1)/d$,
	$c = d+ k -3 +1/d$, and $c' = ((d-1)/d) \log f_0$,
	we obtain the following:
	\begin{coro}
		\label{corosmooth}
Suppose that $d\geq 2$  and
$f=f_0{\bf 1}_A$, where $A \subset \R^d$ is compact, and
has $C^2$ boundary,
	and $f_0 := |A|^{-1}$. Assume $B=A$.
	Let $k \in \N$,
	$\zeta \in \R$.
Then
\bea
		\lim_{r \downarrow 0}
		\Pr \Big[ \left( \frac{ \theta_d 
		f_0 r^d N(r,k)}{2}
	\right)- 
		\left(\frac{d-1}{d}\right) \log 
		\left( \left(\frac{2 (d-1)}{d 
		\theta_d} \right) r^{-d} \right)
\nonumber \\
		- \left( d+k-2 \right)
		\lglg r^{-d} \leq \zeta \Big]
\nonumber \\
=
	\begin{cases}
		\exp \left( - |A| e^{-2 \zeta}
		- c_{2,1}  | \partial A| e^{- \zeta}  
		\right) 
		&{\rm if} ~ d=2, k =1 \\
		\exp \left( - c_{d,k}  | \partial A| e^{- \zeta}  
		\right) 
		&{\rm otherwise.}
	\end{cases}
		~~~~~~~~~~~~~~~
\label{eqmain30}
\eea
	\end{coro}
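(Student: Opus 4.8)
The plan is to deduce Corollary \ref{corosmooth} as a formal consequence of Theorem \ref{thsmoothgen} and Proposition \ref{lemrewrite}, as already indicated in the remarks preceding the statement.

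First I would rewrite the binomial part of Theorem \ref{thsmoothgen} in the normalized form required by Proposition \ref{lemrewrite}. Splitting $\log (n f_0) = \log n + \log f_0$ in the first of the limits in (\ref{eqmain3}) shows that, under the hypotheses of Corollary \ref{corosmooth},
\[
a\, n R_{n,k}^d - b \log n - c \lglg n - c' \tod Z \qquad (n \to \infty),
\]
where $a := \theta_d f_0/2$, $b := (d-1)/d$, $c := d+k-3+1/d$, $c' := ((d-1)/d)\log f_0$, and $Z$ is a random variable whose cumulative distribution function $F_Z$ is the right-hand side of (\ref{eqmain3}). One then checks the hypotheses of Proposition \ref{lemrewrite}: in either case $F_Z$ has the form $\zeta \mapsto \exp(-c_1 e^{-2\zeta} - c_2 e^{-\zeta})$ with $c_1 \geq 0$ and $c_2 > 0$, hence is continuous and strictly increasing on $\R$ with limits $0$ and $1$ at $-\infty$ and $+\infty$, so it is indeed the distribution function of a continuous random variable; moreover $a > 0$ since $|A| \in (0,\infty)$, and $b > 0$ since $d \geq 2$.

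Then Proposition \ref{lemrewrite} gives, as $r \downarrow 0$,
\[
a r^d N(r,k) - b \log\big((b/a) r^{-d}\big) - (c+b)\lglg(r^{-d}) - c' \tod Z,
\]
and it remains only to simplify the centering. The leading term is $\theta_d f_0 r^d N(r,k)/2$. Since $b/a = 2(d-1)/(d\theta_d f_0)$, combining the term $-b\log((b/a)r^{-d})$ with $-c'$ causes the two occurrences of $\log f_0$ to cancel, leaving $-((d-1)/d)\log\big((2(d-1)/(d\theta_d)) r^{-d}\big)$; and $c + b = d+k-3+1/d + (d-1)/d = d+k-2$. Substituting these identities into the last display and re-expressing it as a statement about $\Pr[\,\cdot \leq \zeta\,]$ yields exactly (\ref{eqmain30}).

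There is no substantive obstacle: the corollary is a mechanical consequence of the two cited results. The only points requiring care are the bookkeeping of the additive constant $c'$ — which is precisely what makes $f_0$ disappear from the logarithmic centering term — and the verification that $F_Z$ is continuous, so that Proposition \ref{lemrewrite} is applicable.
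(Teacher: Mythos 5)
Your argument matches the paper's intended derivation exactly: the paper states that Corollary \ref{corosmooth} follows by applying Proposition \ref{lemrewrite} to Theorem \ref{thsmoothgen} with precisely the constants $a = \theta_d f_0/2$, $b = (d-1)/d$, $c = d+k-3+1/d$, $c' = ((d-1)/d)\log f_0$ that you identify. Your additional checks (continuity of the limit distribution, cancellation of $\log f_0$, and the identity $c+b = d+k-2$) are correct and fill in the bookkeeping the paper leaves implicit.
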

	}

\section{Strong laws of large numbers}
\label{secLLN}
\allco
The results in this section  provide
  strong laws of large numbers (SLLNs) for $R_{n}$. For these
  results we relax the condition that $f$ be uniform on $A$.
We give strong laws for $R_n$ when $A=B$ and $A $ is either
smoothly bounded or a polytope.
Also for general $A$ we
give strong laws for $R_n$ when $\overline{B} \subset A^o$,
and for $\tR_n$ 
%when $ B=A$, 
for general $B$.

More generally, we consider $R_{n,k}$, allowing $k$ to vary with $n$.
	Throughout this section, assume we are given 
	a constant $\beta \in [0,\infty]$
	and a sequence $k: \N \to \N$ with
	%such that as $n \to \infty$ we have
	\bea
	%k(n)/\log n \to \beta;  ~~~~~ k(n)/n \to 0.
	\lim_{n \to \infty} \left( k(n)/\log n \right) = \beta;
	~~~~~ \lim_{n \to \infty} \left( k(n)/n \right) = 0.
	\label{kcond}
	\eea
We make use of the following notation throughout:
\bea
f_0 := \essinf_{x \in B} f(x); ~~~~~~~ f_1:= \inf_{x \in \partial A}
f(x);
\label{f0def}
\\
 H(t) := \begin{cases}  1-t + t \log t, ~~~ & {\rm if} ~ t >0 \\
	 1 ,  & {\rm if} ~ t =0.
 \end{cases}
	 \label{Hdef}
\eea
	Observe that $-H(\cdot)$ is unimodal with a maximum
value of $0$ at $t=1$. Given $a \in [0,\infty)$, we define
the
%increasing
function $\hH_a: [0,\infty) \to [a,\infty)$ by
\bean
y = \hH_a(x)  \Longleftrightarrow y H(a/y) =x,~ y \geq a,
\eean
with $\hH_0(0):=0$.
Note that 
$\hH_a(x)$  is  increasing in  $x$,
and that 
$\hH_0(x)=x$.

	Throughout this paper, the phrase `almost surely' or 
	`a.s.' means `except on a set of $\Pr$-measure zero'.
 We write $f|_A$ for the restriction of $f$ to $A$.
 \textcolor{\blue}{If $f_0=0,$ $ b>0$ we interpret $b/f_0$ as $+\infty$ in
 the following limiting statements, and likewise for $f_1$.}
%	Recall that $\mu$ denotes the distribution of $X_1$,
%that is, the probability measure on $\R^d$ with density $f$. 

\begin{theo}
\label{thm2}
Suppose that $d \geq 2$ and $A \subset \R^d$ is compact with
$C^2$ boundary,
 %that $f_0 >0$, 
	and that $f|_A$ is continuous at $x$ for all
$x \in \partial A$. Assume also that $B=A$ and 
	  (\ref{kcond}) holds.
Then as $n \to \infty$, almost surely
\bea
(n \theta_d R_{n,k(n)}^d)/k(n) \to \max \left( 1/f_0, 2 /f_1 \right)
	~~~~~{\rm if}~ \beta =\infty;
\label{th2eq1}
\\
(n \theta_d R_{n,k(n)}^d)/\log n \to 
	\max \left( \hH_\beta(1)/f_0, 2 \hH_{\beta}(1-1/d)/f_1  
\right),
	~~~~~{\rm if}~ \beta < \infty.
\label{th2eq}
\eea
In particular, if $k \in \N$ is a constant, then
as $n \to \infty$, almost surely 
\bea
	(n \theta_d R_{n,k}^d)/\log n \to 
	\max \left( 1/f_0,  (2-2/d)/f_1 \right).
\label{th2eq0}
\eea
\end{theo}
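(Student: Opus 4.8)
The plan is to bound $R_{n,k(n)}$ from above and from below separately, in each direction splitting $A$ into a ``deep'' part (points at distance more than $r_n$ from $\partial A$) and a thin layer near $\partial A$; the two terms appearing in the stated limits will come from these two parts. Observe first that since $\hH_0(x)=x$, \eqref{th2eq0} is the case $\beta=0$, $k(n)\equiv k$ of \eqref{th2eq}, so it suffices to prove \eqref{th2eq1} and \eqref{th2eq}. Put $a_n:=k(n)$ when $\beta=\infty$ and $a_n:=\log n$ when $\beta<\infty$, and let $L$ denote the conjectured limit in the relevant case; the target is $(n\theta_d R_{n,k(n)}^d)/a_n\to L$ almost surely. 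I would use repeatedly that a ball $B(x,r)$ lying in $A$ has $\mu$-measure $\geq f_0\theta_d r^d$ (recall $f_0=\essinf_A f$), while for $x\in\partial A$ the $C^2$ hypothesis together with compactness of $\partial A$ yields $|B(x,r)\cap A|=(\tfrac12+O(r))\theta_d r^d$ \emph{uniformly} in $x$, this quantity being monotone in $\dist(x,\partial A)$ for $x$ at distance at most $r$ from $\partial A$; combined with the uniform continuity of $f|_A$ at $\partial A$ (a consequence of pointwise continuity at every boundary point and compactness of $\partial A$), this gives that such a ball has $\mu$-measure at least $(\tfrac12 f_1-o(1))\theta_d r^d$ as $r\downarrow0$. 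The probabilistic input is the standard two-sided binomial large-deviation estimate (upper bound $\Pr[\Bin(n,p)<m]\leq e^{-np\,H(m/np)}$ for $m\leq np$, and a matching lower bound up to polynomial factors, with $\Pr[\Bin(n,p)<m]\geq c>0$ when $m\geq np$), together with the defining relations $\hH_\beta(1)\,H(\beta/\hH_\beta(1))=1$ and $\hH_\beta(1-\tfrac1d)\,H(\beta/\hH_\beta(1-\tfrac1d))=1-\tfrac1d$ and the strict monotonicity of $y\mapsto y\,H(a/y)$ on $(a,\infty)$. The degenerate cases $f_0=0$ or $f_1=0$, where $L=\infty$, follow from the same arguments with $L$ replaced by an arbitrarily large constant, so I assume $f_0,f_1>0$.

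\emph{Upper bound.} Fix $\eps>0$ and choose $r_n$ with $n\theta_d r_n^d=(1+\eps)L\,a_n$. Cover $A$ by $O(r_n^{-d})$ cells of diameter at most $\delta r_n$, where $\delta:=\eps/(2d)$; if every cell centre $x_c$ satisfies $\X_n(B(x_c,(1-\delta)r_n))\geq k(n)$ then, since $\|x-x_c\|\leq\delta r_n$ for $x$ in the cell, $R_{n,k(n)}\leq r_n$. For a centre at distance more than $r_n$ from $\partial A$, the ball $B(x_c,(1-\delta)r_n)$ lies in $A$, so its mean point count $n\mu(B(x_c,(1-\delta)r_n))$ is at least $(1-\delta)^d(1+\eps)\hH_\beta(1)a_n$, which exceeds $(1+\eps_1)\hH_\beta(1)a_n$ for some $\eps_1>0$ (because $(1-\delta)^d(1+\eps)>1$); by the binomial bound and strict monotonicity of $y\mapsto yH(a/y)$, such a centre fails with probability $O(n^{-1-\delta_1})$ for some $\delta_1>0$ (with $\exp(-\Theta(k(n)))$ in place of this when $\beta=\infty$). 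For a centre within distance $r_n$ of $\partial A$, the ball lies within $2r_n$ of a single boundary point, so the density there is at least $f_1-o(1)$ and the ball has $\mu$-measure at least $(\tfrac12-o(1))f_1\theta_d((1-\delta)r_n)^d$; its mean count is then at least $(1+\eps_2)\hH_\beta(1-\tfrac1d)a_n$, so it fails with probability $O(n^{-(1-1/d)-\delta_2})$. Since there are $O(n/a_n)$ deep cells and $O((n/a_n)^{(d-1)/d})$ near-boundary cells (the layer $\{\dist(\cdot,\partial A)\leq r_n\}$ has volume $\Theta(r_n)$, and $r_n^d=\Theta(a_n/n)$), a union bound gives $\Pr[R_{n,k(n)}>r_n]=O(n^{-\delta})$ for some $\delta>0$. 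This is summable along a subsequence $(n_j)$ with $n_j\to\infty$, $n_{j+1}/n_j\to1$ and $\sum_j n_j^{-\delta}<\infty$ (e.g.\ $n_j=\lceil e^{\sqrt j}\rceil$); Borel--Cantelli together with the monotonicity of $R_{n,k}$ in $n$ and $k$ (to interpolate over $[n_j,n_{j+1}]$, first replacing $k(n)$ by $\max_{m\leq n}k(m)$ when $\beta=0$ to cope with non-monotone $k$, while when $\beta=\infty$ the failure probabilities are already summable over all $n$, so no interpolation is needed) then gives $\limsup_n(n\theta_d R_{n,k(n)}^d)/a_n\leq(1+\eps)L$ a.s.; let $\eps\downarrow0$.

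\emph{Lower bound.} It suffices to show the $\liminf$ is at least each of the two terms of $L$; I describe the argument for the boundary term $b$ (that is, $2/f_1$ or $2\hH_\beta(1-\tfrac1d)/f_1$), the interior term being handled the same way with full balls in place of half-balls and $f_0$ in place of $f_1$. Fix $\eps>0$, choose $r_n$ with $n\theta_d r_n^d=(1-\eps)b\,a_n$, pick $z_0\in\partial A$ with $f(z_0)\leq f_1+\eta$ (for small $\eta>0$) and a neighbourhood $W$ of $z_0$ on which $f\leq f_1+2\eta$, and select $z_1,\dots,z_M\in\partial A\cap W$ pairwise at distance at least $3r_n$, with $M=\Theta(r_n^{1-d})=\Theta((n/a_n)^{(d-1)/d})$. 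Then the balls $B(z_i,r_n)$ are pairwise disjoint, each has $\mu$-measure at most $(f_1+2\eta)(\tfrac12+o(1))\theta_d r_n^d$, and $R_{n,k(n)}>r_n$ whenever $\X_n(B(z_i,r_n))<k(n)$ for some $i$. Since the counts $\X_n(B(z_1,r_n)),\dots,\X_n(B(z_M,r_n))$ are negatively associated (being a sub-vector of a multinomial count vector), $\Pr[R_{n,k(n)}\leq r_n]\leq\prod_{i=1}^M\Pr[\X_n(B(z_i,r_n))\geq k(n)]=(1-p_n)^M$, and for small $\eta$ and large $n$ the mean count $n\mu(B(z_i,r_n))$ is at most $(1-\delta')\hH_\beta(1-\tfrac1d)a_n<\hH_\beta(1-\tfrac1d)a_n$, so by the lower binomial estimate $p_n\geq\exp(-((1-\tfrac1d)-\rho)a_n)$ for some $\rho>0$; hence $Mp_n\to\infty$ (indeed $\geq n^\rho(\log n)^{-(d-1)/d}$ when $\beta<\infty$, and exponentially fast when $\beta=\infty$), so $\Pr[R_{n,k(n)}\leq r_n]\leq\exp(-Mp_n)$ is summable along a subsequence (over all $n$ when $\beta=\infty$). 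Borel--Cantelli plus monotonicity (interpolating as before, now using $\min_{m\geq n}k(m)$ where required) gives $\liminf_n(n\theta_d R_{n,k(n)}^d)/a_n\geq b$ a.s. For the interior term one packs $M=\Theta(r_n^{-d})$ disjoint balls $B(y_i,r_n)$ whose centres lie in a set of positive Lebesgue measure on which $\mu(B(\cdot,r_n))\leq(f_0+\eta)\theta_d r_n^d$, which exists because $f_0=\essinf_A f$. Combining the two bounds and letting $\eps\downarrow0$ completes the proof.

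The step I expect to be the main obstacle is the treatment of the boundary layer. On the one hand, one must establish the half-volume asymptotics $|B(x,r)\cap A|=(\tfrac12+O(r))\theta_d r^d$ uniformly over $x\in\partial A$, and control how the volume grows as $x$ moves into $A$, so that in the upper-bound union sum the near-boundary contribution is genuinely dominated by the cells sitting on $\partial A$. On the other hand, one must do the bookkeeping by which the $(d-1)$-dimensional count $\Theta(r_n^{1-d})$ of near-boundary cells, combined with the large-deviation rate, produces the constant $2\hH_\beta(1-\tfrac1d)/f_1$ rather than $2\hH_\beta(1)/f_1$ — the ``$-\tfrac1d$'' being precisely the exponent $(d-1)/d$ of that count, which lowers the effective exponential rate by the factor $1-\tfrac1d$. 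By comparison, the binomial tail estimates (with error terms uniform in the possibly unbounded parameter $k(n)$), the use of negative association, and the passage from a sparse subsequence back to all $n$ via monotonicity are routine, the last requiring only mild care because $k(n)$ may grow with $n$ and need not be monotone.
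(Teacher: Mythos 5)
Your proposal is correct and follows essentially the same plan as the paper's proof: split $A$ into an interior region and a thin boundary layer, obtain the upper bound by covering with $O(r_n^{-d})$ small cells and applying a Chernoff bound, obtain the lower bound by packing $\Theta(r_n^{-d})$ interior balls (respectively $\Theta(r_n^{1-d})$ boundary balls) and applying a matching lower tail estimate, and upgrade to almost-sure convergence via Borel--Cantelli along a subsequence using monotonicity of $R_{n,k}$ in $n$ and $k$. The paper packages the covering and packing steps into general Lemmas~\ref{lemmeta} and~\ref{gammalem} and phrases the interior/boundary split as the identity $R_{n,k}=\max(\tR_{n,k},R_{n,k,1})$, but the underlying argument is the same as yours.

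The one genuinely different technical choice is in the lower bound: to turn the probability that every packed ball is overfull into a product, you invoke negative association of the multinomial ball counts, whereas the paper Poissonizes, embedding $\X_n$ in $\Po_{n+n^{3/4}}$ so that counts on disjoint balls are exactly independent, and then transfers back via a Poisson tail bound (this is what makes Lemma~\ref{lemChern}(e), the Poisson lower deviation estimate, applicable directly). Both are standard; negative association avoids de-Poissonization at the cost of needing a binomial rather than Poisson lower large-deviation estimate, which you quote but which is a bit less off-the-shelf. Two small points worth tightening: the replacement of $k(\cdot)$ by a monotone majorant/minorant in the interpolation step is required for every finite $\beta$, not only $\beta=0$; and it is a little cleaner (as in the paper's Lemma~\ref{lemtrick}) to replace $k(n)$ by the deterministic $\lfloor(\beta\pm\eps)\log n\rfloor$ before running Borel--Cantelli, which sidesteps verifying that the running extremum still has the right $\log$-ratio asymptotics along the interpolating blocks.
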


We now consider the case where 
$A$ is a polytope. Assume the polytope $A$ is compact
and finite,
that is, has finitely many faces. 
Let $\Phi(A)$ denote the set of all faces of $A$ (of all dimensions).
Given a  face $\varphi \in \Phi(A)$, denote the
dimension of this face by  $D(\varphi)$.
Then $0 \leq D(\varphi) \leq d-1$, and $\varphi$ is a 
$D(\varphi)$-dimensional polytope embedded in $\R^d$.
 Set $f_\varphi := \inf_{x \in \varphi}f(x)$,
 let $\varphi^o$ denote the relative interior of $\varphi$,
and set $\partial \varphi := \varphi \setminus \varphi^o$.
Then there is a cone $\cK_\varphi$ in $\R^d$ such that every $x \in \varphi^o$
has a neighbourhood $U_x$ such that $A \cap U_x = (x+ \cK_\varphi) \cap U_x$.
Define the angular volume $\rho_\varphi$ of $\varphi$ 
to be the $d$-dimensional Lebesgue measure of
$\cK_\varphi \cap B(o,1)$. 

For example, if
$D(\varphi)=d-1$ then $\rho_\varphi = \theta_d/2$.
If $D(\varphi) = 0$ then $\varphi = \{v\}$ for some vertex $v \in  \partial A$,
and $\rho_\varphi$ equals 
\textcolor{\blue}{
the volume of $B(v,r) \cap A$, }
	divided by $r^d$,
for all sufficiently small $r$.
If $d=2$, $D(\varphi)=0$ and $\omega_\varphi$ denotes
the angle subtended by $A$ at the vertex $\varphi$, 
then $\rho_{\varphi} = \omega_\varphi/2$. If $d=3$ and
$D(\varphi)=1$, and  $\alpha_\varphi$ denotes
the angle subtended by $A$ at the edge $\varphi$ (which is either the angle
between the two boundary  planes of $A$ meeting at $\varphi$, or
$2 \pi$ minus this angle), then
$\rho_{\varphi} = %(4\pi/3)\omega_\varphi/(2\pi) = 
2 \alpha_\varphi/3$.

For $d \geq 4$, our result for $A$ a polytope includes 
a condition that the polytope be convex; we conjecture
that this condition is not needed. We include connectivity
in the definition of a polytope, so for $d=1$ 
a polytope is defined to be an interval.

\begin{theo}
	\label{thmpolytope}
	Suppose $A$ is a compact finite polytope in $\R^d$. 
	If $d \geq 4$, assume  moreover that $A$ is convex.
	Assume that
	$f|_A$ is continuous at $x $ for all $x \in \partial A$, and
	set $B=A$. Assume $k(\cdot)$ satisfies (\ref{kcond}).
	Then,
	almost surely,
\bea
\lim_{n \to \infty} nR_{n,k(n)}^d/ k( n) & = & \max
	\left( \frac{1}{f_0 \theta_d}, \max_{\varphi \in \Phi(A)} \left(
	\frac{1}{f_\varphi \rho_\varphi }   
\right) \right), 
~~~~~
~~
%~~~~~ ~~~~~
	{\rm if} ~\beta = \infty;
	\label{0717a}
\\
\lim_{n \to \infty} nR_{n,k(n)}^d/ \log n & = & \max 
	\left( \frac{ \hH_\beta(1) }{f_0 \theta_d} ,
	\max_{\varphi \in \Phi(A)} \left( \frac{
		\hH_\beta( D(\varphi)/d)  }{ 
f_\varphi \rho_\varphi}  
	\right) \right), ~~~{\rm if} ~\beta < \infty.
	\nonumber \\
	\label{0717b}
\eea
\end{theo}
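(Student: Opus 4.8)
The plan is to prove Theorem \ref{thmpolytope} by reducing the global coverage problem to a collection of local coverage problems near each face $\varphi \in \Phi(A)$, plus one interior contribution, and then applying (or re-deriving versions of) the scaling analysis behind Proposition \ref{Hallthm} in each local regime. First I would establish the \emph{lower bound} for $R_{n,k(n)}$: for each face $\varphi$ pick a point $x_0 \in \varphi^o$ at which $f$ is close to $f_\varphi$, and a small ball $B(x_0,\delta)$ on which $A$ looks like the cone $x_0 + \cK_\varphi$ and $f \leq f_\varphi + \eps$. The number of points of $\X_n$ landing in $B(x, r) \cap A$ for $x$ near $x_0$ is approximately Binomial with mean $n (f_\varphi+\eps)\rho_\varphi r^d$; a first-moment / Borel--Cantelli argument over a suitably fine net of candidate centres $x$ shows that if $n\rho_\varphi f_\varphi r^d$ is below $k(n)$ (when $\beta=\infty$) or below $\log n \cdot \hH_\beta(D(\varphi)/d)$ (when $\beta<\infty$), then with probability one eventually some such $x$ has fewer than $k(n)$ points nearby, forcing $R_{n,k(n)}$ to exceed $r$. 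The exponent $\hH_\beta(D(\varphi)/d)$ arises because along a $D(\varphi)$-dimensional face the ``most deprived'' location is free to move in $D(\varphi)$ directions, producing a union bound over $\Theta(r^{-D(\varphi)})$ essentially independent cells, while the Poisson/Binomial lower-tail probability of seeing fewer than $k(n)$ points is $\exp(-n\rho_\varphi f_\varphi r^d\, H(k(n)/(n\rho_\varphi f_\varphi r^d)) + o(\cdot))$; balancing these via the definition $yH(a/y)=x$ gives $\hH$. The interior term $1/(f_0\theta_d)$ is handled identically with $D=d$, i.e.\ via $\hH_\beta(1)$, and is exactly the content of Proposition \ref{Hallthm} (suitably strengthened to an SLLN and to varying $k$).

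Next I would prove the matching \emph{upper bound}: show that with the stated normalisation, for any $\eps>0$, almost surely $A$ is eventually $k(n)$-covered by balls of radius $r = r_n$ with $n\theta_d R_{n,k(n)}^d/\log n$ (or $/k(n)$) just above the claimed maximum. Partition $A$ into the interior part $A^{(\delta)}$ and a boundary collar; cover $A^{(\delta)}$ using the interior estimate. Near each face $\varphi$, cover a neighbourhood of $\varphi^o$ using the cone geometry, and then handle the lower-dimensional skeleton $\partial\varphi$ by induction on dimension --- a point near a $j$-face that is also near a $(j-1)$-subface sees even more of $A$ around it in some directions but the relevant angular volume is that of the lower face, so the worst case over the closed face $\varphi$ is attained in $\varphi^o$ and dominates; taking the maximum over all $\varphi$ and the interior yields the result. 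The key technical inputs are: (i) a uniform (in the centre) large-deviation estimate for $\Po_t(B(x,r)\cap A)$ or $\X_n(B(x,r)\cap A)$, (ii) a chaining/net argument to pass from finitely many test centres to all of $B$, and (iii) de-Poissonization together with a subsequence-plus-monotonicity argument (since $R_{n,k(n)}$ is not monotone in $n$ when $k(n)$ grows, one interpolates along a geometric subsequence $n_j = \lceil (1+\eps)^j\rceil$ and uses that $R_{n,k}$ changes slowly, controlling the fluctuation of $k(n)$ between consecutive subsequence terms using \eqref{kcond}).

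The main obstacle I anticipate is twofold. The first difficulty is the geometry of the skeleton: near a vertex or lower-dimensional face, the set $A\cap B(x,r)$ is genuinely the cone $\cK_\varphi$ only in the limit, and near $\partial\varphi$ several cones interact; one must argue carefully that the worst-case deprived location on $\overline{\varphi}$ occurs in the relative interior and equals the $\hH_\beta(D(\varphi)/d)$ value, and that contributions from where two faces meet are not worse. This is exactly why the convexity hypothesis is imposed for $d\geq 4$: for nonconvex polytopes in high dimension, a reflex face can in principle produce a ``pocket'' whose effective angular volume near the skeleton is smaller than any $\rho_\varphi$ in $\Phi(A)$, which the current method cannot rule out --- hence the conjecture. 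The second difficulty is the varying-$k$ regime and the uniformity of the large-deviation bounds: when $\beta=\infty$, $k(n)$ grows like (or faster than) $\log n$ and the relevant tail probability is $\exp(-\Theta(k(n)))$, so the union-bound savings from the $r^{-D(\varphi)}$ cells become negligible and only the mean matters, explaining why the $\beta=\infty$ formula \eqref{0717a} contains no $\hH$ and reduces to $\max_\varphi 1/(f_\varphi\rho_\varphi)$; getting the transition $\beta<\infty \to \beta=\infty$ to be continuous (i.e.\ $\hH_\beta(s)/\log n \cdot \log n \sim \hH_\beta(s)$ behaves correctly as $\beta\to\infty$, using $\hH_\beta(s)\sim \beta$) requires care but is routine once the finite-$\beta$ case is in hand. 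I would present the finite-$\beta$ case in full and then obtain $\beta=\infty$ by a monotone-limit comparison (bounding $k(n)$ above and below by constant multiples of $\log n$ and letting the multiple $\to\infty$).
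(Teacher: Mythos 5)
Your high-level architecture — decompose the coverage event into contributions indexed by the faces of $\Phi(A)$ plus the interior, balance a packing/covering count of order $r^{-D(\varphi)}$ against a Chernoff-type lower-tail estimate $\exp(-n\rho_\varphi f_\varphi r^d\, H(\cdot))$, derive $\hH_\beta$ from $yH(\beta/y)=x$, and upgrade to an SLLN with a subsequence trick — matches the skeleton of the paper's argument (Lemmas \ref{lemChern}, \ref{lemtrick}, \ref{gammalem}, \ref{lemmeta}, \ref{lemtopelb}, \ref{lemedge2}), and your lower-bound sketch is essentially Lemma \ref{lemtopelb}.

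The genuine gap is in the upper bound near the skeleton. You propose ``induction on dimension'' with the heuristic that ``a point near a $j$-face that is also near a $(j-1)$-subface sees even more of $A$ around it,'' but this is in general false: for convex $A$, the angular volume shrinks as you approach lower-dimensional faces (e.g.\ a vertex of a cube sees $1/8$ of a ball, an edge sees $1/4$, a facet sees $1/2$). So one cannot simply assert the worst case is attained in $\varphi^o$; one must \emph{assign} each potentially uncovered location $x$ to the correct lowest-dimensional face it is close to, and quantify ``close'' so that $B(x,r)\cap A$ really has volume $\geq (1-\delta)\rho_\varphi r^d$ for the assigned $\varphi$. That quantification requires a uniform lower bound on $\dist(x,\varphi')/\dist(x,\partial\varphi)$ over all faces $\varphi'$ not containing $\varphi$ — the constant $K(\varphi,\varphi')$ of Lemma \ref{lemangle} in the paper — which then feeds into the nested annular regions $\varphi_{K_j r}\setminus(\partial\varphi)_{K_{j+1}r}$ and the event decomposition $\{R_{n,k,1}>r\}\subset\bigcup_\varphi G_{n,k,r,\varphi}$ (Lemma \ref{Flem}). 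Without something playing the role of this finiteness lemma, the ``interaction near $\partial\varphi$'' you flag as a difficulty remains unresolved, and the induction has no base for its geometric estimates. Relatedly, this geometric lemma is exactly where convexity is used for $d\geq 4$: the paper conjectures the \emph{statement} holds for nonconvex polytopes, and convexity is imposed only because proving $K(\varphi,\varphi')<\infty$ for nonconvex polytopes in high dimension is not carried out — it is not (as you suggest) that a reflex ``pocket'' might make the answer wrong. Finally, the paper handles $\beta=\infty$ directly inside Lemmas \ref{gammalem}/\ref{lemmeta} (using $\Pr[\mathrm{Bin}\geq k]\leq e^{-k}$ type bounds when $k\gg$ mean) rather than by a monotone limit from finite $\beta$; your proposed limiting argument would require some additional uniformity in $\beta$ that you haven't established.
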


In the next three results,
we spell out some special cases of Theorem \ref{thmpolytope}.
\begin{coro}
\label{thpolygon}
Suppose that $d=2$,  $A$ is polygonal with
 $B=A $, and  $f|_A$ is continuous at $x$ for all $x \in 
	\partial A$. Let $V$ denote the set of vertices of $A$, and
	for $v \in V$ let $\omega_v$ denote the angle
	subtended by $A$ at vertex $v$.
	Assume (\ref{kcond}) with $\beta < \infty$.
Then, almost surely,
\bea
\lim_{n \to \infty} \left( \frac{ n  R_{n,k(n)}^2}{\log n}\right) =
	\max \left( \frac{\hH_\beta(1)}{\pi f_0}, 
	\frac{2 \hH_\beta(1/2)}{\pi f_1},
	\max_{v \in V}  \left( 
\frac{2  \beta }{  \omega_{v} f(v) } \right) 
	\right) .
	\label{polystrong2}
\eea
In particular, for any constant $k \in \N$,
$
\lim_{n \to \infty} \left( \frac{ n \pi R_{n,k}^2}{\log n}\right) =
 \max \left( \frac{1}{f_0}, \frac{1}{ f_1} \right) .
 $
\end{coro}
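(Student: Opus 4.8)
The plan is to derive Corollary~\ref{thpolygon} directly from Theorem~\ref{thmpolytope} by specializing to $d=2$. Since $2<4$, the convexity hypothesis appearing in Theorem~\ref{thmpolytope} is vacuous here, and the remaining hypotheses of Corollary~\ref{thpolygon} ($A$ compact polygonal, $B=A$, $f|_A$ continuous at every point of $\partial A$, and (\ref{kcond}) with $\beta<\infty$) are exactly those of Theorem~\ref{thmpolytope} for $d=2$. Thus it suffices to evaluate the right-hand side of (\ref{0717b}) in this setting.

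With $d=2$ we have $\theta_2=\pi$, so the interior term is $\hH_\beta(1)/(f_0\theta_2)=\hH_\beta(1)/(\pi f_0)$. The faces $\varphi\in\Phi(A)$ are exactly the (closed) edges, with $D(\varphi)=1$, and the vertices, with $D(\varphi)=0$ (the set $A$ itself is not a face, since $D(\varphi)\le d-1$). For an edge $\varphi$ we have $D(\varphi)=d-1$, hence $\rho_\varphi=\theta_2/2=\pi/2$, and its contribution to the maximum is
\[
\frac{\hH_\beta(D(\varphi)/d)}{f_\varphi\rho_\varphi}=\frac{\hH_\beta(1/2)}{f_\varphi\,(\pi/2)}=\frac{2\,\hH_\beta(1/2)}{\pi f_\varphi}.
\]
Because $\partial A$ is the union of the closed edges of $A$, we have $\min_{\varphi \text{ an edge}}f_\varphi=\inf_{x\in\partial A}f(x)=f_1$, so the maximum of these contributions over edges equals $2\hH_\beta(1/2)/(\pi f_1)$. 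For a vertex $v$ (a face with $D(\varphi)=0$) one has $\hH_\beta(0)=\beta$: indeed $\hH_\beta(0)$ is the $y\ge\beta$ solving $yH(\beta/y)=0$, and since $-H(\cdot)$ is unimodal with its unique zero at $t=1$, this forces $\beta/y=1$; combined with $\rho_v=\omega_v/2$ (recorded before Theorem~\ref{thmpolytope} for $d=2$, $D(\varphi)=0$) the vertex contribution is $\hH_\beta(0)/(f(v)\rho_v)=2\beta/(\omega_v f(v))$. Substituting these three evaluations into (\ref{0717b}) yields (\ref{polystrong2}).

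For the final assertion, let $k(\cdot)$ be a constant $k\in\N$; then $\beta=\lim_n k/\log n=0$, and since $\hH_0(x)=x$ we get $\hH_0(1)=1$ and $\hH_0(1/2)=1/2$, while each vertex term $2\beta/(\omega_v f(v))$ vanishes. Hence (\ref{polystrong2}) collapses to $\lim_n n\pi R_{n,k}^2/\log n=\max(1/f_0,\,1/f_1)$ almost surely. Since the whole argument amounts to inserting the values $\theta_2=\pi$, $\rho_\varphi\in\{\pi/2,\ \omega_v/2\}$, and $\hH_\beta(0)=\beta$ into Theorem~\ref{thmpolytope}, there is no substantive obstacle; the only points requiring a moment's care are the evaluation $\hH_\beta(0)=\beta$ via unimodality of $-H$, and the identification of $\min$ over edges of $f_\varphi$ with $f_1$, which relies on $\partial A$ being the union of the closed edges.
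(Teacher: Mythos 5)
Your proposal is correct and is essentially the paper's own derivation: Corollary \ref{thpolygon} is stated there as a special case of Theorem \ref{thmpolytope}, obtained exactly as you do by inserting $\theta_2=\pi$, $\rho_\varphi=\pi/2$ for edges, $\rho_v=\omega_v/2$ for vertices, and $\hH_\beta(0)=\beta$, with the edge minimum identified with $f_1$. Your added care over $\hH_\beta(0)=\beta$ and over $\min_{\varphi}f_\varphi=f_1$ via the closed-edge decomposition of $\partial A$ matches the intended reading, so there is nothing to correct.
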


\begin{coro}
\label{thpoly}
	Suppose $d=3$ (so $\theta_d = 4\pi/3$),  $A$ is polyhedral with
 $B=A $, and $f|_A$ is continuous at $x$ for all $x \in 
\partial A$. 
	Let $V$ denote the set of vertices of $A$, and
	 $E$  the set of edges  of $A$. For $e\in E$,
let $\alpha_e$ denote the angle subtended by $A$ at edge $e$,
and  $f_e$  the infimum of
$f $ over $e$.  
	For $v \in V$ let $\rho_v$ denote the
	angular volume of vertex $v$.
	Suppose (\ref{kcond}) holds 
with $\beta <  \infty$. Then, almost surely,
\bean
\lim_{n \to \infty} \left( \frac{ n  R_{n,k(n)}^3}{\log n}\right) =
	\max \left( \frac{\hH_\beta(1)}{\theta_3 f_0}, 
	\frac{2 \hH_\beta(2/3)}{\theta_3 f_1},
	\frac{3 \hH_\beta(1/3) }{ 2 \min_{e \in E} (\alpha_e f_e ) } 
	, 
\max_{v \in V}  \left( 
\frac{\beta }{ \rho_{v} f(v) } \right) 
 \right) .
\eean
In particular, if $\beta =0$ the above limit comes to
 $\max \left( \frac{3}{ 4 \pi f_0}, \frac{1}{\pi f_1},
\max_{e \in E}  \left( 
\frac{1 }{2 \alpha_e f_e } \right)
 \right) $.
\end{coro}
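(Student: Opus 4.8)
\textbf{Proof proposal for Corollary \ref{thpoly}.}
The plan is to obtain Corollary \ref{thpoly} as a direct specialisation of Theorem \ref{thmpolytope} to $d=3$, followed by an identification of the faces of $A$, their angular volumes, and the relevant values of $\hH_\beta$. Since $d = 3 < 4$, Theorem \ref{thmpolytope} applies with no convexity hypothesis on $A$, and since $\beta < \infty$ the conclusion we need is (\ref{0717b}): almost surely, $\lim_{n \to \infty} n R_{n,k(n)}^3/\log n = \max\big(\hH_\beta(1)/(f_0 \theta_3),\, \max_{\varphi \in \Phi(A)} \hH_\beta(D(\varphi)/3)/(f_\varphi \rho_\varphi)\big)$. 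The whole task is then to rewrite the right-hand side in the stated form.

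First I would split the maximum over $\Phi(A)$ according to the dimension $D(\varphi) \in \{0,1,2\}$. For a $2$-dimensional face we have $\rho_\varphi = \theta_3/2$ (as recorded just before Theorem \ref{thmpolytope}) and $D(\varphi)/3 = 2/3$; since the finitely many closed $2$-faces cover $\partial A$, the maximum of $\hH_\beta(2/3)/(f_\varphi\, \theta_3/2)$ over these faces equals $2\hH_\beta(2/3)/(\theta_3 \min_{D(\varphi)=2} f_\varphi) = 2\hH_\beta(2/3)/(\theta_3 f_1)$ by the definition (\ref{f0def}) of $f_1$. For an edge $e$ we have $\rho_e = 2\alpha_e/3$ and $D(e)/3 = 1/3$, so the maximum over edges is $3\hH_\beta(1/3)/(2 \min_{e \in E}(\alpha_e f_e))$. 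For a vertex $v$ the face is $\{v\}$, so $f_\varphi = f(v)$, $\rho_\varphi = \rho_v$, and $D(\varphi)/3 = 0$. Assembling these three contributions together with the interior term $\hH_\beta(1)/(f_0\theta_3)$ reproduces the displayed formula.

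The only non-mechanical step is recognising that $\hH_\beta(0) = \beta$, which is what yields the vertex term $\beta/(\rho_v f(v))$. By the definition of $\hH_a$, the quantity $\hH_\beta(0)$ is the value $y \geq \beta$ solving $y H(\beta/y) = 0$; since $-H$ is unimodal with maximum value $0$ attained only at argument $1$, any positive such $y$ must satisfy $\beta/y = 1$, hence $y = \beta$ (and $\hH_0(0) = 0 = \beta$ by the stated convention). For the ``in particular'' clause with $\beta = 0$ I would then use $\hH_0(x) = x$ to get $\hH_0(1) = 1$, $\hH_0(2/3) = 2/3$, $\hH_0(1/3) = 1/3$, observe that the vertex term vanishes, and substitute $\theta_3 = 4\pi/3$ to reduce the three surviving terms to $3/(4\pi f_0)$, $1/(\pi f_1)$, and $\max_{e \in E} 1/(2\alpha_e f_e)$. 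I do not anticipate a genuine obstacle: the result is a pure corollary of Theorem \ref{thmpolytope}, and the ``main obstacle'' amounts only to keeping the substitutions straight — in particular the evaluations of $\hH_\beta$ at $0$, $1/3$, $2/3$ and $1$ and the identification of $\min_{D(\varphi)=2} f_\varphi$ with $f_1$.
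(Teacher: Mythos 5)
Your proposal is correct and coincides with the paper's own treatment: Corollary \ref{thpoly} is stated there purely as a spelled-out special case of Theorem \ref{thmpolytope} (equation (\ref{0717b}) with $d=3$), using exactly the identifications you make — $\rho_\varphi=\theta_3/2$ for $2$-faces, $\rho_e=2\alpha_e/3$ for edges, $\min_{D(\varphi)=2}f_\varphi=f_1$, and $\hH_\beta(0)=\beta$ for the vertex term. Your evaluation of $\hH_\beta(0)$ and the $\beta=0$ simplification via $\hH_0(x)=x$ are both sound, so nothing further is needed.
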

\begin{coro}
\label{thmbox}
Suppose $A=B = [0,1]^d$, and $f|_A$ is continuous at $x$ for all $x \in 
\partial A$. For $1 \leq j \leq d$ let $\partial_j$ denote
the union of all $(d-j)$-dimensional faces of $A$, 
	and let $f_j$ denote the infimum of
$f $ over $\partial_j$. Assume 
	(\ref{kcond})  with $\beta < \infty$. 
  Then
\bea
\lim_{n \to \infty} \left( \frac{ n  R_{n,k(n)}^d}{\log n}\right) =
	\max_{0 \leq j \leq d} \left( \frac{2^j \hH_\beta(1-j/d)  }{ \theta_d f_j } \right),
	~~~~~ a.s.
\label{0505b}
\eea
\end{coro}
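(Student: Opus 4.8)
The plan is to obtain Corollary \ref{thmbox} directly from Theorem \ref{thmpolytope}. The cube $A=B=[0,1]^d$ is a compact finite polytope, and it is convex, so Theorem \ref{thmpolytope} applies for every $d$; since we assume $\beta<\infty$ in (\ref{kcond}), the relevant formula is (\ref{0717b}). Thus it suffices to identify, for each face $\varphi \in \Phi([0,1]^d)$, the dimension $D(\varphi)$, the angular volume $\rho_\varphi$, and the value $f_\varphi$, and then rewrite the right-hand side of (\ref{0717b}).

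The geometry is elementary. For each $j \in \{1,\dots,d\}$, the $(d-j)$-dimensional faces of $[0,1]^d$ are precisely the sets obtained by fixing some $j$ of the coordinates in $\{0,1\}$ and letting the remaining $d-j$ coordinates range over $[0,1]$; their union is $\partial_j$. For such a face $\varphi$ one has $D(\varphi)=d-j$ and $D(\varphi)/d = 1-j/d$, and the associated local cone $\cK_\varphi$ is, up to a rigid motion, $\R^{d-j}\times[0,\infty)^{j}$. Since $[0,\infty)^{j}$ occupies a fraction $2^{-j}$ of $\R^{j}$, we get $\rho_\varphi = |\cK_\varphi\cap B(o,1)| = 2^{-j}\theta_d$, consistent with the value $\theta_d/2$ recorded earlier for $j=1$ and with $2^{-d}\theta_d$ at a vertex. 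Finally $f_\varphi=\inf_\varphi f$ depends on the chosen face, and $\min\{f_\varphi: D(\varphi)=d-j\} = \inf_{\partial_j} f = f_j$, because $\partial_j$ is the (finite) union of these faces.

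It remains to substitute these values into (\ref{0717b}). Since $\hH_\beta(\cdot)\ge 0$ and is increasing, for each fixed $j\in\{1,\dots,d\}$ maximizing $\hH_\beta(D(\varphi)/d)/(f_\varphi\rho_\varphi)$ over faces $\varphi$ of dimension $d-j$ amounts to minimizing $f_\varphi$, giving
\[
\max_{\varphi:D(\varphi)=d-j}\frac{\hH_\beta(D(\varphi)/d)}{f_\varphi\,\rho_\varphi}
= \frac{\hH_\beta(1-j/d)}{2^{-j}\theta_d}\cdot\frac{1}{f_j}
= \frac{2^{j}\,\hH_\beta(1-j/d)}{\theta_d\,f_j},
\]
using the conventions of the paper for the degenerate cases (where both sides are $0$, or both are $+\infty$). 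Taking the maximum over $j\in\{1,\dots,d\}$ and combining with the interior term $\hH_\beta(1)/(f_0\theta_d)$ appearing in (\ref{0717b}) --- which is exactly the $j=0$ summand $2^{0}\hH_\beta(1-0/d)/(\theta_d f_0)$, with $f_0=\essinf_{x\in[0,1]^d}f$ as in (\ref{f0def}) --- yields $\max_{0\le j\le d} 2^{j}\hH_\beta(1-j/d)/(\theta_d f_j)$, which is (\ref{0505b}). There is no substantial obstacle: all the probabilistic content is already contained in Theorem \ref{thmpolytope}, and the only points requiring care are the one-line volume computation $\rho_\varphi = 2^{-j}\theta_d$ for a codimension-$j$ face of the cube, the monotonicity and nonnegativity of $\hH_\beta$ used to collapse each per-dimension maximum to $f_j$, and the identification of the $j=0$ term with the interior term of (\ref{0717b}).
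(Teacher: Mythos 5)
Your proposal is correct and is exactly the route the paper intends: Corollary \ref{thmbox} is stated as a special case of Theorem \ref{thmpolytope}, obtained by noting that a codimension-$j$ face of the cube has $D(\varphi)=d-j$ and local cone congruent to $\R^{d-j}\times[0,\infty)^j$, hence $\rho_\varphi=2^{-j}\theta_d$, and then collapsing the maximum over faces of each codimension to $f_j$ and identifying the $j=0$ term with the interior term $\hH_\beta(1)/(\theta_d f_0)$ of (\ref{0717b}). No gaps.
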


It is perhaps worth spelling out what the preceding results mean in the
special case where
$\beta =0$ (for example, if $k(n)$ is a constant)  and also
$\mu$ is the uniform distribution on
$A$ (i.e. $f(x) \equiv f_0$  on $A$).
In this case,
the right hand
side of (\ref{th2eq0}) comes to $(2-2/d)/f_0$,
%provided $d \geq 2$,
while the right hand
side of (\ref{0717b}) comes to 
$f_0^{-1}\max(1/ \theta_d,
%\max_{\varphi \in \Phi(A)} D(\varphi)/(d \rho_\varphi))$.
\max_{\varphi \in \Phi(A)} \frac{D(\varphi)}{(d \rho_\varphi)})$.
The limit in (\ref{polystrong2}) comes to $1/(\pi f_0)$,
while the limit in Corollary \ref{thpoly} comes to 
\linebreak
%$\max( (4/3), \max_i (2 \pi/(3 \alpha_i))/f_0$,  
$f_0^{-1} \max[ 1/\pi, \max_e (1 /(2 \alpha_e))]$,  
and
the right hand side of (\ref{0505b}) comes to $2^{d-1}/(\theta_d d)$.

\begin{remk}
	{\rm 
The notion of coverage threshold is analogous to that of
{\em connectivity threshold} in the theory of random geometric
graphs \cite{RGG}. 
Our  results show that the threshold
for full coverage by the balls $B(X_i,r)$, is asymptotically twice
the threshold for the union of these balls to be connected, if
$A^o$ is connected,
at least when $A$ has a smooth boundary or $A =[0,1]^d$. 
	This can be seen from comparison of Theorem \ref{thm2} above
	with  \cite[Theorem 13.7]{RGG}, and comparison of
	Corollary \ref{thmbox} above with \cite[Theorem 13.2]{RGG}.
	More general polytopes were not considered in \cite{RGG}.}
\end{remk}

\begin{remk}
	{\rm
	We compare our results
	with \cite{Cuevas}.
	In the setting
	of our Theorem \ref{thm2}, 
	\cite[Theorem 3]{Cuevas} and \cite[Remark 1]{Cuevas} give an
	upper bound on 
	$\limsup_{n \to \infty}(n \theta_d R_n^d/\log n)$ of
	$\max (f_0^{-1}, 2f_1^{-1})$ in probability or 
	$\max (2f^{-1}_0, 4f_1^{-1})$ almost surely. In the
	setting of our Theorem \ref{thmpolytope}, they give
	an upper bound of $f_0^{-1} \vee
	\max_{\varphi} (\theta_d/(f_\varphi \rho_\varphi))$ in
	probability, or twice this almost surely.
	Our (\ref{th2eq0}) and (\ref{0717b}) improve significantly
	on those results.
	}
\end{remk}

\begin{remk}
\textcolor{\blue}{
	{\rm 
	In Theorem \ref{thmpolytope} we do not consider the case 
	where $A$ is a non-convex polytope in dimension $d \geq 4$.
	Even the definition of non-convex polytope when $d \geq 4$
	seems not to be universally agreed on, and to generalize
	the proof of Lemma \ref{lemangle} to this case would seem to
	require considerably more work.}
}
\end{remk}

Our final result is a law of large numbers for
$\tR_{n,k}$, no longer requiring $B=A$. In the case
where $B$ is contained in the interior of $A$, 
 this easily yields a law of large numbers for
the $k$-coverage threshold $R_{n,k}$.  
\textcolor{\blue}{Recall from Section \ref{secdefs} that
$\mu$ denotes the probability measure on $A$
with density $f$.}

\begin{prop}
\label{thm1}
 Suppose that either (i) 
	$B$ is compact and Riemann measurable with $\mu(B) >0$,
	and $B \subset A^o$,
	and $f$ is continuous on $A$; or
 (ii) $B=A$.
	Assume (\ref{kcond}) holds.
	Then, almost surely,
\bea
\lim_{n \to \infty} ( n \theta_d \tR_{n,k(n)}^d/k(n)) = f_0^{-1}
	~~~~~{\rm if}~ \beta = \infty;
\label{0617a}
\\
	\lim_{n \to \infty} (n \theta_d  \tR_{n,k(n)}^d /\log n) =  
	\hH_\beta(1)/f_0,
	~~~~~{\rm if}~ \beta < \infty.
\label{0315a}
\eea
In particular, if $k \in \N$ is a constant then
\bea
 \Pr[ \lim_{n \to \infty} ( n \theta_d f_0 \tR_{n,k}^d /\log n) = 1 ] =1.
\label{Strong1}
\eea
In Case (i), all of the above almost sure limiting
 statements hold for $R_{n,k(n)}$ as well as for $\tR_{n,k(n)}$.
\end{prop}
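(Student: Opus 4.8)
\medskip
\noindent\textbf{Proof plan for Proposition~\ref{thm1}.}
The plan is to establish, for each fixed $\epsilon>0$, matching almost-sure upper and lower bounds on $n\theta_d\tR_{n,k(n)}^d$ — relative to $\log n$ when $\beta<\infty$, and to $k(n)$ when $\beta=\infty$ — and then let $\epsilon\downarrow0$. One works throughout with the binomial process $\X_n$ and applies Borel--Cantelli at every $n\in\N$, so that no interpolation along a subsequence is needed (important, since $k(\cdot)$ varies). Write $y_\beta:=\hH_\beta(1)$, so $y_0=1$ and $y_\beta>\beta$ for $0<\beta<\infty$, and recall the elementary fact that $g(y):=yH(\beta/y)$ is strictly increasing on $(\beta,\infty)$ with $g(y_\beta)=1$; this monotonicity is precisely what converts a multiplicative perturbation of the radius into a power-of-$n$ change in the binomial tail probabilities below, and it is used in both directions. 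Two reductions: in Case (i), compactness of $B$ together with $B\subset A^o$ give $B\subset A^{(r)}$ for all small $r$, and then by monotonicity in $r$ of the defining conditions one has $R_{n,k}=\tR_{n,k}$ whenever $\tR_{n,k}<\dist(B,\partial A)$; since the upper bound below forces $\tR_{n,k(n)}\to0$ a.s., the limits for $\tR_{n,k(n)}$ transfer to $R_{n,k(n)}$, giving the last sentence of the proposition. And if $f_0=0$ the asserted limits are $+\infty$, which follows from the lower-bound argument below applied with $\mu(B(x,r))$ forced below $\varepsilon\theta_d r^d$ for an arbitrarily small fixed $\varepsilon$; so assume $f_0>0$ henceforth. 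Finally, the key measure-theoretic input, obtained from the Lebesgue differentiation theorem and the fact that $|A\setminus A^{(r)}|\to0$ as $r\downarrow0$ (valid since $A$ is Riemann measurable): for every $\eta>0$ there are $c_\eta>0$ and $r_0(\eta)>0$ such that the set $T_r:=\{x\in B:\,B(x,r)\subset A^o,\ \mu(B(x,r))\le(f_0+\eta)\theta_d r^d\}$ has $|T_r|\ge c_\eta$ for $0<r<r_0(\eta)$. In Case (ii), since $B=A$ and $f\ge f_0$ a.e., one also has $\mu(B(x,r))\ge f_0\theta_d r^d$ for every $x\in A^{(r)}$, with no boundary-of-$B$ to worry about; in Case (i) the continuity of $f$ and the presence of sample points of $\X_n$ in $A\setminus B$ (where $f\ge f_0-\eta$ near $\partial B$) give the corresponding two-sided estimate $(f_0-\eta)\theta_d r^d\le\mu(B(x,r))\le(f_0+\eta)\theta_d r^d$ for $x\in B$ and $r$ small.

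\emph{Upper bound.}
Fix $\epsilon>0$ and let $r_n$ solve $n\theta_d f_0 r_n^d=(1+\epsilon)y_\beta\log n$ if $\beta<\infty$, or $n\theta_d f_0 r_n^d=(1+\epsilon)k(n)$ if $\beta=\infty$. Overlay a cubic lattice of spacing $\delta r_n$ with $\delta=\delta(\epsilon,d)$ small; every $x\in B\cap A^{(r_n)}$ lies within $\tfrac{\sqrt d}{2}\delta r_n$ of a lattice point $g$, whence $B(g,(1-\tfrac{\sqrt d}{2}\delta)r_n)\subset B(x,r_n)\subset A^o$, so $\tR_{n,k(n)}\le r_n$ as soon as $\X_n(B(g,(1-\tfrac{\sqrt d}{2}\delta)r_n))\ge k(n)$ for each of the $O(1/r_n^d)$ lattice points $g$ within $\tfrac{\sqrt d}{2}\delta r_n$ of $B\cap A^{(r_n)}$. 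For such $g$ the count is $\Bin(n,p_g)$ with $np_g\ge(f_0-\eta)(1-\tfrac{\sqrt d}{2}\delta)^d\theta_d n r_n^d\ge(1+\tfrac\epsilon2)y_\beta\log n$ (resp.\ $\ge(1+\tfrac\epsilon2)k(n)$) for $\eta,\delta$ small; the Chernoff bound $\Pr[\Bin(n,p)\le k(n)-1]\le\exp(-np\,H(k(n)/(np)))$ together with the monotonicity of $g$ then gives failure probability $\le n^{-(1+\epsilon')}$ for some $\epsilon'>0$ when $\beta<\infty$, and $\le e^{-c\,k(n)}=o(n^{-2})$ (using $k(n)/\log n\to\infty$) when $\beta=\infty$. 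Summing over the $O(n/\log n)$ (resp.\ $O(n/k(n))$) lattice points gives a summable bound in $n$, so $\tR_{n,k(n)}\le r_n$ for all large $n$, a.s.; letting $\epsilon\downarrow0$ yields the ``$\le$'' half of each limit.

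\emph{Lower bound.}
Fix $\epsilon>0$ small and let now $n\theta_d f_0 r_n^d=(1-\epsilon)y_\beta\log n$ (resp.\ $=(1-\epsilon)k(n)$). When $\beta=\infty$, pick any $x^*\in T_{r_n}$ (possible once $r_n<r_0(\eta)$): then the mean count $np^*$ in $B(x^*,r_n)$ is $\le(1-\tfrac\epsilon2)k(n)<k(n)$ for $\eta$ small, so $\Pr[\tR_{n,k(n)}\le r_n]\le\Pr[\X_n(B(x^*,r_n))\ge k(n)]\le e^{-c_\epsilon k(n)}$ by the upper-tail Chernoff bound, which is summable since $k(n)/\log n\to\infty$. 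When $\beta<\infty$ a single ball is not enough (now $y_\beta>\beta$ forces the mean count in $B(x^*,r_n)$ above $k(n)$), so instead choose, for $n$ large, $N_n=\Theta(1/r_n^d)=\Theta(n/\log n)$ points $x_1,\dots,x_{N_n}\in T_{r_n}$ with the balls $B(x_i,r_n)$ pairwise disjoint (a greedy packing inside $T_{r_n}$, using $|T_{r_n}|\ge c_\eta$). The cell counts $\X_n(B(x_i,r_n))$ are negatively associated (see e.g.\ \cite{RGG}), so
\[
\Pr[\tR_{n,k(n)}\le r_n]\ \le\ \Pr\Big[\bigcap_{i=1}^{N_n}\{\X_n(B(x_i,r_n))\ge k(n)\}\Big]\ \le\ \prod_{i=1}^{N_n}\bigl(1-q_{n,i}\bigr),
\]
where $q_{n,i}=\Pr[\Bin(n,p_i)<k(n)]$ and $(f_0-\eta)\theta_d r_n^d\le p_i=\mu(B(x_i,r_n))\le(f_0+\eta)\theta_d r_n^d$. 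A single-term Stirling estimate for $\Bin(n,p_i)$ at the value $k(n)-1$, combined once more with the strict monotonicity of $g$ (so that $g((1-\tfrac\epsilon2)y_\beta)=1-\delta$ for some $\delta>0$), gives $q_{n,i}\ge n^{-(1-\delta/2)}$ for $n$ large; hence $\Pr[\tR_{n,k(n)}\le r_n]\le\exp(-N_n\min_i q_{n,i})\le\exp(-n^{\delta/4}/\log n)$ eventually, which is summable. Borel--Cantelli gives $\tR_{n,k(n)}>r_n$ for all large $n$, a.s.; letting $\epsilon\downarrow0$ supplies the ``$\ge$'' half. Combining the two halves proves (\ref{0617a})--(\ref{0315a}); taking $k$ constant (so $\beta=0$, $y_\beta=1$) gives (\ref{Strong1}), and the reduction noted above gives the final sentence.

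\emph{Main obstacle.}
The delicate step is the lower bound when $0<\beta<\infty$: it requires both (a) producing enough pairwise-disjoint interior balls whose $\mu$-mass is within a factor $1+\eta$ of the critical value $f_0\theta_d r_n^d$ — i.e.\ the Lebesgue-point construction of $T_r$ and the verification that $|T_r|$ stays bounded away from $0$ along $r\downarrow0$ — and (b) a matching lower bound on the single-ball undercoverage probability with exactly the exponent dictated by $g$. The bookkeeping that turns the radius perturbation $(1\pm\epsilon)^{1/d}$ into a genuine power-of-$n$ gap, in both directions, rests entirely on the strict monotonicity of $y\mapsto yH(\beta/y)$ on $(\beta,\infty)$; all other estimates are routine Chernoff bounds and volume counting.
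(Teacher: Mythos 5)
Your lower bound is sound and in fact takes a genuinely different route from the paper (negative association of the multinomial cell counts in disjoint balls, instead of the paper's Poissonization to $\Po_{n+n^{3/4}}$ in Lemma \ref{gammalem} to gain independence), and your set $T_r$ plays the role of Lemma \ref{packlem}. But the upper bound for $\beta<\infty$ has a genuine gap, located exactly at the step you declared unnecessary. With $n\theta_d f_0 r_n^d=(1+\epsilon)\hH_\beta(1)\log n$, your per-lattice-point failure probability is $n^{-(1+\epsilon')}$ where, by your own monotonicity computation (note $g'(y)=1-\beta/y<1$), $\epsilon'=g((1+\tfrac{\epsilon}{2})\hH_\beta(1))-1\to 0$ as $\epsilon\downarrow 0$. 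Since the lattice has $\Theta(r_n^{-d})=\Theta(n/\log n)$ points, the union bound gives only $O(n^{-\epsilon'}/\log n)$, which is \emph{not} summable in $n$ for small $\epsilon$; and the argument must be run for every small $\epsilon$ and then $\epsilon\downarrow0$. So Borel--Cantelli applied "at every $n\in\N$", as you insist, does not yield $\tR_{n,k(n)}\le r_n$ eventually a.s., and the upper half of (\ref{0315a}) (hence (\ref{Strong1})) is not established as written. (Your $\beta=\infty$ upper bound is fine, because there the per-ball probability $e^{-ck(n)}$ beats every polynomial.)

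The fix is precisely the device you set out to avoid: exploit that $\tR_{n,k}$ is nonincreasing in $n$ and nondecreasing in $k$, prove the $O(n^{-\epsilon'})$ bound with the slightly inflated threshold $k'(n)=\lfloor(\beta+\epsilon)\log n\rfloor$ (so that eventually $k(n)\le k'(m)$ for the interpolation), apply Borel--Cantelli only along the subsequence $n=j^K$ with $K\epsilon'>1$, and interpolate between consecutive powers; this is the paper's subsequence trick (Lemma \ref{lemtrick}), which is exactly how Lemma \ref{lemmeta}'s non-summable $O(n^{-\epsilon})$ bound is upgraded to an almost sure statement in Lemma \ref{lemlimsup}. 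With that insertion your proof closes; the remaining ingredients you use (the two-sided estimate on $\mu(B(x,r))$ in cases (i) and (ii), the greedy packing of $\Theta(r_n^{-d})$ disjoint balls in $T_{r_n}$, the Stirling-type lower bound on the binomial point mass, and the transfer from $\tR_{n,k(n)}$ to $R_{n,k(n)}$ in case (i)) are correct, and the negative-association alternative to Poissonization is a tidy simplification of the paper's lower-bound mechanism, provided you quote or prove the NA property for counts of a binomial process in disjoint regions.
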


Proposition \ref{thm1} has some overlap 
 with known results;
 the uniform case  with $A=B = [0,1]^d$ and $f \equiv 1$ on $A$ is
covered by  
\cite[Theorem 1]{Deheuvels}. 
Taking ${\bf C}$ of that paper to be the class of 
Euclidean balls centred on the origin, we see 
that the quantity denoted $M_{1,m}$ in \cite{Deheuvels} equals $\tilde{R}_n$.
In \cite[Example 3]{Deheuvels} it is stated that the Euclidean balls
satisfy the conditions of \cite[Theorem 3]{Deheuvels}. 
See also \cite{Janson2}. Note also that \cite{Iyer}
has a result similar to the case of Proposition \ref{thm1} where $d=2$, 
$A=B= [0,1]^2$ and $f$ is uniform over $A$.

\section{Strategy of proofs}
\label{secstrategy}

\textcolor{\blue}{
We first give an overview of the strategy for the proofs, in Section
\ref{secLLNpfs}, of the strong laws of large numbers
that were stated in Section \ref{secLLN}.}

For $n \in \N$ and $p \in [0,1]$ let $\Bin(n,p)$ denote
  a binomial random variable with parameters $n,p$.
  Recall that $H(\cdot)$ was defined at (\ref{Hdef}), \textcolor{\blue}{and
  $Z_t$ is  a Poisson$(t)$ variable for $t>0$.}
  The proofs in Section \ref{secLLNpfs} rely heavily on the
  following lemma.
\begin{lemm}[Chernoff bounds]
\label{lemChern}
Suppose  $n \in \N$, $p \in (0,1)$, $t >0$ and $0 < k < n$. 
%Set $\eta := np$.

	(a) If $k \geq np$ then $\Pr[ \Bin(n,p) \geq k ] \leq \exp\left( 
	- np H(k/(np) )
\right)$. 

	(b) If $k \leq np$ then $\Pr[ \Bin(n,p) \leq k ] \leq \exp\left( - np H(k/(np))
\right)$. 

	(c) If $k \geq e^2 np$ then 
 $\Pr[ \Bin(n,p) \geq k ] \leq \exp\left( - (k/2)
	\log(k/(np))\right) \leq e^{-k}$.

	\textcolor{\blue}{
	(d) If $k < t$ then $\Pr[Z_t \leq k ] \leq
	\exp(- t H(k/t))$.}

	\textcolor{\blue}{
	(e) If $k \in \N$ then $\Pr[Z_t = k ] \geq
	(2 \pi k)^{-1/2}e^{-1/(12k)}\exp(- t H(k/t))$.
	}
\end{lemm}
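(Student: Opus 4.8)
The plan is to obtain all five estimates from the exponential Markov inequality (the Chernoff method), using only the elementary bound $1+y\le e^y$ for the binomial moment generating function, the exact Poisson moment generating function, and the standard two-sided Stirling bounds for part (e).

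For (a) and (b), I would start from $\E\big[e^{\lambda\Bin(n,p)}\big]=(1+p(e^\lambda-1))^n\le\exp(np(e^\lambda-1))$, valid for every $\lambda\in\R$. For (a), with $\lambda\ge0$, Markov's inequality gives $\Pr[\Bin(n,p)\ge k]\le e^{-\lambda k}\exp(np(e^\lambda-1))$; choosing $e^\lambda=k/(np)$ — admissible since $k\ge np$ forces $\lambda\ge0$ — makes the right-hand side equal to $\exp\!\big(-(np-k+k\log(k/(np)))\big)=\exp(-np H(k/(np)))$, because $H(t)=1-t+t\log t$. Part (b) is the mirror image: for $\lambda\le0$ one has $\Pr[\Bin(n,p)\le k]\le e^{-\lambda k}\exp(np(e^\lambda-1))$, and the same choice $e^\lambda=k/(np)\le1$ (now $\lambda\le0$, admissible as $k\le np$) again produces $\exp(-np H(k/(np)))$.

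For (d) I would run the same computation with the exact identity $\E[e^{\lambda Z_t}]=\exp(t(e^\lambda-1))$: for $\lambda\le0$, $\Pr[Z_t\le k]\le e^{-\lambda k}\exp(t(e^\lambda-1))$, and $e^\lambda=k/t\le1$ (admissible since $k<t$) yields exactly $\exp\!\big(-(t-k+k\log(k/t))\big)=\exp(-t H(k/t))$. For (e) I would instead use $\Pr[Z_t=k]=e^{-t}t^k/k!$ together with the Stirling upper bound $k!\le\sqrt{2\pi k}\,(k/e)^k e^{1/(12k)}$; substituting and rewriting the exponent via $-t+k+k\log(t/k)=-t H(k/t)$ gives the stated lower bound $(2\pi k)^{-1/2}e^{-1/(12k)}\exp(-t H(k/t))$.

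Finally, (c) follows from (a): writing $s:=k/(np)\ge e^2$, part (a) gives $\Pr[\Bin(n,p)\ge k]\le\exp(-np\,H(s))$, and it suffices to check $np\,H(s)=np(s\log s-s+1)\ge(np\,s/2)\log s$, i.e.\ $(s/2)\log s\ge s-1$, which holds for $s\ge e^2$ since then $(s/2)\log s\ge s$; and $\log s\ge2$ gives $(k/2)\log(k/(np))\ge k$, hence the bound $e^{-k}$. The only non-mechanical ingredients are a few one-variable inequalities (essentially $\log x\ge1-1/x$, with equality at $x=1$, which underlies the clean $H$-form, together with $(s/2)\log s\ge s-1$ for $s\ge e^2$) and checking the sign of the optimizing $\lambda$ under each hypothesis; neither is a genuine obstacle, so the argument is essentially bookkeeping around the Chernoff method.
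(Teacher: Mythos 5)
Your argument is correct, and it matches the standard Chernoff-method derivation that underlies the reference the paper cites (\cite[Lemmas 1.1--1.3]{RGG}): exponential Markov inequality with the bound $(1+p(e^\lambda-1))^n\le e^{np(e^\lambda-1)}$, the exact Poisson mgf, optimizing $e^\lambda=k/(np)$ (or $k/t$), and the two-sided Stirling inequality for (e). So you have supplied an explicit proof of exactly the kind the paper delegates to the literature; there is no substantive difference in approach.
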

\begin{proof}
	See e.g. \cite[Lemmas 1.1, 1.2 and 1.3]{RGG}.
	%for (a), (b) and (c), and \cite[Lemma1.3]{RGG} for (d).
	\qed
\end{proof}

%Suppose $A_r$ is some region in $A$ depending on $r$,
%and for some $a >0$ we can cover $A_r$ by $O(r^{-a})$ balls of radius $r$
%as $r \downarrow 0$, and moreover find $O(r^{-a})$ 
%disjoint balls of radius $r$. Suppose moreover that
%balls of radius $r $ in $A_r$ have $\mu$-content

\textcolor{\blue}{
Recall that $R_{n,k}$ is defined at (\ref{Rnkdef}),  that
we assume $(k(n))_{n \geq 1}$
satisfies (\ref{kcond}) for some $\beta \in [0,\infty]$, and
that $\hH_\beta(x)$ is defined to be the $ y \geq \beta $
such that $y H(\beta/y) =x$, where $H(\cdot)$ was defined
at (\ref{Hdef}).}

\textcolor{\blue}{
If $f \equiv f_0$ on $A$ for a constant $f_0$, and $r>0$, then
%taking $n \theta r_n^d
%= a \log n$ we have
for $x \in A^{(r)}$,   $\X_n(B(x,r))$ is binomial with mean
$n f_0 \theta_d r^d =: M$, say.
%n\theta r_n^d = a \log n$. 
Hence if $M > k(n)$, then
parts (b) and (e) of
Lemma \ref{lemChern}
suggest
that we should have
$$
\Pr[\X_n(B(x,r)) < k(n) ] \approx \exp(- M H(k(n)/M)) .
$$
If 
$\beta < \infty$, and 
we choose $r=r_n$ so that
$M = a \log n$ with $a = \hH_\beta(1)$ 
this probability approximates
to $\exp( -  a (\log n) H( (\beta \log n) / (a \log n)))$,
which comes to $n^{-1}$.
Since we can find 
%$O(n)$
$n^{1+o(1)}$
disjoint balls of radius
$r_n$ where this might happen, this suggests $a = \hH_\beta(1)$
is the critical  value of $a := M/\log n$,
below  which the interior region $A^{(r_n)}$
is not covered ($k(n)$ times)
with high probability, and above which it is covered.
We can then improve the `with high probability' statement
to an `almost surely for large enough $n$' statement  using Lemma
\ref{lemtrick}.
If $f$ is continuous but not constant on $A$,
the value of $f_0$ defined at (\ref{f0def}) still
determines the critical choice of $a$. If $\beta = \infty$
instead, taking  $M= a' k(n)$ the critical value of $a'$ is now 1.
These considerations lead to Proposition \ref{thm1}.
}

\textcolor{\blue}{
Now consider the boundary region $A \setminus A^{(r)}$,  in the case where
$\partial A$ is smooth.
We can argue similarly to before, except that for  $x \in \partial A$ 
the approximate mean of $\X_n(B(x,r))$
%value of $M$
changes to $nf_1 \theta r^d/2 =: M'$. 
If $\beta < \infty$, and we now choose $r = r_n$ so $M' = a' \log n$
with $a' = \hH_\beta(1-1/d)$, then $\Pr[\X_n(B(x,r_n) ) < k(n) ]
\approx n^{-(1-1/d)}$.
Since we can find 
%$O(n^{1-1/d})$ 
$n^{1-1/d + o(1)}$ 
disjoint balls of radius
$r_n$ centred in $A \setminus A^{(r_n)}$, this suggests 
$a' := \hH_\beta(1-1/d)$ 
 is the critical  choice of $a'$ for covering $A \setminus A^{(r_n)}$.
 }

 \textcolor{\blue}{
For polytopal $A$ we consider covering the regions 
near to each of the
lower dimensional faces of $\partial A$ in an analogous way;
the dimension of a face affects both the $\mu$-content
of a ball centred on that face, and the number of
disjoint balls that can packed into the region near the face.
}

 \textcolor{\blue}{
Next, we describe the strategy for the proof, in Section
\ref{secpfwk}, of
the weak convergence results that were stated in Section \ref{secweak}.
}

 \textcolor{\blue}{
First, we shall provide a general `De-Poissonization' lemma 
(Lemma \ref{depolem}), as a result of which for each of the
Theorems in Section \ref{secweak} it suffices to
prove the results for a Poisson process rather than
a binomial point process (i.e., for $R'_{t,k}$ rather
than for $R_{n,k}$).
}

 \textcolor{\blue}{
Next, we shall provide a general lemma (Lemma \ref{lemHall}) 
giving the limiting probability of covering ($k$ times, with
$k$ now fixed)
a  bounded region
of $\R^d$ by a spherical Poisson Boolean model (SPBM) on
the whole of $\R^d$, in the limit of 
high intensity and small balls. This is based on results from
\cite{Janson} (or \cite{HallZW} if $k=1$).
Applying Lemma \ref{lemHall} for an SPBM with all balls of
the same radius yields a proof of Proposition \ref{Hallthm}.
}

 \textcolor{\blue}{
Next, we shall consider the SPBM with balls of equal radius $r_t$,
centred on a homogeneous Poisson process of intensity 
$t f_0 $ in a $d$-dimensional half-space.  In the large-$t$
limit, with $r_t$ shrinking in an appropriate way,
in Lemma \ref{lemhalf3a} we determine the limiting probability that the
a given  bounded 
set within the surface of the half-space is covered, by applying Lemma 
\ref{lemHall} in $d-1$ dimensions, now with balls of
varying radii. Moreover we will show
that the probability  that a region in the half-space  within distance $r_t$
of  that set  is covered with the same limiting probability.
}

 \textcolor{\blue}{
We can then complete the proof of 
Theorem \ref{thmwksq} by applying Lemma \ref{lemhalf3a}
to determine the limiting probability that  the
region near the boundaries of a polygonal set $A$ is covered,
and Proposition  \ref{Hallthm} to determine the limiting
probability that the interior region is covered,
along with a separate argument to show the regions
near the corners  of polygonal $A$ are  covered with high probability.
}

 \textcolor{\blue}{
For Theorem \ref{thwkpol3}
we again use Proposition \ref{Hallthm} to determine the limiting
probability that the interior region is covered,
and Lemma \ref{lemhalf3a} to  determine the 
limiting probability that the region near the faces
(but not the edges) of polyhedral $A$ are covered.
To deal with  the region near the edges of $A$,
we also require a separate lemma (Lemma \ref{lemwedge})
determining the limiting probability that
a bounded region near the edge of an infinite
wedge-shaped region in $\R^3$ is covered by
an SPBM restricted to that wedge-shaped region.}

 \textcolor{\blue}{
The proof of Theorem \ref{thsmoothgen} requires further ideas.
}
Let $\gamma \in (1/(2d),1/d)$. We shall
work simultaneously on two length scales,
namely the radius $r_t$ satisfying (\ref{rt3c})  
(and hence  satisfying $r_t = \Theta (((\log t)/t)^{1/d})$),
and a coarser length-scale given by
$t^{-\gamma}$. If $d=2$, we approximate to $\partial A$ by
a polygon 
of side-length
that is
$\Theta(t^{-\gamma})$. We approximate to $\Po_t$
by a Poisson process inside the polygon, 
and can determine the asymptotic probability
of  complete coverage of this approximating polygon by considering
a lower-dimensional Boolean model on each of  the edges. In
fact we shall line up these edges by means rigid motions, into a line
segment embedded in the plane; in the limit we obtain the limiting
probability of covering this line segment with balls centred on
a Poisson process in the half-space to one side of this line segment,
which we know about by Lemma \ref{lemhalf3a}.
By separate estimates we can show that the error
terms either 
from the corners of the polygon, or from the approximation
of Poisson processes, are negligible in the large-$t$ limit.

For $d \geq 3$
we would like to approximate
to $A$ by a polyhedral set $A_t$ obtained by taking its surface to be
a triangulation
of the surface of $A$ with side-lengths $\Theta(t^{-\gamma})$.
However, 
two obstacles to this strategy present themselves.

The first obstacle is that in 3 or more dimensions, it is harder to
be globally explicit about the set $A_t$ and the set difference 
$A \triangle A_t$. We deal with this by triangulating
$\partial A$  locally rather than globally; we break $\partial A$ into
finitely many pieces, each of which lies within a single chart within
which $\partial A$, after a rotation,
can be expressed as the graph of a $C^2$ function
on a region $U$ in $\R^{d-1}$. Then we triangulate $U$ (in
the sense of tessellating it into simplices) explicitly and  use this
to determine an explicit local triangulation (in the sense
of approximating the curved surface by a union of $(d-1)$-dimensional
simplices) of $\partial A$.

The second obstacle is that the simplices in the triangulation
cannot in general be reassembled into a $(d-1)$-dimensional cube.
To get around this, we shall pick $\gamma' \in (\gamma,1/d)$
and subdivide these simplices 
into smaller $(d-1)$-dimensional cubes
of side $t^{-\gamma'}$; we can
reassemble these smaller $(d-1)$-dimensional cubes 
into a cube
in $\R^{d-1}$, and
control the boundary region near the boundaries of the
smaller $(d-1)$-dimensional cubes, or near the boundary of the simplices,
by separate estimates.

\section{Proof of strong laws of large numbers}
\label{secLLNpfs}
\allco

In this section we prove the results stated in Section \ref{secLLN}.
Throughout this section we are assuming we are  given a 
constant $\beta \in [0,\infty]$
and a sequence $(k(n))_{n \in \N}$ satisfying (\ref{kcond}). 
Recall that $\mu$ denotes the distribution of $X_1$,
and this has a density $f$ with support $A$, and that $B \subset A$
is fixed, and $R_{n,k}$ is defined at (\ref{Rnkdef}).

We shall repeatedly use the following lemma. It 
is based on what in \cite{RGG} was called
the `subsequence trick'. \textcolor{\blue}{This result says 
that if an array of random variables $U_{n,k}$ is
monotone in $n$ and $k$, and $U_{n,k(n)}$, properly scaled,
converges in probability to a constant at rate $n^{-\eps}$,
one may be able to improve 
this to almost sure convergence.} 
\begin{lemm}[Subsequence trick]
\label{lemtrick}
Suppose $(U_{n,k},(n,k) \in \N \times \N)$ is an array of random 
variables on a common probability space such that $U_{n,k} $ is
nonincreasing in $n$ and nondecreasing in $k$, that is, 
$U_{n+1,k} \leq U_{n,k} \leq U_{n,k+1} $ almost surely,
for all $(n,k) \in \N \times \N$. Let $\beta \in [0,\infty), \eps >0, c  >0$,
and suppose $(k(n),n \in \N)$
is an $\N$-valued sequence such that 
 $k(n)/\log n \to \beta$ as $n \to \infty$. 

	(a) Suppose
$
\Pr[n U_{n, \lfloor (\beta + \eps) \log n \rfloor } > \log n] \leq 
c n^{-\eps},
$
	for all but finitely many $n$. 
Then $\Pr[ \limsup_{n \to \infty} n U_{n,k(n)} /\log n \leq 1] =1$.

	(b)
	Suppose  $\eps < \beta $ and
$
\Pr[n U_{n, \lceil (\beta - \eps) \log n \rceil } \leq \log n] \leq 
c n^{-\eps},
$
	for all but finitely many $n$. 
Then $\Pr[ \liminf_{n \to \infty} n U_{n,k(n)} /\log n \geq 1] =1$.
\end{lemm}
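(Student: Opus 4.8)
The plan is to run the ``subsequence trick'' from \cite{RGG}: I would pass to a sparse deterministic subsequence $(n_j)$ of $\N$ along which the hypothesised tail bounds become summable, apply the Borel--Cantelli lemma there, and then fill in the remaining indices $n \in [n_j,n_{j+1})$ using the assumed monotonicity of $U_{n,k}$ in $n$ and in $k$ together with the fact that $n_{j+1}/n_j \to 1$. Concretely, with $\eps,c,\beta$ and $(k(n))$ as in the statement, I would set $a := \lceil 2/\eps\rceil$ (so $a \in \N$ and $a\eps \ge 2$) and $n_j := \lceil j^{a}\rceil$; then $(n_j)$ is eventually strictly increasing, $\sum_j n_j^{-\eps} \le \sum_j j^{-a\eps} < \infty$, and both $n_{j+1}/n_j \to 1$ and $\log n_{j+1}/\log n_j \to 1$ as $j \to \infty$.

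For part (a) I would consider the events $E_j := \{\, n_j U_{n_j,\,\lfloor(\beta+\eps)\log n_j\rfloor} > \log n_j \,\}$. The hypothesis gives $\Pr[E_j] \le c\,n_j^{-\eps}$ for all large $j$, so $\sum_j \Pr[E_j] < \infty$, and Borel--Cantelli shows that almost surely $n_j U_{n_j,\lfloor(\beta+\eps)\log n_j\rfloor} \le \log n_j$ for all $j$ large enough. Then, given a large $n$, I would pick $j$ with $n_j \le n < n_{j+1}$; since $k(n)/\log n \to \beta$ and $\log n \le \log n_{j+1} = (1+o(1))\log n_j$, for $j$ large one has $k(n) \le (\beta + \eps/2)\log n_j \le \lfloor(\beta+\eps)\log n_j\rfloor$ (which also covers $\beta = 0$). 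Using that $U$ is nonincreasing in its first argument and nondecreasing in its second, this gives, almost surely and for $j$ large,
\[
\frac{n\,U_{n,k(n)}}{\log n} \;\le\; \frac{n}{\log n}\cdot U_{n_j,\,\lfloor(\beta+\eps)\log n_j\rfloor} \;\le\; \frac{n}{\log n}\cdot\frac{\log n_j}{n_j} \;=\; \frac{n}{n_j}\cdot\frac{\log n_j}{\log n} \;\le\; \frac{n_{j+1}}{n_j}\;\longrightarrow\;1,
\]
so $\limsup_{n\to\infty} n U_{n,k(n)}/\log n \le 1$ almost surely.

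Part (b) would go symmetrically, now with $\eps < \beta$: I would take $F_j := \{\, n_j U_{n_j,\,\lceil(\beta-\eps)\log n_j\rceil} \le \log n_j \,\}$, note $\sum_j \Pr[F_j] < \infty$, and deduce that almost surely $n_j U_{n_j,\lceil(\beta-\eps)\log n_j\rceil} > \log n_j$ for $j$ large. For large $n$ with $n_j \le n < n_{j+1}$, using $k(n)/\log n \to \beta$ and $\log n \ge \log n_j = (1+o(1))\log n_{j+1}$, for $j$ large one has $k(n) \ge (\beta-\eps/2)\log n_{j+1} \ge \lceil(\beta-\eps)\log n_{j+1}\rceil \ge 1$; monotonicity then yields $U_{n,k(n)} \ge U_{n_{j+1},k(n)} \ge U_{n_{j+1},\lceil(\beta-\eps)\log n_{j+1}\rceil} > (\log n_{j+1})/n_{j+1}$ a.s.\ for $j$ large, hence $n U_{n,k(n)}/\log n \ge (n_j/n_{j+1})(\log n_{j+1}/\log n) \ge n_j/n_{j+1} \to 1$, giving $\liminf_{n\to\infty} n U_{n,k(n)}/\log n \ge 1$ almost surely.

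The only slightly delicate point, and the part I would be most careful about, is the bookkeeping in the interpolation step: after replacing $n$ by $n_j$ (resp.\ $n_{j+1}$) one must check that $k(n)$ still lies on the correct side of the \emph{fixed} comparison index $\lfloor(\beta\pm\eps)\log n_j\rfloor$ (resp.\ with $n_{j+1}$). This works precisely because the subsequence is chosen sparse enough for the Borel--Cantelli sum to converge yet slowly enough that $\log n_{j+1}/\log n_j \to 1$, so the $o(\log n)$ gap between $k(n)$ and $\beta\log n$ never swamps the $(\eps/2)\log n$ margin built into the hypotheses. Everything else is routine.
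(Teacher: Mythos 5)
Your proof is correct and follows essentially the same route as the paper: a polynomially sparse subsequence (your $n_j=\lceil j^a\rceil$ with $a\eps\ge 2$ versus the paper's $n^K$ with $K\eps>1$) chosen so the tail bounds are summable, Borel--Cantelli along it, and interpolation to all $n$ via the monotonicity of $U_{n,k}$ in both indices together with $n_{j+1}/n_j\to 1$ and $\log n_{j+1}/\log n_j\to 1$. Your part (b), which the paper omits as ``similar'', is the correct symmetric argument.
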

\begin{proof} (a)
For each $n $ set $k'(n) := \lfloor (\beta + \eps) \log n \rfloor$.
Pick $K \in \N$ with $K \eps >1$.
Then 
	by our assumption, we have for all large
 enough  $n$ that $\Pr[n^K U_{n^K, k'(n^K)} >
\log (n^K) ]\leq
c  n^{-K \eps}$,
which is summable in $n$. Therefore by the Borel-Cantelli lemma,
 there exists a random but almost surely finite $N$
such that for all $n \geq N$ we have 
$$
n^K U_{n^K,k'(n^K)} \leq  \log (n^K),
$$
and also  $k(m) \leq (\beta + \eps/2) \log m$ for all $m \geq N^K$,
and moreover $(\beta + \eps/2) \log ((n+1)^K) 
\leq (\beta + \eps)  \log (n^K) $ for 
all $n \geq N$.
Now for $m \in \N$ with $m \geq N^K$,
 choose $n \in \N$ such that $n^K \leq m < (n+1)^K$. Then
$$
k(m) \leq  (\beta +\eps/2)\log ((n+1)^K) 
\leq  (\beta +\eps)\log (n^K),
$$
so $k(m) \leq k'(n^K)$.
Since $U_{m,k}$ is nonincreasing in $m$ and nondecreasing in $k$,
\bean
m U_{m,k(m)}/\log m \leq (n+1)^K U_{n^K,k'(n^K)}/\log (n^K) 
\leq (1+ n^{-1})^K , 
\eean
which gives us the result asserted.

	(b) The proof is similar to that of (a), and is omitted.
	\qed
\end{proof}

%\subsection{The case $\beta = \infty$}
%In this subsection we assume $\beta=\infty$, so $k(n)/\log n \to \infty$
%and $k(n) /n \to 0$ by (\ref{kcond}).

\subsection{{\bf General lower and upper bounds}}
\label{subsecgenbds}
%\subsection{General lower and upper bounds}

%To 
%implement the strategy just described, and 
%avoid too much repetition when dealing with different regions
%within $A$ (interior, boundary, lower-dimensional faces),
\textcolor{\blue}{
In this subsection 
we present 
asymptotic lower and upper
bounds on $R_{n,k(n)}$, not requiring 
%For these results, we do not require 
any extra
assumptions on $A$ and $B$. In fact, $A$ here can be any metric space
endowed with a probability  measure $\mu$, and $B$ can
be any subset of $A$. The definition of $R_{n,k}$ at (\ref{Rnkdef})
carries over in an obvious way to this general setting.
}
%Recall that $R_{n,k}$ is defined at (\ref{Rnkdef}).
%

\textcolor{\blue}{
Later, we shall derive the results
stated in Section \ref{secLLN}
by  applying the results of this subsection to the different 
regions within $A$ (namely interior, boundary, and
lower-dimensional faces).
}

\textcolor{\blue}{
Given $r >0, a>0$,   define the `packing number' $ \nu(B,r,a) $
 be the largest number $m$ such that there exists a collection of $m$ disjoint closed balls of radius $r$ centred on points of $B$,
each with $\mu$-measure at most $a$.
The proof of the following lemma implements,
for a general metric space,
the strategy outlined in Section \ref{secstrategy}. 
}

\begin{lemm}[General lower bound]
\textcolor{\blue}{
	\label{gammalem}
	Let $a >0, b \geq 0$. Suppose
	$\nu(B,r,a r^d) = \Omega (r^{-b})$ as $r \downarrow 0$.
	Then, almost surely, if $\beta = \infty$ then
	$\liminf_{n \to \infty} \left(n  R_{n,k(n)}^d/k(n) \right) \geq
	1/a$. If $\beta < \infty$ then
	$\liminf_{n \to \infty} \left(n  R_{n,k(n)}^d/\log n \right) 
	\geq a^{-1} \hH_\beta(b/d)$, almost surely.
}
\end{lemm}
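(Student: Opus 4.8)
The plan is to exploit the packing hypothesis to produce many disjoint balls in $B$, each with small $\mu$-measure, and to argue that with high probability at least one of them fails to be $k(n)$-covered when $r$ is taken just slightly below the conjectured critical radius. Fix a small $\eps > 0$. For the case $\beta < \infty$, set $a' := a^{-1}\hH_\beta(b/d) - \eps$ (or, since we will invoke Lemma \ref{lemtrick}(b), it is cleaner to work directly with the threshold scaling $r = r_n$ defined by $a n r^d = (1 - \eps')\log n$ for suitable $\eps'$, and with $k$ replaced by $\lceil(\beta-\eps)\log n\rceil$ in the array $U_{n,k} := R_{n,k}^d$, which is nonincreasing in $n$ and nondecreasing in $k$). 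With $r = r_n$ chosen so that $a n r_n^d \approx (1-\eps')\log n$, take a maximal packing of $\nu(B, r_n, a r_n^d) = \Omega(r_n^{-b}) = \Omega\big((n/\log n)^{b/d}\big)$ disjoint balls $B(x_i, r_n)$ centred in $B$, each of $\mu$-measure at most $a r_n^d$. Since these balls are disjoint, the events $E_i := \{\X_n(B(x_i, r_n)) < k(n)\}$ are negatively associated; in particular $\Pr[\bigcap_i E_i^c] \leq \prod_i (1 - \Pr[E_i])$, so it suffices to lower-bound $\Pr[E_i]$.

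The key estimate is a lower bound on $\Pr[\Bin(n,p_i) < k(n)]$ where $p_i = \mu(B(x_i,r_n)) \leq a r_n^d$. Here one uses the reverse Chernoff / local limit direction: $\Pr[\Bin(n,p_i) < k] \geq \Pr[\Bin(n,p_i) = k-1] \gtrsim (nk)^{-1/2}\exp(-np_i H((k-1)/(np_i)))$, mimicking Lemma \ref{lemChern}(e). Because $np_i \leq a n r_n^d \approx (1-\eps')\log n$ and $k(n) \approx \beta\log n$, and $-H$ is unimodal with maximum $0$ at $1$, one computes that $\exp(-np_i H(k/(np_i))) \geq n^{-(1-\eps'')}$ for suitable $\eps'' \to 0$ as $\eps', \eps \to 0$, using the definition $\hH_\beta(b/d) H(\beta/\hH_\beta(b/d)) = b/d$ to calibrate the exponent. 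Thus $\Pr[E_i] \geq n^{-(1-\eps'')+o(1)}$, and since there are $\Omega(n^{b/d - o(1)})$ disjoint balls, $\prod_i(1-\Pr[E_i]) \leq \exp(-\Omega(n^{b/d - 1 + \eps'' - o(1)}))$. Choosing $\eps'$ small enough that $b/d - 1 + \eps'' $... — wait, this is negative when $b < d$; so instead one calibrates $r_n$ more carefully: pick $\eps'$ so that $np_i H(k/(np_i))$ is at most $(b/d)\log n - \delta\log n$ for a fixed small $\delta > 0$, which is exactly what the strict inequality $a n r_n^d < a^{-1}\hH_\beta(b/d)\cdot(\ldots)$ buys, whence $\Pr[E_i] \geq n^{-b/d + \delta}$ and $\prod_i(1-\Pr[E_i]) \leq \exp(-\Omega(n^{\delta - o(1)})) \to 0$. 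This shows $\Pr[R_{n, \lceil(\beta-\eps)\log n\rceil}^d \leq r_n] \to 0$; in fact the rate is $\exp(-n^{\Omega(1)})$, which is more than enough to feed into Lemma \ref{lemtrick}(b) (whose hypothesis only needs a polynomially summable bound). Letting $\eps \downarrow 0$ along a sequence gives $\liminf_n n R_{n,k(n)}^d/\log n \geq a^{-1}\hH_\beta(b/d)$ a.s.

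For the case $\beta = \infty$, the argument is the same but one sets $r_n$ so that $a n r_n^d = (1-\eps)k(n)$ and uses that $k(n)/\log n \to \infty$; then $\Pr[\Bin(n,p_i) < k(n)] \geq \Pr[\Bin(n,p_i) = k(n)-1] \gtrsim (nk(n))^{-1/2}\exp(-np_i H(k(n)/(np_i)))$, and since $np_i \leq (1-\eps)k(n)$ with $H(1-\eps) > 0$ fixed, this is $\geq \exp(-(1-\eps)k(n)H((1-\eps)^{-1}) + o(k(n))) = \exp(-\Theta(k(n)))$; but $k(n) = o(\log n)$... no — here $k(n)/\log n \to \infty$, so $\exp(-\Theta(k(n)))$ is far smaller than $1/n$ and the union over $\Omega(r_n^{-b})$ balls would fail. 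The fix: when $\beta = \infty$ we only need $b \geq 0$ and the trivial packing $\nu \geq 1$; a single ball $B(x_1, r_n)$ already has $\Pr[\Bin(n,p_1) < k(n)] \to 1$ because $np_1 \leq (1-\eps)k(n) < k(n)$ forces, by Lemma \ref{lemChern}(b) applied in the other direction, $\Pr[\Bin(n,p_1)\geq k(n)] \leq \exp(-np_1 H(k(n)/(np_1))) \to 0$. Hence $\Pr[R_{n,k(n)}^d > r_n] \to 1$ with exponentially good rate, and Lemma \ref{lemtrick}(b) (adapted to the $k(n)/k(n)$ scaling) upgrades this to the a.s. statement $\liminf_n n R_{n,k(n)}^d/k(n) \geq 1/a$.

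The main obstacle I anticipate is the bookkeeping in the $\beta < \infty$ case: one must choose the radius $r_n$, the shifted index $\lceil(\beta-\eps)\log n\rceil$, and the slack $\delta$ in a mutually consistent way so that (i) each ball's non-coverage probability is at least $n^{-b/d+\delta}$, (ii) the product over the $\Omega(n^{b/d-o(1)})$ packing balls tends to $0$ fast enough to be summable along the subsequence $n^K$ demanded by Lemma \ref{lemtrick}, and (iii) the limit as $\eps \downarrow 0$ recovers exactly $a^{-1}\hH_\beta(b/d)$ — this last point relies on continuity and monotonicity of $\hH_\beta$, noted after (\ref{Hdef}). A secondary technical point is justifying the reverse-Chernoff lower bound $\Pr[\Bin(n,p) = j] \geq$ (explicit) $\exp(-npH(j/(np)))$ uniformly over the relevant range of $p$ and $j$; this is a standard Stirling computation analogous to Lemma \ref{lemChern}(e) and should be quoted or proved in a short separate step. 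The negative-association (or, if one prefers to avoid it, a second-moment / Bonferroni) step is routine given disjointness of the balls.
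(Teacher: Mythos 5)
Your plan is essentially the paper's for the main case ($\beta<\infty$, $b>0$): pack $\Omega(r_n^{-b})$ disjoint low-measure balls, lower-bound the non-coverage probability of each via a local-limit/reverse-Chernoff estimate calibrated by $\hH_\beta(b/d)H(\beta/\hH_\beta(b/d))=b/d$, multiply, and use the resulting exponentially small coverage probability to conclude. The one genuine difference in machinery is the route to independence. You argue negative association of the binomial occupancy counts; the paper instead Poissonizes the sample (passing to $\Po_{\lambda(n)}$ with $\lambda(n)=n+n^{3/4}$, coupled to $\X_n$ by $\Pr[Z_{\lambda(n)}<n]\le\exp(-n^{1/2}/9)$), so that points in disjoint balls are literally independent and Lemma~\ref{lemChern}(e) supplies the Poisson local-limit lower bound without a separate binomial Stirling argument. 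Both routes work; Poissonization avoids having to cite or re-derive negative association and the binomial local limit. Your $\beta=\infty$ treatment (single ball, Chernoff upper bound, then upgrade) also lands where the paper does, though there the paper uses Borel--Cantelli directly since the bound decays superpolynomially; invoking Lemma~\ref{lemtrick}(b) isn't quite right for $\beta=\infty$, since that lemma assumes $\beta<\infty$ (it needs $\eps<\beta$).

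There is a real gap at $b=0$, which the lemma allows and the paper handles as a separate subcase. Your calibration claims $\Pr[E_i]\geq n^{-b/d+\delta}$; for $b=0$ this reads $\Pr[E_i]\geq n^\delta$, which is impossible, and indeed the packing hypothesis at $b=0$ only guarantees $\Omega(1)$ balls, so $\prod_i(1-\Pr[E_i])$ is bounded below by a positive constant and does not tend to zero. For $b=0$ one must drop the packing argument entirely: take a single ball with $\mu$-measure at most $ar_n^d$, show $\Pr[\X_n(B(x,r_n))\geq k'(n)]\le n^{-\delta}$ directly from Lemma~\ref{lemChern}(a), and then feed the \emph{polynomial} bound into Lemma~\ref{lemtrick}(b) (with the trivial case $\beta=0$, where $\hH_0(0)=0$, disposed of separately). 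Your outline as written silently assumes $b>0$ and would need this third branch spelled out to cover the full statement.
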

\begin{proof}
	First suppose $\beta = \infty$.
	\textcolor{\blue}{Let $\nalpha \in (0,1/a )$.}
	Set $r_n := \left( \nalpha k(n)/n \right)^{1/d}$, $n \in \N$.
	By (\ref{kcond}), $r_n \to 0$ as $n \to \infty$.
	Then, given $n$ sufficiently large, we have
	$\nu(B,r_n,ar_n^d ) >0$ so  we can find $x \in B$ such
	that $ \mu(B(x,r_n)) \leq a r_n^d$, and hence
	$
	n \mu (B(x,r_n)) \leq a \nalpha k(n) .
	$
	If $k(n) \leq e^2 n \mu(B(x,r_n))$
	\textcolor{\blue}{(and hence $n \mu(B(x,r_n)) \geq e^{-2} k(n)$),
	then since $\X_n(B(x,r_n))$ is binomial with parameters
	$n$ and $\mu(B(x,r_n))$,}
	by Lemma \ref{lemChern}(a) we have that
	\bean
	\Pr[ R_{n,k(n)} \leq r_n] \leq
	\Pr[ \X_n(B(x,r_n) ) \geq k(n) ]
	& \leq & \textcolor{\blue}{  \exp \left( - n \mu(B(x,r_n)) H \left(\frac{k(n)}{n \mu( B(x,r_n))} \right) \right)}
	\\
	& \leq & \exp \left( - 
	e^{-2} k(n)
	H \left(
	(a \nalpha)^{-1}
	\right) \right),
	\eean
	while  if $k (n) > e^2 n \mu(B(x,r_n))$ then
	by Lemma \ref{lemChern}(c),
	$\Pr[ R_{n,k(n)} \leq r_n] \leq e^{-k(n)}$.
	Therefore
	$\Pr[ R_{n,k(n)} \leq r_n]$
	is summable in $n$ because we assume here that 
	$k(n)/\log n \to \infty$
	as $n \to \infty$. Thus by the Borel-Cantelli lemma,
	almost surely
	$R_{n,k(n)} > r_n$ for all but finitely many $n$, and hence
	$
	\liminf n  R_{n,k(n)}^d /k(n)  \geq \nalpha$. This gives the result
	for $\beta = \infty$. 

	Now suppose instead that $\beta < \infty$ and
	$b=0$, so that $\hH_\beta(b/d) = \beta$.
	Assume that   $\beta >0$ 
	(otherwise the result is trivial).
	Let $\beta' \in (0, \beta)$.  Let $\delta > 0$
with $\beta' < \beta- \delta$ and with
$
\beta'
H \left( \frac{ \beta - \delta}{\beta' } 
\right) > \delta.
$
	For $n \in \N$, set $r_n := (  
	(\beta' \log n)/ (a n) )^{1/d}$ and set
$k'(n)= \lceil (\beta - \delta) \log n \rceil$.
	By assumption $\nu(B,r_n,a r_n^d) = \Omega(1)$, so
	for all $n$ large enough, 
	we can (and do) choose $x_n \in B$ such that
$
	n \mu(B(x_{n},r_n)) \leq n a r_n^d = \beta' \log n. 
$
Then by a simple coupling, and Lemma \ref{lemChern}(a),
\bean
\Pr[
\X_{n} ( B(x_{n},r_n) ) \geq k'(n)
	]
	& \leq & 
	\textcolor{\blue}{
	\Pr \left[ \Bin\left(n,
	%\frac{\beta' \log n}{n}
	(\beta' \log n)/n)
	\right) \geq k'(n) \right]}
\\
	& \leq & \exp \left( - \left( 
	\beta' \log n
	\right) 
H \left( \frac{\beta- \delta }{\beta' } \right)  
\right) 
\leq n^{- \delta}.
\eean
Hence
$\Pr[ n R_{n,k'(n)}^d \leq (\beta'/a) \log n ]
= \Pr[ R_{n,k'(n)} \leq r_n] \leq n^{-\delta}$. Then
 by   the subsequence trick (Lemma \ref{lemtrick}(b)),
	we may deduce that
 $
	\liminf
( n  R_{n,k(n)}^d /\log n) 
\geq \beta'/a ,
$
	almost surely, which gives us the result for this case.

	Now suppose instead that $\beta < \infty$ and $b > 0$.
Let $\nalpha \in( 0,  a^{-1} \hH_\beta(b/d))$, so that
$\nalpha a  H(\beta/(\nalpha a)) < b/d$.
	Choose $\eps >0$ such that $(1+ \eps) \nalpha a H(\beta/(\nalpha a))
	< (b/d)-9 \eps$.

For each $n \in \N$ set $r_n= (\nalpha (\log n)/n)^{1/d}$.
Let $m_n := \nu(B,r_n,a r_n^d)$, and choose $x_{n,1},\ldots,$ $x_{n,m_n} \in B$
such that the balls $B(x_{n,1},r_n),\ldots,B(x_{n,m_n},r_n)$ are
pairwise disjoint and each have $\mu$-measure at most $ar_n^d$.
Set $\lambda(n):= n+ n^{3/4}$.
	For $1 \leq i \leq m_n$,
if $k(n) > 1$ then \textcolor{\blue}{by a simple coupling, 
and Lemma \ref{lemChern}(e)}, 
%\cite[Lemma 1.3]{RGG},
\bean
	\Pr[ \Po_{\lambda(n)}( B(x_{n,i},r_n) ) \leq k(n) -1]
	\geq \Pr[ Z_{\lambda(n) ar_n^d} \leq k(n)-1]
	%\nonumber 
	\\
\geq 
 \left( \frac{e^{-1/(12(k(n)-1))}}{ \sqrt{2 \pi (k(n)-1)}} \right) 
%~~~~~~~~~~~~~~~~~~~~~~~~~~~~
%\nonumber \\
%~~~~~~~~~~~~~~~~~~~~~~~~~~~  \times  
\exp 
\left( - \lambda(n) a r_n^d  
	H \left( \frac{k(n)-1}{\lambda(n) a r_n^d } \right)  
\right).
\eean
	\textcolor{\blue}{
		Now $\lambda(n)r_n^d/\log n \to \nalpha$ so by (\ref{kcond}),
	$(k(n)-1)/(\lambda(n) a r_n^d) \to \beta/(\nalpha a)$
	as $n \to \infty$. Thus by the continuity of
	$H(\cdot)$,
	provided $n$ is large enough and $k(n) > 1$,
	for $1 \leq i \leq m_{n}$,}
\bea
\Pr[\Po_{\lambda(n)}(B(x_{n,i},r_n))  \leq  k(n)-1]  
~~~~~~~~~~~~~~~~~~~~~~~~~~~  
\nonumber \\
\geq 
 \left( \frac{e^{-1/12} } {\sqrt{2 \pi (\beta +1 ) \log n }} \right)
%%~~~~~~~~~~~~~~~~~~~~~~~~~~~  \times
  \exp 
\left( - (1+ \eps)  a \nalpha   
H \left( \frac{\beta}{a \nalpha   } \right) \log n  
\right) .
\label{0319b5}
\eea
If $k(n)=1$ for infinitely many $n$, then $\beta =0$ and (\ref{0319b5})
still holds for large enough $n$.

By  (\ref{0319b5}) and our choice of $\eps$,
  there is a constant $c >0 $ such that for  all
large enough $n$ and all $i \in \{1,\ldots,m_n\}$ we have
\bean
\Pr[\Po_{\lambda(n)}(B(x_{n,i},r_n))  \leq  k(n)-1]  \geq c (\log n)^{-1/2}
	n^{   9 \eps - b/d  } \geq n^{8\eps -b/d}. 
\eean
	Hence, setting $E_n:= \cap_{i=1}^{m_n}
	\{
\Po_{\lambda(n)}(B(x_{n,i},r_n))  \geq  k(n)
		\}$,
	for all large enough $n$ we have
$$
	%\Pr[E_n] \leq ( 1- n^{8\eps-\gamma(a)/d})^{m_n} \leq \exp( - m_n
	\Pr[E_n] \leq ( 1- n^{8\eps-b/d})^{m_n} \leq \exp( - m_n
	%n^{8\eps -\gamma(a)/d} 
	n^{8\eps -b/d} 
).
$$ 
By assumption $m_n = \nu(B,r_n , a r_n^d) = \Omega (r_n^{-b})$ so that 
%(\ref{gammadef}), 
for large enough $n$ we have $m_n \geq n^{(b/d) - \eps}$,
%\frac{\log \nu(r_n,a r_n^d) }{ \log (1/r_n)} \geq \gamma(a) - \eps,
%$\log \nu(r_n,a r_n^d) \geq ( \gamma(a) - \eps)  \log (1/r_n)$ ,
%so that
%$$
%m_n = \nu(r_n,ar_n^d) \geq r_n^{\eps -\gamma(a) } \geq n^{(\gamma(a) - 
%2\eps)/d},
%$$
and therefore $\Pr[E_n]$ is
 is summable in $n$.
 %Thus in all cases $\sum_n \Pr[E_n] < \infty$.

 \textcolor{\blue}{
	 By Lemma \ref{lemChern}(d), 
and Taylor expansion of $H(x)$ about $x=1$ 
(see the print version of  \cite[Lemma 1.4]{RGG} for details; there may be a typo in the electronic
version),}
%of \cite{RGG}  that was not there in the print version),}
for all $n $ large enough
$
\Pr[Z_{\lambda(n)} < n] \leq
 %\exp( - (1/9)  n^{1/2}), 
 \exp( - \frac19  n^{1/2}), 
$
which is summable in $n$. Since $R_{m,k}$ is nonincreasing in $m$,
by the union bound
$$
\Pr[R_{n,k(n)} \leq r_n] \leq \Pr[R_{Z_{\lambda(n)},k(n)} \leq r_n ]
+ \Pr[ Z_{\lambda(n) } < n ]  \leq 
 \Pr[ E_n]+
\Pr[ Z_{\lambda(n)} < n],
$$
which is summable in $n$ by the preceding estimates. Therefore
by the Borel-Cantelli lemma,
\bean
 \Pr[  \liminf ( n  R_{n,k(n)}^d /\log n) \geq \nalpha  ] =1,
%~~~~~ \alpha < a^{-1} \hH_\beta(\gamma(a)/d),
~~~~~ \nalpha < a^{-1} \hH_\beta(b/d),
\eean
so the result follows for this case too.
\qed
\end{proof}

\textcolor{\blue}{
Given  $r>0$, and $D \subset A$,
define the `covering number' 
\bea
\kappa(D,r): = \min \{m  \in \N: \exists x_1,\ldots,x_m \in D
~{\rm with} ~ D \subset \cup_{i=1}^m B(x_i,r) 
\}.
\label{covnumdef}
\eea
We need a complementary upper  bound to go with
the preceding asymptotic lower bound on $R_{n,k(n)}$. 
For this,  we shall require a condition on the `covering number'
that is roughly dual to the condition on `packing number'
 used in Lemma \ref{gammalem}.
Also, instead of stating the lemma in terms  
of $R_{n,k}$ directly, it is more convenient
to state it in terms of 
the `fully covered' region $F_{n,k,r}$, 
defined for 
$n,k \in \N$ and $r>0$ by
}
\bea
F_{n,k,r}:= \{x \in A: \X_n(B(x,r)) \geq k\}.
\label{F3def}
\eea
\textcolor{\blue}{
 We can characterise the event
 $\{\tR_{n,k} \le r\} $ in terms of the set $F_{n,k,r}$
 as follows:
\bea
\tR_{n,k } \leq r {\rm~~~ if ~and~only~if~~~}
%\Longleftrightarrow 
(B \cap A^{(r)} )
\subset F_{n,k,r}.
\label{Fnequiv}
\eea
Indeed, the `if' part of this statement is clear from (\ref{eqmaxspac}).  
For the `only if' part, note that  if there exists $x \in (B \cap A^{(r)}) 
\setminus F_{n,k,r}$, then there exists $s >r$ with $x \in B \cap 
A^{(s)} \setminus F_{n,k,s}$. Then for all $s' <s$ we have
$x \in B \cap A^{(s')} \setminus F_{n,k,s}$, and therefore
$\tR_{n,k} \geq s > r$.}

\begin{lemm}[General upper bound]
\label{lemmeta}
\textcolor{\blue}{
Suppose $r_0, a, b \in (0,\infty)$,  and
 a family of sets  $A_r \subset A,$ defined for $0 < r < r_0$, 
	are
	 such that  for all $r \in (0,r_0)$, $x \in A_r$ and
 $s \in (0,r)$ we have $\mu(B(x,s)) \geq a s^d$, and
moreover $\kappa(A_r,r) = O(r^{-b})$ as $r \downarrow 0$.
}

\textcolor{\blue}{
If $\beta = \infty$ then let $\nalpha > 1/a$
and set $r_n = (\nalpha k(n)/n)^{1/d}$, $ n \in \N$.
Then with probability one,  $A_{r_n} \subset F_{n,k(n),r_n}$ 
for all large enough $n$.
}

\textcolor{\blue}{
If $\beta < \infty$, let $\nalpha >a^{-1}
\hH_\beta(b/d)$ and set $r_n = (\nalpha (\log n)/n)^{1/d}$.
 Then there exists $\eps >0$ such that
$
\Pr[\{A_{r_n} \subset F_{n,\lfloor (\beta + \eps) \log n\rfloor,r_n} \}^c ]
 = O(n^{-\eps})
$
as $n \to \infty$.
}
\end{lemm}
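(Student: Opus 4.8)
The plan is to prove both cases by a union bound over a suitably fine net of $A_{r_n}$, using only the lower-tail Chernoff bound of Lemma~\ref{lemChern}(b) together with the two hypotheses on $A_r$ (the local volume lower bound $\mu(B(x,s))\ge as^d$ and the covering-number bound $\kappa(A_r,r)=O(r^{-b})$). \textbf{Net reduction.} Fix a small $\eta\in(0,1/4)$, to be chosen later. For $n$ large enough that $r_n<r_0$, start from a cover of $A_{r_n}$ by $\kappa(A_{r_n},r_n)=O(r_n^{-b})$ Euclidean balls of radius $r_n$ with centres in $A_{r_n}$; subdivide each such ball into at most $C_{d,\eta}$ balls of radius $\eta r_n$ (a purely volumetric bound, $C_{d,\eta}$ depending only on $d$ and $\eta$), and for each of these small balls meeting $A_{r_n}$ choose one point of $A_{r_n}$ lying in it. This gives a finite set $\mathcal{N}_n\subset A_{r_n}$ with $|\mathcal{N}_n|=O(r_n^{-b})$ such that every point of $A_{r_n}$ is within distance $2\eta r_n$ of $\mathcal{N}_n$. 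Since $\|x-y\|\le 2\eta r_n$ implies $B(y,(1-2\eta)r_n)\subset B(x,r_n)$, if $\X_n(B(y,s_n))\ge k$ for all $y\in\mathcal{N}_n$, where $s_n:=(1-2\eta)r_n$, then $A_{r_n}\subset F_{n,k,r_n}$. So it suffices to bound $\Pr[\X_n(B(y,s_n))<k]$ for a single $y\in\mathcal{N}_n$ and multiply by $|\mathcal{N}_n|$.

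\textbf{Single-point estimate.} For $n$ large we have $s_n<r_0$ and $s_n<r_n$, so the hypothesis gives $\mu(B(y,s_n))\ge a s_n^d=a(1-2\eta)^d r_n^d$; hence $\X_n(B(y,s_n))$ stochastically dominates $\Bin(n,p_n)$ with $p_n:=a(1-2\eta)^d r_n^d$. If $\beta=\infty$, then $np_n=a(1-2\eta)^d\nalpha\,k(n)$; choosing $\eta$ so small that $a(1-2\eta)^d\nalpha=1+\delta'>1$ and applying Lemma~\ref{lemChern}(b) with $k=k(n)\le np_n$ gives $\Pr[\X_n(B(y,s_n))<k(n)]\le\exp(-c_0 k(n))$ with $c_0:=(1+\delta')H(1/(1+\delta'))>0$. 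If $\beta<\infty$, then $np_n=a(1-2\eta)^d\nalpha\log n=:a'\log n$; since $a\nalpha>\hH_\beta(b/d)$ we may pick $\eta$ small so that still $a'>\hH_\beta(b/d)$, and we take the threshold $k'(n):=\lfloor(\beta+\eps)\log n\rfloor$ with $\eps>0$ to be chosen. Because $\hH_\beta(b/d)>\beta$ (here $b>0$ is used), for $\eps<a'-\beta$ we have $k'(n)\le np_n$ eventually, so by Lemma~\ref{lemChern}(b) and the smoothness of $H$ on $(0,\infty)$ (to absorb the floor and the $O(1/\log n)$ error in the argument of $H$), $\Pr[\X_n(B(y,s_n))<k'(n)]\le C\,n^{-a'H((\beta+\eps)/a')}$ for a constant $C$. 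Now $y\mapsto yH(\beta/y)$ is increasing on $[\beta,\infty)$ — this is exactly the monotonicity of $\hH_\beta$ — and $\hH_\beta(b/d)\,H(\beta/\hH_\beta(b/d))=b/d$, whence $a'H(\beta/a')>b/d$; shrinking $\eps$ once more we may also ensure $a'H((\beta+\eps)/a')\ge b/d+2\eps$.

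\textbf{Union bound and conclusion.} When $\beta=\infty$, the fact that $k(n)\ge\log n$ eventually forces $r_n^{-b}=O(n^{b/d})$, so $\Pr[A_{r_n}\not\subset F_{n,k(n),r_n}]\le|\mathcal{N}_n|\exp(-c_0k(n))=O\big(n^{b/d}\exp(-c_0k(n))\big)$; since $k(n)/\log n\to\infty$ this is $O(n^{-2})$, which is summable, and the Borel--Cantelli lemma gives $A_{r_n}\subset F_{n,k(n),r_n}$ for all large $n$ almost surely. When $\beta<\infty$, $r_n^{-b}=O((n/\log n)^{b/d})=O(n^{b/d})$, so $\Pr[A_{r_n}\not\subset F_{n,k'(n),r_n}]\le|\mathcal{N}_n|\,C\,n^{-(b/d+2\eps)}=O(n^{-2\eps})=O(n^{-\eps})$, which is the asserted bound.

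I expect the main obstacle to be the case $\beta<\infty$ of the single-point estimate: one must order the choices of the geometric slack $\eta$ and of the threshold excess $\eps$ so that the decay $n^{-a'H((\beta+\eps)/a')}$ of the single-point failure probability strictly beats the polynomial net cardinality $n^{b/d}$, and establishing the strict inequality $a'H(\beta/a')>b/d$ requires the defining relation and monotonicity of $\hH_\beta$ rather than any crude estimate. The net construction is routine, but care is needed to keep the net points inside $A_{r_n}$ (so that the local volume bound $\mu(B(y,s))\ge as^d$ applies at every net point) while retaining the cardinality bound $|\mathcal{N}_n|=O(r_n^{-b})$.
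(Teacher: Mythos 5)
Your proposal is correct and follows essentially the same route as the paper: cover $A_{r_n}$ by $O(r_n^{-b})=O(n^{b/d})$ balls of radius proportional to $r_n$ with centres in $A_{r_n}$, note that $k$-coverage of the slightly shrunk ball at each centre forces coverage of the corresponding small ball, apply the lower-tail Chernoff bound of Lemma~\ref{lemChern}(b) at each centre via the volume hypothesis, and finish with a union bound plus Borel--Cantelli (for $\beta=\infty$) or the polynomial estimate coming from $a'H(\beta/a')>b/d$ (for $\beta<\infty$). Your use of two parameters $\eta,\eps$ instead of the paper's single $\eps$, and your explicit two-step construction of the net with centres re-chosen inside $A_{r_n}$, are only cosmetic refinements of the same argument.
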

\begin{proof}
	Let $\eps \in (0,1)$; if
$\beta = \infty$, assume  $a (1-\eps)^d \nalpha > 1+ \eps$.
If $\beta < \infty$, assume
$$
a(1-\eps)^d \nalpha H((\beta+ \eps)/(a (1-\eps)^d \nalpha))
> ( b/d ) + \eps.
$$
 This can be achieved because
$a \nalpha H (\beta /(a \nalpha)) > b/d$ in this case.
Set $m_n = \kappa(A_{r_n},\eps r_n)$. Then $m_n= O(r_n^{-b})
= O(n^{b/d})$ (in either case).
Let $x_{n,1},\ldots,x_{n,m_n} \in A_{r_n}$ with $A_{r_n}
\subset \cup_{i=1}^{m_n} B(x_{n,i}, \eps r_n)$. Then
for $1 \leq i \leq m_n$, 
if $\X_n(B(x_{n,i}, (1-\eps)r_n) \geq k(n)$ then 
$B(x_{n,i},\eps r_n) \subset F_{n,k(n),r_n}$.
Therefore
\bea
	\Pr[ \{A_{r_n} \subset F_{n,k(n),r_n} \}^c ]
 \leq 
\Pr[ \cup_{i=1}^{m_n} \{ \X_n(B(x_{n,i},(1-\eps)r_n)  
< k(n) \}]. 
\label{210920a}
\eea

Suppose $\beta = \infty$. Then for $1 \leq i \leq m_n$, 
$$
n \mu(B(x_{n,i},(1-\eps)r_n))
\geq n a ((1-\eps)r_n)^d
\geq (1+ \eps) k(n), 
$$
so that by (\ref{210920a}), the union bound, and Lemma \ref{lemChern}(b),
\bean
	\Pr[ \{A_{r_n} \subset F_{n,k(n),r_n} \}^c ]
	%\textcolor{\blue}{
		& \leq & m_n \Pr[\Bin(n, (1+\eps) k(n)/n) <k(n) ] 
		%}
	\\
	& \leq & m_n \exp ( - (1+\eps) k(n) H((1+\eps)^{-1}) ).
\eean
This is summable in $n$, 
since $m_n = O(n^{b/d})$ and $k(n)/\log n \to \infty$.
Therefore by the first Borel-Cantelli lemma,
we obtain the desired conclusion for this case.

Now suppose instead that $\beta < \infty$. Then
$$
n \mu(B(x_{n,i},(1-\eps)r_n))
\geq n a ((1-\eps)r_n)^d
\geq  a (1- \eps)^d  \nalpha \log n . 
$$
Therefore setting
% if $n$ is large enough so that
%$k(n) \leq (\beta + \eps)  \log n$, 
$k'(n)  := \lfloor (\beta + \eps)  \log n \rfloor$, 
	by (\ref{210920a})
we have
\bean
	\Pr[ \{A_{r_n} \subset F_{n,k'(n),r_n} \}^c ]
% \leq 
%\Pr[ \cup_{i=1}^{m_n} \{ \X_n(B(x_{n,i},(1-\eps)r_n)  
%< k \}] 
%\\
& \leq & m_n \exp \left( -a (1-\eps)^d \nalpha   H
\left( \frac{(\beta +\eps) }{a (1- \eps)^d \nalpha} \right) 
\log n
\right)
\\
& \leq & m_n n^{-(b/d) -\eps},
\eean
which yields the desired conclusion for this case.
\qed
\end{proof}

\subsection{{\bf Proof of Proposition \ref{thm1}}} 
\label{subsecpfprop1}

Throughout this subsection we assume
%, as in Proposition \ref{thm1}, 
that either:
(i)  $B$ is compact and Riemann measurable with $\mu(B)>0$ and
 $B \subset A^o$, 
 %and $f$ is continuous on $A$, 
 or
 (ii) $B=A$. \textcolor{\blue}{We do not (yet) assume $f$ is continuous on $A$.}
 Recall from (\ref{f0def}) that
 $f_0: = \essinf_{x \in B} f(x)$.

 %We first characterise the event
 %$\{\tR_{n,k} \le r\} $ in terms of the set $F_{n,k,r}$
 %defined at (\ref{F3def}).
 %\begin{lemm}
%	 \label{lemtRchar}
% \textcolor{\blue}{
%Let $n,k \in \N$ and $r>0$. Then
% $\tR_{n,k} \leq r$ if and only
%	 if $ B \cap A^{(r)} \subset F_{n,k,r}$, for all  
%	 $r>0$, $n, k \in \N$.} 
% \end{lemm}
% \begin{proof}
%	 The `if' part is clear from (\ref{eqmaxspac}).
%	 For the `only if' part, observe that if $x \in B \cap A^{(r)}
%	 \setminus F_{n,k,r}$ then there exists $s >r$
%%	 with $x \in B\cap A^{(s)} \setminus F_{n,k,s} $. Then for all
%	 $r' <s$, we have $x \in B \cap A^{(r')} \setminus
%	 F_{n,k,r'}$, so that $\tR_{n,k} \geq s >r$.
% \end{proof}

 \textcolor{\blue}{
 We shall prove Proposition \ref{thm1} by applying 
   (\ref{Fnequiv}) and
  Lemma 
  \ref{lemmeta} 
   to derive an
 upper bound on $\tR_{n,k(n)}$ (Lemma \ref{lemlimsup}), and Lemma
  \ref{gammalem} to derive a lower bound (Lemma \ref{lemliminf}).
 For the lower bound we also require the following lemma (recall
 that $\nu(B,r,a)$ was defined just before Lemma \ref{gammalem}).}

\begin{lemm}
\label{packlem}
%Suppose  $B$ is Riemann measurable with $\mu(B) >0$.
Let $\alpha > f_0$.
	Then $\liminf_{r \downarrow 0} r^d \nu(B,r, \alpha \theta_d r^d) > 0$.
\end{lemm}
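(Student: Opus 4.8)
The approach is to produce a subset $E \subseteq B$ with $|E|>0$, together with a single radius threshold $\delta>0$, such that every ball of radius at most $\delta$ centred in $E$ has $\mu$-measure at most $\alpha\theta_d$ times its volume; once this is done, a greedy (maximal) packing argument inside $E$ will give $\Omega(r^{-d})$ admissible disjoint balls for every $r<\delta$.

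To construct $E$ and $\delta$, for $j\in\N$ I would set
\[
E_j := \{ x \in B : \mu(B(x,s)) \le \alpha \theta_d s^d \ \text{ for all } s \in (0,1/j] \}.
\]
Each $E_j$ is Lebesgue measurable: $x\mapsto\mu(B(x,s))$ is continuous (dominated convergence, using that $\mu$ has a density so spheres are $\mu$-null), hence each set $\{x:\mu(B(x,s))\le\alpha\theta_d s^d\}$ is closed, and $E_j$ is the intersection of $B$ (which is Riemann, hence Lebesgue, measurable) with these closed sets over $s\in(0,1/j]$. The $E_j$ increase with $j$, and one has the inclusions $\{x\in B:\limsup_{s\downarrow0}\mu(B(x,s))/(\theta_d s^d)<\alpha\}\subseteq\bigcup_j E_j\subseteq\{x\in B:\limsup_{s\downarrow0}\mu(B(x,s))/(\theta_d s^d)\le\alpha\}$, since a strict limsup below $\alpha$ yields a uniform bound $\mu(B(x,s))\le\alpha\theta_d s^d$ for all small $s$. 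By the Lebesgue differentiation theorem applied to $f$ (extended by $0$ off $A$), $\limsup_{s\downarrow0}\mu(B(x,s))/(\theta_d s^d)=f(x)$ for Lebesgue-a.e.\ $x$, so $\bigcup_j E_j$ contains $\{x\in B:f(x)<\alpha\}$ up to a null set. Since $\alpha>f_0=\essinf_{x\in B}f$, the set $\{x\in B:f(x)<\alpha\}$ has positive Lebesgue measure, hence $\big|\bigcup_j E_j\big|>0$; as the $E_j$ increase there is $j$ with $|E_j|>0$, and I take $E:=E_j$, $\delta:=1/j$.

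For the count: fix $r\in(0,\delta)$ and choose a maximal family of pairwise disjoint closed balls $B(x_1,r),\dots,B(x_m,r)$ with $x_1,\dots,x_m\in E$ (finite, since the centres are $2r$-separated and $E\subseteq A$ is bounded). Maximality forces every point of $E$ to lie within distance $2r$ of some $x_i$, so $E\subseteq\bigcup_{i=1}^m B(x_i,2r)$ and therefore $|E|\le m\,\theta_d(2r)^d$, i.e.\ $m\ge |E|/(2^d\theta_d r^d)$. Each $B(x_i,r)$ is centred in $B$ and, because $x_i\in E$ and $r<\delta$, satisfies $\mu(B(x_i,r))\le\alpha\theta_d r^d$; hence this family witnesses $\nu(B,r,\alpha\theta_d r^d)\ge m$. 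Consequently $r^d\nu(B,r,\alpha\theta_d r^d)\ge |E|/(2^d\theta_d)$ for all $r\in(0,\delta)$, so $\liminf_{r\downarrow0}r^d\nu(B,r,\alpha\theta_d r^d)\ge |E|/(2^d\theta_d)>0$.

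The argument is essentially routine; the one delicate point is passing from the pointwise a.e.\ statement of the Lebesgue differentiation theorem to a \emph{single} measurable set $E$ of positive measure on which the ball-measure bound holds \emph{uniformly} in the radius up to a fixed $\delta$ — this is exactly what the countable exhaustion by the sets $E_j$ handles. Everything after that is a standard volume/packing estimate.
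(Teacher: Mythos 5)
Your proof is correct, but it runs along a different track from the paper's. The paper first replaces $B$ by an interior set $B^{(\delta)}$ chosen so that $\mu(B^{(\delta)})>0$ and $\essinf_{B^{(\delta)}}f<\alpha$, then invokes \cite[Lemma 5.2]{RGG} (applied to the restriction of $\mu$ to $B^{(\delta)}$) as a black box to get $\liminf_{r\downarrow 0} r^d\sigma'(r)>0$ for a packing of balls with small restricted mass, and finally uses $r<\delta$ to argue that those balls are automatically centred in $B$, so that $\sigma'(r)\le\nu(B,r,\alpha\theta_d r^d)$. You instead reprove the packing content from scratch: the countable exhaustion $E_j=\{x\in B:\mu(B(x,s))\le\alpha\theta_d s^d\ \forall s\le 1/j\}$ combined with the Lebesgue differentiation theorem produces a single positive-Lebesgue-measure set $E\subseteq B$ and a radius threshold $\delta$ with a bound uniform in the radius, and then a maximal-packing/covering volume estimate gives $m\ge |E|/(2^d\theta_d r^d)$ admissible disjoint balls; all the steps (continuity of $x\mapsto\mu(B(x,s))$ via null spheres, measurability of $E_j$, positivity of $|\{f<\alpha\}\cap B|$ from $\alpha>f_0$) check out. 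What your route buys is self-containedness and the fact that your balls are by construction centred in $B$ with the mass bound on the \emph{whole} ball $\mu(B(x_i,r))\le\alpha\theta_d r^d$, exactly as the definition of $\nu$ requires (the paper's cited lemma bounds $\mu(B_i\cap B^{(\delta)})$ and needs the extra interior-set step to place the centres in $B$); what the paper's route buys is brevity, by outsourcing the density-point/packing argument to the book lemma and avoiding the measurability bookkeeping for the sets $E_j$.
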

\begin{proof}
	By the measurability assumption,
$\essinf_{B^{(\eps)}} (f) \downarrow f_0$ as $\eps \downarrow 0$. 
Therefore we can and do
	choose $\delta >0$ with $\mu(B^{(\delta)} ) >0$ and with
	$\essinf_{B^{(\delta)}}(f) < \alpha$.
	For $ r >0$,
	let $\sigma'(r)$ be the maximum number of disjoint closed balls $B_i$
	of radius $r$
	that can be found such that $0 < \mu( B_i \cap B^{(\delta)} ) \leq
	\alpha \theta_d r^d$. Then by applying \cite[Lemma 5.2]{RGG},
	taking the measure $F$ there to be the restriction of $\mu$ to 
	$B^{(\delta)}$,
	%and taking $\delta$ there to be 1,
	we have
	that $\liminf_{r \downarrow 0} (r^d \sigma'(r)) >0$.  
	%given $r < \delta$, 
	Each of the balls $B_i$ satisfies $B_i \cap B^{(\delta)} 
	\neq \emptyset $, so if $r< \delta$ the centre of $B_i$ lies in
	$B$.    Hence $\sigma'(r) \leq \nu(B,r,\alpha \theta_d r^d)$ 
	for $r < \delta$, and the result follows.
	\qed
\end{proof}

\begin{lemm} 
\label{lemliminf}
	%Suppose that $B$ is compact and Riemann measurable with $\mu(B) >0$,
	%and either $B \subset A^o$ or $B=A$.
	It is the case that
\bea
\Pr[ \liminf(n \theta_d \tR_{n,k(n)}^d/k(n) ) \geq 1/f_0]=1
	~~~~~{\rm if} ~\beta = \infty;
\label{0328a}
\\
 \Pr[  \liminf 
	( n \theta_d \tR_{n,k(n)}^d /\log n) \geq \hH_\beta(1)/f_0  ] =1
	~~~~~{\rm if} ~\beta < \infty.
\label{liminf1}
\eea
\end{lemm}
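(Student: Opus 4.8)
The plan is to apply the general lower bound (Lemma \ref{gammalem}) to the region $B$ inside $A$, treating it purely as a metric space with the measure $\mu$, and then to exploit the packing estimate of Lemma \ref{packlem} to supply the hypothesis of Lemma \ref{gammalem}. Concretely, fix any $\alpha > f_0$. By Lemma \ref{packlem} we have $\liminf_{r \downarrow 0} r^d \nu(B,r,\alpha \theta_d r^d) > 0$, i.e.\ $\nu(B,r, (\alpha \theta_d) r^d) = \Omega(r^{-d})$ as $r \downarrow 0$. This is exactly the hypothesis of Lemma \ref{gammalem} with the constant ``$a$'' taken to be $\alpha \theta_d$ and with $b = d$. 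Lemma \ref{gammalem} then gives, almost surely: if $\beta = \infty$ then $\liminf_{n \to \infty}(n R_{n,k(n)}^d/k(n)) \geq 1/(\alpha \theta_d)$; and if $\beta < \infty$ then $\liminf_{n \to \infty}(n R_{n,k(n)}^d / \log n) \geq (\alpha \theta_d)^{-1} \hH_\beta(b/d) = (\alpha \theta_d)^{-1} \hH_\beta(1)$, since $b/d = 1$.

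The next step is to pass from $R_{n,k}$ (coverage of all of $B$) to the interior coverage threshold $\tR_{n,k}$ (coverage of $B \cap A^{(r)}$). Since $\tR_{n,k} \le R_{n,k}$ always holds directly from the definitions (\ref{Rnkdef}) and (\ref{eqmaxspac}) — covering $B$ is at least as hard as covering the smaller set $B \cap A^{(r)}$ — this is the wrong direction, so instead I would re-run the argument of Lemma \ref{gammalem} with $\tR_{n,k}$ in place of $R_{n,k}$. The point is that the proof of Lemma \ref{gammalem} only ever exhibits a single ball $B(x_n,r_n)$ (or a family of disjoint balls) centred at a point $x_n \in B$ and argues that this ball fails to contain $k(n)$ sample points with non-summable probability; as long as we also know $x_n \in B \cap A^{(r_n)}$, the same reasoning shows $\tR_{n,k(n)} > r_n$. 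So the required modification is to strengthen Lemma \ref{packlem} (or rather Lemma \ref{gammalem} applied to it) to place the packing balls in the $r$-interior: by compactness of $B$ (in case (i)) or by the fact that one can restrict to $B^{(\delta)}$-centred balls in the proof of Lemma \ref{packlem}, for small $r$ the centres lie well inside $A^o$, hence in $A^{(r)}$, so that $x_n \in B \cap A^{(r_n)}$ as needed. This yields the stated bounds (\ref{0328a}) and (\ref{liminf1}) for $\tR_{n,k(n)}$.

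There remains the bookkeeping of the two cases $\beta = \infty$ and $\beta < \infty$, and the final step of letting $\alpha \downarrow f_0$. For each fixed $\alpha > f_0$ we have obtained an almost-sure event on which the liminf is at least $1/(\alpha \theta_d f_0)$-type bounds; taking a countable sequence $\alpha_m \downarrow f_0$ and intersecting the corresponding almost-sure events gives $\liminf(n\theta_d \tR_{n,k(n)}^d/k(n)) \ge 1/f_0$ when $\beta = \infty$, and $\liminf(n\theta_d \tR_{n,k(n)}^d/\log n) \ge \hH_\beta(1)/f_0$ when $\beta < \infty$, using the continuity of $\hH_\beta$ in the second case (and noting $\hH_\beta(1) = 1$ when $\beta = 0$, recovering the expected constant). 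When $f_0 = 0$ the claimed bounds are $+\infty$, and the same chain of reasoning with $\alpha \to \infty$ delivers that.

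The main obstacle, as I see it, is not any single hard estimate but the bridging point: making sure the packing/lower-bound machinery is applied with balls centred in the $r$-interior $A^{(r)}$ rather than merely in $B$, so that it genuinely controls $\tR_{n,k}$ and not just $R_{n,k}$. In case (ii), where $B = A$ and there is no interior margin to exploit, one must instead observe that $A^{(r)}$ still exhausts $A^o$ as $r \downarrow 0$ and that $\mu(A \setminus A^o) = 0$ (Riemann measurability of $A$), so the essential infimum $f_0 = \essinf_{x \in A} f(x)$ is achieved, up to $\varepsilon$, on a positive-measure subset of $A^{(r)}$ for all small $r$; this is exactly what the proof of Lemma \ref{packlem} already provides by working with $B^{(\delta)}$, so the argument goes through uniformly in both cases.
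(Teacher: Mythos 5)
Your proposal is essentially the same argument as the paper's: derive the packing lower bound from Lemma \ref{packlem}, feed it into Lemma \ref{gammalem} with $a = \alpha\theta_d$, $b=d$, observe that the packing balls sit well inside $A^{(r)}$ for small $r$, and let $\alpha \downarrow f_0$. One small difference in bookkeeping: rather than ``re-running'' Lemma \ref{gammalem} with $\tR_{n,k}$ in place of $R_{n,k}$, the paper applies the lemma as stated (using a pair $\alpha'>\alpha>f_0$, with $r_n$ defined via $\alpha'$, so that the liminf bound translates into ``$R_{n,k(n)}>r_n$ eventually'') and then passes to $\tR_{n,k(n)}>r_n$ via the characterization (\ref{Fnequiv}); in case (ii) ($B=A$) the paper avoids the boundary entirely by first choosing a compact Riemann measurable $B'\subset A^o$ with $\essinf_{B'}f<\alpha$ and applying the case-(i) argument to $B'$. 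Your re-run plan is a workable alternative, but if you take that route you should state explicitly that the Lemma \ref{packlem} packing balls have centres in $B^{(\delta-r)}$ (hence in $A^{(r)}$ when $B=A$ and $r$ is small), and that an uncovered point of $B\cap A^{(r)}$ forces $\tR_{n,k}>r$ via (\ref{Fnequiv}); those are the two facts doing the work, and without them the assertion that the argument ``controls $\tR$ directly'' is informal.
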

\begin{proof}
	Let $\alpha' > \alpha > f_0$.
	Set $r_n := (k(n)/(n \theta_d \alpha'))^{1/d}$ if $\beta = \infty$, and
	set
	\linebreak
	$r_n:= (\hH_\beta(1) (\log n)/(n \theta_d \alpha'))^{1/d}$ if $\beta < \infty$. 

	Assume for now that $B$ is compact with
	$B \subset A^o$, so that there exists $\delta >0$
	such that $B \subset A^{(\delta)}$.
	Then, even if $f$ is not continuous on $A$,
	we can find $x_0 \in B$ with $f(x_0) < \alpha$,
	such that $x_0$ is a Lebesgue point of $f$.
	Then for all small enough $r>0$ we have $\mu(B(x_0,r)) < 
	\alpha \theta_d r^d$,
	so that $\nu(B,r,\alpha \theta r^d) = \Omega(1)$ as $r
	\downarrow 0$.
	%so that $\inf_{x \in B} \mu(B(x,r))  < r^d  \theta_d \alpha$.

	If $\beta = \infty$,
	then by Lemma \ref{gammalem} (taking $b=0$),
	 $\liminf_{n \to \infty} nR_{n,k(n)}^d/k(n)
	\geq (\theta_d \alpha)^{-1}$, almost surely. Hence for all large
	enough $n$ we have
	$R_{n,k(n)} > r_n$;
	  provided  $n$ is also large enough  so that $r_n < \delta$
	   we also have $\tR_{n,k(n)} > r_n$, and (\ref{0328a})
	follows.

	Suppose instead that $\beta < \infty$.
	Then by Lemma \ref{packlem},
	$\nu(B,r, \alpha \theta_d r^{d}) = \Omega(r^{-d})$ as
	$r \downarrow 0$.
	%there is a constant $c>0$ such
	%that $\nu(B,r, \alpha \theta_d r^d) > c r^{-d}$ for all small enough
	%$r>0$. 
	%Using (\ref{gammadef}),  it follows that $\gamma(\alpha \theta_d)
	%\geq d$.
	Hence by Lemma \ref{gammalem}, almost surely
	$\liminf_{n \to \infty}
	\left( n R_{n,k(n)}^d /\log n \right) \geq (\alpha \theta_d)^{-1} 
	\hH_\beta(1)$, and hence for large enough $n$ we have
	$R_{n,k(n)} >  r_n $ 
	and also 
	$\tR_{n,k(n)} >  r_n $, 
	which
	yields (\ref{liminf1}). 

	Finally, suppose instead that $B=A$.  Then
	by using e.g. \cite[Lemma 11.12]{RGG}
	we can find compact, Riemann measurable $B' \subset A^o$ with
	$\mu(B') >0$ and $\essinf_{x \in B'} f(x) < \alpha$.
	Define $S_{n,k}$ to be the smallest $r \geq 0$ such that every
	point in $B'$ is covered at least $k$ times by balls of
	radius $r$ centred on points of $\X_n$. 
	By the argument already given we have
	almost surely 
	for all large enough $n$
	\textcolor{\blue}{ 
	that $S_{n,k(n)} > r_n $
	and also $B' \subset A^{(r_n)}$. 
	 For such $n$, there exists $x \in B' \subset B 
	\cap A^{(r_n)}$ with 
	$\X_n(B(x,r_n)) < k(n)$,
	%so also $x \in B \cap A^{(s)}$
	%and $\X_n(B(x,r_n)) < k(n)$ for all $s < r_n$,
	and hence by (\ref{Fnequiv}),}
	$\tR_{n,k(n)} > r_n$,
%	
%	but this implies that also $\tR_{n,k(n)} \geq r_n$ for
%	all large enough $n$,
	which gives us (\ref{0328a}) and
	(\ref{liminf1}) in this case too.
	\qed
\end{proof}

\textcolor{\blue}{Now and for the rest of this subsection, we do
assume in case (i) (with $B \subset A^o$) that
$f$ is continuous on $A$.}

\begin{lemm}
\label{lemlimsup}
Suppose  that
	%the hypothesis  of Proposition  \ref{thm1} applies, and
	$f_0 >0$.
	Then almost surely
\bea
  \limsup ( n \theta_d \tR_{n,k(n)}^d /k(n)) \leq 1/f_0, ~~~~~ 
	{\rm if~} \beta = \infty;
	\label{eqsubseq3}
\\
	\limsup ( n \theta_d \tR_{n,k(n)}^d /\log n) \leq \hH_\beta(1)/f_0 , ~~~~~
	{\rm if~}  \beta < \infty.
\label{eqsubseq2}
	\eea 

\end{lemm}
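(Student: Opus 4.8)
The plan is to derive this upper bound by applying the general upper bound (Lemma \ref{lemmeta}) together with the characterisation (\ref{Fnequiv}) of the event $\{\tR_{n,k}\le r\}$, and then in case $\beta<\infty$ promote an ``$O(n^{-\eps})$ in probability'' statement to an almost-sure statement via the subsequence trick (Lemma \ref{lemtrick}(a)). To set up Lemma \ref{lemmeta}, I need to exhibit, for suitable $a$ and $b$, a family of sets $A_r\subset A$ with the two required properties: a lower volume bound $\mu(B(x,s))\ge a s^d$ for $x\in A_r$, $s\in(0,r)$, and a covering-number bound $\kappa(A_r,r)=O(r^{-b})$. Since $f$ is continuous on $A$ (in case (ii) this means $f$ restricted to $A$ is continuous at boundary points, and we work with an essential-infimum version of $f_0$; in case (i) $f$ is genuinely continuous), given any $\alpha<f_0$ the set where $f>\alpha$ contains $B\cap A^{(\delta)}$ for some $\delta>0$ after shrinking slightly, so a ball $B(x,s)$ centred at a point of $B\cap A^{(s)}$ with $s$ small has $\mu$-measure at least $\alpha\theta_d s^d$. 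Thus the natural choice is $A_r := B\cap A^{(r)}$ (or a Riemann-measurable approximation thereof), $a=\alpha\theta_d$ with $\alpha$ slightly below $f_0$, and $b=d$ since $B\cap A^{(r)}$ is a bounded set and so can be covered by $O(r^{-d})$ balls of radius $r$.

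With these choices, Lemma \ref{lemmeta} gives the following. If $\beta=\infty$: for any $\nalpha>1/(\alpha\theta_d)$, setting $r_n=(\nalpha k(n)/n)^{1/d}$, almost surely $A_{r_n}\subset F_{n,k(n),r_n}$ for all large $n$; by (\ref{Fnequiv}) this says $\tR_{n,k(n)}\le r_n$ for all large $n$, hence $\limsup n\theta_d\tR_{n,k(n)}^d/k(n)\le \nalpha\alpha\theta_d$, and letting $\nalpha\downarrow 1/(\alpha\theta_d)$ and then $\alpha\uparrow f_0$ yields (\ref{eqsubseq3}). If $\beta<\infty$: for any $\nalpha>(\alpha\theta_d)^{-1}\hH_\beta(b/d)=(\alpha\theta_d)^{-1}\hH_\beta(1)$, setting $r_n=(\nalpha(\log n)/n)^{1/d}$, there is $\eps>0$ with $\Pr[\{A_{r_n}\subset F_{n,\lfloor(\beta+\eps)\log n\rfloor,r_n}\}^c]=O(n^{-\eps})$. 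By (\ref{Fnequiv}), on the event $\{A_{r_n}\subset F_{n,\lfloor(\beta+\eps)\log n\rfloor,r_n}\}$ we have $\tR_{n,\lfloor(\beta+\eps)\log n\rfloor}\le r_n$, i.e. $n\theta_d\tR_{n,\lfloor(\beta+\eps)\log n\rfloor}^d\le \nalpha\theta_d\log n$. Applying Lemma \ref{lemtrick}(a) with $U_{n,k}:=\theta_d\tR_{n,k}^d\cdot(\nalpha\theta_d)^{-1}$ (which is nonincreasing in $n$, nondecreasing in $k$, by definition of $\tR_{n,k}$) gives $\limsup n\theta_d\tR_{n,k(n)}^d/\log n\le \nalpha\theta_d$ almost surely; letting $\nalpha\downarrow(\alpha\theta_d)^{-1}\hH_\beta(1)$ and $\alpha\uparrow f_0$ gives (\ref{eqsubseq2}).

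The main point requiring care is the verification that $A_r = B\cap A^{(r)}$ genuinely satisfies the hypotheses of Lemma \ref{lemmeta}, in particular the interior-volume lower bound $\mu(B(x,s))\ge a s^d$ uniformly over $x\in A_r$ and $s\in(0,r)$. For $x\in A^{(r)}$ the ball $B(x,s)$ with $s<r$ lies in $A^o$, so $\mu(B(x,s))=\int_{B(x,s)}f$; continuity of $f$ at boundary points plus positivity of $f_0=\essinf_B f$ (combined, in case (ii), with the same kind of ``$f>\alpha$ off a small neighbourhood of $\partial A$'' argument used in Lemma \ref{packlem}) yields the uniform lower bound $f\ge\alpha$ on a neighbourhood of $B$ of the relevant scale, giving $a=\alpha\theta_d$. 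A minor technical wrinkle is Riemann measurability: one may replace $A^{(r)}$ by the Riemann-measurable set $A^{[r]}$ (introduced in Section \ref{secdefs}) at the cost of a comparison argument, or simply note that Lemma \ref{lemmeta} as stated only needs $A_r\subset A$ with the two stated properties and does not require measurability of $A_r$ itself. The case $\beta=\infty$ needs the observation that $r_n\to 0$, which follows from (\ref{kcond}); and throughout one uses that when $B=A$ one may first pass to a compact Riemann-measurable $B'\subset A^o$ with $\essinf_{B'}f$ arbitrarily close to $f_0$ (exactly as in the proof of Lemma \ref{lemliminf}), reducing case (ii) to an instance of case (i) for the lower volume bound.
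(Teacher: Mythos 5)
Your proposal follows essentially the same route as the paper's proof: take $A_r = B\cap A^{(r)}$ in Lemma \ref{lemmeta} with $a$ slightly below $\theta_d f_0$ and $b=d$, convert coverage of $A_{r_n}$ into $\tR_{n,k}\le r_n$ via (\ref{Fnequiv}), and, when $\beta<\infty$, upgrade the $O(n^{-\eps})$ probability bound to an almost sure statement by Lemma \ref{lemtrick}(a) applied to $U_{n,k}=\tR_{n,k}^d/u$; letting the various slack parameters tend to their limits then yields (\ref{eqsubseq3}) and (\ref{eqsubseq2}). This is exactly the paper's argument.

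The one place your verification goes astray is case (ii), $B=A$. In this lemma no continuity of $f$ is assumed in case (ii) (the standing assumption of the subsection imposes continuity of $f$ on $A$ only in case (i)), so you cannot argue via continuity at boundary points; and the proposed reduction to a compact $B'\subset A^o$ with $\essinf_{B'}f$ close to $f_0$, borrowed from the lower-bound proof of Lemma \ref{lemliminf}, does not serve an upper bound: for (\ref{eqsubseq2}) you must cover all of $A^{(r_n)}$, and coverage of $B'\cap A^{(r_n)}$ says nothing about the remainder of $A^{(r_n)}$. Fortunately the hypothesis you need is immediate in this case: when $B=A$ one has $f_0=\essinf_A f$, so for $x\in A^{(r)}$ and $s\in(0,r)$ the ball $B(x,s)$ lies in $A$ and $\mu(B(x,s))=\int_{B(x,s)}f\,dy\ge f_0\theta_d s^d$ directly, with no continuity and no auxiliary set $B'$; this is precisely how the paper justifies its claim (\ref{ctyclaim}) when $B=A$, continuity being invoked only for case (i). With that small correction your proof coincides with the paper's.
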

\begin{proof}
	\textcolor{\blue}{We shall apply Lemma \ref{lemmeta},
	here taking $A_r = B \cap A^{(r)}$.} 
%Observe that by (\ref{eqmaxspac}),
% $\tR_{n,k} \leq r$ if and only
%if $ B \cap A^{(r)} \subset F_{n,k,r}$, for all  $r>0$, $n, k \in \N$. 
To start, we claim that
\bea
\liminf_{r \downarrow 0} \inf_{x \in B \cap A^{(r)} , s \in (0,r)} 
	\left(
	\frac{\mu(B(x,s))}{\theta_d s^d } \right)
	\geq f_0.
\label{ctyclaim}
\eea 
This follows from the definition (\ref{f0def}) of $f_0$
	when $B=A$. In the other case (with $B \subset A^o$) it
follows from (\ref{f0def}) and the assumed continuity of $f$ on $A$.

Suppose $f_0 < \infty$ and let $\delta \in (0,f_0)$.
It is easy to see that $\kappa(B \cap A^{(r)},r) = O(r^{-d})$
as $r \downarrow 0$. Together with (\ref{ctyclaim}), this 
shows that  the hypotheses of Lemma \ref{lemmeta} (taking 
$A_r = B \cap A^{(r)}$)
apply with $a= \theta_d(f_0- \delta)$ and $b= d$. 
	Hence 
	\textcolor{\blue}{by (\ref{Fnequiv}) and Lemma \ref{lemmeta},}
	if $\beta = \infty$ then for  any
 $ u > (\theta_d(f_0-\delta))^{-1}$,
we have almost surely for large enough $n$ that 
$\tR_{n,k(n)} \leq (u k(n)/n)^{1/d}$,
and (\ref{eqsubseq3}) follows.

If $\beta < \infty$, then 
	\textcolor{\blue}{by (\ref{Fnequiv}) and Lemma \ref{lemmeta},} given
$u > \hH_\beta(1)/
(\theta_d (f_0-\delta))
$,
there exists $\eps >0$ such that, setting $k'(n):=
\lfloor (\beta + \eps) \log n \rfloor$ and
	$r_n:= (u (\log n)/n)^{1/d}$, 
	we have
$$
\Pr[n \tR_{n, k'(n)}^d > u \log n]
=
\Pr[ \tR_{n,k'(n)}
 > r_n
	%(u (\log n)/n)^{1/d}
	]
	= \Pr[\{B \cap A^{(r_n)} \subset F_{n, k'(n),r_n}\}^c]
= O(  n^{-\eps}).
$$
 Therefore by Lemma \ref{lemtrick},
which is applicable since $\tR_{n,k}^d/u$ is nonincreasing
in $n$ and nondecreasing in $k$, we obtain that
$	 \limsup ( n  \tR_{n,k(n)}^d /\log n) \leq u$, 
%for all large enough $n$,
	almost surely.
Since $u > \hH_\beta(1)/(\theta_d ( f_0 - \delta))$
and $\delta \in (0,f_0)$ are 
 arbitrary, we therefore obtain (\ref{eqsubseq2}).
	\qed
\end{proof}

\begin{proof}[Proof of Proposition \ref{thm1}]
	Under either hypothesis ((i) or (ii)),
	%of this result,
it is immediate from Lemmas  \ref{lemliminf} and
 \ref{lemlimsup} that (\ref{0617a}) holds if $\beta = \infty$ and
(\ref{0315a}) holds if $\beta < \infty$.

It follows that almost surely
 $\tR_{n,k(n)} \to 0$ as $n \to \infty$, 
and therefore if we are in Case (i) (with $B \subset
	A^o$) we have $\tR_{n,k(n)} = R_{n,k(n)}$ for all
large enough $n$.  Therefore in this case (\ref{0617a}) (if $\beta =\infty$)
or (\ref{0315a}) (if $\beta < \infty)$ still holds with $\tR_{n,k(n)}$
replaced by $R_{n,k(n)}$.
%\qed
\end{proof}

\subsection{{\bf Proof of  Theorem \ref{thm2}}}
\label{secproofstrong}
%\allco

In this section and again later on,
we shall use certain results from \cite{RGG}, which
rely on an alternative characterization of $A$ having
a $C^2$ boundary, given in the next lemma.
%See for example \cite{RGG} for the definition of 
Recall that $S \subset \R^d$ is called a $(d-1)$-dimensional 
{\em submanifold} of $\R^d$ 
if there exists a collection of pairs $(U_i,\phi_i)$, where
$\{U_i\}$ is a collection
of open sets in $\R^d$ whose union contains $S$, and
$\phi_i$ is a $C^2$ diffeomorphism of $U_i$ onto an open
set in $\R^d$ with the property that
$\phi_i(U_i \cap S) = \phi_i(U_i) \cap (\R^{d-1} \times \{0\})$.
The pairs $(U_i,\phi_i)$ are called {\em charts}.
We shall sometimes also refer to the sets $U_i$ as charts here.

\begin{lemm}
	\label{lemsubmfld}
	Suppose  $A \subset \R^d$  has a $C^2$ boundary. Then
 $\partial A$ is a $(d-1)$-dimensional $C^2$ submanifold of $\R^d$.
\end{lemm}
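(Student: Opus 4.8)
The plan is to build, around each point of $\partial A$, an explicit chart of the form demanded by the definition of a $C^2$ submanifold, by composing the rotation supplied by the hypothesis with a single ``shear'' that straightens a graph into a hyperplane. Concretely: fix $x\in\partial A$; by the definition of ``$A$ has $C^2$ boundary'' there are an open neighbourhood $U_0\ni x$, a rotation $T$ of $\R^d$, an open set $W\subset\R^{d-1}$, and a twice continuously differentiable $g\colon W\to\R$ with $T(\partial A\cap U_0)=\Gamma:=\{(w,g(w)):w\in W\}$. I would then take $\phi:=(G\circ T)|_U$ on a suitable neighbourhood $U$ of $x$, where $G(w,s):=(w,\,s-g(w))$, and verify this is a chart.

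The key steps, in order, are: (1) pass to the rotated picture, setting $V_0:=T(U_0)$ and noting $\Gamma\subset V_0$ and $T(\partial A)\cap V_0=\Gamma$, since $T$ is a bijection; (2) shrink to $V:=V_0\cap(W\times\R)$, still an open neighbourhood of $Tx$ (as $Tx\in\Gamma\subset V_0$ and $Tx\in W\times\R$) and still satisfying $\Gamma\subset V$ and $T(\partial A)\cap V=\Gamma$, so that $G$ is defined on all of $V$; (3) check $G$ is a $C^2$ diffeomorphism of $W\times\R$ onto itself, with explicit inverse $(w,t)\mapsto(w,\,t+g(w))$, both $G$ and $G^{-1}$ being $C^2$ precisely because $g\in C^2$, hence $G$ carries the open set $V$ onto an open subset $G(V)\subset\R^d$; (4) observe $G$ maps $\Gamma$ bijectively onto $W\times\{0\}$, so $G(V\cap\Gamma)=G(V)\cap(W\times\{0\})=G(V)\cap(\R^{d-1}\times\{0\})$, the last equality because $G(V)\subset W\times\R$; (5) set $U:=T^{-1}(V)$ and $\phi:=(G\circ T)|_U$, a composition of the $C^\infty$ isometry $T$ with the $C^2$ diffeomorphism $G$, hence a $C^2$ diffeomorphism of the open set $U$ onto $\phi(U)=G(V)$, and from $T(U\cap\partial A)=V\cap T(\partial A)=V\cap\Gamma$ conclude $\phi(U\cap\partial A)=\phi(U)\cap(\R^{d-1}\times\{0\})$. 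Finally, letting $x$ range over $\partial A$ yields a family of charts whose domains cover $\partial A$, which is exactly the definition of a $(d-1)$-dimensional $C^2$ submanifold.

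This argument is essentially bookkeeping, so I do not expect a serious obstacle; the one place to be careful is the domain juggling in step (2) — one must confirm that intersecting the rotated chart $V_0$ with the slab $W\times\R$ (needed so that the shear $G$ is defined everywhere on it) neither discards a neighbourhood of $x$ nor destroys the identity $T(\partial A)\cap V=\Gamma$. The only other point worth stating explicitly is why $\phi(U)$ is open, i.e. why $\phi$ maps open sets to open sets: this follows from the explicit global inverse of $G$ (or, if one prefers, from the inverse function theorem, since the Jacobian of $G$ is unitriangular with determinant $1$).
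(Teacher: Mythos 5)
Your proposal is correct and is essentially the paper's own argument: the paper likewise shrinks the rotated neighbourhood so that it sits inside $V\times\R$ and then composes the rotation with the shear $\psi(w,z)=(w,z-f(w))$, whose explicit $C^2$ inverse makes it a $C^2$ diffeomorphism straightening the graph onto $\R^{d-1}\times\{0\}$. Your extra bookkeeping in step (2) just makes explicit what the paper assumes when it writes $\rho(U)\subset V\times\R$.
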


\begin{proof}
	Let $x \in \partial A$. Let $U$ be an  open neighbourhood of
	$x$, $V \subset \R^{d-1} $ 
	an open set, and $\rho$ a rotation on $\R^d$ about $x$ 
	%such that 
	such that $ \rho( \partial A \cap U)  = \{(w,f(w)): w \in V\}$,
	and moreover $\rho(U) \subset V \times \R$.
	Then for $(w,z) \in U$ (with $w \in V $ and $z \in \R$),
	take $\psi(w,z) = (w,z-f(w))$. Then $\psi \circ \rho$
	is a $C^2$ diffeomorphism from $U$ to $\psi \circ \rho(U)$,
	with the property that $\psi \circ \rho(U \cap \partial A) 
	=  \psi \circ \rho (U) \cap (\R^{d-1} \times \{0\})$,
	as required.
	\qed
\end{proof}
\begin{remk}
	{\rm
The converse to Lemma \ref{lemsubmfld}  also holds:
if $\partial A$ is a $(d-1)$-dimensional submanifold of $\R^d$
then $A$ has a $C^2$ boundary in the sense that we have defined it.
The proof of this this implication is more involved, and not needed in
	the sequel, so  we omit the argument.}
	%only sketch the argument.
%%
%%,To see this equivalency:
%%it is clear that (2) implies (1); projecting the graph back to 
%%$\R^{d-1}$  gives a $C^2$ local chart.
%%For (1) implies (2), 
%It is convenient to use the fact that a subset of
% $\R^d$ is a $C^2$-submanifold if, and only if, it is the image of
%	a $C^2$-embedding (see e.g. \cite[Sec. 3.1]{Hirsch}).
%	If $M$ is a $C^2$ manifold,
% a $C^2$ embedding is a map $f:M\to \R^d$ which is an immersion and
% which maps $M$ homeomorphically onto its image, which we call $D$. Let 
% $x \in D$; after rotation and translation of $\R^d$ we may suppose that 
% $x=0$ and that the tangent space to $D$ at $x$ is $\R^{d-1}$.
% Composing $f$ with the orthogonal projection $\pi$ to $\R^{d-1}$
% gives a map whose derivative at $o $ is non-singular. By the inverse
% function theorem, there is a neighbourhood $U$ of $o$ in $\R^{d-1}$
% and a $C^2$ map $g:U\to M$ with $(\pi\circ f)\circ g$
% the identity on $U$. Then $f\circ g $ is $C^2$, and indeed is a graph
% map of a $C^2$ function - for applying the projection $\pi$ just gives
% the identity on $U$, so $f\circ g$ has the form
%$(x_1,\ldots,x_d)\mapsto (x_1,\ldots…,x_{d-1},h(x_1,…,x_{d-1}))$.
\end{remk}

%If r=0, (1) and (2) are not equivalent, and this is perhaps what Ball pointed out to you.

We shall use the following lemma  here and again
later on.

\begin{lemm}
	\label{lemMyBk}
	Suppose $A \subset \R^d$ is compact, and has
	$C^2$ boundary.
	Given $\eps >0$,
	there exists $\delta >0$ such that for all $x \in A$
	and $s \in (0,\delta)$, we have $|B(x,s) \cap A|
	> (1- \eps) \theta_d s^d/2$.
\end{lemm}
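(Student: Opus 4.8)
The plan is to reduce the statement to a single elementary two-ball volume estimate by invoking the \emph{uniform interior ball condition} for a compact $C^2$ domain: there exists $\rho_0>0$ such that for every $x'\in\partial A$ there is a ball $B(y_{x'},\rho_0)\subset A$ with $x'\in\partial B(y_{x'},\rho_0)$. This follows from compactness of $\partial A$ together with Lemma \ref{lemsubmfld} (or can be quoted from the standard theory of $C^2$ domains, or from \cite{RGG}): covering $\partial A$ by finitely many charts in which, after a rotation, $\partial A$ is the graph of a $C^2$ function $f$ on a compact set, the second derivatives of the finitely many $f$'s are uniformly bounded, which yields a radius $\rho_0$ (depending only on this bound and the geometry of the cover) that works at every boundary point. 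I would also record the elementary consequence that if $x\in A$ with $\dist(x,\partial A)=t<\rho_0$ and $x'\in\partial A$ is a nearest boundary point, then $x$ lies on the open segment $(x',y_{x'})$ with $|x-y_{x'}|=\rho_0-t$; this is because $B(x,t)\subset A$ is internally tangent to $A$ at $x'$, so $\partial B(x,t)$ and $\partial A$ share a tangent hyperplane there, forcing the outward normals to agree.

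Next I would split on $\dist(x,\partial A)$. If $\dist(x,\partial A)\ge s$ then $B(x,s)\subset A$ (using $A=\overline{A^o}$), so $|B(x,s)\cap A|=\theta_d s^d>(1-\eps)\theta_d s^d/2$ for any $\delta$, with $s<\delta$. If instead $\dist(x,\partial A)=t<s$, pick a nearest boundary point $x'$; then $B(x,s)\cap A\supseteq B(x,s)\cap B(y_{x'},\rho_0)$, and I estimate the latter from below by $\theta_d s^d-|B(x,s)\setminus B(y_{x'},\rho_0)|$. Writing $u:=(x-y_{x'})/|x-y_{x'}|$ and $z=x+w$ with $|w|\le s$, one computes $|z-y_{x'}|^2=|x-y_{x'}|^2+2|x-y_{x'}|\,(u\cdot w)+|w|^2$, so $z\notin B(y_{x'},\rho_0)$ forces $u\cdot w\ge -|w|^2/(2|x-y_{x'}|)\ge -s^2/(2(\rho_0-s))$. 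Hence $B(x,s)\setminus B(y_{x'},\rho_0)$ is contained in the set $\{w:|w|\le s,\ u\cdot w\ge -s^2/(2(\rho_0-s))\}$, a half-ball thickened by a slab of width $O(s^2/\rho_0)$, whose volume is at most $\theta_d s^d/2+C_d s^{d+1}/\rho_0$ for a constant $C_d$ depending only on $d$. Therefore $|B(x,s)\cap A|\ge \theta_d s^d/2-C_d s^{d+1}/\rho_0$, and choosing $\delta:=\rho_0\wedge(\eps\theta_d\rho_0/(2C_d))$ makes the right-hand side exceed $(1-\eps)\theta_d s^d/2$ for all $s\in(0,\delta)$ and all $x\in A$. (Note this case also covers $x\in\partial A$, where $t=0$.)

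The only genuinely non-routine ingredient is the uniform interior ball condition, i.e.\ the step where the $C^2$ (rather than merely $C^1$) hypothesis and the compactness of $\partial A$ are both used; everything after that is the two-line geometric bound on the overhang $B(x,s)\setminus B(y_{x'},\rho_0)$. If the surrounding paper already has (or cites) this rolling-ball fact I would simply invoke it; otherwise I would include the short chart-based derivation sketched above.
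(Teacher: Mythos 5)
Your proof is correct, but it follows a different route from the paper: the paper disposes of Lemma \ref{lemMyBk} in one line, by first invoking Lemma \ref{lemsubmfld} (so that $\partial A$ is a compact $C^2$ submanifold) and then quoting \cite[Lemma 5.7]{RGG}, whereas you essentially reprove that quoted lemma from scratch. Your argument rests on the uniform interior (rolling) ball condition for a compact $C^2$ body, and from there the dichotomy on $\dist(x,\partial A)$ together with the two-ball overhang estimate for $B(x,s)\setminus B(y_{x'},\rho_0)$ is elementary and quantitatively explicit: it even gives the rate $|B(x,s)\cap A| \geq \theta_d s^d/2 - C_d s^{d+1}/\rho_0$, which is more information than the lemma asks for. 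What the paper's approach buys is brevity and consistency with its other uses of \cite{RGG}; what yours buys is self-containedness, at the cost of having to justify the uniform interior ball radius $\rho_0$ (itself a standard consequence of compactness plus uniformly bounded second derivatives in finitely many charts, so your sketch is adequate). Two small points to tidy if you were to write it out: (i) in passing from the bound $u\cdot w \geq -s^2/(2(\rho_0-s))$ to a slab of width $O(s^2/\rho_0)$ you implicitly need $s\leq \rho_0/2$ (or similar), so take $\delta \leq \rho_0/2$ as well, e.g.\ $\delta := \min(\rho_0/2,\ \eps\theta_d\rho_0/(4C_d))$; (ii) the tangency claim identifying $x$, $x'$ and $y_{x'}$ as collinear along the inward normal uses $t<\rho_0$, which indeed holds since $t<s<\delta\leq\rho_0$, but it is worth saying.
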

\begin{proof} 
	Immediate from applying first
	Lemma \ref{lemsubmfld}, and then
 \cite[Lemma 5.7]{RGG}.
	\qed
	\end{proof}

	\textcolor{\blue}{
	Recall that $F_{n,k,r}$ was defined at (\ref{F3def}).
	}
We introduce 
a new variable
$R_{n,k,1}$, which is the smallest radius $r$ of balls required to
cover $k$ times the
boundary region $A \setminus A^{(r)} $:
\bea
R_{n,k,1} := \inf \{r >0 : 
 A \setminus A^{(r)} \subset F_{n,k,r} 
\}, ~~~~n,k \in \N.
\label{Rnk1def}
\eea
 Loosely speaking, the 1 in the subscript refers to the fact
that this boundary region is in some sense $(d-1)$-dimensional. 
For all $n,k$, we claim that
\bea
R_{n,k} = \max( \tR_{n,k},R_{n,k,1}), ~~~~{\rm  if} ~ B=A.
\label{mineq}
\label{mineq2}
\eea
\textcolor{\blue}{Indeed, if $r > R_{n,k}$ then $A \subset F_{n,k,r}$
so that $A^{(r)} \subset F_{n,k,r}$ and $A \setminus A^{(r)}
\subset F_{n,k,r}$, and hence $r \geq \max(\tR_{n,k},R_{n,k,1})$;
hence, $R_{n,k} \geq \max(\tR_{n,k},R_{n,k,1})$.
For an inequality the other way,
suppose $r > \max(\tR_{n,k},R_{n,k,1})$; then
by (\ref{eqmaxspac}),
there exists $r' <r$ with $A^{(r')} \subset F_{n,k,r'}$,
and hence also $A^{(r)} \subset F_{n,k,r}$. 
Moreover by  (\ref{Rnk1def}) there exists $s < r$ with
$A \setminus A^{(s)} \subset F_{n,k,s}$. 
Now suppose $x \in A^{(s)} \setminus A^{(r)}$. 
Let $z \in \partial A$ with $\|z-x\|= \dist(x,\partial A) \in (s,r]$.
Let $y \in [x,z]$ with $\|y-z\|= s$.
Then 
$y \in A \setminus A^{(s)}$,
so that $y \in F_{n,k,s}$, and also $\|x-y\| \leq r-s$.
This implies that $x \in F_{n,k,r}$.
Therefore $A^{(s)} \setminus A^{(r)} \subset F_{n,k,r}$. Combined
with the earlier set inclusions this shows that $A \subset F_{n,k,r}$
and hence 
$R_{n,k} \leq r$. Thus $R_{n,k} \leq \min(\tR_{n,k},R_{n,k,1})$,
and hence (\ref{mineq}) as claimed.
}

Recall that 
we are assuming that
$k(n)/\log n \to \beta \in [0,\infty]$ and $k(n) /n \to 0$,
 as $n \to \infty$,
and
$f_1 := \inf_{\partial A} f$. \textcolor{\blue}{We shall derive
Theorem \ref{thm2} %is immediate 
using the next two lemmas.}

\begin{lemm}
\label{lemliminfb}
	Suppose the assumptions of Theorem \ref{thm2} apply.  Then
\bea
\Pr[\liminf_{n \to \infty}
\left( n \theta_d R_{n,k(n)}^d /k( n) \right)
	\geq
 2/ f_1  ] =1, ~~~~{\rm if}~
 \beta = \infty;
\label{liminfeq0}
	\\
\Pr[\liminf_{n \to \infty}
\left( n \theta_d R_{n,k(n)}^d /\log n \right) 
\geq
	2 \hH_\beta(1-1/d)/ f_1 ] =1, ~~~~~ {\rm if} ~
\beta < \infty .
\label{liminfeq}
\eea
\end{lemm}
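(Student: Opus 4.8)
The plan is to derive Lemma \ref{lemliminfb} from the general lower bound Lemma \ref{gammalem}, taken with $B=A$, by exhibiting a suitable packing of $r$-balls centred near $\partial A$. The point is that a ball of radius $r$ centred on the boundary of a $C^2$ domain has about half its volume inside $A$, so its $\mu$-content is close to $(\theta_d/2)f_1 r^d$ when the centre sits near a point where $f$ is close to $f_1$; and near such a point one can still pack $\Theta(r^{-(d-1)})$ such balls disjointly, reflecting the $(d-1)$-dimensionality of $\partial A$. This is exactly why the factor $2$ and the argument $1-1/d$ appear in the conclusion.

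First I would fix $\alpha>f_1$ and choose $x_1\in\partial A$ with $f(x_1)<\alpha$ (possible by the definition of $f_1$ at (\ref{f0def})). Since $f|_A$ is continuous at every point of $\partial A$, a compactness argument applied to a small compact patch $V'\subset\partial A$ containing $x_1$ produces $\delta_0>0$ with $f<\alpha$ on $(V'\oplus B(o,\delta_0))\cap A$. By Lemma \ref{lemsubmfld}, $\partial A$ is a $(d-1)$-dimensional $C^2$ submanifold, so $V'$ is bi-Lipschitz equivalent to a $(d-1)$-dimensional ball; hence for all sufficiently small $r>0$ there are $\Omega(r^{-(d-1)})$ points of $V'$ pairwise at distance at least $2r$, and the closed $r$-balls about them are disjoint with centres in $A=B$. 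Next I would use the local $C^2$ graph representation of $\partial A$ to get the volume estimate matching Lemma \ref{lemMyBk} from above: for each $\eps>0$ there is $\delta\in(0,\delta_0)$ with $|B(y,s)\cap A|\le(1+\eps)\theta_d s^d/2$ for all $y\in V'$ and $s\in(0,\delta)$ (the standard half-ball computation for a $C^2$ hypersurface, as in \cite[Chapter~5]{RGG}). Combining, each packed ball has $\mu$-measure at most $\alpha(1+\eps)(\theta_d/2)r^d$, so with $a:=\alpha(1+\eps)\theta_d/2$ we get $\nu(A,r,a r^d)=\Omega(r^{-(d-1)})$ as $r\downarrow0$.

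Now Lemma \ref{gammalem} applies with this $a$ and $b=d-1$ (note $b>0$ since $d\ge2$). If $\beta=\infty$ it gives, almost surely, $\liminf_{n\to\infty}(nR_{n,k(n)}^d/k(n))\ge1/a$; letting $\eps\downarrow0$ and $\alpha\downarrow f_1$ through countable sequences and multiplying by $\theta_d$ yields (\ref{liminfeq0}). If $\beta<\infty$ it gives, almost surely, $\liminf_{n\to\infty}(nR_{n,k(n)}^d/\log n)\ge a^{-1}\hH_\beta((d-1)/d)$; since $(d-1)/d\in(0,1)$ for $d\ge2$ we have $\hH_\beta(1-1/d)>0$, and letting $\eps\downarrow0$, $\alpha\downarrow f_1$ and multiplying by $\theta_d$ yields (\ref{liminfeq}). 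If $f_1=0$ the same computation applies for every $\alpha>0$, so both $\liminf$'s are $+\infty$ almost surely, consistent with the convention $2/f_1=+\infty$.

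I expect the main obstacle to be the boundary-geometry input behind the packing estimate: one must simultaneously control, uniformly over an arbitrarily small boundary patch and for all small $r$, both the number of disjoint $r$-balls that can be centred there (which must be $\Theta(r^{-(d-1)})$, not fewer) and the volume $|B(y,r)\cap A|$ from above by $(1+\eps)\theta_d r^d/2$. Both rest on the local $C^2$ graph representation of $\partial A$ together with a bi-Lipschitz comparison with a flat $(d-1)$-dimensional patch, and arranging the constants so that they converge exactly to $2/f_1$ and $2\hH_\beta(1-1/d)/f_1$ is where the characteristic half-space factor of $2$ has to be handled with care.
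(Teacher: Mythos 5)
Your proposal is correct and follows essentially the same route as the paper: apply Lemma~\ref{gammalem} with $a\approx f_1\theta_d/2$ and $b=d-1$ after exhibiting a packing of $\Omega(r^{-(d-1)})$ disjoint $r$-balls centred on $\partial A$, each of $\mu$-content at most roughly $f_1\theta_d r^d/2$. The only difference is that the paper obtains this packing-plus-volume estimate in one step by citing \cite[Lemma~5.8]{RGG}, whereas you reconstruct it from the $C^2$ graph representation, continuity of $f|_A$ at the boundary, and a bi-Lipschitz comparison with a flat $(d-1)$-dimensional patch; this is a sound re-derivation of the cited lemma and does not change the argument.
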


\begin{proof}
Let $\eps >0$.
By \cite[Lemma 5.8]{RGG}, 
for each $r >0 $  we can and do take $\ell_r \in \N \cup \{0\}$ and
	points $y_{r,1},\ldots,y_{r,\ell_r} \in \partial A$ 
such that the balls $B(y_{r,i},r), 1 \leq i \leq \ell_r$, are disjoint
and each satisfy $\mu(B(y_{r,i}, r)) \leq (f_1+\eps) \theta_d r^d/2$,
	with $\liminf_{r \downarrow 0} (r^{d-1} \ell_r) >0$.
	In other words, $ \liminf_{r \downarrow 0} 
	r^{d-1} \nu(B,r,(f_1+\eps)\theta_d r^d/2) >0$.

	Hence, if $\beta = \infty$ then by
	Lemma \ref{gammalem} we have that $\liminf_{n \to \infty}
	\left(  n R_{n,k(n)}^d/k(n) \right) \geq 2/(\theta_d (f_1+ \eps))$,
	almost surely, and this yields (\ref{liminfeq0}).

	Now suppose $\beta < \infty$; also we are assuming $d \geq 2$. By
	taking $a= (f_1+ \eps) \theta_d/2$ in Lemma \ref{gammalem},
	we obtain that, almost surely,
	$$
	\liminf_{n \to \infty} \left( nR_{n,k(n)}^d / \log n \right)
	\geq \left( 
	2/((f_1+ \eps) \theta_d)
	\right)
	\hH_\beta \left( 1 - 1/d \right),
	$$
	and hence (\ref{liminfeq}).
	\qed
\end{proof}

\begin{lemm}
\label{lemlimsupb}
	Under the assumptions of Theorem \ref{thm2},
\bea
\Pr[\limsup_{n \to \infty}
\left( n \theta_d R_{n,k(n),1}^d /k( n) \right) \leq 
 2/ f_1  ] =1, ~~~~{\rm if}~
 \beta = \infty;
\label{limsupeq0}
\\
\Pr[\limsup_{n \to \infty}
\left( n \theta_d R_{n,k(n),1}^d /\log n \right) \leq 
	2 \hH_\beta(1- 1/d) / f_1 ] =1, ~~~~~ {\rm if} ~ 
\beta < \infty .
\label{limsupeq}
\eea
\end{lemm}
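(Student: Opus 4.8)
If $f_1 = 0$ then the right-hand sides of (\ref{limsupeq0}) and (\ref{limsupeq}) are $+\infty$ and there is nothing to prove, so I would assume $f_1>0$. The plan is to apply the general upper bound, Lemma \ref{lemmeta}, with the family of sets $A_r := A \setminus A^{(r)}$, which for small $r$ is a thin boundary shell of width $\Theta(r)$ around $\partial A$. Once the hypotheses of Lemma \ref{lemmeta} are verified, its conclusion translates directly into an upper bound on $R_{n,k(n),1}$, because by the definition (\ref{Rnk1def}) the inclusion $A \setminus A^{(r)} \subset F_{n,k,r}$ forces $R_{n,k,1} \le r$. The whole argument would run parallel to the proof of Lemma \ref{lemlimsup}, with the interior region $B \cap A^{(r)}$ there replaced by the boundary shell $A\setminus A^{(r)}$ here.

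The first task is to check the hypotheses of Lemma \ref{lemmeta}. Fix $\eps \in (0, f_1 \wedge 1)$. Since $f|_A$ is continuous at every point of $\partial A$ and $\partial A$ is compact, there is $\delta_1>0$ with $f(x) \ge f_1-\eps$ whenever $x \in A$ and $\dist(x,\partial A) < \delta_1$. By Lemma \ref{lemMyBk} there is $\delta_2>0$ with $|B(x,s)\cap A| > (1-\eps)\theta_d s^d/2$ for all $x\in A$ and $s \in (0,\delta_2)$. Set $r_0 := (\delta_1 \wedge \delta_2)/3$. An elementary argument shows that for $0<r<r_0$ every $x \in A_r = A \setminus A^{(r)}$ has $\dist(x,\partial A) \le r$; hence for $s \in (0,r)$ the set $B(x,s)\cap A$ lies within distance $2r < \delta_1$ of $\partial A$, where $f \ge f_1-\eps$, and combining the two bounds gives $\mu(B(x,s)) \ge (f_1-\eps)(1-\eps)\theta_d s^d/2 =: a\,s^d$. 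It then remains to bound the covering number: I would expect $\kappa(A_r,r) = O(r^{1-d})$ as $r \downarrow 0$, since $A_r$ is a shell of width $O(r)$ about a compact $(d-1)$-dimensional $C^2$ surface, and this should follow routinely from the boundary estimates of \cite{RGG}. Thus Lemma \ref{lemmeta} would apply with this $a$ and with $b = d-1$.

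With the hypotheses in hand the rest is mechanical. In the case $\beta = \infty$: for any $u > 1/a$, setting $r_n := (u\,k(n)/n)^{1/d}$, Lemma \ref{lemmeta} gives that almost surely $A_{r_n}\subset F_{n,k(n),r_n}$ for all large $n$, hence $R_{n,k(n),1} \le r_n$ for all large $n$, hence $\limsup_{n\to\infty}(n\theta_d R_{n,k(n),1}^d/k(n)) \le u\theta_d$ almost surely; letting $u \downarrow 1/a$ and then $\eps \downarrow 0$ gives $\limsup \le \theta_d/a \to 2/f_1$, which is (\ref{limsupeq0}). In the case $\beta < \infty$: observe that $U_{n,k} := R_{n,k,1}^d$ is nonincreasing in $n$ and nondecreasing in $k$, since $F_{n,k,r}$ is nondecreasing in $n$ and nonincreasing in $k$ and so is the set of admissible $r$ in (\ref{Rnk1def}); therefore the subsequence trick is available. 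For $u > a^{-1}\hH_\beta((d-1)/d)$, setting $r_n := (u(\log n)/n)^{1/d}$, Lemma \ref{lemmeta} would yield some $\eps'>0$ with $\Pr[\{A_{r_n}\subset F_{n,\lfloor(\beta+\eps')\log n\rfloor,r_n}\}^c] = O(n^{-\eps'})$, and hence $\Pr[n R_{n,\lfloor(\beta+\eps')\log n\rfloor,1}^d > u\log n] = O(n^{-\eps'})$. Applying Lemma \ref{lemtrick}(a) to the array $U_{n,k}/u$ then gives $\limsup_{n\to\infty}(n R_{n,k(n),1}^d/\log n) \le u$ almost surely, and letting $u \downarrow a^{-1}\hH_\beta((d-1)/d)$ and $\eps \downarrow 0$ gives $\limsup(n\theta_d R_{n,k(n),1}^d/\log n) \le \theta_d a^{-1}\hH_\beta(1-1/d) \to 2\hH_\beta(1-1/d)/f_1$, which is (\ref{limsupeq}).

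The step I expect to be the main obstacle is the geometric bookkeeping of the second paragraph: confirming that $A \setminus A^{(r)}$ really behaves as a thin boundary shell on which both the uniform volume lower bound $|B(x,s)\cap A| \gtrsim \theta_d s^d/2$ (for $x \in A\setminus A^{(r)}$, $s<r$) and the covering bound $\kappa(A\setminus A^{(r)},r) = O(r^{1-d})$ hold. Both rest on the $C^2$ assumption on $\partial A$ and on the boundary lemmas of \cite{RGG}; everything downstream is a direct application of Lemmas \ref{lemmeta} and \ref{lemtrick}, exactly as in the proof of Lemma \ref{lemlimsup}.
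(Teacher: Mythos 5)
Your proposal is correct and follows essentially the same route as the paper: apply Lemma \ref{lemmeta} with $A_r = A\setminus A^{(r)}$, $a \approx (1-\eps)^2 f_1\theta_d/2$ and $b=d-1$ (volume lower bound via continuity of $f$ at $\partial A$ plus Lemma \ref{lemMyBk}), then conclude via Borel--Cantelli for $\beta=\infty$ and via monotonicity of $R_{n,k,1}$ and Lemma \ref{lemtrick}(a) for $\beta<\infty$. The one step you leave as "expected" — $\kappa(A\setminus A^{(r)},r)=O(r^{1-d})$ — is exactly what the paper verifies, in a few lines, by covering $\partial A$ with $O(r^{1-d})$ balls of radius $r$ (using \cite[Lemma 5.4]{RGG}), inflating to radius $2r$ to capture the shell, and re-covering each inflated ball by a bounded number of balls of radius $r$ recentred in $A\setminus A^{(r)}$.
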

\begin{proof}
We shall apply Lemma \ref{lemmeta}, here taking $A_r = A \setminus A^{(r)}$. 
Observe that by (\ref{Rnk1def}),
	event $\{ A \setminus A^{(r)} \subset F_{n,k,r} \}$
	implies
 $R_{n,k,1} \leq r$,
	 for all  $r>0$, $n, k \in \N$. 

We claim that
\bea
	\kappa(A \setminus A^{(r)},r) = O(r^{1-d}) ~~{\rm as} ~~ r \downarrow
	0.
\label{0320b2}
\eea
	To see this,  let $r>0$, 
	and  let $x_1,\ldots,x_m \in \partial A$ with 
	$\partial A \subset \cup_{i=1}^m B(x_i,r)$, and
	with $m = \kappa(\partial A,r)$.
	Then $A \setminus A^{(r)}  \subset \cup_{i=1}^{m}
	B(x_{i}, 2r)$. Setting $c := \kappa (B(o,4),1)$,
	we can cover each ball $B(x_i,2r)$ by $c$ balls of
	radius $r/2$, and therefore can cover $A \setminus A^{(r)}$
	by  $cm$ balls of radius $r/2$, denoted $B_1,\ldots, B_{cm}$
	say. Set $ {\cal I}:= \{i \in 
	\{1,\ldots,cm\}:
	A\setminus A^{(r)} \cap B_i \neq \emptyset \}$.
	For each $i \in {\cal I}$, 
	select a point $y_i \in A \setminus A^{(r)} \cap B_i$.
	Then
	$A \setminus A^{(r)} \subset \cup_{i \in{\cal I}} B(y_i,r) $,
	and hence $\kappa(A \setminus A^{(r)}) \leq c \kappa( \partial A,r)$.
	By \cite[Lemma 5.4]{RGG}, $\kappa(\partial A,r) = O(r^{1-d})$,
	and (\ref{0320b2}) follows.

	Let $\eps_1 \in (0,1)$.
Since we assume $f|_A$ is continuous at $x $ for all $x \in \partial A$,
there exists $\delta >0$ such that
$f(x) > (1 - \eps_1)f_1$ for all $x \in A$ distant less than $\delta$
from $\partial A$. Then, under the hypotheses of Theorem
\ref{thm2}, by Lemma \ref{lemMyBk},
	\textcolor{\blue}{there is a further constant $\delta' \in (0,\delta/2)$ such
	that for all 
	%$r \in (0, \delta/2)$
	$r \in (0, \delta')$}
	and all $x \in A \setminus A^{(r)}$,
	$s \in (0,r]$ we have
$
\mu(B(x,s)) \geq 
(1- \eps_1  )^{2} 
f_1 (\theta_d/2) s^d.
$

	Therefore taking $A_r = A \setminus A^{(r)}$,
	the hypotheses of Lemma \ref{lemmeta} hold
	with $a = (1-\eps_1)^2 f_1 \theta_d/2$ and
	$b= d-1$.

	Thus if $\beta = \infty$, then taking $r_n = (u k(n)/n)^{1/d}$
	with $u > 2 (1-\eps_1)^{-2}/(f_1  \theta_d)$,
	by Lemma \ref{lemmeta}
	we have almost surely that for all $n$ large enough,
	$A \setminus A^{(r_n)} \subset F_{n,k(n),r_n}$ and
	hence $R_{n,k(n),1} \leq r_n$. That is,
	$n R_{n,k(n),1}^d/k(n) \leq u$, and (\ref{limsupeq0}) follows. 
%
%Moreover, (\ref{0327a}) also holds under the hypotheses of
%Theorem \ref{thpolygon}, because $q_{n,j}$ 
%lies within distance $(1- d \eps)r_n$ of at most one edge of
%$\partial A$, for each $j \in \{1,\ldots,\ell_n\}$.

	If $\beta < \infty$, take $u > (2 (1-\eps_1)^{-2}/(f_1 \theta_d))
	\hH_\beta(1-1/d)$.
	By Lemma \ref{lemmeta}, if we set $r_n= (u (\log n)/n)^{1/d}$,
	then there exist $\eps >0$ such that
	if we take $k'(n)= \lfloor (\beta + \eps) \log n \rfloor$, then
	$$
	\Pr[n R_{n,k'(n),1}^d >  u \log n]  =
	\Pr[ R_{n,k'(n),1} > r_n] \leq
	\Pr[ \{ (A \setminus A^{(r_n)} ) \subset F_{n,k'(n),r_n} \}^c]
	= O(n^{-\eps}).
	$$
	Also $R_{n,k,1}$ is nonincreasing in $n$
	and nondecreasing in $k$, so by Lemma \ref{lemtrick},
	we have almost surely that $\limsup_{n \to \infty}
	n R^d_{n,k(n),1} /\log n \leq u$, and hence
	(\ref{limsupeq}).
	\qed
\end{proof}

\begin{proof}[Proof of Theorem \ref{thm2}]
	\textcolor{\blue}{
	By (\ref{mineq}), Proposition \ref{thm1} and Lemma \ref{lemlimsupb},
	$$
	\limsup_{n \to \infty} \left(
	n \theta_d R_{n,k(n)}^d/k(n) \right) \leq \begin{cases}
		\max(1/f_0,2/f_1) & 
		{\rm if}~ \beta = \infty \\
		\max(\hH_\beta(1)/f_0, 2 \hH_\beta(1-1/d)/f_1) & 
		{\rm if}~ \beta < \infty, 
	\end{cases}
	$$
	almost surely.
Moreover
by (\ref{mineq}), Proposition \ref{thm1} and  Lemma \ref{lemliminfb}, 
	$$
	\liminf_{n \to \infty} \left(
	n \theta_d R_{n,k(n)}^d/k(n) \right) \geq \begin{cases}
		\max(1/f_0,2/f_1) & 
		{\rm if}~ \beta = \infty \\
		\max(\hH_\beta(1)/f_0, 2 \hH_\beta(1-1/d)/f_1) & 
		{\rm if}~ \beta < \infty, 
	\end{cases}
	$$
	almost surely, and the result follows.}
\end{proof}

\subsection{{\bf Polytopes: Proof of Theorem \ref{thmpolytope}}}
\label{secpfpolytope}

Throughout this subsection we assume, as in Theorem \ref{thmpolytope}, that
$A$ is a compact finite  polytope in $\R^d$,
and if $d \geq 4 $ then also $A$ is convex.
We also assume that  $B=A$, and $f|_A$ is continuous at $x$
for all $x \in \partial A$, and  (\ref{kcond})
holds for some $\beta \in [0,\infty]$.

Given any $x \in \R^d$ and nonempty $S \subset \R^d$
we set  $\dist(x,S):= \inf_{y \in S} \|x-y\|$.

\begin{lemm}
	\label{lemangle}
	Suppose $\varphi, \varphi'$ are faces of $A$ with $D(\varphi)>0$
	and $D(\varphi') = d-1$, and with
	$\varphi \setminus \varphi' \neq \emptyset$.
	Then $\varphi^o \cap \varphi' = \emptyset$, and
	$K(\varphi,\varphi') < \infty$, where we set
	\bea
	K(\varphi,\varphi') := \sup_{x \in \varphi^o}
	\left( \frac{\dist(x,\partial \varphi)}{\dist(x,\varphi')}\right).
	\label{Kphi2def}
	\eea
	%Then there are constants $K(\varphi,\varphi') >0,
	%r_0 (\varphi,\varphi') \in (0,\infty)$
%such that for all
%	$x \in \varphi$,
%	\bea
%	\dist(x,\varphi')  \geq \min (
%	K^{-1} \dist(x, \partial \varphi), r_0 ).
%	\label{0716a}
%	\eea
\end{lemm}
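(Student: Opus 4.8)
The plan is to treat the two assertions separately. The first is essentially combinatorial: since $\varphi \cap \varphi'$ is a common face of the two polytopes $\varphi$ and $\varphi'$ (for convex $A$ this is classical; for the polytopes with $d \le 3$ considered here it follows from the polyhedral-complex structure of $\partial A$), and since $\varphi \setminus \varphi' \neq \emptyset$ forces $\varphi \cap \varphi' \subsetneq \varphi$, the set $\varphi \cap \varphi'$ is a \emph{proper} face of $\varphi$ and hence lies in the relative boundary $\partial \varphi = \varphi \setminus \varphi^o$. As $\varphi^o \subseteq \varphi$ we get $\varphi^o \cap \varphi' = \varphi^o \cap (\varphi \cap \varphi') \subseteq \varphi^o \cap \partial\varphi = \emptyset$, which is the first claim; in particular $\dist(x,\varphi')>0$ for $x\in\varphi^o$, so $K(\varphi,\varphi')$ is a well-defined element of $[0,+\infty]$.

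For $K(\varphi,\varphi')<\infty$, note first that $\dist(x,\partial\varphi)\le\diam(\varphi)$ always, so if $\varphi\cap\varphi'=\emptyset$ then $\dist(x,\varphi')\ge\dist(\varphi,\varphi')>0$ uniformly and we are done. So assume $G:=\varphi\cap\varphi'\neq\emptyset$; by the first part $G$ is a proper face of $\varphi$, hence $G\subseteq\partial\varphi$ and therefore $\dist(x,\partial\varphi)\le\dist(x,G)$ for every $x\in\varphi$. It thus suffices to produce $c>0$ with $\dist(x,\varphi')\ge c\,\dist(x,G)$ for all $x\in\varphi$.

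When $A$ is convex this can be read off directly. The facet $\varphi'$ lies in a supporting hyperplane $H'=\{\ell=0\}$ with $\ell\le0$ on $A$, so $\ell\le0$ on $\varphi$ and $\varphi\cap H'=\varphi\cap A\cap H'=\varphi\cap\varphi'=G$; in particular the vertices of $\varphi$ not lying in $G$ satisfy $\ell(v_i)\le-\delta<0$. Writing $x\in\varphi$ as $\sum_i\lambda_i v_i$ in barycentric coordinates, one gets $\sum_{v_i\notin G}\lambda_i\le|\ell(x)|/\delta\le(\|\nabla\ell\|/\delta)\dist(x,H')\le(\|\nabla\ell\|/\delta)\dist(x,\varphi')$; renormalising the remaining weights exhibits a point of $G$ within distance $O\!\big(\sum_{v_i\notin G}\lambda_i\big)=O(\dist(x,\varphi'))$ of $x$, uniformly once $\dist(x,\varphi')$ is small, the complementary range being trivial by compactness. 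This gives the required $c$, hence $K(\varphi,\varphi')<\infty$ in the convex case.

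For a general polytope (so $d\le3$), where no global supporting hyperplane for $\varphi'$ is available, the plan is to localise and pass to tangent cones. By compactness of $G$ it is enough to find, for each $p\in G$, a neighbourhood $N_p$ and $c_p>0$ with $\dist(x,\varphi')\ge c_p\dist(x,G)$ for $x\in\varphi\cap N_p$ (off $\bigcup N_{p_i}$ the function $\dist(\cdot,\varphi')$ is bounded below since $\varphi\cap\varphi'=G$, while $\dist(\cdot,\partial\varphi)\le\diam(\varphi)$). Near $p$ the sets $\varphi,\varphi',G$ coincide with $p+T_\varphi$, $p+T_{\varphi'}$, $p+(T_\varphi\cap T_{\varphi'})$ for the corresponding tangent cones, and both distances agree locally with $\dist(\cdot-p,T_{\varphi'})$ and $\dist(\cdot-p,T_\varphi\cap T_{\varphi'})$; by positive homogeneity it then suffices to bound $\dist(y,T_{\varphi'})/\dist(y,T_\varphi\cap T_{\varphi'})$ over $y\in T_\varphi$ with $\|y\|=1$. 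Decomposing $T_\varphi$ and $T_{\varphi'}$ into finitely many simplicial (convex) polyhedral cones, each containing the origin, and applying a Hoffman-type bounded-linear-regularity estimate to every pair $(S,S')$ of convex pieces (legitimate since $S\cap S'\ni o\neq\emptyset$) yields $\dist(y,T_\varphi\cap T_{\varphi'})\le\kappa\,\dist(y,T_{\varphi'})$ for $y\in T_\varphi$, completing the proof. The main obstacle is precisely this last, non-convex, step: one must verify that the face-lattice and tangent-cone bookkeeping really does reduce the bound to finitely many pairs of convex cones, and in effect that the angle at which $\varphi$ leaves $\varphi'$ along $G$ is bounded away from $0$ — this is where the hypothesis that $\varphi$ and $\varphi'$ are distinct faces (so their relative interiors are disjoint) is essential, as otherwise $\varphi$ could be tangent to $\varphi'$ along $G$ and the ratio would blow up.
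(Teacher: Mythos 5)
Your proposal is correct in substance but takes a genuinely different route from the paper's proof. In the convex case the paper works with the subspace $\psi=\langle\varphi\rangle\cap\langle\varphi'\rangle$: it shows $\varphi\cap\psi\subset\partial\varphi$ and proves $\dist(x,\varphi')\ge\delta\,\dist(x,\psi)\ge\delta\,\dist(x,\partial\varphi)$, with $\delta$ a compactness infimum over unit vectors of $\langle\varphi\rangle$ orthogonal to $\psi$; your barycentric-coordinate argument with the supporting functional $\ell$ reaches the same conclusion with $G=\varphi\cap\varphi'$ in place of $\psi$, and is sound. In the non-convex case (which the paper only treats for $d\le 3$) the paper argues by hand: for $D(\varphi\cap\varphi')=0$ it compares with the tangent cones at the common vertex, and for $d=3$, $D(\varphi\cap\varphi')=1$ it splits a neighbourhood of each common edge into left/middle/right regions, using endpoint cones and the dihedral-angle bound $\sin\alpha'$. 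Your scheme — localise at $p\in G$, pass to tangent cones, triangulate each cone into convex polyhedral pieces containing $o$, and apply a Hoffman-type bounded-linear-regularity estimate pairwise — replaces all of that case analysis and, notably, never uses $d\le 3$; what it gives up is self-containedness (Hoffman's error bound is invoked as a black box), and it still inherits the definitional question of what the face structure of a non-convex polytope is when $d\ge4$, which is the actual reason the paper restricts to $d\le3$ there.

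Two small repairs. The step you flag as the main obstacle does close, and quickly: for $y\in T_\varphi$ choose a convex piece $S$ of $T_\varphi$ with $y\in S$ and a convex piece $S'$ of $T_{\varphi'}$ attaining $\dist(y,T_{\varphi'})$; both contain $o$, so the polyhedral error bound for the pair $(S,S')$ produces a point of $S\cap S'\subset T_\varphi\cap T_{\varphi'}$ within $\kappa_{S,S'}\,\dist(y,S')$ of $y$, and the maximum of the finitely many constants $\kappa_{S,S'}$ serves; the routine localisation estimates (nearest points of $p+T_G$ to nearby $x$ stay in the chart, far parts of $\varphi'$ are at distance bounded below) then give the stated bound on $\varphi\cap N_p$. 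Second, in the non-convex case $\varphi\cap\varphi'$ need not be a single common face — the paper's own example has it equal to a union of several edges — so the first claim should be phrased as ``$\varphi\cap\varphi'$ is a union of proper faces of $\varphi$, hence contained in $\partial\varphi$''; that containment is all you use. Finally, your closing remark is slightly misleading: it is polyhedrality (via the error bound), not merely disjointness of relative interiors, that rules out tangential contact and keeps the ratio bounded.
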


\begin{proof}
	If $\varphi \cap \varphi' = \emptyset$ then $K(\varphi,\varphi')
	< \infty$
	by an easy compactness argument, so
	assume $\varphi \cap \varphi' \neq \emptyset$.
Without loss of generality we may then assume $o \in \varphi \cap
\varphi'$.

	\textcolor{\blue}{
	If $d=3$, $A$ is convex and $D(\varphi)= D(\varphi')=2$,
	$D(\varphi \cap \varphi') = 1$ and
	moreover $\varphi, \varphi'$ are rectangular with angle
	$\alpha $ between them and $0 < \alpha < \pi$, then
	$K(\varphi,\varphi') = \sec \alpha$, as illustrated in 
	Figure 1. However to generalize to all $d$,
	and to non-convex $A$ when $d=2$ or $d=3$,
	takes some care.}

\begin{figure}[!h]
\label{figangle}
\center
\includegraphics[width=8cm]{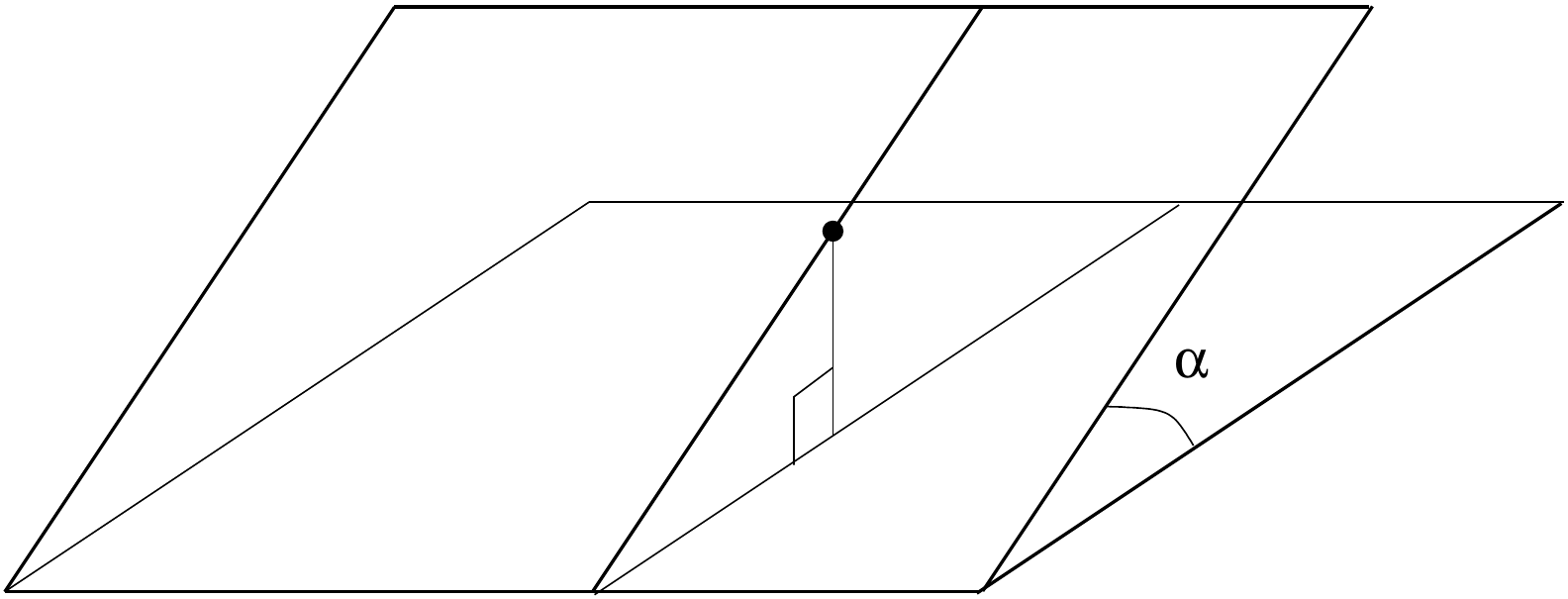}
	\caption{Illustration of Lemma \ref{lemangle} in  in $d=3$
	with $D(\varphi) = D(\varphi')= 2$. The dot represents a
	point in $\varphi^o$.}
\end{figure}

Let $\langle \varphi \rangle$, respectively
 $\langle \varphi' \rangle$,  be the linear subspace of $\R^d$ generated 
 by $\varphi$, respectively by $\varphi'$. Set $\psi := \langle \varphi
 \rangle \cap \langle \varphi' \rangle$.

	Assume first that $A$ is convex. Then $A \cap \langle \varphi' 
	\rangle = \varphi'$, and $\langle \varphi' \rangle$ is a supporting 
	hyperplane of $A$.

 We claim that $\varphi \cap \langle \varphi' \rangle \subset \partial
 \varphi$. Indeed, let $z \in \varphi \cap \langle \varphi'\rangle$ and
	$y \in \varphi \setminus  \varphi' $.
	Then 
	$y \in \varphi \setminus \langle \varphi' \rangle$,
	and for all $\eps >0$
	the vector $y + (1+ \eps) (z- y) $ lies in the affine hull
	of $\varphi$ but  not in $A$, since
	it is on the wrong side of the supporting hyperplane
	$\langle \varphi' \rangle$, and therefore not in $\varphi$.
	This shows that $z \in \partial \varphi$, and hence the claim.

	\textcolor{\blue}{Since
	%also
	$\varphi \cap \psi \subset
	%A \cap \langle \varphi' \rangle = \varphi'$, 
	\varphi \cap \langle \varphi' \rangle$,}
	it follows
	 from the preceding claim that
	$\varphi \cap  \psi  \subset
 \partial \varphi$.
 
 Now let $x \in \varphi^o$.
	Then
$x \in  \langle \varphi \rangle \setminus \psi$.
 Let $\pi_\psi(x)$ denote the point in $\psi$ closest to $x$, 
 and set $a:= \| x - \pi_\psi(x) \| = \dist(x, \psi)$. Then $a >0$. 

 Set $w:= a^{-1}(x - \pi_\psi(x))$.
 Then $\|w \| =1,$ $ w \in \langle \varphi \rangle$ and $w \perp \psi$
	(i.e., the Euclidean inner product of $w$ and $z$ 
	is zero for all $z \in \psi$),
 so $\dist(w,\langle \varphi' \rangle) \geq \delta$, where we set
 $$
 \delta := \inf \{ \dist(y,\langle \varphi' \rangle): y \in \langle \varphi \rangle,
 \| y \| =1, y \perp \psi \}.
 $$
 %Since we assume that  $\varphi$ is not contained in $\varphi'$ and $A$ 
 %is convex, also $\langle \varphi \rangle $ is not contained in $\psi$,
 %so the preceding infimum is over a non empty collection of vectors
 %$w$. Moreover, 
	If $y \in \langle \varphi \rangle \setminus \psi$
 then $y \notin \langle \varphi' \rangle $ so 
 $\dist( y, \langle \varphi' \rangle ) >0$. Therefore $\delta$
 is the infimum of a continuous, strictly positive  function 
 defined on a non-empty compact set of vectors $y$,
	and hence $0 < \delta < \infty$.
	Thus for $x \in \varphi^o$, with $w,a$ as given above, we have
 \bea
 \dist(x, \varphi') \geq \dist(x,\langle \varphi' \rangle)
 = \dist(\pi_\psi(x) + a w, \langle \varphi' \rangle)
 \nonumber \\
 = \dist(aw, \langle \varphi' \rangle)
 \nonumber \\
 \geq \delta a 
 = \delta \dist(x,\psi).
 \label{1228b}
 %\\
 %\geq \delta \dist (x, \partial \varphi),
 \eea
 If $\pi_\psi(x) \notin \varphi$,
 then there is a point in
 $[x, \pi_\psi(x)] \cap \partial \varphi$,
 while 
 if $\pi_\psi(x) \in \varphi$, then $\pi_\psi(x) \in \partial \varphi$.
 Either way $\dist(x,\psi) \geq \dist(x, \partial \varphi)$,
 and hence by (\ref{1228b}),
 $\dist (x,\varphi') \geq \delta \dist(x, \partial \varphi)$.
 Therefore $K(\varphi,\varphi')\leq \delta^{-1} < \infty$ as required.

 %(\ref{0716a}). 
 %Hence if we take $K> \delta^{-1}$ and $\dist(x,\partial \varphi) > Kr$
 %then $\dist(x,\varphi') >r$.

 Now we drop the assumption that $K$ is convex. Suppose
 first that the dimension of the face  $\varphi \cap \varphi'$ is  
 zero. Without loss of generality we may assume $\varphi \cap \varphi' = \{o\}$.
 Then there exists a neighbourhood $U$ of $o$, and cones $\cK,\cK'$, such
 that $\varphi \cap U = \cK \cap U$ and
 $\varphi' \cap U = \cK' \cap U$.
 
 Then $\cK \cap \cK' = \{o\}$; indeed, if $z \in \cK \cap \cK'$ then
 for small enough $\eps >0$ we have $\eps z \in \varphi \cap \varphi'$
 so  $z=o$. Hence, defining
 $$
 \delta' := \min \left( 1, \inf \{ \dist(z,\cK') : z \in \cK, \|z\| = 1\} 
 \right),
 $$
 we have by a compactness argument that $\delta' > 0$. 
 
 Choose $r >0$ such that $B(o,2r) \subset U$. Let $x \in
 B(o,r) \cap  \varphi \setminus \varphi'$, and let $y \in \varphi'$.
 Then $x \neq o$. If 
 $y \in   B(o,2r)$, then  $y \in \cK'$ so that
 \bean
 \|x-y\| \geq \dist (x, \cK') = \|x\| \dist (\|x\|^{-1}x,\cK')
 \\
 \geq \delta' \|x\| \geq \delta' \dist(x, \partial \varphi),
 \eean
 while if $y \notin  B(o,2r)$ then $\|x-y \| \geq r \geq \|x\|$.
 Therefore taking the infimum over all $y \in \varphi'$, we have
 that $\dist(x,\varphi') \geq \delta' \dist(x,\partial \varphi)$.

 Moreover, by a compactness argument 
 $\dist (x,\varphi')$ is bounded away from $0$ on $x \in \varphi
 \setminus B(o,r)$, while $\dist(x,\partial \varphi)$ is bounded away from
 infinity. Combining this with the preceding bound we obtain that
 $K(\varphi,\varphi') < \infty$ whenever $D(\varphi \cap \varphi')=0$.

 If $d=2$ then the preceding case with $D(\varphi \cap \varphi')=0$,
 is the only non-trivial case to consider so we have the result in
 this case.  

 Now suppose $d=3$.
 Then we also need to consider the case with
 $D(\varphi \cap \varphi')=1$. In this case $D(\varphi)= D(\varphi')=2$.
 Then we may write  $\varphi \cap \varphi' = \cup_{j=1}^k e_j$,
 where $e_1,\ldots,e_k$ are edges of $A$, all contained in a single 
 line (for example $k$ could be 2 if $A$ is a cube with a small
 polyhedral notch removed in a neighbourhood of the middle of one 
 of its edges). 
 Without loss of generality we assume that this line is the
 $x$-axis. Pick $j \in \{1,\ldots,k\}$ and assume
 %The set $\varphi \cap \varphi'$ is
 %a line segment; assume 
 without loss of generality 
 that the endpoints
 of the line segment $e_j$ are at 
 $o $ and at $(a,0,0)$ for some $a >0$, and that
 $ \varphi \subset \R^2 \times \{0\}$.

 Let $\alpha $ be the angle subtended by $A$ at the edge
 $e_j$.
 %$\varphi \cap \varphi'$.  
 Then $0 \leq \alpha < 2 \pi $ with $\alpha \neq \pi$. 
 Pick $b>0$ such that none of
 the edges $e_j, j \in \{1,\ldots,k\}$, intersect
 with the open line segment from
 $(-b,0,0)$ to $o$.

 For $x \in \varphi^o$ with $x $ close to $e_j$
 we need to find a lower bound
 for $\dist(x,\varphi')/ \dist(x, \partial \varphi)$.
 We argue separately depending on whether $x$ lies in
 the `left' region $L:= (-\infty,0] \times \R^2$, the 
 `middle' region $M := (0,a) \times \R^2$, or the `right' region
 $R:= [a, \infty) \times \R^2$. 

 Let $\cK_0, \cK'_0$ be cones such that there is a neighbourhood
 $U_0$  of $o$ with $\varphi \cap U_0 = \cK_0 \cap U_0$ and $\varphi' \cap U_0
 = \cK'_0 \cap U_0$. For $x \in \cK_0 \cap L \cap \cK'_0$ we
 have for sufficiently small $\eps >0$ that $\eps x \in \varphi \cap \varphi' 
 \cap L \cap ((-b,0] \times \R^2) = \{o\}$, and
 hence $\cK_0 \cap L \cap \cK'_0 = \{o\}$.
 Then setting 
 $$
 \delta_j := \min \left(1,  \inf 
 \{\dist(w,\cK'_0): w \in \cK_0 \cap L, \|w\|=1\} \right),
 $$
 we have  $\delta_j >0$ by a compactness argument. Next,  pick
 $r_j >0$ such that $B(o,2r_j) \subset U_0$.  For $x \in L \cap \varphi 
 \cap B(o,r_j)$, if
  $y \in   \varphi' \cap B(o,2r_j)$
 we have
 \bean
 \|y - x\| \geq \dist(x,\cK'_0) \geq \|x\| \delta_j \geq \delta_j
 \dist(x, \partial \varphi),
 \eean
 while if $y \in \varphi' \setminus B(o,2r_j)$ then $\| y-x\|
 \geq r_j \geq \|x\|$, so that $\dist(x,\varphi') \geq  
 \delta_j \dist(x, \partial \varphi)$.
% once more. 

 Similarly, we can find $\eps_j >0$ and $s_j >0$
 such that if $x \in R \cap
 \varphi \cap B((a,0,0),s_j)$ we have $\dist(x,\varphi' ) \geq
 \eps_j \dist(x,\partial \varphi)$.

 Define $\langle \varphi \rangle , \langle \varphi' \rangle$
 as before.
 The planes $\langle \varphi \rangle$ and $\langle
 \varphi' \rangle$ are at an angle 
 $\alpha' := \min(\alpha, 2 \pi - \alpha)$ to each other. 
 Given  $x \in \varphi \cap M $, let $\pi(x)$ be the closest point
 in $\langle \varphi \rangle  \cap \langle \varphi' \rangle$ to $x$.
 Then, since $x \in M$ we have $\pi(x) \in \varphi \cap \varphi' \subset
 \partial \varphi$.
 Hence $\|x - \pi(x)\| \geq \dist(x,\partial \varphi)$.
 Then
 \bean
 \dist(x,\varphi') \geq \dist(x,\langle \varphi' \rangle)
 = \|x - \pi(x)\| \sin (\alpha') \geq \dist(x,\partial \varphi) \sin (\alpha'),
 \eean
 for all $x \in \varphi \cap  M$. Combined with the  preceding
 estimates for $x \in L$ and for $x \in R $, we have 
 \bean
 \inf \left\{ \frac{\dist(x,\varphi')}{\dist(x,\partial \varphi) }:
 x \in \varphi^o , \dist(x,e_j) \leq \min(r_j,s_j)
 \right\} > \min(\delta_j, \eps_j, \sin \alpha').
 \eean
 By a  compactness argument,
  for $x \in \varphi^o$ with $\dist(x,\varphi \cap \varphi')
 \geq \min(r_1,s_1, \ldots, r_k,s_k)$ the ratio
 $\dist(x, \varphi') /\dist(x,\partial \varphi)$
 is also bounded away from zero.
 This shows that
 $K(\varphi,\varphi')$ is finite in this case too.
 \qed
\end{proof}

Recall that we are assuming (\ref{kcond}).
%$k(n)/\log n \to \beta
%\in [0,\infty]$ and $k(n)/n \to 0$
%as $n \to \infty$. 
Also, 
recall  that
for each face $\varphi$ of $A$
we denote the angular  volume of $A$ at $\varphi$ by
$\rho_{\varphi}$, and set $f_\varphi := \inf_{\varphi} f(\cdot)$.

\begin{lemm}
	\label{lemtopelb}
Let $\varphi$ be a face of $A$.
Then, almost surely:
\bea
	\liminf_{n \to \infty} \left( n  R^d_{n,k(n)}/ k(n) \right)
	\geq ( \rho_\varphi f_\varphi)^{-1}  ~~~ & {\rm if} ~ \beta = \infty; 
	\label{0704e3}
	\\
\liminf_{n \to \infty} \left( n  R^d_{n,k(n)}/ \log n \right)
	\geq (\rho_\varphi f_{\varphi} )^{-1} \hH_\beta(D(\varphi)/d)  ~~~ & {\rm if} ~ \beta < \infty 
	.
	\label{0704f3}
\eea
\end{lemm}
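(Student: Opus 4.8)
The plan is to obtain this directly from the general lower bound, Lemma~\ref{gammalem}, applied with packing exponent $b=D(\varphi)$ and with the ``volume constant'' $a$ chosen just above $\rho_\varphi f_\varphi$. Thus the whole task reduces to establishing a single packing estimate: for every $\epsilon>0$,
\[
\nu\big(A,\;r,\;(f_\varphi+\epsilon)\rho_\varphi\,r^{d}\big)=\Omega\big(r^{-D(\varphi)}\big)\qquad\text{as }r\downarrow0,
\]
i.e.\ one can place $\Omega(r^{-D(\varphi)})$ pairwise disjoint closed balls of radius $r$, centred in $A$, each of $\mu$-measure at most $(f_\varphi+\epsilon)\rho_\varphi r^{d}$. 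Granting this, Lemma~\ref{gammalem} immediately gives $\liminf_n\big(nR_{n,k(n)}^{d}/k(n)\big)\ge\big((f_\varphi+\epsilon)\rho_\varphi\big)^{-1}$ when $\beta=\infty$, and $\liminf_n\big(nR_{n,k(n)}^{d}/\log n\big)\ge\big((f_\varphi+\epsilon)\rho_\varphi\big)^{-1}\hH_\beta(D(\varphi)/d)$ when $\beta<\infty$; letting $\epsilon\downarrow0$ yields (\ref{0704e3}) and (\ref{0704f3}) (with the convention that the right-hand sides are $+\infty$, and the inequalities assert a limit of $\infty$, when $f_\varphi=0$).

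To prove the packing estimate I would first choose a base point $x_0\in\varphi^o$ at which $f$ is nearly minimal on $\varphi$: since $\varphi\subset\partial A$ and $f|_A$ is continuous at each point of $\partial A$, the restriction $f|_\varphi$ is continuous, and $\varphi^o$ is dense in the compact polytope $\varphi$, so $\inf_{\varphi^o}f=f_\varphi$; pick $x_0\in\varphi^o$ with $f(x_0)<f_\varphi+\epsilon/2$ and then, by continuity at $x_0$, an $\eta>0$ with $f<f_\varphi+\epsilon$ on $A\cap B(x_0,\eta)$. Next, fix a $D(\varphi)$-dimensional disc $D_\delta\subset\varphi^o$ of radius $\delta$ centred at $x_0$, with $\delta$ small enough that $\overline{D_\delta}\subset\varphi^o$ and $\delta<\eta/2$. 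Using the local conical description $A\cap U_x=(x+\cK_\varphi)\cap U_x$ of $A$ near points of $\varphi^o$, together with the fact that $\cK_\varphi$ is invariant under translation along the direction space of $\varphi$ (so that $y+\cK_\varphi=x_0+\cK_\varphi$ for every $y$ in the affine hull of $\varphi$), a compactness argument over $\overline{D_\delta}$ produces an $r_0\in(0,\eta-\delta)$ such that $A\cap B(y,r)=(y+\cK_\varphi)\cap B(y,r)$ for all $y\in\overline{D_\delta}$ and $r\in(0,r_0)$; this set has volume exactly $\rho_\varphi r^{d}$, and since it lies in $A\cap B(x_0,\eta)$ we get $\mu(B(y,r))\le(f_\varphi+\epsilon)\rho_\varphi r^{d}$. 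Finally, for $r\in(0,r_0)$ a standard volume-packing argument inside the $D(\varphi)$-dimensional disc $D_\delta$ gives $\Omega\big((\delta/r)^{D(\varphi)}\big)=\Omega(r^{-D(\varphi)})$ points of $D_\delta\subset A$ that are pairwise at distance greater than $2r$; the radius-$r$ balls about them are disjoint and each have $\mu$-measure at most $(f_\varphi+\epsilon)\rho_\varphi r^{d}$, which is exactly the required bound. (When $D(\varphi)=0$, $\varphi$ is a single vertex $v$, $\overline{D_\delta}=\{v\}$, and this degenerates to the single ball $B(v,r)$ with $\mu(B(v,r))\le(f_v+\epsilon)\rho_v r^{d}$ for small $r$, using continuity of $f$ at $v$ and the definition of $\rho_v$.)

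The only step requiring genuine care is the \emph{uniform} local conical structure: one must check that the threshold $r_0$ below which $A\cap B(y,r)$ coincides with $(y+\cK_\varphi)\cap B(y,r)$, and hence has volume $\rho_\varphi r^{d}$, can be taken uniform over the compact patch $\overline{D_\delta}\subset\varphi^o$, and this rests on the translation-invariance of $\cK_\varphi$ along $\varphi$ noted above. Beyond that, no probabilistic estimates are needed here: the Chernoff-bound-plus-Borel--Cantelli argument (including the Poissonization and the subsequence trick) is already packaged inside Lemma~\ref{gammalem}, so it suffices to verify its hypothesis in the form displayed above and then let $\epsilon\downarrow0$.
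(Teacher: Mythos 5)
Your proposal is correct and takes essentially the same route as the paper: pick a base point $x_0\in\varphi^o$ where $f$ is nearly $f_\varphi$, use the local conical description of $A$ near $\varphi^o$ (with the implicit translation-invariance of $\cK_\varphi$ along the direction space of $\varphi$) to get $\mu(B(x,r))\le a\rho_\varphi r^d$ for $x$ in a small patch of $\varphi^o$, pack $\Omega(r^{-D(\varphi)})$ disjoint such balls, and feed the packing estimate into Lemma~\ref{gammalem}. The only cosmetic difference is that the paper splits into cases and takes $b=0$ in Lemma~\ref{gammalem} when $\beta=\infty$ or $D(\varphi)=0$, whereas you use $b=D(\varphi)$ uniformly; both are valid.
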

\begin{proof}
	Let $a > f_{\varphi}$.
	Take $x_0 \in \varphi$ 
	such that $f (x_0) <a$. If $D(\varphi) >0$, assume
	also that $x_0 \in \varphi^o$.
	By the assumed continuity of $f$ at $x_0$, for all
	small enough $r >0$ we have
	$\mu(B(x_0,r)) \leq
	a  \rho_\varphi r^d$, so that
	$\nu(B,r,a \rho_\varphi r^d) = \Omega(1)$ as
	$r \downarrow 0$.
	Hence by Lemma \ref{gammalem} (taking $b=0$),
if  $\beta = \infty$ then almost surely   
	$\liminf_{n \to \infty} n R_{n,k(n)}^d/k(n) \geq
	1/(a \rho_\varphi)$, and (\ref{0704e3}) follows.
	Also, if $\beta < \infty$ and 
	$D(\varphi) =0$, then by Lemma \ref{gammalem}
	(again with $b=0$),
	almost
	surely
	$\liminf_{n \to \infty} (n R_{n,k(n)}^d/\log n)
	\geq \hH_\beta(0)/(a \rho_\varphi)$, and hence
	(\ref{0704f3}) in this
	case.
%follows since $\hH_{\beta} (0) = \beta$.

	Now suppose $\beta < \infty$ and $D(\varphi)>0$. Take
	$\delta >0 $ such that $f(x) < a$ for all
 $x \in B(x_0, 2 \delta) \cap A$, and such that moreover
	$B(x_0,2 \delta) \cap A = B(x_0,2 \delta) \cap (x_0+
	\cK_\varphi)$ (the cone $\cK_\varphi$ was defined
	in Section \ref{secLLN}).
	Then for all $x \in B(x_0,\delta) \cap \varphi$
	and all $r \in (0,\delta)$,
	we have $\mu(B(x,r)) \leq  a \rho_\varphi   r^d$.

	There is a constant $c >0$ such that
	for small enough $r >0$ we can find at least $cr^{-D(\varphi)}$
	points $x_i \in
	B(x_0,\delta) \cap \varphi$ that are all at a
	distance more than $2 r$ from each other, and therefore
	%$\liminf_{r \downarrow 0 }
	%\left( r^{D(\varphi)} \nu(B, r,a \rho_\varphi r^d ) \right) \geq c$.
	 $\nu(B, r,a \rho_\varphi r^d ) 
	 = \Omega( r^{-D(\varphi)})$ 
	as $r \downarrow 0 $.
	Thus by
	%(\ref{gammadef}) we have
	%$\gamma( a \rho_\varphi ) \geq D(\varphi)$,
	%and so by
	Lemma \ref{gammalem} we have
	$$
	\liminf_{n \to \infty} \left( nR_{n,k(n)}^d/k(n) \right)
	\geq  ( a \rho_\varphi)^{-1} \hH_{\beta}(D(\varphi)/d),
	$$
	almost surely,
	and (\ref{0704f3}) follows.
	\qed
\end{proof}

We now define a sequence of positive constants $K_1,K_2,\ldots$ depending on $A$
as follows. With $K(\varphi,\varphi') $ defined at (\ref{Kphi2def}), set 
\bea
K_A : =
\max \{ K(\varphi,\varphi'): \varphi ,  \varphi' \in
\Phi(A), D(\varphi') = d-1, \varphi \setminus \varphi' \neq \emptyset \},
\label{KAdef}
\eea
which is finite by Lemma \ref{lemangle}.
Then for $j=1,2,\ldots$ set $K_j := j(K_A+1)^{j-1}$. Then
$K_1=1$ and
for each $j \geq 1 $ we have $K_{j+1} \geq (K_A+1)(K_j+1)$. 

For each face $\varphi$ of $A$ and each $r >0$,
define the sets $\varphi_{r} : = \cup_{x \in \varphi} B(x,r) \cap A$,
and also $(\partial \varphi)_{r} := \cup_{x \in \partial \varphi} 
B(x,r) \cap A$ (so if $D(\varphi)=0$ then 
 $(\partial \varphi)_{r} = \partial \varphi = \emptyset$).
Given also $r>0$,
define for each $n,k  \in \N$ 
the event $G_{n,k,r,\varphi}$ as follows:

If $D(\varphi) = d-j$ with $1 \leq j \leq d$, let  
%$G_{n,k,r,\varphi} $ be the event that there exists 
$G_{n,k,r,\varphi} := \{
	(\varphi_{K_j r} \setminus (\partial \varphi)_{K_{j+1} r})
	\setminus F_{n,k,r} \neq \emptyset \}$,
	 the event that there exists 
$x \in \varphi_{K_j r} \setminus (\partial \varphi)_{K_{j+1} r}$
such that $\X_n ( B(x,r)) < k$. 

Let $R_{n,k,1}$
 be the smallest radius $r$ of balls required to
cover $k$ times the
boundary region $A \setminus A^{(r)} $,
as  defined at (\ref{Rnk1def}).

\begin{lemm}
	\label{Flem}
	Given $r>0$ and $n, k \in \N$,
	$ \{R_{n,k,1} > r\} \subset 
	 \cup_{\varphi \in \Phi(A)} G_{n,k,r,\varphi}. 
	$
\end{lemm}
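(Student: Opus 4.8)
The strategy is to show, concretely, that for any outcome $\omega$ in the event $\{R_{n,k,1}>r\}$ there is a single point $x$ and a single face $\varphi$ such that $x$ witnesses the event $G_{n,k,r,\varphi}$; producing $\varphi$ by a descent through the face lattice of $A$ is the crux. First I would record the reduction: if $R_{n,k,1}>r$ then, since $R_{n,k,1}$ is defined at (\ref{Rnk1def}) as an infimum, we cannot have $A\setminus A^{(r)}\subset F_{n,k,r}$, so there exists $x\in(A\setminus A^{(r)})\setminus F_{n,k,r}$; that is, $x\in A$, $\X_n(B(x,r))<k$, and $B(x,r)\not\subset A^o$. Walking along a segment from $x$ to a point of $B(x,r)\setminus A^o$ and stopping the first time the segment leaves $A^o$ produces a point of $\partial A$ within distance $r$ of $x$, so $\dist(x,\partial A)\le r$.

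Next comes the descent. Since $\partial A$ is the union of the $(d-1)$-dimensional faces of $A$, choose a facet $\varphi^{(1)}$ with $\dist(x,\varphi^{(1)})\le r=K_1 r$, and set $j_1:=d-D(\varphi^{(1)})=1$. Inductively, suppose $\varphi^{(m)}$ is a face of $A$ with $j_m:=d-D(\varphi^{(m)})$ and $\dist(x,\varphi^{(m)})\le K_{j_m} r$. If $\dist(x,\partial\varphi^{(m)})>K_{j_m+1} r$, stop and put $\varphi:=\varphi^{(m)}$. Otherwise, since $\partial\varphi^{(m)}$ is the union of the proper faces of $\varphi^{(m)}$, each of which is a face of $A$ of dimension at most $D(\varphi^{(m)})-1$, pick such a face $\varphi^{(m+1)}$ with $\dist(x,\varphi^{(m+1)})\le K_{j_m+1} r$; then $j_{m+1}:=d-D(\varphi^{(m+1)})\ge j_m+1$, and since the $K_j$ are nondecreasing, $\dist(x,\varphi^{(m+1)})\le K_{j_m+1} r\le K_{j_{m+1}} r$, so the inductive hypothesis persists. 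The index $j_m$ strictly increases and never exceeds $d$, and when $j_m=d$ the face $\varphi^{(m)}$ is a vertex, whence $\partial\varphi^{(m)}=\emptyset$ and the stopping condition holds vacuously; so the descent terminates after at most $d$ steps.

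Finally, at the terminal face $\varphi$, writing $j:=d-D(\varphi)$, we have $x\in A$ with $\dist(x,\varphi)\le K_j r$, hence $x\in\varphi_{K_j r}$; also $\dist(x,\partial\varphi)>K_{j+1} r$, hence $x\notin(\partial\varphi)_{K_{j+1} r}$; and $x\notin F_{n,k,r}$. Thus $x\in(\varphi_{K_j r}\setminus(\partial\varphi)_{K_{j+1} r})\setminus F_{n,k,r}$, so this set is nonempty and $\omega\in G_{n,k,r,\varphi}$, which proves the inclusion. The only mildly delicate ingredients are the two elementary geometric facts invoked — that $x\notin A^{(r)}$ forces $\dist(x,\partial A)\le r$, and that the relative boundary of a face of $A$ decomposes into lower-dimensional faces of $A$ — together with keeping the indices $j_m$ consistent; I do not expect real difficulty there, and I note that this argument uses only the monotonicity of $(K_j)$, the sharper growth $K_{j+1}\ge(K_A+1)(K_j+1)$ being needed later when one bounds $\Pr[G_{n,k,r,\varphi}]$.
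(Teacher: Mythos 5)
Your proof is correct and follows essentially the same descent-through-the-face-lattice strategy as the paper's proof. The one technical refinement you make is to allow the dimension to drop by more than one at each step (using monotonicity of $(K_j)$ to restore the invariant), whereas the paper chooses the chain so that $D(\varphi_j)=d-j$ exactly, which is also legitimate since the relative boundary of an $m$-dimensional face is a union of $(m-1)$-dimensional faces.
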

\begin{proof}
	Suppose $R_{n,k,1} > r$. Then we can and do choose a point
	 $x \in (A \setminus A^{(r)} ) \setminus F_{n,k,r}$,
% such that $\X_n(B(x,r)) < k$, 
	and a face
	$\varphi_1 \in \Phi(A) $ with $D(\varphi_1)
	= d-1$, and with
	$x \in (\varphi_1)_{r} = (\varphi_1)_{K_1 r}$. 

	If $x \notin (\partial \varphi_1)_{K_2 r}$ then
		$G_{n,k,r,\varphi_1}$
	occurs. Otherwise, we can and do choose $\varphi_2 \in \Phi(A)$ with
	$D(\varphi_2) = d-2$ and $x \in (\varphi_2)_{K_2 r}$. 

	If $x \notin (\partial \varphi_2)_{K_3 r}$ then 
	\textcolor{\blue}{
	$G_{n,k,r,\varphi_2}$
	occurs.} Otherwise, we can choose $\varphi_3 \in \Phi(A)$ with
	$D(\varphi_3) = d-3$ and $x \in (\varphi_3)_{K_3 r}$. 

	Continuing in this way,
	we obtain a terminating
	sequence of faces $\varphi_1 \supset
	\varphi_2 \supset \cdots \varphi_m$, with $m \leq d$, such that
	for $j =1,2,\ldots,m$ we have $D(\varphi_j) = d-j$ and
	$x \in (\varphi_j)_{K_j r}$, and $x \notin
	(\partial \varphi_m)_{K_{m+1} r}$
	(the sequence must terminate because if $D(\varphi)=0$
	then $(\partial \varphi)_s =\emptyset$ for all $s>0$
	by definition). But then $G_{n,k,r,\varphi_m}$ occurs,
	completing the proof.
%	\qed
\end{proof}

\begin{lemm}
	\label{lemfromang}
	Let $r>0$
	and $j \in \N$.
	Suppose $\varphi \in \Phi(A)$ and
	$x \in \varphi_{K_{j} r} \setminus 
	(\partial \varphi)_{K_{j+1} r}$. Then  
	for all $\varphi' \in \Phi(A)$ with $D(\varphi') = d-1$
	and  $\varphi \setminus \varphi' \neq \emptyset$,
	we have $\dist(x, \varphi') \geq r$.
\end{lemm}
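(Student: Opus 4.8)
The plan is to reduce the statement to a short chain of triangle inequalities whose only non-elementary ingredient is the finiteness of the constants $K(\varphi,\varphi')$ supplied by Lemma~\ref{lemangle}. Fix a face $\varphi'$ with $D(\varphi')=d-1$ and $\varphi\setminus\varphi'\neq\emptyset$ (if there is no such $\varphi'$ the assertion is vacuous). We may assume $D(\varphi)\ge 1$, so that $\varphi^o\neq\emptyset$ and $\partial\varphi\neq\emptyset$; then $(\varphi,\varphi')$ is one of the pairs entering the maximum in~(\ref{KAdef}), and since by Lemma~\ref{lemangle} we have $\varphi^o\cap\varphi'=\emptyset$, it follows that $0<K(\varphi,\varphi')\le K_A<\infty$ (and in particular $K_A>0$).

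First I would unpack the two hypotheses on $x$ into metric form. Since $\varphi\subset A$ we have $\varphi_{K_jr}\subset A$, so the hypothesis $x\in\varphi_{K_jr}$ gives $x\in A$ together with $\dist(x,\varphi)\le K_jr$; and then, because $x\in A$, the hypothesis $x\notin(\partial\varphi)_{K_{j+1}r}$ forces $\dist(x,\partial\varphi)>K_{j+1}r$ (here one uses that $\partial\varphi$ is compact, so this infimum is attained). Next I would choose a point $y\in\varphi$ nearest to $x$, which exists since $\varphi$ is compact, so that $\|x-y\|=\dist(x,\varphi)\le K_jr$, and check that necessarily $y\in\varphi^o$: if instead $y\in\partial\varphi$ then $\dist(x,\partial\varphi)\le\|x-y\|\le K_jr<K_{j+1}r$, contradicting the above, where $K_{j+1}>K_j$ is immediate from $K_j=j(K_A+1)^{j-1}$.

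Given $y\in\varphi^o$, the conclusion follows by chaining estimates. By the triangle inequality, $\dist(y,\partial\varphi)\ge\dist(x,\partial\varphi)-\|x-y\|>(K_{j+1}-K_j)r$; applying the definition~(\ref{Kphi2def}) of $K(\varphi,\varphi')$ at the point $y\in\varphi^o$ and then $K(\varphi,\varphi')\le K_A$ gives $\dist(y,\varphi')\ge\dist(y,\partial\varphi)/K_A>(K_{j+1}-K_j)r/K_A$; and one more triangle inequality gives $\dist(x,\varphi')\ge\dist(y,\varphi')-\|x-y\|>\bigl((K_{j+1}-K_j)/K_A-K_j\bigr)r$. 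It then remains only to observe that the stated inequality $K_{j+1}\ge(K_A+1)(K_j+1)$ rearranges to $K_{j+1}-K_j\ge K_A(K_j+1)+1$, so that $(K_{j+1}-K_j)/K_A-K_j\ge 1+K_A^{-1}>1$, which yields $\dist(x,\varphi')>r$, as required.

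I do not anticipate a genuine obstacle here: the real content — that the ratio $\dist(\,\cdot\,,\partial\varphi)/\dist(\,\cdot\,,\varphi')$ is bounded on $\varphi^o$ — has already been isolated and proved in Lemma~\ref{lemangle}. The two points requiring a little care are ensuring that the nearest point $y$ lands in $\varphi^o$ rather than merely in $\varphi$ (so that Lemma~\ref{lemangle} may legitimately be applied at $y$), which is precisely what the jump from the scale $K_j$ to $K_{j+1}$ buys; and keeping each triangle inequality oriented the right way, so that the definition of the $K_j$ absorbs both the loss $K_j r$ incurred at the final triangle inequality and the factor $K_A$ incurred in inverting $K(\varphi,\varphi')$.
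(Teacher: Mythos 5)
Your argument is correct and is essentially the paper's own proof run in the direct rather than the contradiction direction: the same ingredients appear (Lemma \ref{lemangle} via the constant $K_A$, the recursion $K_{j+1}\ge(K_A+1)(K_j+1)$, and two triangle inequalities), only organised as a chain of lower bounds at the nearest point $y\in\varphi^o$ instead of the paper's argument by contradiction from points $z\in\varphi$, $z'\in\varphi'$ close to $x$. Your explicit restriction to $D(\varphi)\ge 1$ matches the paper's implicit one (its proof likewise needs $\partial\varphi\neq\emptyset$ and $K(\varphi,\varphi')$ to be defined), so nothing essential is lost.
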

\begin{proof}
	Suppose some such $\varphi'$ exists
	with $\dist(x,\varphi') < r$. Then 
	there exist points $z \in \varphi$ and $z' \in \varphi'$
	with $\|z-x\| \leq K_j r$, and $\|z'-x\|  < r$,
	so that by the triangle inequality $\| z'-z\| < (K_j+1)r$.
	%Provided $n$ is large enough, 
	By (\ref{Kphi2def}) and (\ref{KAdef}),
	%Lemma \ref{lemangle}
	this implies that $\dist(z, \partial \varphi) < K_A(K_j+1) r$.
On the other hand, 
	we also have that 
	$$
	\dist(z, \partial \varphi) \geq \dist (x,\partial \varphi)
	- \|z-x\|  \geq (K_{j+1} - K_j)r,
	$$
	and combining these inequalities  shows
	that $K_{j+1} - K_j < K_A(K_j+1)$, that is,
	$K_{j+1} < K_j(K_A+1) +K_A < (K_j+1)(K_A+1)$. However, we earlier
	defined the sequence $(K_j)$ in such a way that
	$K_{j+1} \geq (K_j+1)(K_A+1)$,
	so we have a contradiction.
	\qed
\end{proof}

\begin{lemm}
\label{lemedge2}
	Let $\varphi$ be a face of $A$.
	If $\beta = \infty $ then let $u > 1/(f_{\varphi} \rho_\varphi)$.
	If $\beta < \infty$, let $u >  \hH_\beta(D(\varphi)/d) / (f_{\varphi} \rho_\varphi) $. 
For each $n \in \N$,
	set $r_n = (u k(n)/n )^{1/d}$ if $\beta = \infty$, and
	set $r_n = (u (\log n)/n )^{1/d}$ if $\beta < \infty$.
	Then:

	(i) if $\beta = \infty$ then a.s. the events 
	$G_{n,k(n),r_n,\varphi}$ occur for only finitely many $n$;

	(ii) if $\beta < \infty$ then
	 there exists $\eps>0$ such that, setting $k'(n):=
	 \lfloor (\beta + \eps) \log n\rfloor$, we have that
	 $\Pr[G_{n,k'(n),r_n,\varphi}]
	= O(n^{- \eps})$ as $n \to \infty$.
\end{lemm}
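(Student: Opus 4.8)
The plan is to deduce both statements from the general upper bound, Lemma~\ref{lemmeta}, applied with a carefully chosen $r$-dependent family of sets. Write $j:=d-D(\varphi)\in\{1,\dots,d\}$ and, for $r>0$, set $A_r:=\varphi_{K_jr}\setminus(\partial\varphi)_{K_{j+1}r}$; then straight from the definition of $G_{n,k,r,\varphi}$ one has $G_{n,k,r,\varphi}=\{A_r\not\subseteq F_{n,k,r}\}$. If $f_\varphi=0$ the hypotheses force $u=+\infty$ and there is nothing to prove, so assume $f_\varphi>0$. I will verify that the hypotheses of Lemma~\ref{lemmeta} hold for the family $(A_r)$, with exponent $b:=D(\varphi)$ and with $a:=(1-\eps_1)f_\varphi\rho_\varphi$, where $\eps_1\in(0,1)$ is fixed small enough that $1/a<u$ in the case $\beta=\infty$ and $a^{-1}\hH_\beta(D(\varphi)/d)<u$ in the case $\beta<\infty$ --- possible since $a\uparrow f_\varphi\rho_\varphi$ as $\eps_1\downarrow0$ and $\hH_\beta(D(\varphi)/d)<\infty$. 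Since $u$ then exceeds the threshold required in Lemma~\ref{lemmeta} (namely $1/a$ when $\beta=\infty$, and $a^{-1}\hH_\beta(b/d)$ when $\beta<\infty$), that lemma applies with the present $r_n$: when $\beta=\infty$ it gives that, almost surely, $A_{r_n}\subseteq F_{n,k(n),r_n}$ for all large $n$, which says precisely that $G_{n,k(n),r_n,\varphi}$ occurs for only finitely many $n$ --- this is (i); and when $\beta<\infty$ it furnishes $\eps>0$ with $\Pr[G_{n,k'(n),r_n,\varphi}]=\Pr\big[\{A_{r_n}\subseteq F_{n,k'(n),r_n}\}^c\big]=O(n^{-\eps})$ for $k'(n):=\lfloor(\beta+\eps)\log n\rfloor$ --- this is (ii).

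The hypothesis I expect to be the crux is the pointwise measure bound: that for some $r_0>0$ and all $r\in(0,r_0)$, $x\in A_r$, $s\in(0,r)$ one has $\mu(B(x,s))\ge as^d$. The plan is to show that for such $x$ the ball $B(x,r)$ ``sees'' only the faces of $A$ containing $\varphi$, so that $B(x,r)\cap A$ is the intersection of $B(x,r)$ with a translate $\cK$ of the tangent cone $\cK_\varphi$ with $x\in\cK$; then $B(x,s)\cap A$ contains the translate of $\cK_\varphi\cap B(o,s)$ by $x$ (or the analogue appropriate to a non-convex $\cK_\varphi$), so $|B(x,s)\cap A|\ge\rho_\varphi s^d$, and the bound follows on replacing $f$ by $(1-\eps_1)f_\varphi$ on $B(x,r)\cap A$, which is legitimate once $r_0$ is small because $f|_A$ is continuous at the points of the compact set $\varphi\subseteq\partial A$ and $B(x,r)$ lies within distance $(K_j+1)r$ of $\varphi$. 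The identification of $B(x,r)\cap A$ with a piece of a translate of $\cK_\varphi$ is precisely what Lemma~\ref{lemfromang} delivers: it guarantees $B(x,r)$ is disjoint from every facet $\varphi'$ of $A$ with $\varphi\setminus\varphi'\ne\emptyset$, so that within $B(x,r)$ the boundary of $A$ is cut out only by facets containing $\varphi$, whose local configuration near $\varphi^o$ is exactly $\cK_\varphi$; the rapidly growing constants $K_j$ are what keep the nearest point of $\varphi$ to $x$ inside $\varphi^o$ and far from $\partial\varphi$, hence inside the chart where $A$ has this conical description. I expect the genuine work to lie in making all of this uniform over $x\in A_r$ --- especially when the nearest point of $\varphi$ to $x$ is close to $\partial\varphi$ --- and in the cone-volume inequality $|B(v,s)\cap\cK_\varphi|\ge|B(o,s)\cap\cK_\varphi|=\rho_\varphi s^d$ for $v\in\cK_\varphi$, which is trivial when $\cK_\varphi$ is convex (then $v+\cK_\varphi\subseteq\cK_\varphi$) but needs a direct argument for the non-convex tangent cones arising when $d\le3$.

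The remaining hypothesis of Lemma~\ref{lemmeta}, that $\kappa(A_r,r)=O(r^{-D(\varphi)})$ as $r\downarrow0$, is routine: $A_r$ lies in the $K_jr$-neighbourhood of the $D(\varphi)$-dimensional polytope $\varphi$, so one covers $\varphi$ by $O(r^{-D(\varphi)})$ balls of radius $K_jr$, thickens these to balls of radius $2K_jr$ covering $A_r$, subdivides each into a bounded number of balls of radius $r/2$, and replaces the centre of each such ball that meets $A_r$ by a point of $A_r$; this presents $A_r$ as a union of $O(r^{-D(\varphi)})$ balls of radius $r$ centred in $A_r$, just as in the proof of (\ref{0320b2}) in Lemma~\ref{lemlimsupb}. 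With both hypotheses of Lemma~\ref{lemmeta} in hand, the conclusions (i) and (ii) follow as described in the first paragraph.
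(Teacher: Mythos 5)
Your proposal is essentially the paper's own proof: the paper likewise applies Lemma~\ref{lemmeta} to $A_r=\varphi_{K_j r}\setminus(\partial\varphi)_{K_{j+1}r}$ with $b=D(\varphi)$ and $a=(1-\delta)f_\varphi\rho_\varphi$, getting the measure bound $\mu(B(x,s))\ge(1-\delta)f_\varphi\rho_\varphi s^d$ from Lemma~\ref{lemfromang} plus continuity of $f$ near $\varphi$, and the covering bound $\kappa(A_r,r)=O(r^{-D(\varphi)})$ by the same routine covering argument, then reads off (i) and (ii) exactly as you do. The one step you flag as delicate --- the cone-volume lower bound $|B(x,s)\cap A|\ge\rho_\varphi s^d$ when $\cK_\varphi$ is not convex --- is asserted in the paper without further argument, so on that point too your treatment matches the paper's rather than diverging from it.
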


\begin{proof}
	Set $j = d-D(\varphi)$.
	We shall apply Lemma \ref{lemmeta},
	now taking 
	$A_r = \varphi_{K_j r} \setminus (\partial \varphi)_{K_{j+1} r}$.
	With this choice of $A_r$, observe first that
	$\kappa(A_r,r) = O(r^{-D(\varphi)})$ as $r \downarrow 0$.

	Let $\delta \in (0,1)$. Assume $u > 1/(f_\varphi \rho_\varphi
	(1-\delta))$ if $\beta = \infty$ and
	assume that
	$u > \hH_\beta(D(\varphi)/d)/(f_\varphi \rho_\varphi (1-\delta))$ 
	if $\beta < \infty$.

        By Lemma \ref{lemfromang}, for all small enough $r$, and
	for all $x \in \phi_{K_j r} \setminus (\partial \phi)_{K_{j+1} r}$,
	and all $s \in (0,r]$,
the ball $B(x,s)$ does not intersect any of
the faces of dimension $d-1$,
        other than those
        which meet at
        $\varphi$ (i.e., which contain $\varphi$).
        Also $f(y) \geq (1-  \delta) f_\varphi$ for all $y \in A $ sufficiently
	close to $\varphi$.  Hence 
	$$
	\mu(B(x,s)) \geq (1- \delta) f_\varphi \rho_\varphi s^d.
	$$
	Therefore we can apply Lemma \ref{lemmeta}, now with
	 $A_r = \varphi_{K_j r} \setminus (\partial \varphi)_{K_{j+1} r}$,
	 taking $a = (1-\delta) f_\varphi \rho_\varphi$ and
	 $b = D(\varphi)$.

	 If $\beta = \infty$, taking $u > 1/(f_\varphi 
	 \rho_\varphi (1-\delta))$ and $r_n = (u k(n)/n)^{1/d}$,
	 by Lemma \ref{lemmeta} we have 
	 with probability 1 that 
	$ \varphi_{K_j r_n} \setminus (\partial \varphi)_{K_{j+1} r_n}
	\subset F_{n,k(n),r_n}$ for all large enough $n$,
	which gives part (i).

	If $\beta < \infty$, taking $u > 
	\hH_\beta(D(\varphi)/d)/(f_\varphi \rho_\varphi (1-\delta))$,
	and setting $r_n = (u (\log n)/n)^{1/d}$, 
	we have from Lemma \ref{lemmeta} that
	there exists $\eps >0 $ such that setting 
	$k'(n) := \lfloor (\beta + \eps) \log n \rfloor$,
	we have $\Pr[ \{
	 \varphi_{K_j r_n} \setminus (\partial \varphi)_{K_{j+1} r_n}
	 \subset F_{n,k'(n),r_n} \}^c] = O(n^{-\eps})$,
 which gives part (ii). 
	\qed
\end{proof}

\begin{proof}[Proof of  Theorem \ref{thmpolytope}] 
	Suppose $\beta = \infty$. 
	Let
	$u > \max \left( \frac{1}{\theta_d f_0},
	\max_{\varphi \in \Phi(A)}
	\frac{1}{f_\varphi \rho_\varphi} \right)$.
	Setting $r_n = (u k(n)/n)^{1/d}$, we have from 
	Lemma \ref{lemedge2} that  $G_{n,k(n),r_n,\varphi}$
	occurs only finitely often, a.s.,  for each $\varphi \in \Phi(A)$.
	Hence by Lemma \ref{Flem}, $R_{n,k(n),1} \leq r_n$ for all large
	enough $n$, a.s. Hence, almost surely
	$
	\limsup_{n \to \infty}  \left( n R_{n,k(n),1}^d/k(n) \right) \leq
	u.
	$

	Since $u > 1/(\theta_d f_0)$, by Proposition \ref{thm1}
	  we also have
	$\limsup_{n \to \infty}  \left( n \tR_{n,k(n)}^d/k(n) \right) \leq
	u$,
	  almost surely,    
	and hence by (\ref{mineq2}), almost surely 
	$$
	\limsup_{n\to \infty}
	\left( n R_{n,k(n)}^d/k(n) \right) \leq 
	\max \left( 
	 (\theta_d  f_0)^{-1}, 
	\max_{\varphi \in \Phi(A)}
	1/(f_\varphi \rho_\varphi)
	\right).
	$$
	Moreover, by
	Lemma \ref{lemtopelb},
	Proposition \ref{thm1} and (\ref{mineq2}) 
	we also have that
	$$
	\liminf_{n\to \infty}
	\left( n R_{n,k(n)}^d/k(n) \right) \geq 
	\max \left( 
	  (\theta_d f_0)^{-1}, 
	\max_{\varphi \in \Phi(A)}
	1/(f_\varphi \rho_\varphi)
	\right),
	$$
	and thus (\ref{0717a}).

	Now suppose $\beta < \infty$.
	Let $u > \max \left(
	 \hH_\beta(1)/ (f_0\theta_d), 
	\max_{\varphi \in \Phi(A)}
	\hH_\beta(D(\varphi)/d)/(f_\varphi \rho_\varphi)
	\right)$.  
	Set $r_n := (u (\log n)/n)^{1/d}$.
	Given $\varphi \in \Phi(A)$, by
	Lemma \ref{lemedge2}  there exists $\eps >0$ such
	that, setting $k'(n):= \lfloor(\beta + \eps) \log n \rfloor$,
	we have $\Pr[G_{n,k'(n),r_n,\varphi}
	] = O(n^{-\eps})$.
	Hence  by Lemma \ref{Flem} and the union bound, 
	$$
	\Pr[ n R_{n,k'(n),1}^d/ \log n > u] = \Pr[R_{n,k'(n),1} > r_n] =
	O(n^{- \eps}).
	$$
	Thus by the subsequence trick (Lemma \ref{lemtrick} (a)),
	$
	\limsup_{n \to \infty} \left( n R_{n,k(n),1}^d/ \log n
	\right) \leq u,
	$
	almost surely.
	Since
	$u > \hH_\beta(1)/ (f_0\theta_d)$,  
	and we take $B=A$ here,
	by Proposition \ref{thm1}
	  we also have a.s. that
	$\limsup_{n \to \infty}  \left( n \tR_{n,k(n)}^d/\log n \right) \leq
	u$,
	  %almost surely,    
	and hence by (\ref{mineq2}), almost surely 
	$$
	\limsup_{n\to \infty}
	\left(
	n R_{n,k(n)}^d/\log n \right) \leq 
	\max \left( \frac{
		\hH_\beta(1) }{  f_0\theta_d } 
	 ,
	\max_{\varphi \in \Phi(A)} \left(
	\frac{ \hH_\beta(D(\varphi)/d)}{ f_\varphi \rho_\varphi}
	\right)
	\right).  
	$$
	Moreover, by
	Lemma \ref{lemtopelb},
	Proposition \ref{thm1} and (\ref{mineq2}), we also have a.s. that
	$$
	\liminf_{n\to \infty}
	\left( n R_{n,k(n)}^d/k(n) \right) \geq 
	\max \left( 
	\frac{\hH_\beta(1)}{\theta_d f_0}, 
	\max_{\varphi \in \Phi(A)}
	\left(
	\frac{ \hH_\beta(D(\varphi)/d) }{ f_\varphi \rho_\varphi } \right)
	\right),
	$$
	and thus (\ref{0717b}).
	\qed
\end{proof}

\section{Proof of results from Section \ref{secweak}}
\label{secpfwk}
Throughout this section, we assume $f = f_0 {\bf 1}_A$, where
$A \subset \R^d$ is compact and Riemann measurable with $|A| >0$,
and $f_0 := |A|^{-1}$.

\subsection{{\bf Preliminaries, and proof of Propositions \ref{Hallthm}
and \ref{lemrewrite}}}
\label{subsecprelims}
We start by showing that any weak convergence result  for $R'_{t,k}$ 
(in the large-$t$ limit)
of the type we seek to prove, implies the corresponding
weak convergence result for $R_{n,k}$ in the large-$n$ limit.
\textcolor{\blue}{This is needed because all of the
results in Section \ref{secweak} are stated both
for $R_{n,k}$ and for $R'_{t,k}$ (these quantities were defined at
(\ref{Rnkdef}) and
(\ref{Rdashdef})).}
\allco
\begin{lemm}[%Poissonization and 
de-Poissonization]
\label{depolem}
Suppose $\mu$ is uniform over $A$.
 Let $k \in \N$, and $a,b,c \in \R$ with $a >0$ and $b >0$.
Let  $F$ be
 a continuous  cumulative distribution function. Suppose that
% If
\bea
\lim_{t \to \infty} \Pr[a t (R'_{t,k})^d - b \log t - c \lglg t \leq \gamma]
= F(\gamma), ~~~~ \forall \gamma \in \R. 
\label{0114a2}
\eea
Then 
\bea
\lim_{n \to \infty} \Pr[a n R_{n,k}^d - b \log n - c \lglg n \leq \gamma]
= F(\gamma), ~~~~ \forall \gamma \in \R. 
\label{0114b2}
\eea
%Conversely, if (\ref{0114b2}) holds, then (\ref{0114a2}) holds.
\end{lemm}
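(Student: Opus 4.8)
The plan is a routine de-Poissonization: sandwich $R_{n,k}$ between two Poissonized thresholds with slightly perturbed intensities and apply Slutsky's theorem. Write $W_t := a\, t\, (R'_{t,k})^d - b \log t - c \lglg t$, so that hypothesis (\ref{0114a2}) reads $W_t \tod F$ as $t \to \infty$; in particular the family $(W_t)_{t \geq 2}$ is tight. Recall from (\ref{Rdashdef}) that $R'_{t,k} = R_{Z_t,k}$, that $R_{m,k}$ is nonincreasing in $m$ (enlarging the point set only eases coverage), and that $(Z_t)_{t \geq 0}$ is nondecreasing and independent of $(X_i)_{i \geq 1}$. For each $n$, fix the two intensities $t_- := n - n^{3/4}$ and $t_+ := n + n^{3/4}$. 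On the event $\{Z_{t_-} \leq n\}$ we have $\Po_{t_-} \subseteq \X_n$, hence $R_{n,k} \leq R'_{t_-,k}$; on the event $\{Z_{t_+} \geq n\}$ we have $\X_n \subseteq \Po_{t_+}$, hence $R_{n,k} \geq R'_{t_+,k}$.

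First I would note that $\Pr[Z_{t_-} > n] \to 0$ and $\Pr[Z_{t_+} < n] \to 0$ as $n \to \infty$: indeed $Z_{t_\pm}$ is Poisson with mean and variance $t_\pm = n + O(n^{3/4})$, so Chebyshev's inequality bounds each probability by $O(n^{3/2}/n^{3/2}) = O(n^{-1/2})$ (Lemma \ref{lemChern}(d) would also do). Next I would establish that, as $n \to \infty$,
\be
\label{dpslutsky}
a\, n\, (R'_{t_\pm,k})^d - b \log n - c \lglg n = W_{t_\pm} + o_P(1).
\ee
To see this, write $a\, n\, (R'_{t_\pm,k})^d = (n/t_\pm)\,\bigl(W_{t_\pm} + b \log t_\pm + c \lglg t_\pm\bigr)$, and use $n/t_\pm = 1 + O(n^{-1/4})$, $\log t_\pm = \log n + O(n^{-1/4})$, and $\lglg t_\pm = \lglg n + o(1)$. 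The purely deterministic discrepancies contribute $O(n^{-1/4}\log n) = o(1)$, while the remaining stochastic term equals a deterministic $O(n^{-1/4})$ times $W_{t_\pm}$, which is $o_P(1)$ since $(W_t)$ is tight; this gives (\ref{dpslutsky}). As $t_\pm \to \infty$ we have $W_{t_\pm} \tod F$, so by Slutsky's theorem the left-hand side of (\ref{dpslutsky}) also converges in distribution to $F$.

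Finally I would combine these facts. Fix $\gamma \in \R$, which is a continuity point of $F$ by hypothesis. Using $R_{n,k} \geq R'_{t_+,k}$ on $\{Z_{t_+} \geq n\}$,
$$
\Pr[a\, n\, R_{n,k}^d - b \log n - c \lglg n \leq \gamma] \leq \Pr[a\, n\, (R'_{t_+,k})^d - b \log n - c \lglg n \leq \gamma] + \Pr[Z_{t_+} < n],
$$
and letting $n \to \infty$ yields $\limsup_n \Pr[a\, n\, R_{n,k}^d - b \log n - c \lglg n \leq \gamma] \leq F(\gamma)$. Symmetrically, using $R_{n,k} \leq R'_{t_-,k}$ on $\{Z_{t_-} \leq n\}$,
$$
\Pr[a\, n\, R_{n,k}^d - b \log n - c \lglg n \leq \gamma] \geq \Pr[a\, n\, (R'_{t_-,k})^d - b \log n - c \lglg n \leq \gamma] - \Pr[Z_{t_-} > n] \longrightarrow F(\gamma),
$$
so $\liminf_n \Pr[a\, n\, R_{n,k}^d - b \log n - c \lglg n \leq \gamma] \geq F(\gamma)$. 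Together these give (\ref{0114b2}).

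The only slightly delicate point is (\ref{dpslutsky}): one must check that applying the correction factor $n/t_\pm = 1 + O(n^{-1/4})$ to a quantity of size $\Theta(\log n)$ is harmless. This works precisely because, after subtracting the deterministic centring $b \log t_\pm + c \lglg t_\pm$, the leftover $W_{t_\pm}$ is only tight rather than bounded, yet the correction factor decays fast enough ($n^{-1/4}\log n \to 0$) to absorb it. Everything else is routine bookkeeping with Slutsky's theorem and the two vanishing Poisson-tail probabilities; note also that the uniformity of $\mu$ plays no role in this argument.
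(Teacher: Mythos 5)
Your proof is correct and takes a genuinely different route from the paper's. You use a two-sided sandwich: with $t_\pm = n \pm n^{3/4}$, the natural coupling $\Po_t = \{X_1,\ldots,X_{Z_t}\}$, $\X_n = \{X_1,\ldots,X_n\}$ gives $R'_{t_+,k} \leq R_{n,k} \leq R'_{t_-,k}$ on the high-probability events $\{Z_{t_+}\geq n\}$ and $\{Z_{t_-}\leq n\}$ respectively, and the tightness/Slutsky step (your \eqref{dpslutsky}) converts $W_{t_\pm}\tod F$ into convergence of the binomially-normalized quantity. The paper instead works with a single auxiliary intensity $t(n)=n-n^{3/4}$ (so only the one-sided bound $R_{n,k}\leq R'_{t(n),k}$ comes for free) and a fixed sequence $r_n$ chosen so the Poissonized centred quantity converges to $\gamma$; the harder inclusion $\{R'_{t(n),k}>r_n\geq R_{n,k}\}$ is then handled by a filling argument: pick a measurable witness $X$ uncovered by $\Po_{t(n)}$, note that at least one of the at most $2n^{3/4}$ extra binomial points $X_{Z_{t(n)}+1},\ldots,X_n$ must land in $B(X,r_n)$, and bound this probability by $2n^{3/4}\theta_d f_0 r_n^d = O(n^{-1/4}\log n)\to 0$. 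What your route buys is generality and transparency: as you correctly observe, uniformity of $\mu$ is never used in your argument, whereas the paper's filling step requires the density bound $\Pr[X_1\in B(X,r_n)]\leq f_0\theta_d r_n^d$. What the paper's route buys is avoiding the Slutsky reduction (it manipulates $r_n$ directly) and needing only one auxiliary Poisson intensity. One minor overstatement in your write-up: you assert tightness of the whole family $(W_t)_{t\geq 2}$, which does not automatically follow from $W_t\tod F$ over a continuum of indices; but you only ever invoke tightness along $t_\pm(n)\to\infty$, where it does follow from the distributional convergence, so the proof stands.
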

\begin{proof}
%Suppose (\ref{0114b2}) holds.
For each $n \in \N$, set $t(n) := n- n^{3/4}$.
 Let $\gamma \in \R$. Given $n \in \N \cap (1,\infty)$,
set
$$
r_n := \left( \frac{b \log n  + c \lglg n  +
  \gamma}{
a n  } \right)^{1/d}.
$$
Then
\bea
t(n) a  r_n^d  - b \log (t(n) ) - c \lglg (t(n) ) 
\nonumber \\
= (b \log n + c \lglg n  + \gamma) \frac{t(n)}{n} 
- b \log (n- n^{3/4}) - c \lglg (n-n^{3/4})
\nonumber \\
\to   \gamma.
\label{togamma}
\eea
 Then by (\ref{0114a2}), and the continuity of $F$,
 we obtain that
\bea
\Pr[R'_{t(n),k} \leq r_n] \to F(\gamma). 
\label{0114e}
\eea
Moreover, since adding further points reduces the $k$-coverage threshold, 
%if $R'_{t(n)} \leq r_n < R_n]
\bea
\Pr[R'_{t(n),k} \leq r_n < R_{n,k}]
\leq \Pr[R'_{t(n),k}  < R_{n,k}]
\leq \Pr[Z_{t(n)} > n ],
\label{0114d}
\eea
which tends to zero by Chebyshev's inequality.

Now suppose $R'_{t(n),k} > r_n$.
 Pick a point $X$ of $B $ that is covered 
by fewer than $k$ of the closed balls of radius $r_n$ centred on points in
$\Po_{t(n)}$  (this can be done
in a measurable way). If, additionally, $R_{n,k} \leq r_n$, then 
we must have $Z_{t(n)} < n$, and at least one
of the points $X_{Z_{t(n)}+1}, X_{Z_{t(n)}+2},\ldots,X_n $ must lie
in $B(X,r_n) $.
Therefore 
\bean
\Pr[ R'_{t(n),k} > r_n \geq R_{n,k}] \leq
\Pr[ \{ n-2 n^{3/4} \leq Z_{t(n) } \leq n\}^c]
+ 2 n^{3/4} \theta_d f_0 r_n^d,
\eean
which tends to zero by Chebyshev's inequality.
Combined with (\ref{0114d}) and (\ref{0114e}) this shows
that 
$
%\lim_{n \to \infty} 
\Pr[ R_{n,k} \leq r_n ] \to F(\gamma)
$
as $n \to \infty$, which gives us (\ref{0114b2}) as required.
\qed
\end{proof}

%Given $\lambda >0$,
The {\em spherical Poisson Boolean model (SPBM)} is defined to be a collection
of Euclidean balls (referred to as {\em grains}) of i.i.d. random radii,
centred on the points of a
homogeneous Poisson process in the whole of $\R^d$. 
Often in the literature the SPBM is taken to
be the {\em union} of these balls (see e.g. \cite{LP}) but here,
following \cite{HallBk}, we
take the SPBM to be the {\em collection} of these balls, rather than
their union. This enables us to consider multiple coverage: given $k \in \N$
we say a point $x \in \R^d$ is {\em covered $k$ times} by the SPBM
if it lies in $k$ of the balls in this collection. 
The SPBM is parametrised by
the intensity of the Poisson process and the distribution of the radii.

We shall repeatedly use   
the following result, which comes from results in
Janson \cite{Janson} or (when $k=1$) 
  Hall \cite{HallZW}. Recall that $c_d$ was defined at (\ref{cdef}).
\begin{lemm}
\label{lemHall}
Let $d,k \in \N$. Suppose $Y$ is a bounded nonnegative random  
variable, and $\alpha = \theta_d \E[Y^d] $
 is the expected volume of a ball of radius $Y$.
Let $\beta \in \R$. Suppose 
$ \delta(\lambda) \in (0,\infty)$  
is defined for all $\lambda >0$, and satisfies
\bea
\lim_{\lambda \to \infty} \left( \alpha \delta(\lambda)^d \lambda -
\log \lambda  - (d+k-2) \lglg \lambda \right)  
= \beta.
\label{0315c}
\eea
Let $B \subset \R^d$  be compact and Riemann measurable, and for 
each $\lambda >0$ let
	\textcolor{\blue}{$B_\lambda \subset B$
	be Riemann measurable with the properties that  
	$B_\lambda \subset B_{\lambda'}$ 
	whenever $\lambda \leq \lambda'$, and  that
	$\cup_{\lambda >0} B_\lambda \supset B^o$}.
	%$B_\lambda \uparrow  B$ as $\lambda \to \infty$.
 Let $E_\lambda$ be the
event that every point in $B_\lambda$ is
 covered at least $k$ times by a spherical Poisson Boolean model
with intensity $\lambda$ and radii having the distribution of
$\delta(\lambda)Y$. Then 
\bea
\lim_{\lambda \to \infty} \Pr[E_\lambda] 
=
 \exp \left(-  \left( 
\frac{c_d  (\E [Y^{d-1}] )^d}{ (k-1)! (\E[Y^d ])^{d-1} } 
\right)|B| e^{-\beta}  \right). 
\label{0315b}
\eea
\end{lemm}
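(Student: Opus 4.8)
The plan is to deduce this lemma from the coverage results of Janson~\cite{Janson} (and, when $k=1$, of Hall~\cite{HallZW}), in three moves: (i) prove the statement when the family $(B_\lambda)$ is replaced by a single fixed compact Riemann measurable target set; (ii) identify the constant appearing in~(\ref{0315b}) with the one furnished by~\cite{Janson}; and (iii) pass from fixed targets to the monotone family $(B_\lambda)$ by a sandwiching argument.

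For~(i), fix a compact Riemann measurable $D \subset \R^d$ and consider the SPBM with Poisson intensity $\lambda$ and grains $B(x_i,\delta(\lambda)Y_i)$, where the $Y_i$ are i.i.d.\ copies of the bounded variable $Y$. A single grain has expected volume $\alpha\delta(\lambda)^d$, so the expected number of grains covering a given point of $\R^d$ equals $\lambda\alpha\delta(\lambda)^d=\log\lambda+(d+k-2)\lglg\lambda+\beta+o(1)$ by~(\ref{0315c}); this is exactly the critical window treated in~\cite{Janson}. By~\cite{Janson} (resp.\ by~\cite{HallZW} when $k=1$), in this regime the probability of $k$-fold coverage of a fixed bounded region with Lebesgue-null boundary converges to $\exp(-c^*|D|e^{-\beta})$ for a constant $c^*$ depending only on $d$, $k$ and the law of $Y$, via a Poisson limit for the number of components of the set of points covered fewer than $k$ times; that is,
\[
\lim_{\lambda\to\infty}\Pr[\,D\text{ covered }k\text{ times by the SPBM}\,]=e^{-c^*|D|e^{-\beta}}.
\]
Since~\cite{Janson} requires the target to be sufficiently regular, I would first establish this display for $D$ a finite union of axis-parallel cubes, and then obtain it for arbitrary compact Riemann measurable $D$ by trapping $D$ between two such unions $D_-\subset D\subset D_+$ with $|D_+|-|D_-|<\eps$ (possible by Riemann measurability), using that covering a larger set is less likely, the continuity of $x\mapsto e^{-c^*xe^{-\beta}}$, and letting $\eps\downarrow 0$.

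For~(iii), note first that $B_\lambda\subset B$ for every $\lambda$, so $\Pr[E_\lambda]\ge\Pr[\,B\text{ covered }k\text{ times}\,]\to e^{-c^*|B|e^{-\beta}}$ by~(i), which gives the required lower bound on $\liminf_\lambda\Pr[E_\lambda]$. For the upper bound, observe that the $B_\lambda$ are increasing with $\bigcup_{\lambda>0}B_\lambda\supset B^o$ and each $B_\lambda\subset B$, so $|B_\lambda|\uparrow|B^o|=|B|$ by Riemann measurability; given $\eps>0$ pick $\lambda_0$ with $|B_{\lambda_0}|>|B|-\eps$, so that $B_\lambda\supset B_{\lambda_0}$ for $\lambda\ge\lambda_0$, whence $\Pr[E_\lambda]\le\Pr[\,B_{\lambda_0}\text{ covered }k\text{ times}\,]\to e^{-c^*|B_{\lambda_0}|e^{-\beta}}\le e^{-c^*(|B|-\eps)e^{-\beta}}$ by~(i) applied to the fixed set $B_{\lambda_0}$. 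Letting $\eps\downarrow 0$ completes the argument, once~(ii) is in hand.

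The genuinely laborious step is~(ii): matching $c^*$ to the explicit expression $c_d(\E[Y^{d-1}])^d/((k-1)!(\E[Y^d])^{d-1})$. Here one specialises the geometric constant of~\cite{Janson} to spherical grains of random radius $\delta(\lambda)Y$. The critical configurations consist of $d$ grains whose bounding spheres pass through a common point --- contributing $(d-1)$-dimensional surface-content factors, i.e.\ powers of $\E[Y^{d-1}]$, together with a Jacobian built from the $d$ sphere normals which, after the normalisation of $\delta(\lambda)$ through $\alpha=\theta_d\E[Y^d]$, collapses to the purely dimensional constant $c_d$ and the factor $(\E[Y^d])^{-(d-1)}$ --- together with $k-1$ further grains covering that point, which shift the centring by $(k-1)\lglg\lambda$ and leave the combinatorial factor $1/(k-1)!$. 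Carrying out this (tedious but routine) computation is the main obstacle; everything else --- the critical scaling, the boundedness of $Y$, the reduction to elementary target sets, and the sandwiching of the monotone family --- is standard.
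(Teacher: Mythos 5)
Your plan coincides with the paper's: reduce to a single fixed compact Riemann measurable target, apply Janson (or Hall for $k=1$), read off the constant, and sandwich the increasing family $(B_\lambda)$. Two concrete steps are missing, however.

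The first is the translation of the critical window into Janson's parametrisation. Janson's Theorem~1.1 centres by $\log(1/(\alpha\delta^d))$ and $\lglg(1/(\alpha\delta^d))$, and the coefficient of the $\lglg$ term there is $d+k-1$, not the $d+k-2$ of~(\ref{0315c}). Since (\ref{0315c}) forces $\alpha\delta(\lambda)^d \sim (\log\lambda)/\lambda$, one has $\log(1/(\alpha\delta^d)) = \log\lambda - \lglg\lambda + o(1)$ and $\lglg(1/(\alpha\delta^d)) = \lglg\lambda + o(1)$; the $-\lglg\lambda$ correction in the first of these is exactly what turns Janson's $d+k-1$ into the $d+k-2$ in the hypothesis. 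Without this bookkeeping you cannot assert that~(\ref{0315c}) places you in Janson's critical window, and a naive match of coefficients would leave the answer off by a full factor $\log\lambda$.

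Second, Janson's Theorem~1.1 concerns a binomial model: a deterministic number $n$ of i.i.d.\ grain centres uniform in a fixed bounded domain, not a Poisson process. To obtain the SPBM statement one restricts the SPBM to grains centred in a large ball $\tilde{A}$ with $B$ in the interior of $\tilde{A}$ (grains centred outside cannot reach $B$ once $\lambda$ is large), couples the restricted Poisson configuration to the first $n(\lambda) \approx \lambda|\tilde{A}|$ terms of the i.i.d.\ sequence, and controls the discrepancy via Chebyshev together with the smallness of a single grain. This de-Poissonisation is absent from your outline. Conversely, the step you flag as the main obstacle --- identifying $c^*$ --- requires no geometric computation at all: Janson's Example~4 already gives $\alpha_J = \frac{1}{d!}\bigl(\frac{\sqrt\pi\,\Gamma(1+d/2)}{\Gamma((1+d)/2)}\bigr)^{d-1}\frac{(\E[Y^{d-1}])^d}{(\E[Y^d])^{d-1}}$ for spherical grains of random radius, which equals $c_d(\E[Y^{d-1}])^d/(\E[Y^d])^{d-1}$ by~(\ref{cdef}), and the $1/(k-1)!$ is built into the statement of Janson's theorem.
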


\textcolor{\blue}{
In the proof, and elsewhere, we use the fact that asymptotics
of iterated logarithms
are unaffected by multiplicative constants: if $a >0$ then
as $t \to \infty$ we have
\bea
\log \log(ta ) = \log ( \log t ( 1+ (\log a)/\log t)) 
= \log \log t + o(1).
\label{eqloglog}
\eea
}
\begin{proof}[Proof of Lemma \ref{lemHall}]
For $k=1$,  when 
$B_\lambda = B$ for all $\lambda >0$ the result can be obtained from
 \cite[Theorem 2]{HallZW}. 
Since \cite{HallZW} does not address multiple coverage,
we  use \cite{Janson} instead to prove the result for general $k$.
Because of
the way the result is stated in \cite{Janson}, we need to
 express $\log(1/(\alpha \delta^d))$
and $\lglg(1/(\alpha \delta^d))$
asymptotically in terms of $\lambda$ (we are now  
 writing just $\delta $ for $\delta(\lambda)$).
  By (\ref{0315c}),
\bea
\alpha \delta^d  =
\lambda^{-1}(\log \lambda) (1 + o(1)) 
\label{0426k}
\eea
so that
%\bea
$
\log (1/( \alpha \delta^d)) = \log \lambda  - \lglg 
\lambda  + o(1)
\label{0426b}
$
%\eea
and
%\bea
$
\lglg (1/(\alpha \delta^d)) = \lglg \lambda + o(1).
%\label{0426c}
%\eea
$
%  By (\ref{0426b}) and  (\ref{0426c}), 
Therefore  we have as $\lambda \to \infty$ that
\bea
\lambda \delta^d \alpha - \log (1/(\alpha \delta^d))  
- (d+ k -1) \lglg (1/(\alpha \delta^d))
\nonumber
 \\ 
=
\lambda \delta^d \alpha - \log \lambda 
- (d+ k -2) \lglg \lambda +o(1),
\label{0426c}
\eea
which tends to $\beta$ by  
 (\ref{0315c}).

Let $\alpha_J$ be the quantity denoted $\alpha$ by Janson \cite{Janson}
(our $\alpha$ is the quantity so denoted by Hall \cite{HallZW}). In the present
setting, as described in \cite[Example 4]{Janson},
\bean
\alpha_J = \frac{1}{d!} \left( \frac{\sqrt{\pi} \Gamma (1 +d/2) }{
\Gamma((1+d)/2) } \right)^{d-1} \frac{(\E[Y^{d-1}] )^d}{( \E[Y^d])^{d-1} } 
%\\
=  \frac{c_d  (\E[Y^{d-1}])^d}{(\E[Y^{d} ])^{d-1} }.
\eean

Let $\tilde{A} = B(o,r_0)$ with $r_0$ chosen large enough so that
  $B$ is contained in the  interior of $\tilde{A}$.
Let $(X_1,Y_1),(X_2,Y_2),\ldots$ be independent identically distributed random
 $(d+1)$-vectors with $X_1 $ 
uniformly distributed over $\tilde{A}$ and $Y_1 $ having the distribution of 
$Y$, independent of $X_1$.

Set $n(\lambda) := \lceil \lambda |\tilde{A} | - \lambda^{3/4} \rceil$.
Let $\tilde{E}_\lambda$ be the event that every point of  
$B$ is covered at least $k$ times by the balls 
$B(X_1,\delta Y_1),\ldots,B(X_{n(\lambda)},\delta Y_{n(\lambda)})$.
By (\ref{0426k}) we have $\lambda^{3/4} \delta^d \to 0$,
so that $n(\lambda) \delta^d \alpha/|\tilde{A}| = \lambda \delta^d \alpha 
+ o(1)$,
and hence 
 by (\ref{0426c}), \textcolor{\blue}{and (\ref{eqloglog}),}
 we have as $\lambda \to \infty$ that
\bean
\frac{n(\lambda) \delta^d \alpha}{|\tilde{A}|}  - \log \left( \frac{|B|}{\alpha \delta^d} \right)
- (d + k-1) \lglg \left( \frac{|B|}{\alpha \delta^d} \right)
+ \log \left( \frac{(k-1)!}{\alpha_J} \right)
\\
\to  \beta + \log \left(\frac{(k-1)!}{\alpha_J |B|} \right).
% + \log \left( \frac{\alpha_J |B|}{(k-1)! } \right). 
\eean
Then by \cite[Theorem 1.1]{Janson} 
%and Lemma \ref{depolem},
\bea
\Pr[ \tilde{E}_\lambda ] \to \exp \left(- \left(  \frac{\alpha_J|B|}{(k-1)!} 
\right)  e^{-\beta} \right).
\label{0426l}
\eea

We can and do 
 assume that our Poisson Boolean model, restricted to grains centred in
$\tilde{A}$, is coupled to the sequence $(X_n,Y_n)_{n \geq 1}$ as follows.
Taking $Z_{\lambda |\tilde{A}|}$ to  
be Poisson distributed with mean $ \lambda |\tilde{A}|, $ independent of
$ (X_1,Y_1), (X_2,Y_2),\ldots$, assume the restricted Boolean model
consists of the balls $B(X_i,\delta Y_i), 
1 \leq i \leq Z_{\lambda |\tilde{A}|}$.

Then $\Pr[\tilde{E}_\lambda  \setminus E_\lambda] \leq
\Pr[Z_{\lambda |\tilde{A}|}
< n(\lambda)]$, which tends to zero by Chebyshev's inequality. Also,
if $\tilde{E}_\lambda$ fails to occur, we can and do choose
 (in a measurable way)
a point $V \in B$ which is covered by fewer   than $k$
of the balls $B(X_i,\delta Y_i), 1 \leq i \leq n(\lambda)$. Then
for large $\lambda $,
\textcolor{\blue}{grains centred outside $\tA$ cannot intersect $B$, and}
$$
\Pr[E_\lambda \setminus \tilde{E}_\lambda]
\leq \Pr[ Z_{\lambda |A|} - n(\lambda) > 2 \lambda^{3/4} ]
+ 2 \lambda^{3/4} \Pr[X_1 \in B(V,\delta) ]
$$
which tends to zero by Chebyshev's inequality and (\ref{0426k}).
These estimates, together with (\ref{0426l}), give us the asserted result
(\ref{0315b})
for general $k$ in the case with  $B_\lambda =B$ for all $\lambda$.
It is then straightforward to obtain (\ref{0315b})
 for  general $(B_\lambda) $ satisfying the stated conditions.
 \qed
\end{proof}

\begin{proof}[Proof of Proposition \ref{Hallthm}]
Suppose for some $\beta \in \R$ that $(r_t)_{t >0}$ satisfies
\bea
\lim_{t \to \infty} \left(
	\theta_d t f_0  r_t^d  - \log ( t f_0)  - (d+k -2) \lglg t  \right) 
%= \log(f_0/\xi) 
= \beta. 
\label{0511a}
\eea
The point process $\Po_t = \{X_1,\ldots,X_{Z_t}\}$ is a homogeneous
Poisson process of intensity
$t f_0$ 
%$tf_0{\bf 1}_A$
in $A$.
% (i.e., a Poisson process with intensity measure
%  with mean  measure 
% times Lebesgue measure.
Let $\cQ_t$ be a homogeneous
 Poisson  process of intensity $t f_0$ in $\R^d \setminus A$,
independent of $\Po_t$.
Then $\Po_t \cup \cQ_t$  is a homogeneous Poisson process of intensity 
$t f_0$ in all of $\R^d$.

The balls of radius $r_t$ centred
on the points of $\Po_t \cup \cQ_t$ form a Boolean 
model in $\R^d$,  and in the notation of 
	Lemma \ref{lemHall}, here we have
	\textcolor{\blue}{ $\delta (\lambda)
	= r_t$}, 
	and $\Pr[Y=1]=1$, so that $\alpha =\theta_d$, and
	$\lambda = tf_0$,
	so that
	%\textcolor{\blue}{ 
	%$\log \lambda = \log t( 1+ \log (f_0)/\log t)$,
	%and
	$\lglg \lambda = \lglg t + o(1)$ \textcolor{\blue}{by
	(\ref{eqloglog})}.
	%(i.e. $\lglg \lambda - \lglg t \to 0$ as $t \to \infty$)}.
Also, by (\ref{0511a}) we have the condition (\ref{0315c})
from Lemma \ref{lemHall}. 

	%(a) Assume $\overline{B} \subset A^o$. 
	First assume $B$ is compact and Riemann measurable  with
	$B \subset A^o$. 
	Then
 for all large enough $t$ we have $B \subset A^{(r_t)}$, in which
case  $R'_{n,k} \leq r_t$ if and only if all locations in $B$ are
covered at least $k$ times by the balls of radius $r_t$ centred
on points of $\Po_t \cup \cQ_t$, which is precisely the event denoted
$E_\lambda$ in Lemma \ref{lemHall}. Therefore by that result, 
 we obtain that
$\Pr[R'_{t,k} \leq r_t] \to  \exp(-(c_d/(k-1)!) |B|e^{-\beta})$.
This yields the second equality of (\ref{0114a}).
We then obtain the first equality of (\ref{0114b}) using Lemma \ref{depolem}. 

%	(b) Now assume $B=A$.

	Now consider general Riemann measurable
	$B \subset A$ (dropping the previous stronger
	assumption on $B$).
	Given $\eps >0$, by using \cite[Lemma 11.12]{RGG}
	we can find a Riemann measurable
compact set $B' \subset A^o$ with $|A \setminus B'|< \eps$.
Then $B \cap B'$ is also Riemann measurable.
	Let $S_{Z_t,k}$ be the smallest radius of balls centred on
	$\Po_t$ needed to cover $k$ times the set $B \cap B'$.
	Then $\Pr [S_{Z_t,k} \leq
	r_t] \to \exp( -(c_d/(k-1)!) |B \cap B'|e^{-\beta})$. 
	For sufficiently
	large $t$ we have
	$\Pr[\tR_{Z_t,k} \leq r_t] \leq \Pr[S_{Z_t,k} \leq r_t ] $,
	but also $\Pr [\{S_{Z_t,k} \leq r_t\} \setminus \{ \tR_{Z_t,k}
	\leq r_t\}] $ is bounded by the probability that 
	$A \setminus B'$ is not covered $k$ times by a SPBM of
	intensity $tf_0$ with radii $r_t$, which converges
	to $1 - \exp(-c_d /(k-1)!) |A \setminus B'|e^{-\beta})$.
	Using these estimates  we may deduce that 
	\bean
	\limsup_{t \to \infty} \Pr[\tR_{Z_t,k} \leq r_t]
	\leq \exp[ -(c_d/(k-1)!)(|B| -\eps) e^{-\beta}];
	\\
	%\eean
	%while
	%$$
	\liminf_{t \to \infty} \Pr[\tR_{Z_t,k} \leq r_t]
	\geq
	 \exp[ -(\frac{c_d}{(k-1)!})|B|  e^{-\beta}]
	 - ( 1- \exp(- (\frac{c_d}{(k-1)!}) \eps e^{-\beta} ) ),
	 \eean
	 and since $\eps$ can be  arbitrarily small,   that
	$\Pr[\tR_{Z_t,k} \leq r_t] 
	\to - \exp(-(c_d /(k-1)!) |B |e^{-\beta})$. This yields
	the second equality of (\ref{1228a}),
	and then we can obtain the first
	equality of (\ref{1228a}) 
	by a similar argument to Lemma \ref{depolem}.
	\qed
\end{proof}

	\begin{proof}[Proof of Proposition \ref{lemrewrite}]
		\textcolor{\blue}{It suffices to prove this result in 
		the special case with $c'=0$ (we leave it to the reader
		to verify this). 
		Recall that (in this special case)} we assume
	$a n R_{n,k}^d - b \log n -c \lglg n \tod Z$.
		Let $(r_m)_{n \geq 1}$
		be an arbitrary real-valued  sequence satisfying
		$r_m \downarrow 0$ as $m \to \infty$.
		Let  $t \in \R$. Then for all but finitely
		many $m \in  \N$
		we can and do define $n_m \in \N$
		by
		%take an $ \N$-valued sequence $(n_m)_{m \geq 1} $ 
		%satisfying
		$$
		n_m:= \lfloor a^{-1} r_m^{-d}
		\left( b \log ((b/a) r_m^{-d} ) + (c+b) \lglg (r_m^{-d})
		+t \right) \rfloor,
		$$
	and set 
		$t_m := a r_m^d n_m - b \log ( (b/a) r_m^{-d}) - (c+b) \lglg 
		(r_m^{-d}) $. 
		As $m \to \infty$,
		we have $t_m \to t$  and also
%
%	\bea
%	t_m := a r_m^d n_m - b \log ( (b/a) r_m^{-d}) - (c+b) \lglg 
%		(r_m^{-d}) \to t,
%	~~~~~ {\rm as} ~~ m \to \infty.
%		%\downarrow 0.
%	\label{0114k}
%	\eea
	%	for all sufficiently small $r$.
		%As $m \to \infty$, we have
	$\log n_m= \log [ (b/a) r_m^{-d} \log ((b/a) r_m^{-d})] + o(1)$,
	and hence
	%$\log n = \log r^{-d } (1+o(1))$ so that
		$\lglg n_m = \lglg (r_m^{-d}) + o(1)$. Therefore
	%writing just $n$ for $n(r)$ we have
	\bean
	a n_m r_m^d - b \log n_m - c \lglg n_m =
	a n_m r_m^d - b \log ( (b/a)r_m^{-d}) - b \lglg ((b/a) r_m^{-d})
	\\
		- c \lglg ( r_m^{-d}) + o(1),
	\eean
		which converges to $t$ \textcolor{\blue}{(using
		(\ref{eqloglog}))}.
		%by (\ref{0114k}).
		Also 
	as $m \to \infty$ we have 
	$n_m \to \infty$ and
	\bean
		\Pr[a r_m^d N(r_m,k) - b \log \left( (b/a) r_m^{-d} \right)
		- (c+b) \lglg( r_m^{-d})  \leq t_m] =
	\Pr[N(r_m ,k) \leq n_m ]
		\\
		= \Pr[ R_{n_m,k} \leq r_m],
	\eean
	and by the convergence in distribution 
	assumption, 
	this  converges
		to $ \Pr[Z \leq t]$.
		%This demonstrates the result.
		\qed
	\end{proof}

	We shall use the following notation throughout the sequel.
	Fix $d,k \in \N$. Suppose 
	that $r_t >0$ is defined for each $t >0$.

Given any point process $\X$ in $\R^d$, and any $t >0$,
define the `vacant' region
\bea
V_t(\X) : =\{  x \in \R^d:\X ( B(x,r_t) ) < k \},
\label{Vtdef}
\eea
which is the set of locations in $\R^d$ covered fewer than $k$ times by the
balls of radius $r_t$ centred on the points of $\X$.
Given also $D \subset \R^d$, define the event
\bea
F_t(D,\X) := \{V_t (\X) \cap D = \emptyset\} ,
\label{Ftdef00}
\eea
which is the event that every location in $D$ 
is covered at least $k$ times by the collection of balls of radius
$r_t$ centred on the points of $\X$. \textcolor{\blue}{Again the
$F$ stands for `fully covered', but
we no longer need the notation
$F_{n,k,r}$ from (\ref{F3def}).
However, we do use
the notation $\kappa(D,r)$ from (\ref{covnumdef}) in the next result, 
which will be used
to show various `exceptional' regions are covered with high probability in
the proofs that follow.}

\begin{lemm}
	\label{lem2meta}
	\textcolor{\blue}{Let $t_0 \geq 0$, and suppose $(r_t)_{t >t_0}$ satisfies
	$t r_t^d \sim c \log t$ as
	$t \to \infty$, for some constant $c >0$.
	Suppose $(\mu_t)_{t \geq t_0}$ are a family
	of Borel measures on $\R^d$, and for each $t \geq t_0$ let
	$\cR_{t}$ be a Poisson process with intensity measure $t \mu_t$
	on $\R^d$. Suppose $(W_t)_{t \geq t_0}$ are 
	%a family of
	Borel sets in $\R^d$, and $a > 0, b \geq 0$ are constants,
	such that (i)
	$\kappa(W_t, r_t) = O(r_t^{-b})$ as $t \to \infty$, and (ii)
	$\mu_t(B(x,s)) > a s^d$
	for all $t \geq t_0$,  $x \in W_t$, $s \in [r_t/2,r_t]$.
	Let $\eps >0$. Then $\Pr[(F_t(W_t, \cR_t))^c ]= O(t^{(b/d)-a c + \eps})$
		as $t \to \infty$.}
\end{lemm}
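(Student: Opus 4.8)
The plan is to run the argument of Lemma~\ref{lemmeta} in the Poissonized setting, replacing the binomial Chernoff bound of Lemma~\ref{lemChern}(b) by the Poisson one of Lemma~\ref{lemChern}(d). The one point needing care is the choice of two length scales: occupancy must be tested on balls of radius close to $r_t$ (not $r_t/2$), so that the relevant Poisson mean is close to $a t r_t^d \sim ac\log t$, since only this produces the decay exponent $ac$ rather than $a\,2^{-d}c$.

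Fix $\eps' \in (0,1/2)$, to be chosen at the end. First I would cover $W_t$ by $m_t := \kappa(W_t,\eps' r_t)$ balls of radius $\eps' r_t$ centred on points $x_{t,1},\dots,x_{t,m_t} \in W_t$. By the doubling argument used to establish~(\ref{0320b2}), $\kappa(W_t,\eps' r_t) \le C \kappa(W_t,r_t)$ for a constant $C = C(\eps',d)$, so hypothesis~(i) gives $m_t = O(r_t^{-b})$, and since $t r_t^d \sim c\log t$ this is $O(t^{b/d})$. Next comes the key geometric observation: if $\cR_t(B(x_{t,i},(1-\eps')r_t)) \ge k$, then for every $y \in B(x_{t,i},\eps' r_t)$ we have $B(x_{t,i},(1-\eps')r_t) \subset B(y,r_t)$, so $y \notin V_t(\cR_t)$. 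Hence the event $(F_t(W_t,\cR_t))^c$, namely $V_t(\cR_t) \cap W_t \neq \emptyset$, forces $\cR_t(B(x_{t,i},(1-\eps')r_t)) \le k-1$ for some $i$, and by the union bound $\Pr[(F_t(W_t,\cR_t))^c] \le \sum_{i=1}^{m_t} \Pr[\cR_t(B(x_{t,i},(1-\eps')r_t)) \le k-1]$.

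Since $\cR_t$ is Poisson with intensity measure $t\mu_t$, the count $\cR_t(B(x_{t,i},(1-\eps')r_t))$ is Poisson with mean $M_{t,i} := t\mu_t(B(x_{t,i},(1-\eps')r_t))$; as $(1-\eps')r_t \in [r_t/2,r_t]$, hypothesis~(ii) gives $M_{t,i} > a(1-\eps')^d t r_t^d \ge a(1-\eps')^{d+1} c\log t$ for all large $t$. Since $k$ is fixed and $M_{t,i}\to\infty$, Lemma~\ref{lemChern}(d) yields $\Pr[\cR_t(B(x_{t,i},(1-\eps')r_t)) \le k-1] \le \exp(-M_{t,i}H((k-1)/M_{t,i}))$; because $H(0)=1$ and $M_{t,i} = O(\log t)$, a Taylor expansion of $H$ at $0$ gives $M_{t,i}H((k-1)/M_{t,i}) = M_{t,i} - O(\lglg t) \ge a(1-\eps')^{d+1}c\log t - O(\lglg t)$, so each summand is at most $t^{-a(1-\eps')^{d+1}c + o(1)}$. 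Multiplying by $m_t = O(t^{b/d})$ gives $\Pr[(F_t(W_t,\cR_t))^c] = O(t^{(b/d) - a(1-\eps')^{d+1}c + o(1)})$, and choosing $\eps'$ small enough that $ac\bigl(1-(1-\eps')^{d+1}\bigr) < \eps$ turns this into $O(t^{(b/d)-ac+\eps})$, completing the proof.

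The main (mild) obstacle is precisely this bookkeeping of constants: one must count points in the radius-$(1-\eps')r_t$ ball rather than in the radius-$(\eps' r_t)$ or radius-$(r_t/2)$ ball, so that the decay exponent is $a(1-\eps')^{d+1}c$, which can be pushed arbitrarily close to $ac$; the covering-number doubling, the Poisson Chernoff bound, and the union bound are all routine.
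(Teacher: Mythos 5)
Your proof is correct and follows essentially the same route as the paper's: cover $W_t$ by $O(t^{b/d})$ balls of small radius, enlarge to radius $(1-\eps')r_t$, apply the Poisson Chernoff bound of Lemma~\ref{lemChern}(d) to each enlarged ball, and take a union bound. The only difference is in the Chernoff step: the paper compares the count $\cR_t(B_{i,t})$ to the threshold $\delta(1-\delta)^d a t r_t^d$ and reads off the exponent $a(1-\delta)^{d+1}H(\delta)c$, whereas you estimate $M_{t,i}H\bigl((k-1)/M_{t,i}\bigr)$ directly; your intermediate identity $M_{t,i}H\bigl((k-1)/M_{t,i}\bigr)=M_{t,i}-O(\lglg t)$ tacitly uses $\log M_{t,i}=O(\lglg t)$, i.e.\ an upper bound on $M_{t,i}$ that hypothesis~(ii) does not provide, but this is harmless since $M\mapsto M H\bigl((k-1)/M\bigr)$ is increasing for $M>k-1$, so one may replace $M_{t,i}$ by its lower bound $a(1-\eps')^d t r_t^d=\Theta(\log t)$ before expanding.
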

\begin{proof}
	\textcolor{\blue}{
This proof is similar to that of Lemma \ref{lemmeta}.
	Let $\delta \in (0,1/2)$.
	Since $\kappa(W_t,\delta r_t)$ is at most a constant
	times $\kappa(W_t,r_t)$, we can and do
	cover $W_t$ by $m_t$ balls of radius $\delta r_t$,
	with $m_t= O(r_t^{-b}) = O(t^{b/d})$ 
	as $t \to \infty$.
	Let $B_{1,t},\ldots,B_{m_t,t}$
	be balls with radius $(1-\delta)r_t$ and with the same centres
	as the balls in the covering. Then
	for $t > t_0$ and $1 \leq i \leq m_t$ we have
	$t \mu_t (B_{i,t} ) \geq a t (1-\delta)^d r_t^d$
	and  so by Lemma \ref{lemChern}(d),
	provided $ k < \delta a t (1-\delta)^d r_t^d $
	and $tr_t^d >  (1-\delta) c \log t$ 
	(which is true for large $t$) we have
	\bean
	\Pr[F_t(W_t,\cR_t)^c ] \leq \Pr[ \cup_{i=1}^{m_t}
	\{ \cR_t(B_{i,t}) < k \} ] 
	%\\
	& \leq & m_t  \Pr[ Z_{at(1-\delta)^d  r_t^d} \leq
	\delta (1-\delta)^d a t r_t^d]
	\\ & \leq & m_t 
	\exp( - (1-\delta)^d a t r_t^d H(\delta) ) 
	\\
	%\exp(- t a (1-\eps)^d r_t^d ) 
	%\\
	& = & O(t^{b/d} t^{-a(1-\delta)^{d+1} H(\delta) c})
	~~~{\rm as}~ t \to \infty.
	\eean
	Since we can choose $\delta$ so that
	$a(1-\delta)^{d+1} H(\delta) c > ac - \eps$, the result follows.
		}
	\qed
\end{proof}

\subsection{{\bf \textcolor{\blue}{Coverage of a region in a hyperplane by
a Boolean model in a half-space}}}

In this subsection, assume that $d \geq 2$.  Let $\zeta \in \R $, 
 $k \in \N$, and
assume   that $(r_t)_{t >0}$ satisfies  
\bea
\frac{f_0 t \theta_d r_t^d}{2} - \left(\frac{d - 1}{d}\right) \log (t f_0) - 
%\left( \frac{d+k-3+1/d}{2} \right)
\left( d+k-3+1/d \right)
\lglg t \to \zeta
~~ {\rm as} ~ t \to \infty,
~~~
\label{rt3c}
\eea
\textcolor{\blue}{
so that  for some function $h(t)$ tending to zero as $t \to \infty$,
\bea
\label{rt3d}
%e^{-\theta_d f_0 t r_t^d/2} =
\exp(-\theta_d f_0 t r_t^d/2) =
(t f_0)^{-(d-1)/d} 
(\log t)^{-d-k +3- 1/d} e^{- \zeta + h(t)}.
\eea
	}

 For $t >0$ let $\cU_t$ denote a homogeneous Poisson process on
 the half-space $\bH := \R^{d-1} \times [0,\infty)$
  of intensity $t f_0$.
 Recall from (\ref{cdkdef}) the definition of $c_{d,k}$. 
 \textcolor{\blue}{The next result determines the limiting
 probability of covering a bounded region $\Omega \times \{0\} $ in
 the hyperplane $\partial \bH := \R^{d-1} \times \{0\}$ by
 balls of radius $r_t$ centred on $\cU_t$, or of covering
 the $r_t$-neighbourhood of $\Omega \times \{0\}$ in $\bH$.
 It is crucial for dealing with boundary regions 
 in the proof of Theorems 
 \ref{thsmoothgen}, 
 \ref{thmwksq}  
 and \ref{thwkpol3}.
 In it, $|\Omega|$ 
	denotes the $(d-1)$-dimensional
	Lebesgue measure of $\Omega$.}

\begin{lemm}
\label{lemhalf3a}
	Let $\Omega \subset \R^{d-1}$ and (for each $t >0$)
	$\Omega_t \subset \R^{d-1}$ be closed and Riemann measurable, with
$\Omega_t \subset \Omega$ for each $t >0$.
	Assume (\ref{rt3c}) holds for some $\zeta \in (-\infty,\infty]$
	and also $ \limsup_{t \to \infty}
	( tr_t^d/(\log t)) < \infty$. 
\bea
\lim_{t \to \infty} (\Pr[F_t(\Omega \times \{0\},\cU_t ) ] 
) = 
%	\begin{cases}
	\exp \left(-  c_{d,k}   |\Omega| 
e^{-  \zeta}  \right). 
\label{0517c2}
\eea
Also, given 
 $a \in (0,\infty)$ and  $\delta_t >0$ for each $t >0$,
\bea
\lim_{t \to \infty}
(\Pr[F_t((\Omega_t \times \{0\} ) 
	\cup ( ((\partial \Omega_t) \oplus B_{(d-1)}(o,\delta_t)) 
	\times [0,ar_t]),   
 \cU_t) 
	\nonumber \\
\setminus
 F_t ( \Omega_t \times [0, a r_t] , \cU_t)] ) = 0.
\label{0517b2}
\eea 
\end{lemm}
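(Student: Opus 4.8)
The plan is to prove the two displays of the lemma separately.

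\textbf{Proof of (\ref{0517c2}).} The idea is to pass from covering the flat set $\Omega\times\{0\}$ by the $d$-dimensional balls $B(x,r_t)$, $x\in\cU_t$, to covering $\Omega$ by a $(d-1)$-dimensional spherical Poisson Boolean model, and then quote Lemma \ref{lemHall} in dimension $d-1$. Writing points of $\bH$ as $(v,s)$ with $v\in\R^{d-1}$, $s\ge 0$, we have $(w,0)\in B((v,s),r_t)$ iff $s\le r_t$ and $\|v-w\|\le\sqrt{r_t^2-s^2}$; so by the marking and mapping theorems, the projections onto $\partial\bH$ of the grains $B((v,s),r_t)\cap\partial\bH$ (over those $(v,s)\in\cU_t$ with $s\le r_t$) form a homogeneous spherical Poisson Boolean model on $\R^{d-1}$ of intensity $\lambda_t:=tf_0r_t$ and grain radii distributed as $r_tY$, where $Y:=\sqrt{1-U^2}$ and $U$ is uniform on $[0,1]$; and $F_t(\Omega\times\{0\},\cU_t)$ is precisely the event that $\Omega$ is covered $k$ times by this model. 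One has $\E[Y^{d-1}]=\int_0^{\pi/2}\cos^d\phi\,d\phi$ and the identity $\theta_{d-1}\E[Y^{d-1}]=\theta_d/2$, so that with $\alpha:=\theta_{d-1}\E[Y^{d-1}]$ (the ``expected volume'' parameter of Lemma \ref{lemHall}) we get $\alpha\,r_t^{d-1}\lambda_t=\tfrac12 tf_0\theta_d r_t^d$. Using (\ref{rt3c}) together with $\log\lambda_t=\tfrac{d-1}{d}\log(tf_0)+\tfrac1d\lglg t+\tfrac1d\log\tfrac{2(d-1)}{d\theta_d}+o(1)$ and $\lglg\lambda_t=\lglg t+\log(1-\tfrac1d)+o(1)$ (note that here, unlike in (\ref{eqloglog}), the additive constant $\log(1-1/d)$ does not vanish and must be kept), one checks that hypothesis (\ref{0315c}) holds in dimension $d-1$ with $\beta'=\zeta-\tfrac1d\log\tfrac{2(d-1)}{d\theta_d}-(d+k-3)\log(1-\tfrac1d)$; Lemma \ref{lemHall} then gives the limit $\exp\big(-\tfrac{c_{d-1}(\E[Y^{d-2}])^{d-1}}{(k-1)!\,(\E[Y^{d-1}])^{d-2}}\,|\Omega|\,e^{-\beta'}\big)$, and a routine if tedious Wallis-integral computation with (\ref{cdef})--(\ref{cdkdef}) identifies this coefficient with $c_{d,k}$ — it is exactly the constant $\log(1-1/d)$ in $\lglg\lambda_t$ that produces the factor $(1-1/d)^{k-1}$ appearing in $c_{d,k}$. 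The case $\zeta=\infty$ follows from the finite-$\zeta$ cases by monotone coupling in $r_t$.

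\textbf{Proof of (\ref{0517b2}): reduction and the ``high'' part.} Write $A$ for the event in the first line of (\ref{0517b2}) and $B:=F_t(\Omega_t\times[0,ar_t],\cU_t)$, and put $\Omega_t':=\{w\in\Omega_t:\dist(w,\partial\Omega_t)>\delta_t\}$, so $\Omega_t\setminus\Omega_t'\subset(\partial\Omega_t)\oplus B_{(d-1)}(o,\delta_t)$. The slice $\Omega_t\times\{0\}$ is covered on $A$, and $(\Omega_t\setminus\Omega_t')\times[0,ar_t]$ is contained in the ``wall'' $((\partial\Omega_t)\oplus B_{(d-1)}(o,\delta_t))\times[0,ar_t]$ which is also covered on $A$; hence any point of the slab left uncovered on $A$ must lie in $\Omega_t'\times(0,ar_t]$, i.e.
\[ A\setminus B \ \subset \ \{V_t(\cU_t)\cap(\Omega_t\times\{0\})=\emptyset\}\cap\{V_t(\cU_t)\cap(\Omega_t'\times(0,ar_t])\neq\emptyset\}. \]
Fix a small constant $\eta\in(0,\theta_d/(4\theta_{d-1}))$ and split the slab at height $\eta r_t$. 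For the ``high'' part $\Omega_t'\times[\eta r_t,ar_t]$: for $x=(w,h)$ with $h\ge\eta r_t$ and $0<s\le r_t$, a spherical-cap estimate gives $|B(x,s)\cap\bH|\ge(\tfrac12+g(\eta))\theta_ds^d$ for some $g(\eta)>0$, and $\kappa(\Omega_t'\times[\eta r_t,ar_t],r_t)=O(r_t^{1-d})$; since $(\ref{rt3c})$ gives $tr_t^d\sim\tfrac{2(d-1)}{df_0\theta_d}\log t$, Lemma \ref{lem2meta} (with $b=d-1$, $a=f_0\theta_d(\tfrac12+g(\eta))$, for which $ac-b/d>0$) shows this part is covered $k$ times with probability $1-O(t^{-c'})$ for some $c'>0$.

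\textbf{Proof of (\ref{0517b2}): the ``low'' part, and the main obstacle.} The low part $\Omega_t'\times(0,\eta r_t]$ is the delicate one, and here the conditioning on $A$ (that $\Omega_t\times\{0\}$ is covered) is used essentially. Split $\cU_t$ into the independent processes $\cU_t^{\mathrm{lo}}$ (points with $s\le\eta r_t$) and $\cU_t^{\mathrm{hi}}$ (points with $s>\eta r_t$). Two elementary inclusions hold: if $(w,0)\in B((v,s),r_t)$ and either $s>\eta r_t$, or $\|(v,s)-(w,0)\|\le(1-\eta)r_t$, then $(w,h)\in B((v,s),r_t)$ for all $h\in[0,\eta r_t]$ (use respectively $(s-h)^2<s^2$, and $(1-\eta)^2r_t^2+\eta^2r_t^2\le r_t^2$). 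Hence, writing $N^{\mathrm{hi}}(w):=\cU_t^{\mathrm{hi}}(B((w,0),r_t))$ and $G(w):=\cU_t(\{(v,s):s\le\eta r_t,\ (1-\eta)^2r_t^2<\|v-w\|^2+s^2\le r_t^2\})$, one gets $\cU_t(B((w,h),r_t))\ge\max\big(N^{\mathrm{hi}}(w),\ \cU_t(B((w,0),r_t))-G(w)\big)$ for all $h\in[0,\eta r_t]$; so on $A$ a point $(w,h)$ with $w\in\Omega_t'$, $h\in(0,\eta r_t]$ can fail to be $k$-covered only when $N^{\mathrm{hi}}(w)\le k-1$ \emph{and} $G(w)\ge1$. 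Put $\tilde{\Omega}_t:=\{w:N^{\mathrm{hi}}(w)\le k-1\}$ and $\Gamma_t:=\{w:G(w)\ge1\}$: then $A\setminus B$ is contained in the union of the high-part failure with $\{\Gamma_t\cap\tilde{\Omega}_t\cap\Omega_t'\neq\emptyset\}$. Now $\Gamma_t$ depends only on $\cU_t^{\mathrm{lo}}$ and is a union of ``annular shells'' of those $\cU_t^{\mathrm{lo}}$-points whose centres lie within $2r_t$ of $\Omega$; their number is Poisson with mean $O(\eta r_t\cdot tf_0)=O(\eta\log t)$ and each shell has diameter $O(r_t)$, so with probability $1-O(t^{-c})$ we have $\kappa(\Gamma_t,r_t)=O(\eta\log t)=O(r_t^{-b_t})$ with $b_t\downarrow 0$. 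On that event, conditionally on $\cU_t^{\mathrm{lo}}$, the event $\{\Gamma_t\cap\tilde{\Omega}_t\cap\Omega_t'\neq\emptyset\}$ is exactly that $(\Gamma_t\cap\Omega_t')\times\{0\}$ is not covered $k$ times by the radius-$r_t$ balls centred on $\cU_t^{\mathrm{hi}}$; since $|B((w,0),s)\cap(\R^{d-1}\times(\eta r_t,\infty))|\ge(\tfrac12\theta_d-2\eta\theta_{d-1})s^d$ for $s\in[r_t/2,r_t]$, Lemma \ref{lem2meta} applies to $\cU_t^{\mathrm{hi}}$ with $W_t=(\Gamma_t\cap\Omega_t')\times\{0\}$, $b=b_t\downarrow0$ and $a=f_0(\tfrac12\theta_d-2\eta\theta_{d-1})$, for which $ac=\tfrac{d-1}{d}(1-4\eta\theta_{d-1}/\theta_d)$, so $b/d-ac<0$ and this conditional probability is $O(t^{-c''})$, uniformly. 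Combining the three estimates yields $\Pr[A\setminus B]\to0$. The main obstacle is precisely this low part: at the critical window (\ref{rt3c}) a bottom slice of the slab is itself covered only with probability bounded away from $1$, so no direct net argument over $\Omega_t'\times(0,\eta r_t]$ can work — refining a net to scale $\delta r_t$ forces one to use radius $(1-\delta)r_t$, and the multiplicative loss $(1-\delta)^d$ in the Poisson mean always outweighs the $O(\lglg t)$ slack built into (\ref{rt3c}). The device above escapes this by conditioning on the (already guaranteed) coverage of the base and isolating the tiny ``unsafe'' locus $\Gamma_t$, whose covering number is only $r_t^{o(1)}$, so that covering the relevant part of $\Gamma_t$ by $\cU_t^{\mathrm{hi}}$ succeeds with high probability even though the covering rate there is still essentially critical.
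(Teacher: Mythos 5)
Your proof of \eqref{0517c2} is correct and is essentially the same as the paper's: both project the grains onto $\partial\bH$ to obtain a $(d-1)$-dimensional spherical Boolean model with radii $r_tY$, $Y=\sqrt{1-U^2}$, compute the same moments and the same shifted $\beta'$, and invoke Lemma~\ref{lemHall} in dimension $d-1$.

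For \eqref{0517b2} you take a genuinely different route from the paper (high/low slab split plus covering-number estimates, versus the paper's Mecke-formula first-moment argument over the lowest uncovered point viewed as an intersection of $d$ sphere boundaries). Unfortunately your low-slab step has a fatal arithmetic error. You assert that the number of $\cU_t^{\mathrm{lo}}$-points whose first coordinates lie within $2r_t$ of $\Omega$ is Poisson with mean $O(\eta r_t\cdot tf_0)=O(\eta\log t)$. That identification requires $tr_t=O(\log t)$, but (\ref{rt3c}) only gives $tr_t^d=\Theta(\log t)$; hence $tr_t=\Theta\big(t^{1-1/d}(\log t)^{1/d}\big)\to\infty$ for every $d\ge 2$, and the mean in question is $\Theta(\eta\, t r_t)=\Theta\big(\eta(\log t)\,r_t^{1-d}\big)$, not $O(\eta\log t)$. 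Consequently $\kappa(\Gamma_t,r_t)$ is not $r_t^{-o(1)}$ but rather of order $r_t^{1-d+o(1)}$ (the $\Theta(\eta tr_t)$ annular shells are so numerous that their union is essentially all of a neighbourhood of $\Omega$). Feeding this into your final application of Lemma~\ref{lem2meta} gives $b=d-1+o(1)$ while $ac=\frac{d-1}{d}\bigl(1-4\eta\theta_{d-1}/\theta_d\bigr)<\frac{d-1}{d}$, so $b/d-ac>0$ for every fixed $\eta>0$: the lemma gives no decay. Shrinking $\eta$ with $t$ does not rescue this, because the gain $g(\eta)$ in the high-slab volume bound vanishes at the same time.

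The paper instead observes that (off a null event) the lowest uncovered point $\bw$ in $\Omega_t\times[0,ar_t]$ must be the upper intersection point $\bq_{r_t}(\bx_1,\ldots,\bx_d)$ of $d$ sphere boundaries, at strictly positive height, covered by at most $k-1$ further balls. It then applies the Mecke formula to the expected number of such configurations, using the half-space volume inequality $|B(\bx,r)\cap\bH|\ge(\theta_d/2)r^d+c'r^{d-1}\pi_d(\bx)$ to produce a factor $\exp(-c'f_0 t r_t^{d-1}\pi_d(\bq_{r_t}))$. Integrating this over the height contributes $O((tr_t^{d-1})^{-1})$, which precisely beats the remaining polynomial prefactors. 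The structural insight (coverage of the base forces the height integral to localize) is the same idea you were reaching for, but the paper accounts for it via a uniform exponential weight at every height rather than via a two-level split. If you want to repair your argument, it is not enough to bound the number of low points; you need a height-dependent gain that integrates to a quantity of order $(tr_t^{d-1})^{-1}$, which is exactly what the Mecke computation delivers.
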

%{\bf Remark.} 
	\begin{remk}
		{\rm
Usually we shall use Lemma \ref{lemhalf3a}
 in the case where $\zeta < \infty$. In
this case the extra condition
	$ \limsup_{t \to \infty} ( tr_t^d/(\log t)) < \infty$ 
	is automatic.
When $\zeta =\infty$, in  (\ref{0517c2}) we use the convention
		$e^{-\infty} := 0$.}
	\end{remk}

	The following terminology and notation will be used in the proof
	of Lemma \ref{lemhalf3a}, and again later on.
	We use bold face for vectors in $\R^d $ here.
Given $\bx \in \R^d$, we let $\pi_d(\bx)$ denote
the $d$-th co-ordinate of $\bx$, and refer to $\pi_d(\bx)$ as the
as  {\em height} of $\bx$.
Given $\bx_1 \in \R^d, \ldots,\bx_d \in \R^d$, and $r >0$,
if $\cap_{i=1}^d \partial B(\bx_i,r)$
consists of exactly two points, we refer to these as
$\bp_r(\bx_1,\ldots,\bx_d)$ and $\bq_r(\bx_1,\ldots,\bx_d)$
with $\bp_r(\bx_1,\ldots,\bx_d)$ at a smaller height
than 
 $\bq_r(\bx_1,\ldots,\bx_d)$ (or if they are at the same height, take 
$\bp_r(\bx_1,\ldots,\bx_d) <  
\bq_r(\bx_1,\ldots,\bx_d)$
in the lexicographic ordering).
Define the indicator function
\bea
h_r(\bx_1,\ldots,\bx_d) := {\bf 1} \{
\pi_d(\bx_1) \leq \min(\pi_d(\bx_2), \ldots, \pi_d(\bx_d)) \}
\nonumber \\
\times {\bf 1} \{ \#( \cap_{i=1}^d \partial B(\bx_i,r)  
) = 2 \} {\bf 1} \{ \pi_d(\bx_1) <
 \pi_d(\bq_r(\bx_1,\ldots,\bx_d))\} .  
\label{hrdef2}
\eea

\begin{proof}[Proof of Lemma \ref{lemhalf3a}]
Assume for now that $\zeta < \infty$.
Considering the slices of  balls of radius $r_t$ centred on points of
	$\cU_t$ that
	intersect the hyperplane $\R^{d-1} \times \{0\}$,
we have a $(d-1)$-dimensional Boolean model with (in the notation of Lemma
	\ref{lemHall})
$$
\delta = r_t, ~~~ \lambda = t f_0 r_t, ~~~~ \alpha = \frac{\theta_d}{2}
	%= \frac{2 \pi}{3}
	, ~~~~~ \E[ Y^{d-1}] = \frac{\theta_d}{2 \theta_{d-1}} 
	%= \frac{2}{3}
	, ~~~~~
	\E[Y^{d-2}]= \frac{\theta_{d-1}}{2 \theta_{d-2}}.
$$ 
	To see the moment assertions here, note that here $Y= (1-U^2)^{1/2}$
	with $U$ uniformly distributed over $[0,1]$, so that
	$\theta_{d-1}Y^{d-1}$
	is the $(d-1)$-dimensional Lebesgue measure of a $(d-1)$-dimensional
	affine slice through the unit
	ball at distance $U$ from its centre, and an application
	of Fubini's theorem gives the above assertions
	for  $\alpha:= \theta_{d-1} \E[Y^{d-1}]$
	and hence for $\E[Y^{d-1}]$; 
	the same argument in a lower dimension gives the assertion
	regarding $\E[Y^{d-2}]$. (We take $\theta_0=1$ so the assertion
	for $\E[Y^{d-2}]$ is valid for $d=2$ as well.)

By (\ref{rt3c}),  as $t \to \infty$ we have
	$r_t \sim (2 -2/d)^{1/d} 
	(\theta_d t f_0)^{-1/d} (\log t)^{1/d}$,  and therefore
$
\lambda \sim (2 - 2/d)^{1/d}
	\theta_d^{-1/d}
 (t f_0)^{1-1/d} (\log t )^{1/d}
	%( 1+ o(1) ),
$
so that 
$$
	\log \lambda =  
	(1-1/d) \log ( t f_0) + d^{-1} \lglg t +
	d^{-1} \log  \left( \frac{2-2/d}{\theta_d} \right) +
	o(1).
$$ 
	\textcolor{\blue}{Hence, $\log \lambda = (\log t)(1- 1/d)(1+ g(t))$ for 
	some function $g(t)$ tending to zero. 
	%Since $\log (1+ g(t)) \to 0$, 
	Therefore
	 $\lglg \lambda = \lglg t + \log (1-1/d) + o(1)$.} Checking 
	(\ref{0315c})
	here, we have
\bea
	\alpha \delta^{d-1} \lambda - \log \lambda - (d+k -3) \lglg \lambda
	\nonumber \\
= (\theta_d/2)  f_0 tr_t^d 
- (1-1/d) \log (t f_0) 
- (d+k -3 +1/d) \lglg t
\nonumber
\\
	- d^{-1} \log  ((2-2/d)/\theta_d) 
	- (d+k-3 ) \log (1-1/d) + o(1), 
\label{0518a2}
\eea
so by  using (\ref{rt3c}) again we obtain that
\bean
\lim_{t \to \infty} (
	\alpha \delta^{d-1} \lambda - \log \lambda - (d+k -3) \lglg \lambda )
~~~~~~~~~~~~~~~~~~~~~~~~~~~~~~~~~
~~~~~~~~~~~~~
\\
= 
\zeta - d^{-1} \log (2/\theta_d) - (d+k -3 + d^{-1}) \log (1-1/d).
%\label{0429b}
\eean
Also $(\E[Y^{d-2}])^{d-1}/(\E[Y^{d-1}])^{d-2} = \theta_d^{2-d}
\theta_{d-1}^{2d-3} \theta_{d-2}^{1-d}/2$, so
(\ref{0517c2}) follows by Lemma \ref{lemHall}
%\textcolor{\blue}{
and (\ref{cdkdef}). 

Having now verified (\ref{0517c2}) in the case where $\zeta < \infty$,
	we can easily deduce (\ref{0517c2}) in the other case too.

It remains to  prove (\ref{0517b2}); we now consider general $\zeta  \in 
(0,\infty]$. 
Let $E_t$ be the (exceptional)
 event that there exist $d$ distinct points $\bx_1,\ldots,\bx_d$
of $\cU_t$
such that $ \cap_{i=1}^d \partial B(\bx_i,r_t) $
has non-empty intersection with the hyperplane $\R^{d-1} \times \{0\}$. 
Then $\Pr[E_t]=0$.

Suppose that the event displayed in (\ref{0517b2}) occurs, and that $E_t$
 does not. 
Let $\bw$ be a location  of minimal height (i.e., $d$-coordinate) 
in the closure of
$V_t(\cU_t) \cap( \Omega_t \times [0,a r_t])$.
Since we assume $F_t(((\partial \Omega_t) \oplus B_{(d-1)}(o, \delta_t))
\times [0,a r_t], \cU_t)$ occurs, 
$\bw$ must lie in $\Omega_t^o \times
[0,ar_t]$. Also 
\textcolor{\blue}{we claim that}
$\bw$ must
  be  a `corner' given by the
meeting point of the boundaries of exactly $d$ balls of radius $r_t$ centred at
 points of $\cU_t $,
located at $\bx_1,\ldots,\bx_d$ say, with $\bx_1$  the lowest
of these $d$ points,
and with
 $\#(\cap_{i=1}^d \partial B(\bx_i,r_t) 
) =2$, 
and $\bw \in V_t( \cU_t \setminus \{\bx_1,\ldots,\bx_d\})$.

\textcolor{\blue}{
Indeed, if $\bw$ is not at the boundary of any such ball,
then for some $\delta >0$ we have $B(\bw, \delta) \subset
V_t(\cU_t)$,
and then we could find a location in $V_t(\cU_t) \cap (\Omega_t \times
[0,ar_t])$ lower than $\bw$,
a contradiction.
Next, suppose instead that $\bw$ lies at the boundary of fewer than $d$ such balls.
Then denoting by $L$ the intersection of the supporting hyperplanes
at $\bw$ of each of these balls, we have that $L$ is an
affine subspace of $\R^d$, of
dimension at least 1. Take $\delta  >0$ small enough
so that $B(\bw,\delta)$ does not intersect any of the boundaries
of balls of radius $r_t$ centred at points of $\cU_t$,
other than those which meet at $\bw$.
Taking $\bw' \in L \cap B(\bw, \delta) \setminus \{\bw\}$ 
such that $\bw'$ is at least as low as $\bw$, we have
that $\bw'$ lies
in the interior of $V_t(\cU_t)$. Hence for some $\delta' >0$,
$B(\bw',\delta') \subset V_t(\cU_t)$ and we can
find a location in $B(\bw',\delta')$ that is lower than $\bw$, 
yielding a contradiction for this case too. 
Finally, with probability 1 there is no set of
more than $d$ points of $\cU_t$ such that the boundaries
of balls of radius $r_t$ centred on these points have non-empty
intersection, so $\bw$ is not at the boundary of more than
$d$ such balls.
Thus we have justified the claim.
}

Moreover $\bw $ must be the point 
\textcolor{\blue}{$ \bq_{r_t}(\bx_1,\ldots,\bx_d)$ rather than $\bp_{r_t}(\bx_1,\ldots,\bx_d)$,}
 because otherwise by extending the line segment from 
$ \bq_{r_t}(\bx_1,\ldots,\bx_d)$ to  $\bp_{r_t}(\bx_1,\ldots,\bx_d)$ slightly 
beyond  $\bp_{r_t}(\bx_1,\ldots,\bx_d)$ we could find  a  point in
$V_t(\cU_t) \cap (\Omega_t \times [0,a r_t])$
lower than $\bw$, contradicting the statement that $\bw$ is
a location of minimal height in the closure  of
%$\overline{V_t}$.
$V_t(\cU_t) \cap (\Omega_t \times [0,a r_t])$.
Moreover, $\bw$ must be strictly higher than $\bx_1$, since if
$\pi_d(\bw) \leq  \min(\pi_d(\bx_1), \ldots,\pi_d(\bx_d))$,
then locations  just below $\bw$ would lie
in $V_t(\cU_t) \cap (\Omega_t \times [0,a r_t])$, contradicting
the statement that $\bw$ is
a  point of minimal height in the closure of $V_t(\cU_t \cap (\Omega_t \times 
[0,ar_t]))$.
%\overline{V_t}$. 
Hence, $h_{r_t}(\bx_1,\ldots,\bx_d) = 1$, where $h_r(\cdot)$ was
defined at (\ref{hrdef2}).

Note that there is a constant
$c' := c'(d,a) >0$ such
that for any $\bx \in \bH$ with $0 \leq \pi_d(\bx) \leq a$ we
have that $|B(\bx,1) \cap  \bH | \geq (\theta_d/2) + c' \pi_d(\bx)$.
Hence for any $r>0$, and
any $\bx \in \bH$ with
$0 \leq \pi_d (\bx) \leq a r$,
\bean
|B(\bx,r) \cap \bH| = r^d |B(r^{-1}\bx,1) \cap \bH | \geq (\theta_d/2) 
r^d + c'
r^{d-1} \pi_d(\bx). 
\eean

Thus if the event  displayed in (\ref{0517b2}) holds, then almost surely
there exists at least
one $d$-tuple of points $\bx_1,\ldots,\bx_d \in \cU_t$, such that
$h_{r_t}(\bx_1,\ldots,\bx_d)=1$,
 and moreover
 $\bq_{r_t}(\bx_1,\ldots,\bx_d) \in V_t( \cU_t \setminus
 \{\bx_1,\ldots,\bx_d\}) \cap ( \Omega_t \times [0,a r_t])$.
By the Mecke formula (see e.g. \cite{LP}), 
there is a constant $c$ such that
 the expected number of such $d$-tuples is bounded above by
\bea
c t^d (tr_t^d)^{k-1} \int_{\bH} d\bx_1
 \cdots
 \int_{\bH} d\bx_d h_{r_t}(\bx_1,\ldots,\bx_d)
 {\bf 1}\{  \bq_{r_t}(\bx_1,\ldots,\bx_d) \in \Omega_t \times [0,ar_t] \}
\nonumber \\
\times
 \exp(- (\theta_d/2) f_0 t  r_t^d - c' f_0 t r_t^{d-1}
 \pi_d( \bq_{r_t}(\bx_1,\ldots,\bx_d)
 )).  ~~~
\label{0522a2}
\eea

Now we change variables to $\bfy_i= r_t^{-1}(\bx_i-\bx_1)$ for
$2 \leq i \leq d$,
noting that 
\bean
\pi_{d}(\bq_{r_t}(\bx_1,\ldots,\bx_d)) = \pi_d(\bx_1) +
\pi_d( \bq_{r_t}(o,\bx_2 -\bx_1, \ldots,  \bx_d-\bx_1) ) )
\\
= \pi_d(\bx_1) + r_t \pi_d( \bq_1(o,\bfy_2, \ldots, \bfy_d)).
\eean
Hence by (\ref{rt3d}), there is a constant $c''$ such that
 the expression  in (\ref{0522a2}) is at most
\bea
c'' t^d
 (\log t)^{k-1}
 t^{-1 + 1/d} (\log t)^{3-(1/d) -d - k}
 r_t^{d(d-1)}  
\int_0^{a r_t} e^{-c' f_0 t r^{d-1}_t u} du 
\nonumber \\
\times
\int_\bH \cdots \int_\bH h_{1}(o,\bfy_2,\ldots,\bfy_d) 
\exp( - c' f_0 t r_t^d \pi_d( \bq_1(o,\bfy_2,\ldots,\bfy_d)) ) 
d\bfy_2 \ldots d\bfy_d.
\label{0512b}
\eea
In the last expression the first line is bounded by a constant times
the expression
\bean
t^{d-1 + 1/d} r_t^{d(d-1)} (\log t)^{2-d -1/d} (t r_t^{d-1})^{-1} 
=  (t r_t^d)^{d-2 + 1/d} (\log t)^{2-d -1/d}, 
\eean
which is bounded because we assume $\limsup_{t \to \infty}
(tr_t^d/(\log t))< \infty$. 
%(in fact, tends to a constant), by (\ref{rt3c}).
The second line of  (\ref{0512b}) tends to zero by dominated convergence
because $tr_t^d \to \infty$ by (\ref{rt3c}) and
because the
indicator
function $ (\bz_2,\ldots,\bz_d) \mapsto h_1(o,\bz_2,\ldots,
\bz_d) $ has bounded support
and  is zero when $\pi_d(\bq_1(o,\bz_2,\ldots,\bz_d)) \leq 0$. Therefore by
Markov's inequality we obtain (\ref{0517b2}).
\qed
\end{proof}

\subsection{{\bf Polygons: proof of Theorem \ref{thmwksq}}}

\textcolor{\blue}{
Of the
three theorems in Section \ref{secweak},
Theorem \ref{thmwksq} has the simplest proof, so we give this one first.} 
Thus,
in this subsection, we
set $d=2$, take
$A$ to be polygonal with $B=A$, and take
$f \equiv f_0 {\bf 1}_A$, with $f_0 := |A|^{-1}$. 
Denote the vertices of $A$ by $q_1,\ldots,q_\kappa$, and the angles subtended
at these vertices by $\alpha_1,\ldots,\alpha_\kappa$ respectively. Choose
$K \in (2, \infty)$ such that $K \sin \alpha_i > 9$ for
$i=1,\ldots, \kappa$. 

For the duration of this subsection (and the next) we
fix $k \in \N$ and $\beta \in \R$.  Assume we are given 
real numbers
$(r_t)_{t >0}$ satisfying
 \bea
\lim_{t \to \infty} (\pi t f_0 r_t^2 - \log ( t f_0) + (1 -  2k ) \lglg t ) =  \beta. 
\label{0220a}
\eea
Setting $\zeta = \beta/2$, this is the same  as
the condition (\ref{rt3c}) for $d=2$.

Given any $D \subset \R^2$,
 and any point process $\X$ in $\R^2$,
 and $t >0$,  define  the `vacant region' $V_t(\X)$ and the event
 $F_t(D,\X)$ as at (\ref{Vtdef}) and (\ref{Ftdef00}).

For $t >0$, in this subsection we define the `corner regions' $Q_t$ and
$Q_t^-$ 
%(not the same as in the preceding subsection) 
by  
$$
Q_t:= \cup_{j=1}^\kappa B(q_j,(K+9)r_t);
~~~~~
Q^-_t:= \cup_{j=1}^\kappa B(q_j,Kr_t).
%\cap A.
$$
\begin{lemm}
\label{lemQ}
It is the case that $\Pr[F_t(Q_t \cap A,\Po_t)] \to 1$ as $t \to \infty$.
\end{lemm}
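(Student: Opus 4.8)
The plan is to obtain this from the general upper bound, Lemma \ref{lem2meta}, applied with $d=2$, with $W_t := Q_t \cap A$, and with $\cR_t := \Po_t$, so that the measures $\mu_t$ are all equal to the fixed measure $\mu = f_0 {\bf 1}_A$. From (\ref{0220a}) we have $\pi f_0 t r_t^2 \sim \log t$, i.e.\ $t r_t^2 \sim c \log t$ with $c = 1/(\pi f_0)$, so the first hypothesis of Lemma \ref{lem2meta} holds. It will then suffice to produce constants $a>0$ and $b\ge 0$ for which (i) $\kappa(W_t,r_t) = O(r_t^{-b})$ and (ii) $\mu(B(x,s)) > a s^2$ for all sufficiently large $t$, all $x\in W_t$ and all $s\in[r_t/2,r_t]$; Lemma \ref{lem2meta} then gives $\Pr[(F_t(Q_t\cap A,\Po_t))^c] = O(t^{(b/2)-ac+\eps})$ for every $\eps>0$, and I would pick the constants so the exponent is negative.

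For (i) I would just note that $Q_t\cap A$ is contained in the union of the $\kappa$ balls $B(q_j,(K+9)r_t)$, and a ball of radius $(K+9)r_t$ can be covered by a number of balls of radius $r_t$ depending only on the fixed constant $K$; hence $\kappa(Q_t\cap A,r_t) = O(1)$, so I may take $b=0$ (and $b/2=0$).

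For (ii) I would use the local structure of the polygon at its vertices. For each vertex $q_j$ there is a planar wedge (cone) $\cK_j$ of opening angle $\alpha_j$ and a radius $\rho_j>0$ with $A\cap B(q_j,\rho_j) = (q_j+\cK_j)\cap B(q_j,\rho_j)$. Once $t$ is large enough that $(K+10)r_t < \min_j \rho_j$, any $x \in B(q_j,(K+9)r_t)\cap A$ and any $s\le r_t$ have $B(x,s) \subset B(q_j,\rho_j)$, so $B(x,s)\cap A = B(x,s)\cap(q_j+\cK_j)$. An elementary scaling argument then shows that there is a constant $c_j>0$, depending only on $\alpha_j$, with $|B(x,s)\cap(q_j+\cK_j)| \ge c_j s^2$ for every $x$ in the wedge and every $s>0$: the position of $x$ minimising the left-hand side (after dividing by $s^2$) is the apex when $\alpha_j\le\pi$, giving a circular sector of angle $\alpha_j$, and a point on one of the two bounding rays away from the apex when $\alpha_j>\pi$, giving a half-disc, so that $c_j = \tfrac12\min(\alpha_j,\pi)$. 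With $c_0 := \min_{1\le j\le\kappa} c_j > 0$ we get $\mu(B(x,s)) \ge f_0 c_0 s^2$ for all large $t$, all $x\in W_t$ and all $s\in(0,r_t]$, so (ii) holds with any $a \in (0, f_0 c_0)$.

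Finally, applying Lemma \ref{lem2meta} with $b=0$, a choice of $a\in(0,f_0 c_0)$ and then $\eps < ac = a/(\pi f_0)$ gives $\Pr[(F_t(Q_t\cap A,\Po_t))^c] = O(t^{-ac+\eps}) \to 0$ as $t\to\infty$, which is the claim. The one step requiring real care is the uniform volume lower bound (ii) near the vertices; within that, the mildly delicate point is the reflex-angle case $\alpha_j>\pi$, where the minimising position of $x$ lies on an edge rather than at the corner. The constant $K$ was chosen so that $K\sin\alpha_j>9$, but that particular inequality is not used here — only that $K$ is a fixed finite number greater than $2$ — it is needed in the more refined corner analysis that follows.
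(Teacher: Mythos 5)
Your argument is correct and is essentially the paper's own proof: the paper likewise applies Lemma \ref{lem2meta} with $W_t = Q_t \cap A$, $\mu_t(\cdot)=f_0|\cdot \cap A|$, $b=0$ (covering number $O(1)$), and a uniform volume lower bound $\mu(B(x,s)) \geq a s^2$ near the vertices, concluding $\Pr[F_t(Q_t\cap A,\Po_t)^c]=O(t^{-\delta})$. Your explicit wedge computation giving $c_j=\tfrac12\min(\alpha_j,\pi)$ simply supplies the constant that the paper leaves implicit.
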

\begin{proof}
	We have
	%There is a constant $m$,
	%independent of $t$,
	%such that $\kappa(Q_t \cap A,r_t) \leq m$.
	 $\kappa(Q_t \cap A,r_t) = O(1)$ as $t \to \infty$.
	Also there exists $a>0$ such that
	for all large enough $t$, all $s \in [r_t/2,r_t]$
	and $x \in Q_t \cap A$,
	we have $f_0 |B(x,s) \cap A| \geq a r_t^d$. 
	Thus we can apply Lemma \ref{lem2meta},
	taking $W_t = Q_t \cap A$, and $\mu_t (\cdot) = f_0 |\cdot \cap A|$,
	with $b = 0$,
	to deduce that $\Pr[(F_t(W_t, \Po_t))^c ]  $ is $ O(t^{-\delta})$
	for some $\delta >0$, and hence tends to 0.
%
	%There is a constant $m$, independent of $t$,
%such that for all $t>0$ there exist points $y_{t,1},\ldots,y_{t,m} \in A$
%such that
%$Q_t \subset \cup_{j=1}^m
%B(y_{t,j},r_t/2)$.  Also there is a constant $a >0$ such that for all
%$t >0$ and all $j \in \{1,\ldots,m\}$ we have 
%%$\mu(B(y_{t,j},r_t/2)) \geq a r_t^2$.
%$|B(y_{t,j},r_t/2) \cap A |  \geq a r_t^2$.
% Then
%\bean
%	\Pr[ F_t(Q_t \cap A,\Po_t)^c ] & \leq & \Pr[ \cup_{j=1}^m
%\{ \Po_t(B(y_{t,j},r_t/2)) < k \}]
%\\
%	& \leq & m k  (\pi f_0 t r_t^2)^{k-1} \exp(- a f_0 t r_t^2), 
%\eean
%and  since $tr_t^2 \to \infty$ by (\ref{0220a}),
%the probability on the right tends to zero.
% demonstrating the assertion. 
\qed
\end{proof}
\begin{lemm}
\label{LemRpp}
It is the case that
\bea
\lim_{t \to \infty} (
\Pr[F_t(\partial A \setminus Q^-_t,\Po_t)
])  =
	\exp(- c_{2,k} |\partial A| 
	%\pi^{-1/2} 2^{1-k} 
	e^{-\beta/2}).
\label{0429a}
\eea
Also,
\bea
\lim_{t \to \infty} (
	\Pr[F_t (A^{(3r_t)} \cup \partial A \cup (Q_t \cap A),\Po_t)
 \setminus F_t(A,\Po_t)] )   = 0.
\label{eqlem0220}
\eea
\end{lemm}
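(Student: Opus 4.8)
The plan is to reduce coverage of the boundary layer, in both displays, to coverage of a region in a hyperplane by a half-space Boolean model, so that Lemma~\ref{lemhalf3a} applies (recall $\zeta:=\beta/2$ makes \eqref{0220a} coincide with \eqref{rt3c} for $d=2$). The geometric fact that does the work is the following consequence of the choice of $K$ (with $K\sin\alpha_i>9$): for all small enough $r_t$, if $x\in\R^2$ lies within $4r_t$ of $\partial A\setminus Q^-_t$ and $e_i$ is the edge of $A$ nearest to $x$, then $B(x,r_t)\cap A=B(x,r_t)\cap H_i$, where $H_i$ is the closed half-plane bounded by the line through $e_i$ and containing $A$ near $e_i$; moreover for small $r_t$ the $r_t$-neighbourhoods of the sets $e_i\setminus Q^-_t$, $1\le i\le\kappa$, are pairwise disjoint. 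In particular, if $\rho_i$ is a rigid motion taking $H_i$ onto $\bH$ and $e_i$ into $\partial\bH$, then $\Po_t$ restricted to the $r_t$-neighbourhood of $e_i\setminus Q^-_t$ can be coupled to agree with $\cU_t\circ\rho_i$ for a homogeneous Poisson process $\cU_t$ of intensity $tf_0$ on $\bH$, and $e_i\setminus Q^-_t$ maps under $\rho_i$ to $\tilde e_{i,t}\times\{0\}$ with $\tilde e_{i,t}$ an interval of length $|e_i|-2Kr_t$.

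For \eqref{0429a}: write $\partial A\setminus Q^-_t=\bigcup_{i=1}^{\kappa}(e_i\setminus Q^-_t)$. The event $F_t(e_i\setminus Q^-_t,\Po_t)$ depends only on $\Po_t$ restricted to the $r_t$-neighbourhood of $e_i\setminus Q^-_t$, and since these neighbourhoods are disjoint the events $F_t(e_i\setminus Q^-_t,\Po_t)$, $1\le i\le\kappa$, are independent, with $\Pr[F_t(e_i\setminus Q^-_t,\Po_t)]=\Pr[F_t(\tilde e_{i,t}\times\{0\},\cU_t)]$ by the coupling. To evaluate the limit of the latter I would sandwich $\tilde e_{i,t}$ between a fixed closed interval of length $|e_i|-2\epsilon$ and one of length $|e_i|$ (valid once $Kr_t<\epsilon$), apply \eqref{0517c2} of Lemma~\ref{lemhalf3a} to these two fixed intervals together with the monotonicity of $F_t(\cdot,\cU_t)$ in its first argument, and then let $\epsilon\downarrow0$; this gives $\Pr[F_t(\tilde e_{i,t}\times\{0\},\cU_t)]\to\exp(-c_{2,k}|e_i|e^{-\beta/2})$. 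Multiplying over $i$ and using $\sum_i|e_i|=|\partial A|$ yields \eqref{0429a}.

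For \eqref{eqlem0220}: suppose $F_t(A^{(3r_t)}\cup\partial A\cup(Q_t\cap A),\Po_t)$ holds but $F_t(A,\Po_t)$ fails; then there is $w\in V_t(\Po_t)$ with $w\in A\setminus(A^{(3r_t)}\cup\partial A\cup Q_t)$. Since $w\notin A^{(3r_t)}$ and $w\notin\partial A$, $w$ lies in $A^o$ with $\dist(w,\partial A)\le 3r_t$; since $w\notin Q_t$, the nearest point of $\partial A$ to $w$ lies in the relative interior of some edge $e_i$, at distance $\ge\sqrt{(K+9)^2-9}\,r_t>Kr_t$ from each endpoint of $e_i$. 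Fix $i$ and pass to the $\rho_i$-coordinates, with $\cU_t$ the corresponding half-plane process coupled to agree with $\Po_t$ on the $r_t$-neighbourhood of the slab $\Omega^{(i)}_t\times[0,3r_t]$, where $\Omega^{(i)}_t:=\tilde e_{i,t}$ (legitimate by the geometric fact, since every ball relevant to covering that slab is centred within $4r_t$ of $e_i$). Then: (a) $\rho_i(w)$ lies in $\Omega^{(i)}_t\times(0,3r_t]$ and is vacant for $\cU_t$, so $F_t(\Omega^{(i)}_t\times[0,3r_t],\cU_t)$ fails; (b) $\Omega^{(i)}_t\times\{0\}$ is the image of $e_i\setminus Q^-_t\subset\partial A$, hence covered by $\Po_t$, hence by $\cU_t$; (c) taking $\delta_t:=r_t$, the side walls $((\partial\Omega^{(i)}_t)\oplus B_{(1)}(o,\delta_t))\times[0,3r_t]$ consist of points within $\sqrt{(K+1)^2+9}\,r_t<(K+9)r_t$ of an endpoint of $e_i$, hence (again using the choice of $K$) lie in $Q_t\cap A$ and so are covered by $\Po_t$, hence by $\cU_t$. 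Thus the event in \eqref{eqlem0220}, intersected with $\{$the nearest edge to the vacant point is $e_i\}$, is contained in the event of \eqref{0517b2} with $\Omega_t=\Omega^{(i)}_t$ (contained in a fixed bounded interval), $a=3$, $\delta_t=r_t$, whose probability tends to $0$. Summing over the finitely many edges $i=1,\dots,\kappa$ gives \eqref{eqlem0220}.

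The main obstacle is the bookkeeping at the corners: one must check that the choice of $K$ really does force $\Po_t$ restricted to each relevant region (the shrunk edge $e_i\setminus Q^-_t$, the slab $\Omega^{(i)}_t\times[0,3r_t]$ together with its $r_t$-neighbourhood, and the side walls) to coincide with the half-plane process $\cU_t\circ\rho_i$, and that these regions, the corner balls $Q_t$, and the layer $A\setminus A^{(3r_t)}$ nest as claimed; the $t$-dependence of $e_i\setminus Q^-_t$ must then be absorbed into the fixed-$\Omega$ statement \eqref{0517c2} via the $\epsilon$-sandwich. Once this is done the entire probabilistic content is carried by Lemma~\ref{lemhalf3a}.
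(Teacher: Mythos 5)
Your proof is correct and follows essentially the same route as the paper: for \eqref{0429a}, decompose $\partial A\setminus Q^-_t$ into the shrunk edges, use disjointness of their $r_t$-neighbourhoods (forced by the choice of $K$) to get independence, and apply \eqref{0517c2}; for \eqref{eqlem0220}, trap the vacant point in a rectangular slab over one shrunk edge, observe that the side walls plus an $r_t$-margin lie in $Q_t$, and apply \eqref{0517b2}. The only place you go beyond the paper's write-up is in spelling out the $\epsilon$-sandwich needed because the covered interval $e_i\setminus Q_t^-$ is $t$-dependent while \eqref{0517c2} is stated for a fixed $\Omega$, and in making the corner-distance estimates explicit — both are correct and tighten a step the paper leaves implicit.
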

\begin{proof}
Denote the line segments making up $\partial A$ by $I_1,\ldots,I_{\kappa}$,
and for $ t >0$ and $1 \leq i \leq \kappa$ set $I_{t,i} := I_i \setminus
Q^-_t$.

	Let $i,j,k \in \{1,\ldots,\kappa\}$ be such that
	$i\neq j$ and the edges $I_i$ and $I_j$ are both incident
	to $q_k$. If $x \in I_{t,i}$ and
	$y \in I_{t,j}$, then $\|x-y \| \ge (Kr_t) \sin \alpha_k \geq 9r_t$. 
	Hence %for all large enough $t$
	the events $F_t(I_{t,1}, \Po_t),\ldots,F_t(I_{t,\kappa}, \Po_t)$
are mutually independent. Therefore
\bean
\Pr[ F_t(\partial A \setminus Q^-_t, \Po_t) ] =
\prod_{i=1}^\kappa \Pr[ F_t(I_{t,i}, \Po_t) ],
\eean
and by Lemma \ref{lemhalf3a} this converges to the right hand
side of (\ref{0429a}).

Now we prove (\ref{eqlem0220}).
For $t >0$, and
 $i \in \{1,2,\ldots,  \kappa \} $, let $S_{t,i}$ denote the rectangular block
of dimensions $|I_{t,i} | \times 3 r_t$, consisting of all
points in $A$ at perpendicular distance at most $3 r_t$ from $I_{t,i}$.
Let $\partial_{{\rm side}}S_{t,i}$ denote the union of the two
`short' edges of the boundary $S_{t,i}$, i.e. the two edges 
bounding $S_{t,i}$ which are perpendicular to $I_{t,i}$.

Then $A \setminus (A^{(3r_t)} \cup Q_t) \subset \cup_{i=1}^\kappa S_{t,i}$,
	and also $(\partial_{\rm side} S_{t,i} \oplus B(o,r_t))
	\subset Q_t$ for $1 \leq i \leq \kappa$,
so that 
\bean
	F_t (A^{(3r_t)} \cup \partial A \cup (Q_t \cap A),\Po_t)
 \setminus F_t(A,\Po_t) 
	~~~~~~~~~~~~~~~~~~~~~~~
	~~~~~~~~~~~~~~~~~~~~~~~
	\\
	\subset \cup_{i=1}^\kappa 
	[F_t(I_{t,i} \cup (\partial_{{\rm side}} S_{t,i} \oplus B(o,r_t)),
	\Po_t )  
\setminus F_t(S_{t,i},\Po_t)].
\eean
For  $i \in \{1,\ldots,\kappa\}$, 
let $I'_{t,i}$ denote an interval of length $|I_{i,t}|$ contained in
the $x$-axis. By a rotation one sees that
	$\Pr[F_t(I_{t,i} \cup (\partial_{{\rm side}} S_{t,i} \oplus 
	B(o,r_t)), \Po_t )  
	\setminus F_t(S_{t,i}, \Po_t)]$
is at most
 $$
 \Pr[ 
	F_t((I'_{t,i} \times \{0\}) \cup (((\partial I'_{t,i})
	\oplus [-r_t,r_t])
	\times [0,3 r_t] ),\cU_t )
\setminus 
F_t(I'_{t,i} \times [0,3r_t],\cU_t) ] ,
$$
where $\cU_t$ is as in
 Lemma \ref{lemhalf3a}. 
By (\ref{0517b2}) from that result, 
 this probability tends to zero, which gives us (\ref{eqlem0220}).
	\qed
\end{proof}

\begin{proof}[Proof of Theorem \ref{thmwksq}]
Let $\beta \in \R$ and suppose $(r_t)_{t >0}$ satisfies 
(\ref{0220a}). 
Then
$$
\lim_{t \to \infty} ( \pi f_0 t r_t^2 - \log (t f_0) - k \lglg t  ) 
= \begin{cases}
\beta & \mbox{\rm if } k=1 \\
+ \infty  & \mbox{\rm if } k \geq 2. 
 \end{cases}
$$
	Hence by (\ref{1228a}) from
	Proposition \ref{Hallthm} (taking $B=A$ there and using 
	(\ref{Fnequiv})),
\bea
\lim_{t \to \infty} \Pr[ F_t(A^{(r_t)},\Po_t ) 
%\tR'_{t,k} \leq r_t
]  = 
	\lim_{t \to \infty}
	\Pr [ \tR_{Z_t,k} \leq r_t]  =
   \begin{cases}
\exp( -  |A| e^{-\beta}) & \mbox{\rm if }  k=1
\\
1 & \mbox{\rm if } k \geq 2. 
\end{cases}
%~~~~~~~~~~~~~~~~~~~~
%~~~ {\rm as~} t \to \infty.
\label{0322a}
\eea
	Observe that $\{ F_t(A^{(r_t)},\Po_t) \subset
 F_t(A^{(3r_t)},\Po_t) $. 
	We claim that
\bea
\Pr[ F_t(A^{(3r_t)},\Po_t) \setminus F_t(A^{(r_t)},\Po_t)] \to 0
~~~
{\rm as} ~
t \to \infty.
\label{1212a}
\eea
Indeed, given $\eps >0$, for large $t $
the probability on the left side of (\ref{1212a})
	is bounded by $\Pr[ F_t(A^{(r_t)}\setminus A^{[\eps]},\Po_t)^c]  $
	(the set $A^{[\eps]}$ was defined in Section \ref{secdefs}), and 
	 by (\ref{1228a}) from
	 Proposition \ref{Hallthm}
	  (as in (\ref{0322a}) but
	 %(\ref{0114a})
	now taking $B=A \setminus A^{[\eps]}$
	 in Proposition \ref{Hallthm}),
the latter
probability tends to a limit which can be made arbitrarily
small by the choice of $\eps$.

Also, by (\ref{0429a}) and Lemma \ref{lemQ}, 
% and \ref{LemRpp}, 
%this shows that
\bea
\lim_{t \to \infty} (
\Pr[F_t(\partial A,\Po_t )
])  =
	\exp(- c_{2,k} |\partial A|  e^{-\beta/2}).
\label{0510a}
\eea

Using (\ref{eqlem0220})  
followed by  Lemma  \ref{lemQ}, we obtain that
\bea
\lim_{t \to \infty} \Pr[ F_t(A,\Po_t) ]  
= \lim_{t \to \infty} \Pr[ F_t(A^{(3r_t)}  \cup \partial A \cup
(Q_t \cap A),\Po_t) ]  
\nonumber \\
= \lim_{t \to \infty} \Pr[ F_t(A^{(3r_t)}  \cup \partial A,\Po_t) ],  
\label{0510b}
\eea
provided these limits exist.

	%Similarly to (\ref{0920a}),
	%we have $\Pr[F_t(A^{(3r_t)}, \Po_t) \setminus F_t(A^{(r_t)}, \Po_t)]
	%\to 0$ as $t \to \infty$, so that
	By (\ref{1212a}), (\ref{0322a}) still 
	holds with $A^{(r_t)}$ replaced by  $A^{(3r_t)}$
	on the left.
Also, the events $F_t(\partial A,\Po_t) $  and
and $F_t(A^{(3r_t)},\Po_t) $ are independent since the first of
these events is determined by the configuration of
 Poisson points
distant at most $r_t$ from $\partial A$,
 while the second event is determined by the Poisson points 
 distant more than $2r_t$ from $\partial A$. 
  Therefore the limit in (\ref{0510b}) does
indeed exist, and is the product of the limits arising  in (\ref{0322a})
and (\ref{0510a}). 

Since $F_t(A,\Po_t) = \{R'_{t,k} \leq r_t\} $, this  
 gives us the second equality of  (\ref{0322b}), and we then obtain
the first equality of  (\ref{0322b}) by using Lemma 
\ref{depolem}.
\qed
\end{proof}

\subsection{{\bf Polyhedra: Proof of Theorem \ref{thwkpol3}}}
\label{secpolypf}

Now suppose that $d=3$ and $A=B$ is a polyhedron with $f \equiv f_0 {\bf 1}_A$,
where $f_0 = 1/|A|$.
Fix $k \in \N$ and
$\beta \in \R$. Recall that $\alpha_1$ denotes the smallest edge angle of $A$.
Throughout this subsection,
we \textcolor{\blue}{assume that we 
are given} $(r_t)_{ t >0}$ with the following limiting behaviour
as $t \to \infty$:
\bea
 ( \pi \wedge 2\alpha_1   ) f_0 t r_t^3
-  \log (t f_0) - \beta
  =  \begin{cases}
  (3k -1)  \lglg t  + o(1)
  & \mbox{ if  }  \alpha_1 \leq \pi/2
\\
(\frac{1+ 3k}{2}) \lglg t + o(1) 
  & \mbox{ if  }  \alpha_1 > \pi/2.
	  ~~~~
\end{cases}
\label{rt3a} 
\label{rt3} 
\eea
For $D \subset \R^3$ and  $\X$ a point set in $\R^3$,
define the region $V_t(\X)$ and
event $F_t(D,\X)$  by (\ref{Vtdef}) and
(\ref{Ftdef00}) as before, now with $(r_t)_{t >0}$ satisfying (\ref{rt3a}).
%but now in 3 dimensions.
%Let $\bH := \R^2 \times [0,\infty)$, the 
%upper half-space in $\R^3$.

To prove Theorem \ref{thwkpol3}, our main task is to determine
$\lim_{t \to \infty} \Pr[F_t(A,\Po_t)]$. Our strategy for this goes
as follows. 
We shall consider {\em reduced faces} and {\em reduced edges} of $A$,
obtained by removing a  region within distance $Kr_t$
of the boundary of each face/edge, for a suitable constant $K$. 
Then the events that different reduced faces and reduced edges are covered
are mutually independent.
Observing that the intersection of $\Po_t $ with a
face or edge is a lower-dimensional
SPBM, and using Lemma \ref{lemHall}, 
we shall  determine the limiting probability that the reduced faces
and edges are covered.

If $F_t(A,\Po_t)$ does not occur but all reduced faces and 
all reduced edges are covered, then the uncovered region 
%$V_t$
must either meet the region  $A^{(3r_t)}$, or go  near (but not
intersect) one of the reduced faces, or go near (but not intersect)
one of the reduced edges, or go near one of the vertices.
We shall show in turn that each of these possibilities
has vanishing probability.

The following lemma will \textcolor{\blue}{help us deal}
with  the regions near the faces 
of $A$.

\begin{lemm}
\label{lemhalf3}
Let $\Omega \subset \R^2$ and (for each $t >0$)
$\Omega_t \subset \R^2$ be closed and Riemann measurable, with
$\Omega_t \subset \Omega^o$ for each $t >0$. Let $\cU_t$ be
a homogeneous Poisson process of intensity $tf_0$ in $\bH$.
Assume (\ref{rt3}) holds. Then 
\bea
\lim_{t \to \infty} (\Pr[F_t(\Omega \times \{0\},\cU_t ) ] 
) = \begin{cases}
	\exp \left(- c_{3,k}  
	%\frac{3 \pi^{5/3} 2^k
	|\Omega| 
e^{- 2 \beta/3}
	%}{(k-1)! 3^k 32} 
	\right), 
& \text{\rm if }  \: 
\alpha_1 > \pi/2
\\
&
\mbox{\rm or } \: 
\alpha_1= \pi/2, k=1
\\
1 & \mbox {\rm otherwise.} 
\end{cases}
	~~~
\label{0517c}
\eea
Also in all cases, given $a \in (0,\infty)$ and
	$\delta_t >0$ for each $t >0$,
\bea
\lim_{t \to \infty}
(\Pr[F_t((\Omega_t \times \{0\} ) 
	\cup ( (\partial 
	\Omega_t \oplus B_{(2)}(o,\delta_t)) \times [0,ar_t]),   
 \cU_t) \nonumber \\
\setminus
 F_t ( \Omega_t \times [0, a r_t] , \cU_t)] ) = 0.
\label{0517b}
\eea 
\end{lemm}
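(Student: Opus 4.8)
The plan is to deduce Lemma \ref{lemhalf3} directly from its general-dimensional counterpart Lemma \ref{lemhalf3a}, taken with $d = 3$. The only genuine work is to verify that the hypothesis (\ref{rt3}) on the family $(r_t)_{t>0}$ implies the hypothesis (\ref{rt3c}) of Lemma \ref{lemhalf3a} for a suitable $\zeta \in (-\infty,\infty]$, with $\zeta$ depending on how the sharpest edge angle $\alpha_1$ compares with $\pi/2$ and on the value of $k$; and to confirm the mild side condition $\limsup_{t\to\infty}(t r_t^3/\log t) < \infty$ that appears in Lemma \ref{lemhalf3a}.

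Concretely, putting $d = 3$ (so $\theta_3 = 4\pi/3$, $(d-1)/d = 2/3$ and $d+k-3+1/d = k+1/3$) and multiplying through by $3/2$, condition (\ref{rt3c}) becomes
\[
\pi f_0 t r_t^3 - \log(t f_0) - \Big(\tfrac{3k+1}{2}\Big)\lglg t \;\longrightarrow\; \tfrac{3\zeta}{2} .
\]
I would then match this against (\ref{rt3}) in three cases. If $\alpha_1 > \pi/2$, or if $\alpha_1 = \pi/2$ and $k = 1$, then $\pi\wedge 2\alpha_1 = \pi$ and the coefficient of $\lglg t$ in (\ref{rt3}) equals $\tfrac{3k+1}{2}$ (using $3k-1 = \tfrac{3k+1}{2}$ precisely when $k=1$), so (\ref{rt3c}) holds with $\zeta = 2\beta/3$. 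If $\alpha_1 = \pi/2$ and $k \geq 2$, then $\pi\wedge 2\alpha_1 = \pi$ but $3k-1 > \tfrac{3k+1}{2}$, so the left side above tends to $+\infty$ and (\ref{rt3c}) holds with $\zeta = +\infty$. If $\alpha_1 < \pi/2$, then $\pi\wedge 2\alpha_1 = 2\alpha_1 < \pi$, and (\ref{rt3}) gives $2\alpha_1 f_0 t r_t^3 \sim \log t$, whence $\pi f_0 t r_t^3 = \tfrac{\pi}{2\alpha_1}\,\big(2\alpha_1 f_0 t r_t^3\big)$ grows like $\tfrac{\pi}{2\alpha_1}\log t$ with $\tfrac{\pi}{2\alpha_1} > 1$, so $\pi f_0 t r_t^3 - \log(t f_0) \to +\infty$ and again $\zeta = +\infty$. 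In all three cases $t r_t^3 = \Theta(\log t)$, so the side condition holds.

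With these identifications, the first assertion (\ref{0517c}) is immediate from (\ref{0517c2}): when $\zeta = 2\beta/3$ it gives $\exp(-c_{3,k}|\Omega|e^{-2\beta/3})$, and when $\zeta = +\infty$ it gives $\exp(-c_{3,k}|\Omega|e^{-\infty}) = 1$, which reproduces exactly the case split in (\ref{0517c}). The second assertion (\ref{0517b}) is literally (\ref{0517b2}) with $d = 3$ (so $B_{(d-1)} = B_{(2)}$), which applies because (\ref{rt3c}) holds for some $\zeta\in(-\infty,\infty]$ together with the side condition just checked. I expect the only place requiring care to be the case bookkeeping of the preceding paragraph; there is no analytic obstacle here, since all the substance is already contained in Lemma \ref{lemhalf3a} (and, through it, in the work of Janson and of Hall).
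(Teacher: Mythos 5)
Your proposal is correct and matches the paper's own proof: both reduce Lemma \ref{lemhalf3} to Lemma \ref{lemhalf3a} by verifying that (\ref{rt3}) implies (\ref{rt3c}) for $d=3$ with $\zeta = 2\beta/3$ when $\alpha_1 > \pi/2$ or ($\alpha_1 = \pi/2$, $k=1$), and with $\zeta = +\infty$ otherwise, together with the side condition $\limsup_{t\to\infty}(t r_t^3/\log t) < \infty$. Your case bookkeeping and the coefficient comparison $3k-1$ versus $(3k+1)/2$ are accurate.
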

\begin{proof}
	In the case where $\alpha_1 > \pi/2$ or $\alpha_1= \pi/2,k=1$ 
	the condition (\ref{rt3}) implies that (\ref{rt3c}) holds
	(for $d=3$)
	with $\zeta:= 2 \beta/3$.
	In the other case (with $\alpha_1 < \pi/2 $ or $\alpha_1 =
	\pi/2, k>1$) the condition (\ref{rt3}) implies that
	(\ref{rt3c}) holds (for $d=3$) with $\zeta := + \infty$, and
	also $\limsup_{t \to \infty}(tr_t^3 / \log t) < \infty$.
	Therefore we can deduce the result (in both cases) from
	Lemma \ref{lemhalf3a}.
	\qed
\end{proof}

In this subsection we use bold face to indicate vectors in $\R^3$.
For $i=1,2,3$, we
let $\pi_i: \R^3 \to \R$ denote projection onto the $i$th coordinate.
For $\bx \in \R^3$ we shall sometimes refer to $\pi_2(\bx)$ and
$\pi_3(\bx)$ as the `depth' and `height' of $\bx$, respectively.

\textcolor{\blue}{
For $r>0$ and  for triples
$(\bx,\bfy,\bz) \in (\R^3)^3$,
we define the function $h_r(\bx,\bfy,\bz)$ and 
(when $h_r(\bx,\bfy,\bz)=1$) the points
$\bp_r(\bx, \bfy,\bz)$
and $\bq_r(\bx, \bfy,\bz)$, as at (\ref{hrdef2}), now specialising to $d=3$.}

%Note that for any $r >0$ and Lebesgue-almost all triples 
%$(\bx,\bfy,\bz) \in (\R^3)^3$, either 
%$\#(\partial B(\bx,r) \cap \partial B(\bfy,r) \cap \partial B(\bz,r) ) =2$
%or $\#(\partial B(\bx,r) \cap \partial B(\bfy,r) \cap \partial B(\bz,r) ) =0$,
%where $\#(\cdot)$ denotes the number of elements in a set.
%Given  $\bx,\bfy,\bz \in \R^3$ and $r>0$ such that $\#(\partial B(\bx,r) \cap
%\partial B(\bfy,r) \cap \partial B(\bz,r) ) =2$ let the two 
%  points of $\partial B(\bx,r) \cap \partial B(\bfy,r) \cap \partial B(\bz,r)$ 
%be denoted $\bp_r(\bx,\bfy,\bz)$ and $\bq_r(\bx,\bfy,\bz)$, choosing which one
%is which in such a way 
% that   $\pi_3(\bq_r(\bx,\bfy,\bz)) \leq \pi_3(\bp_r(\bx,\bfy,\bz))$,
%i.e. $\bp_r(\bx,\bfy,\bz) $ is at least as high as $\bq_r(\bx,\bfy,\bz)$, 
%or using the lexicographic order if they have equal height.
%Define the indicator function
%\bea
%h_r(\bx,\bfy,\bz) := {\bf 1} \{
%\pi_3(\bx) \leq \min(\pi_3(\bfy),\pi_3(\bz)) \}
%\nonumber \\
%\times {\bf 1} \{ \#(\partial B(\bx,r) \cap \partial B(\bfy,r) \cap 
%\partial B(\bz,r)  ) = 2 \} {\bf 1} \{ \pi_3(\bx) <
% \pi_3(\bp_r(\bx,\bfy,\bz))\} .  
%\label{hrdef}
%\eea

For $\alpha \in (0,2 \pi)$, let 
$\bW_\alpha  \subset \R^3 $ be
  the wedge-shaped region 
$$ 
\bW_\alpha := 
	\{(x,r\cos \theta, r\sin \theta) : x \in \R, r \geq 0, \theta \in
	[0,\alpha] \}
 .
$$
For $\alpha \in (0, 2 \pi)$ and
 $t >0$,
let $\cW_{\alpha,t}$ be a homogeneous Poisson process in $\bW_\alpha$ of
intensity $t f_0$.

\begin{lemm}[Lower bound for the volume of a ball within a wedge]
\label{wdglblem}
	Let $\alpha \in (0, \pi)$, 
and $u >0$. There is  a constant $c' = c'(\alpha,u) \in (0,\infty)$ such
that
for any $r >0$ and any $\bx \in \bW_{\alpha} \cap B(o, ur) $,
\bea
|B(\bx,r) \cap  \bW_\alpha |
 \geq (2 \alpha/3) r^3
 + c' r^2 \pi_3(\bx)  
+ c' r^2    (\pi_2(\bx) - \pi_3(\bx) \cot \alpha) .
\label{wdglb}
\eea
\end{lemm}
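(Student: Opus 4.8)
The plan is to reduce to the unit radius by scaling, then to recover the baseline volume $(2\alpha/3)r^3$ by translating the centre of the ball onto the edge of the wedge, and finally to quantify the extra volume gained by two separate explicit ``overhang'' pieces, one for each bounding half-plane. First I would note that $\bW_\alpha$ is a cone invariant under dilation about $o$, so $B(\bx,r)\cap\bW_\alpha = r\bigl(B(\bx/r,1)\cap\bW_\alpha\bigr)$; hence it suffices to find $c'=c'(\alpha,u)>0$ with $g(\bx):=|B(\bx,1)\cap\bW_\alpha|\ge \rho_\alpha + c'(h+\ell)$ for all $\bx\in\bW_\alpha\cap B(o,u)$, where $\rho_\alpha := 2\alpha/3 = |B(o,1)\cap\bW_\alpha|$, $h:=\pi_3(\bx)\ge 0$, and $\ell:=\pi_2(\bx)-\pi_3(\bx)\cot\alpha$; here $\ell\ge 0$ because $\ell\sin\alpha = \pi_2(\bx)\sin\alpha-\pi_3(\bx)\cos\alpha$ is precisely the distance from $\bx$ to the bounding half-plane $\{\theta=\alpha\}$, while $h$ is the distance to $\{\theta=0\}$. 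Rescaling the resulting inequality back by $r$ reproduces the claimed bound exactly (since $\pi_2,\pi_3$ scale linearly and $\cot\alpha$ is scale-free).

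The structural step is the following. Write $\bW_\alpha=\R\times S_\alpha$, where $S_\alpha\subset\R^2$ is the sector of opening $\alpha$, which is a convex cone since $\alpha<\pi$; the edge of $\bW_\alpha$ is $\R\times\{(0,0)\}$. Let $\bar\bx:=(\pi_1(\bx),0,0)$ be the foot of $\bx$ on the edge. Translation-invariance of $\bW_\alpha$ along its edge gives $\bW_\alpha-\bar\bx=\bW_\alpha$, while $\bW_\alpha-\bx=\R\times(S_\alpha-p)$ with $p:=(\pi_2(\bx),\pi_3(\bx))\in S_\alpha$. Since $S_\alpha$ is a convex cone and $p\in S_\alpha$, we have $S_\alpha+p\subseteq S_\alpha$, equivalently $S_\alpha\subseteq S_\alpha-p$, whence $\bW_\alpha\subseteq\bW_\alpha-\bx$. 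Therefore $g(\bx)=|B(o,1)\cap(\bW_\alpha-\bx)| = |B(o,1)\cap\bW_\alpha| + |B(o,1)\cap(\R\times T_p)| = \rho_\alpha + |B(o,1)\cap(\R\times T_p)|$, where $T_p:=(S_\alpha-p)\setminus S_\alpha$. This already accounts for the leading term $\rho_\alpha$, and leaves us to bound the overhang volume $|B(o,1)\cap(\R\times T_p)|$ below by $c'(h+\ell)$.

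For the overhang, I would work in coordinates where $S_\alpha=\{(y,z):z\ge 0,\ y\sin\alpha-z\cos\alpha\ge 0\}$; a direct check gives $S_\alpha-p=\{(y,z):z\ge -h,\ y\sin\alpha-z\cos\alpha\ge -\ell\sin\alpha\}$, so $T_p$ contains the two sets $R_1:=\{(y,z):-h\le z<0,\ y\sin\alpha-z\cos\alpha\ge 0\}$ and $R_2:=\{(y,z):z\ge 0,\ -\ell\sin\alpha\le y\sin\alpha-z\cos\alpha<0\}$, which are disjoint (one sits in $\{z<0\}$, the other in $\{z\ge 0\}$). Using the inclusion $[-\tfrac12,\tfrac12]\times B_{(2)}(o,\sqrt{3}/2)\subseteq B(o,1)$, we get $|B(o,1)\cap(\R\times T_p)|\ge |R_1\cap B_{(2)}(o,\sqrt{3}/2)| + |R_2\cap B_{(2)}(o,\sqrt{3}/2)|$. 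Each planar area is bounded below by inscribing an explicit thin rectangle: for $R_1$ a rectangle of $z$-extent $\min(h,\delta_0)$, for $R_2$ one of ``$w$''-extent $\min(\ell\sin\alpha,\delta_0)$ (in the sheared variable $w=y\sin\alpha-z\cos\alpha$, whose change of variables has constant Jacobian $1/\sin\alpha$), for a small $\delta_0=\delta_0(\alpha)>0$ chosen so that the rectangles stay inside the half-disc. This yields $g(\bx)-\rho_\alpha\ge \lambda(\alpha)\bigl(\min(h,\delta_0)+\min(\ell,\delta_0)\bigr)$ for every $\bx\in\bW_\alpha$, with $\lambda(\alpha)>0$. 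Finally, on $B(o,u)$ one has $h\le u$ and $0\le\ell\le(1+|\cot\alpha|)u$, so $\min(h,\delta_0)\ge\frac{\delta_0}{u+\delta_0}h$ and $\min(\ell,\delta_0)\ge\frac{\delta_0}{(1+|\cot\alpha|)u+\delta_0}\ell$, which upgrades the estimate to $g(\bx)-\rho_\alpha\ge c'(\alpha,u)(h+\ell)$.

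The main obstacle is the bookkeeping in the third step: one must verify carefully that $R_1$ and $R_2$ really lie inside $T_p$ and are genuinely disjoint, since it is their disjointness that makes their volumes add and hence produces the \emph{sum} $\pi_3(\bx)+(\pi_2(\bx)-\pi_3(\bx)\cot\alpha)$ on the right-hand side rather than just a maximum of the two; and one must pick $\delta_0$ and $\lambda$ so that the inscribed rectangles remain inside $B_{(2)}(o,\sqrt{3}/2)$. The constants $\delta_0(\alpha),\lambda(\alpha),c'(\alpha,u)$ degrade as $\alpha\to 0$ or $\alpha\to\pi$ (through $\cot\alpha$ and the aperture of $S_\alpha$), but this is harmless because $\alpha$ is fixed and $c'$ is allowed to depend on $\alpha$ and $u$; the hypothesis $\alpha<\pi$ enters only to ensure $S_\alpha$ is a convex cone, which is exactly what drives the containment $\bW_\alpha\subseteq\bW_\alpha-\bx$ in the second step.
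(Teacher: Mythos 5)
Your proposal is correct and follows essentially the same route as the paper: the paper also reduces to the unit radius by scaling and then asserts (via its Figure 2) the unit-ball inequality $|B(\bx,1)\cap\bW_\alpha|\geq 2\alpha/3 + c'\bigl(\pi_3(\bx)+\pi_2(\bx)-\pi_3(\bx)\cot\alpha\bigr)$ by decomposing the intersection into the wedge-volume at the corner plus two overhang slabs, one per bounding half-plane. Your convex-cone containment $\bW_\alpha\subseteq\bW_\alpha-\bx$ together with the explicit inscribed rectangles in $R_1$ and $R_2$ is simply a rigorous rendition of that picture argument, so no gap.
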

\begin{proof}
	As illustrated in Figure 2,
	%\ref{fig3},
	we can find a constant
$c' >0 $ such that for any $\bx \in \bW_\alpha \cap B(o,u) $,
we have that
$$
|B(\bx,1) \cap  \bW_\alpha | \geq (2 \alpha/3) + c' (\pi_3(\bx)+  (\pi_2(\bx)
- (\pi_3(\bx)/ \tan \alpha)) ). 
$$
	 \begin{figure*}[!h]
\label{fig3}
\center
\includegraphics[width=8cm]{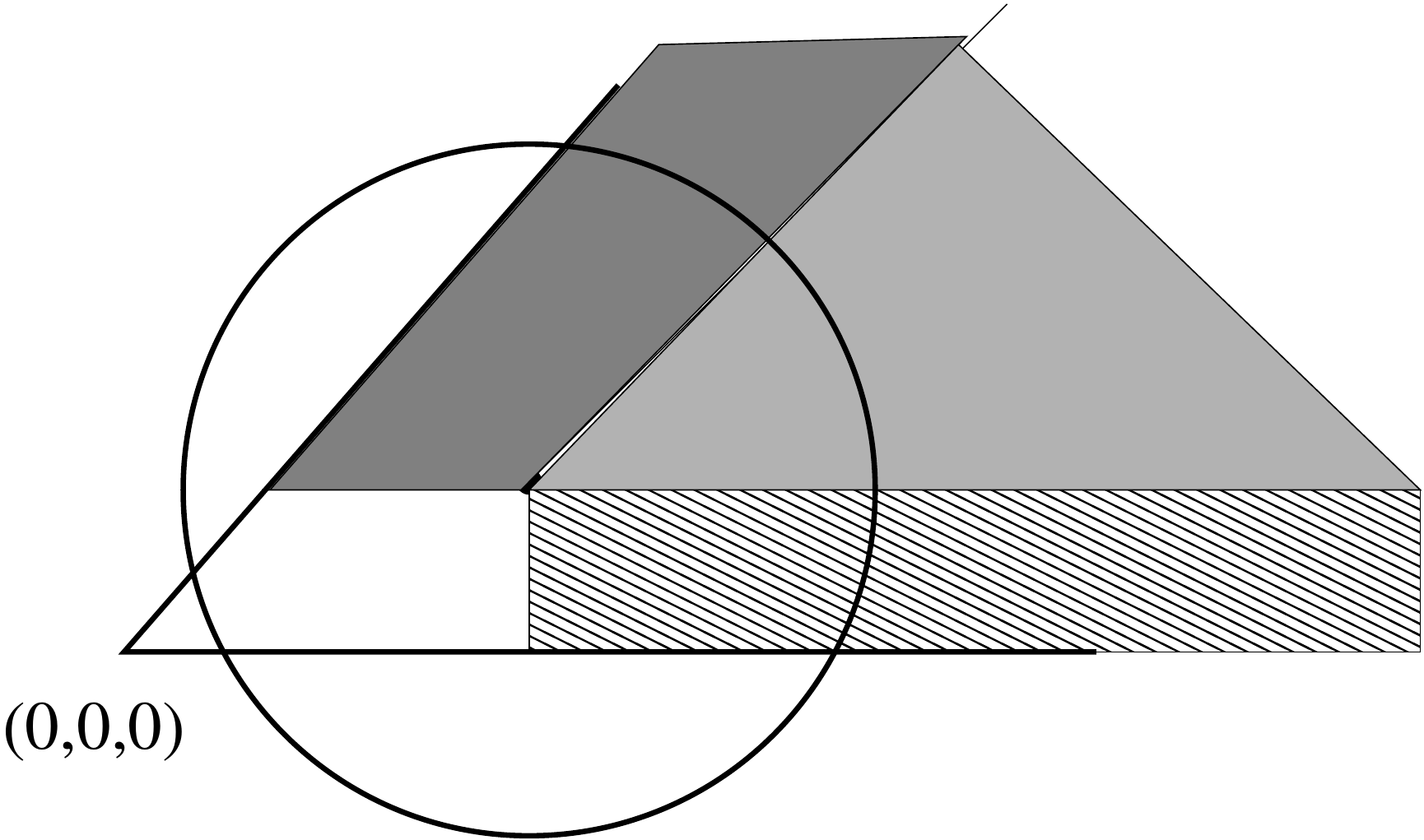}
		 \caption{We show only the 2nd and 3rd co-ordinates. 
		 In the proof of Lemma \ref{wdglblem},
		 we lower bound the volume of the intersection of
		 the circle (representing a ball of unit radius)
		 with the leftmost wedge shown, 
		 by the sum of the volumes of the
		 union of the three shaded regions, each intersected
		 with the ball/circle.}
\end{figure*}
Also 
$ |B(\bx,r) \cap  \bW_\alpha | = r^{3} |B(r^{-1} \bx,1) \cap  \bW_\alpha | $
by scaling, and it is then straightforward to deduce
(\ref{wdglb}). 
\qed
\end{proof}

In the next three lemmas, we take $I$ to be an arbitrary
fixed compact interval in $\R$, and then, given $\alpha \in (0,2\pi)$
and $b \geq 0$, set 
$$
\bW_{\alpha, b} := 
\{ (x_1,x_2,x_3)  \in \bW_\alpha: x_1 \in I,
x_2^2 + x_3^2 \leq b^2\}.
$$ 
\textcolor{\blue}{Also, writing $I = [a_1,a_2]$, let $I'_t:=
[a_1 - r_t,a_2+ r_t] $ (a slight extension of the interval $I$).
Let $\bW'_{\alpha,b,t} :=
\{ (x_1,x_2,x_3)  \in \bW_\alpha: x_1 \in I'_t,
x_2^2 + x_3^2 \leq b^2\}.
$} 

The next lemma will help us to show that if the part of $\partial A$
near to a given edge is covered, then the interior of $A$ near that
edge is also likely to be covered.

\begin{lemm}
\label{lemwedge}
Let $\alpha \in [\alpha_1,\pi)$, $a \in (1,\infty)$.
Then
\bea
\lim_{t \to \infty}
(\Pr[F_t((\partial \bW_\alpha ) \cap \bW_{\alpha,  a r_t},  \cW_{\alpha,t})   \setminus
F_t( \bW_{\alpha, a r_t}, \cW_{\alpha, t} ) ] )
= 0.
\label{0517b2a}
\eea 
\end{lemm}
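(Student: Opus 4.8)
The plan is to mimic the proof of the ``escape'' statement \eqref{0517b2} in Lemma \ref{lemhalf3a}, replacing the half-space $\bH$ by the wedge $\bW_\alpha$ and using the wedge volume bound of Lemma \ref{wdglblem} in place of the elementary half-space volume bound. First I would observe that the event in \eqref{0517b2a} --- that $(\partial\bW_\alpha)\cap\bW_{\alpha,ar_t}$ is fully $k$-covered but $\bW_{\alpha,ar_t}$ is not --- forces the existence of a location $\bw$ of minimal height (or, better, of minimal value of a suitable linear functional) in the closure of the vacant set $V_t(\cW_{\alpha,t})\cap\bW_{\alpha,ar_t}$ that lies in the \emph{interior} of the wedge-slab, since the two boundary faces are covered. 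By the same ``lowest vacant point is a corner'' argument used in the proof of Lemma \ref{lemhalf3a} (the local analysis showing $\bw$ cannot be interior to $V_t$, cannot lie on fewer than $d=3$ sphere boundaries, and w.p.\ $1$ not on more than $3$), $\bw$ must equal $\bq_{r_t}(\bx,\bfy,\bz)$ for some triple $\bx,\bfy,\bz\in\cW_{\alpha,t}$ with $h_{r_t}(\bx,\bfy,\bz)=1$ and $\bw\in V_t(\cW_{\alpha,t}\setminus\{\bx,\bfy,\bz\})$.

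Then I would bound the expected number of such triples via the Mecke formula, exactly as in \eqref{0522a2}: the expectation is at most a constant times
\[
t^3 (tr_t^3)^{k-1} \int_{\bW_\alpha}\!\!\int_{\bW_\alpha}\!\!\int_{\bW_\alpha}
h_{r_t}(\bx,\bfy,\bz)\,{\bf 1}\{\bq_{r_t}(\bx,\bfy,\bz)\in\bW_{\alpha,ar_t}\}
\exp\bigl(-f_0 t\,|B(\bq_{r_t}(\bx,\bfy,\bz),r_t)\cap\bW_\alpha|\bigr)\,d\bx\,d\bfy\,d\bz.
\]
Here the key input is Lemma \ref{wdglblem}: for $\bq_{r_t}(\bx,\bfy,\bz)\in\bW_{\alpha,ar_t}$ the exponent is bounded below by $(2\alpha/3)f_0 tr_t^3 + c' f_0 t r_t^2\bigl(\pi_3(\bq)+(\pi_2(\bq)-\pi_3(\bq)\cot\alpha)\bigr)$, with both bracketed quantities nonnegative on the wedge. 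Changing variables to $\bfy'=r_t^{-1}(\bfy-\bx)$, $\bz'=r_t^{-1}(\bz-\bx)$ and writing $\pi_i(\bq_{r_t}(\bx,\bfy,\bz))=\pi_i(\bx)+r_t\pi_i(\bq_1(o,\bfy',\bz'))$, the integral over the relevant coordinate of $\bx$ along the two ``decay directions'' contributes factors of order $(tr_t^2)^{-1}$ each, while $\exp(-(2\alpha/3)f_0 tr_t^3)$ is controlled by \eqref{rt3}. Since $2\alpha\ge 2\alpha_1\ge \pi\wedge 2\alpha_1$, this exponential is at most (up to polylogarithmic factors matching those on the right side of \eqref{rt3}) the quantity appearing there, so the whole bound is $(tr_t^3)^{\Theta(1)}(\log t)^{\Theta(1)}$ times the second-line integral
\[
\int\!\!\int h_1(o,\bfy',\bz')\exp\bigl(-c' f_0 t r_t^3\,\pi_3(\bq_1(o,\bfy',\bz'))\bigr)\,d\bfy'\,d\bz',
\]
which $\to 0$ by dominated convergence since $tr_t^3\to\infty$ and $h_1(o,\cdot,\cdot)$ has bounded support and vanishes when $\pi_3(\bq_1)\le 0$. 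Markov's inequality then gives \eqref{0517b2a}. One must also check the case $\alpha>\pi/2$ versus $\alpha\le\pi/2$ separately because \eqref{rt3} has different $\lglg t$ coefficients, but the powers of $tr_t^3$ coming out of the coordinate integrations are more than enough slack in both cases, as in Lemma \ref{lemhalf3a}.

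The main obstacle I anticipate is making the ``lowest vacant point is an interior corner'' step genuinely rigorous in the wedge geometry: near the \emph{edge} of the wedge the boundary is not smooth, so I would need to be careful that the minimizing functional (height, or a rotated coordinate) is chosen so that (a) its infimum over the vacant set in $\bW_{\alpha,ar_t}$ is attained in the interior once the two wedge faces are covered, and (b) the usual perturbation arguments pushing $\bw$ downward stay inside $\bW_\alpha$. This is why the statement restricts to $\alpha<\pi$ and to the slab $\bW_{\alpha,ar_t}$ with the extended interval $I'_t$ (so that the ends of the interval behave like the already-covered faces, not like new boundary). A secondary technical point is verifying that $c'>0$ in Lemma \ref{wdglblem} really does capture escape in \emph{both} transverse directions simultaneously — i.e.\ that $\pi_3(\bx)$ and $\pi_2(\bx)-\pi_3(\bx)\cot\alpha$ together dominate $\mathrm{dist}(\bx,\partial\bW_\alpha)$ up to a constant — but this is exactly what Figure 2 is asserting and follows from elementary planar geometry, so it should be routine once set up.
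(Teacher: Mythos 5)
Your plan is essentially the paper's own proof: the lowest vacant point of $V_t(\cW_{\alpha,t})$ in $\bW_{\alpha,ar_t}$ is identified as a triple corner $\bq_{r_t}(\bx,\bfy,\bz)$ with $h_{r_t}(\bx,\bfy,\bz)=1$, the Mecke formula combined with the wedge volume bound of Lemma \ref{wdglblem} and the estimate (\ref{0528b}) bounds the expected number of such corners, the two transverse integrations each contribute a factor $O((tr_t^2)^{-1})$, and dominated convergence plus Markov's inequality conclude. The one point you leave vague --- the hypothesis covers only $(\partial\bW_\alpha)\cap\bW_{\alpha,ar_t}$, not the end faces $x_1\in\partial I$ of the slab, so the minimal-height/corner argument could fail there, and the extended interval $I'_t$ is not part of the statement --- is resolved in the paper by a separate high-probability coverage step: Lemma \ref{lem2meta} applied to $\bW'_{\alpha,ar_t,t}\setminus\bW_{\alpha,ar_t}$, giving (\ref{eqwedcov}), and intersecting that event with yours before running the corner argument.
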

\begin{proof}
	\textcolor{\blue}{First, note that 
	$\kappa(\bW'_{\alpha,ar_t,t} \setminus \bW_{\alpha,ar_t},r_t)
	= O(1)$ as $t \to \infty$, and hence by
	taking the measure $\mu_t$ 
	to be 
	$f_0 | \cdot \cap \bW_{\alpha}|$
	in Lemma} \ref{lem2meta},
	\bea
	\textcolor{\blue}{
	\lim_{t \to \infty} 
	\Pr[F_t (
	%(\partial \bW_\alpha) \cap
	\bW'_{\alpha,ar_t,t} \setminus \bW_{\alpha,ar_t},\cW_{\alpha,t})]
	=1.}
	\label{eqwedcov}
	\eea

Let $E_t$ here  be the (exceptional)
 event that there exist three distinct points $\bx,\bfy,\bz$
of $\cW_{\alpha,t}$
such that $\partial B(\bx,r_t) \cap \partial B(\bfy,r_t) \cap \partial 
B(\bz,r_t)$
	has non-empty intersection with the plane $\R^2 \times \{0\} $. 
Then $\Pr[E_t]=0$.

Suppose that event
% the event displayed in (\ref{0517b2}) 
$F_t((\partial \bW_\alpha) \cap \bW_{\alpha, a r_t},  \cW_{\alpha,t})   
\setminus F_t( \bW_{\alpha,a r_t}, \cW_{\alpha,t} ) $
occurs, 
	\textcolor{\blue}{and that 
	$F_t ( \bW'_{\alpha,ar_t,t} \setminus \bW_{\alpha,ar_t})$ occurs,}
	and that $E_t$ does 
	not.
%Let $V_t$ be the set of locations
%in $\bW_{\alpha,ar_t} $ 
%that are covered by fewer than $k$ of the balls of radius $r_t$ centred
%on points of $\cW_{\alpha,t}$. 
Let $\bw$ be a point of minimal height
	in the closure of $V_t(\cW_{\alpha,t}) \cap \bW_{\alpha,ar_t}$.
Then $\bw \in \bW_{\alpha,a r_t} \setminus \partial \bW_\alpha$, and 
$\bw =\bq_{r_t}(\bx,\bfy,\bz)$ 
for some triple 
$(\bx,\bfy,\bz)$ of
 points of $\cW_{\alpha,t} $, satisfying
$h_{r_t}(\bx,\bfy,\bz)=1$, where $h_{r_t}(\cdot)$ was defined at (\ref{hrdef2}).
 Also $\bw$ is 
 covered by fewer than $k$ of the balls centred on  the other points of
$\cW_{\alpha,t}$.
	Hence by Markov's inequality,
\bea
\Pr[F_t((\partial \bW_\alpha) \cap \bW_{\alpha,2 a r_t},  \cW_{\alpha,t})   
	\cap F_t(\bW'_{\alpha,ar_t,t}\setminus \bW_{\alpha,ar_t},\cW_{\alpha,t})
\setminus F_t( \bW_{\alpha,a r_t}, \cW_{\alpha,t} ) ] 
\nonumber
	\\ \leq \Pr[N_t \geq 1] \leq \E[N_t],
	~~~~~~
	\label{1127a}
\eea
	where we set
$$
N_t := \sum_{\bx,\bfy,\bz \in \cW_{\alpha,t}}^{\neq} h_{r_t}(\bx,\bfy,\bz)
 {\bf 1} 
\{ \bq_{r_t}(\bx,\bfy,\bz) \in \bW_{\alpha,ar_t} \}
	{\bf 1} \{ \cW_{\alpha,t} (B(\bq_{r_t}(\bx,\bfy,\bz),r_t) ) < k \},
$$ 
and $\sum^{\neq}$  means we are summing over triples of distinct points in
	$\cW_{\alpha,t}$.
By the Mecke formula, $\E[N_t]$ is bounded by a constant times
\bean
 t^3 (tr_t^3)^{k-1} \int_{\bW} d\bx
 \int_{\bW} d\bfy
 \int_{\bW} d\bz h_{r_t}(\bx,\bfy,\bz)
 {\bf 1}\{  \bq_{r_t}(\bx,\bfy,\bz) \in \bW_{\alpha,ar_t}  \}
\\
\times \exp(- t f_0 |B(\bq_{r_t}(\bx,\bfy,\bz),r_t) \cap \bW_{\alpha,ar_t}|). 
\eean
Hence by (\ref{wdglb}), 
there is a constant $c$ such that $\E[N_t]$
 is bounded by
\bea
c t^3 (tr_t^3)^{k-1} \int_{\bW_\alpha} d\bx
 \int_{\bW_\alpha} d\bfy
 \int_{\bW_\alpha} d\bz h_{r_t}(\bx,\bfy,\bz)
 {\bf 1}\{  \bq_{r_t}(\bx,\bfy,\bz) \in \bW_{\alpha,ar_t}  \}
%\\
%\times
%{\bf 1} \{ \pi_2(p_{r_t}(x,y,z)) > \pi_2(x) \}
\nonumber \\ 
\times
 \exp \left\{ - (2\alpha /3) f_0 t  r_t^3 - c' f_0 t r_t^2 
\pi_3( \bq_{r_t}(\bx,\bfy,\bz) ) \right. 
\nonumber \\
\left. - c' f_0 tr_t^2  [\pi_2( \bq_{r_t}(\bx,\bfy,\bz)) - 
\pi_3(\bq_{r_t}(\bx,\bfy,\bz))   \cot \alpha  ]
 \right\}.  
\label{0528c}
\eea

We claim that  (\ref{rt3}) implies
\bea
\exp(-(2 \alpha/3) f_0 t r_t^3 ) 
= O(t^{-1/3} (\log t)^{(1/3)-k}).
\label{0528b}
\eea 
Indeed, if $\alpha = \alpha_1 \leq \pi/2$,
then the expression on the left side of (\ref{0528b}) divided by the one
on the right, tends to a finite constant. Otherwise, this ratio tends to zero.

Now we write $c''$ for $c' f_0$. 
By (\ref{rt3}),  (\ref{0528b})
and the assumption that $I$ is a bounded interval,
we obtain that $\E[N_t]$
%the  expression in (\ref{0528c})
 is bounded by
a constant times
\bean
t^3 (\log t)^{k-1}  t^{-1/3}(\log t)^{(1/3)-k } 
\int_{\bW_\alpha} d\bz \int_{\bW_\alpha} d\bfy 
\int_0^{2 ar_t} du \int_0^{2 ar_t} dv h_{r_t}( (0,u,v),\bfy,\bz) 
\\ \times
 {\bf 1}\{  \bq_{r_t}((0,u,v),\bfy,\bz) \in \bW_\alpha  \}
\\
\times
 \exp \left\{  -c'' t r_t^2 
[v + (\pi_3( \bq_{r_t}((0,u,v) ,\bfy,\bz) ) -v) ]
\right\}
\\
\times \exp 
\left\{
- c'' t r_t^2  [\pi_2( \bq_{r_t}((0,u,v),\bfy,\bz)) 
- \pi_3(\bq_{r_t}((0,u,v),\bfy,\bz))    \cot \alpha  ]
 \right\} .  
\eean

Writing $\bu:= (0,u,v)$,
and changing variables to $\bfy'= r_t^{-1}(\bfy -\bu)$, 
and $\bz'= r_t^{-1}(\bz-\bu)$,  we obtain that 
the previous expression  is bounded by 
a constant times the quantity
\bean
b_t := t^{8/3} (\log t)^{-2/3 }    r_t^6
\int_0^{2 ar_t} dv
\int_{\R^3} d\bz' \int_{\R^3} d\bfy' 
\int_0^{2 ar_t} du 
f_t(u,v,\bfy',\bz') g_t(u,v,\bfy',\bz'),
\eean
where we set
\bean
f_t(u,v,\bfy',\bz'):= 
e^{- c''t r_t^2 v} \exp( - c'' t r_t^2  [
\pi_3( \bq_{r_t}(\bu, \bu + r_t \bfy', \bu + r_t \bz') ) -v ] )
\\
\times 
h_{r_t}(\bu, \bu + r_t \bfy', \bu + r_t \bz') ,
\eean
and
\bean
g_t(u,v,\bfy',\bz') := 
{\bf 1}   \{ 
\pi_2 (\bq_{r_t} (\bu, \bu + r_t \bfy', \bu + r_t \bz') )  
> \pi_3( \bq_{r_t} (\bu, \bu + r_t \bfy', \bu + r_t \bz') ) \cot \alpha 
\}
\\ \times 
\exp \{ - c'' t r_t^2 [ 
\pi_2 (\bq_{r_t} (\bu, \bu + r_t \bfy', \bu + r_t \bz')  )
- \pi_3( \bq_{r_t} (\bu, \bu + r_t \bfy', \bu + r_t \bz') ) \cot \alpha) ] \}.
\eean
Observe that
 $f_t(u,v,\bfy',\bz')$
 does not depend on $u$, so it is equal
to $f_t(0,v,\bfy',\bz')$.
Therefore we can take this factor
% $f_t(0,v,y',z')$
 outside the
  innermost integral. Also, setting $\bv:= (0,0,v)$,
% for any $s \in \R$, writing
%$\bu_s :=  \bu +(0,s,0)$ 
we have
$$
\bq_{r_t}(\bu,\bu+ r_t \bfy',\bu+ r_t \bz') 
= \bq_{r_t}(\bv,\bv+ r_t \bfy',\bv+ r_t \bz') + (0,u,0), 
$$
Therefore, setting $\bw := \bw(t,v, \bfy',\bz') := \bq_{r_t}(\bv, \bv  + r_t \bfy',
\bv + r_t \bz') $,
we have
$$
g_t(u,v,\bfy',\bz') = 
{\bf 1}\{ \pi_2( \bw)  + u > \pi_3(\bw)  \cot \alpha_1 \}
\exp( - c'' t r_t^2  (
\pi_2( \bw)  + u - \pi_3(\bw)  \cot \alpha_1 ) ). 
$$ 
Defining 
$
u_0 := u_0 (t,v, \bfy',\bz') :=
\pi_3( \bw) 
\cot \alpha_1 -
\pi_2( \bw ) ,
$
 we obtain for the innermost integral that
\bean
\int_0^{2 a r_t}
g_t (u,v,\bfy',\bz') du = \int_{u_0}^{2 ar_t} \exp (- c'' t r_t^2
(u- u_0) ) du  =  O((tr_t^2)^{-1} ).
\eean
Also,
%writing $\bfo$ for the origin here, 
we have
\bean
e^{c'' t r_t^2 v} f_t(0,v, \bfy', \bz') =
\exp [- c'' t r_t^2 \pi_3(\bq_{r_t} (o,r_t \bfy',r_t \bz') ) ] 
 h_{r_t}(o, r_t \bfy', r_t \bz') 
\\
= \exp( - c'' t r_t^3 \pi_3(\bq_1(o,\bfy',\bz'))  ) 
h_1(o,\bfy',\bz')) 
\\
=: \tf_t(\bfy',\bz'). 
\eean
Combining all this, we obtain that $b_t$ is bounded by 
 a constant times
\bean
t^{8/3} (\log t)^{-2/3} r_t^6 (t r_t^2)^{-1}\int_0^{2 ar_t} 
e^{-c'' t r_t^2 v} dv
\int_{\R^3} \int_{\R^3} d\bz' d\bfy' \tf_t(\bfy',\bz') , 
\eean
and since the first integral is $O((tr_t^2)^{-1})$,
this is bounded by a constant times
\bean
(t r_t^3)^{2/3} (\log t)^{-2/3} \int_{\R^3}
 \int_{\R^3} \tf_t(\bfy',\bz') d\bfy' d\bz'.
\eean
By dominated convergence the last integral tends to zero,
while by (\ref{rt3}) the prefactor tends to a constant.
Thus
 $b_t \to 0$ as $t \to \infty$. 
 \textcolor{\blue}{
 Therefore also $\E[N_t] \to 0$, and
 combined with
	(\ref{1127a}) and (\ref{eqwedcov}), this yields
 %by Markov's inequality we obtain 
%(\ref{0517b2}).
(\ref{0517b2a})}.
\qed
\end{proof}

The next lemma helps us to show that if a given edge of $A$ is 
covered, then the part of $\partial A$ near that edge is
also likely to be covered.

\begin{lemm}
\label{lemrind}
Suppose $\alpha_1 \leq \alpha \leq \pi/2$. Let $a >1$.
%Let $\bF$ denote the plane  $\{\bx \in \R^3: \pi_3(\bx) =0\}$.
	\textcolor{\blue}{Let $\bF := \partial \bH := \R^2 \times \{0\}$.}
%	$ denote the plane  $\{\bx \in \R^3: \pi_3(\bx) =0\}$.
Then $\Pr[F_t(\bW_{\alpha,0}, \cW_{\alpha,t} ) \setminus 
F_t(\bF \cap \bW_{\alpha,a r_t},\cW_{\alpha,t}) ] 
\to 0$ as $t \to \infty$. 
\end{lemm}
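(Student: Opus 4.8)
The plan is to follow the structure of the proof of Lemma~\ref{lemwedge}. First, enlarging $I$ to $I'_t$, I would apply Lemma~\ref{lem2meta} with $\mu_t(\cdot) = f_0|\cdot\cap\bW_\alpha|$, using Lemma~\ref{wdglblem} to supply the lower bound $\mu_t(B(\bx,s))\ge f_0(2\alpha/3)s^3$ for $\bx$ in the relevant region, and noting that the ``end cap'' region $(\bF\cap\bW'_{\alpha,ar_t,t})\setminus(\bF\cap\bW_{\alpha,ar_t})$ has covering number $O(1)$; this shows the end cap region is $k$-covered with probability tending to $1$. It then suffices to prove that, on the event that $\bW_{\alpha,0}$ is $k$-covered, the end caps are $k$-covered, but $\bF\cap\bW_{\alpha,ar_t}$ is not $k$-covered, the expected number of a suitable ``witness'' configuration tends to zero.

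For the witness, let $\bw$ minimise the second coordinate $\pi_2$ (the ``depth below the edge'') over the closure of $V_t(\cW_{\alpha,t})\cap\bF\cap\bW_{\alpha,ar_t}$. Since $\bW_{\alpha,0}$ is $k$-covered we have $\pi_2(\bw)>0$, and since the end caps are $k$-covered, $\pi_1(\bw)$ lies in the interior of $I$. By the tangent-affine-subspace argument of the proof of Lemma~\ref{lemhalf3a}, carried out now within the two-dimensional face $\bF$ so that a corner of two (rather than three) balls arises, on a probability-one event $\bw$ is a meeting point of the boundaries of exactly two balls $B(\bx,r_t),B(\bfy,r_t)$ with $\bx,\bfy\in\cW_{\alpha,t}$ distinct, lies in $\bF$, is covered at most $k-1$ times by balls of radius $r_t$ about the remaining points of $\cW_{\alpha,t}$, and (with $\bx$ labelled so that $\pi_2(\bx)\le\pi_2(\bfy)$) satisfies $\pi_2(\bw)>\pi_2(\bx)$. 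These constraints, together with ``$\bw$ is the appropriate one of the at most two candidate intersection points'', I would record in an indicator $\tilde h_{r_t}(\bx,\bfy)$ which in the rescaled variable $\bfy':=r_t^{-1}(\bfy-\bx)$ and with $s:=\pi_3(\bx)/r_t$ has a fixed bounded shape $\tilde h'(s,\bfy')$, supported where $|\bfy'|\le2$, $s\in[0,1]$, and $\pi_2(\phi(s,\bfy'))>0$, $\phi$ being the rescaled corner ($\bw=\bx+r_t\phi$).

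By the Mecke formula the expected number of witnesses is at most a constant times
\[
t^2(tr_t^3)^{k-1}\int_{\bW_\alpha}\int_{\bW_\alpha}\tilde h_{r_t}(\bx,\bfy)\,\mathbf{1}\{\bw\in\bW_{\alpha,ar_t}\}\,\exp(-f_0t\,|B(\bw,r_t)\cap\bW_\alpha|)\,d\bx\,d\bfy,
\]
where $\bw=\bw(\bx,\bfy)$. The crucial point is that $\pi_3(\bw)=0$, so by Lemma~\ref{wdglblem}, $|B(\bw,r_t)\cap\bW_\alpha|\ge(2\alpha/3)r_t^3+c'r_t^2\pi_2(\bw)$ when $\bw$ is within $O(r_t)$ of the edge (and the half-ball bound $\ge(2\pi/3-o(1))r_t^3$ otherwise, which is better). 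Substituting $\bfy'=r_t^{-1}(\bfy-\bx)$, writing $\bx=(u_1,u_2,u_3)$, and using translation and scaling to write $\pi_2(\bw)=u_2+r_t\pi_2(\phi)$: the $u_1$-integral contributes $|I|+o(1)$; $\int_{u_2\ge0}\exp(-c'f_0tr_t^2\pi_2(\bw))\,du_2$ contributes $O((tr_t^2)^{-1}\min(1,\exp(-c'f_0tr_t^3\pi_2(\phi))))$; $\int_0^{r_t}du_3=r_t\int_0^1ds$ contributes a factor $r_t$ with the $ds$-integral retained; and the Jacobian contributes $r_t^3$. Counting powers of $t$ and $\log t$ using $tr_t^3\sim(2\alpha_1f_0)^{-1}\log t$ and the bound $\exp(-(2\alpha/3)f_0tr_t^3)=O(t^{-1/3}(\log t)^{1/3-k})$ of~(\ref{0528b}), the prefactor comes to $O(1)$, leaving a constant times $\int_0^1\int_{\R^3}\tilde h'(s,\bfy')\min(1,\exp(-c'f_0tr_t^3\pi_2(\phi(s,\bfy'))))\,d\bfy'\,ds$, which tends to $0$ by dominated convergence because $tr_t^3\to\infty$ while $\pi_2(\phi)>0$ on the bounded support of $\tilde h'$ apart from a null set. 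Combined with the end-cap estimate this gives the lemma. The main obstacle is getting the two parts of this last step airtight: (i) proving that the minimal-depth vacant point on $\bF$ really is a two-ball corner with the stated position constraints when $k\ge2$ (the ``$k$-covered'' region near $\bw$ is bounded by circular arcs arising from the $(k-1)$-fold-covered neighbouring grains, so the local geometry is more delicate than for $k=1$), and (ii) confirming that the corner conditions indeed force $\pi_2(\phi)>0$, which is what makes an otherwise merely-bounded estimate vanish. Both are handled as in the proofs of Lemma~\ref{lemhalf3a} and Lemma~\ref{lemwedge}, with no essentially new idea required.
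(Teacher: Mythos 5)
Your proposal is correct and follows essentially the same route as the paper's proof: a first-moment (Mecke) bound on the expected number of minimal-depth two-ball corners in the face $\bF$, combined with the wedge volume bound of Lemma \ref{wdglblem} at the corner (where $\pi_3=0$), the estimate (\ref{0528b}), rescaling by $r_t$, and dominated convergence driven by the constraint $\pi_2(\bw)>\pi_2(\bx)$, with the same $O(1)$ prefactor bookkeeping. The only deviation is your preliminary end-cap step via Lemma \ref{lem2meta}, which the paper does not make explicit (it takes a minimal-depth vacant point without separately policing the ends of $I$); this addition is harmless and, if anything, makes the corner-structure argument at the lateral boundary of the strip airtight.
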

\begin{proof}
Given $\bx,\bfy \in \R^3$, and $r>0$,
if $\#( \partial B(\bx,r) \cap \partial B(\bfy,r) \cap \bF)=2 $, 
 then denote the two points of
 $ \partial B(\bx,r) \cap \partial B(\bfy,r) \cap \bF $, 
taken in increasing lexicographic order, by  $\tp_{r}(\bx,\bfy)$   and
  $\tq_{r}(\bx,\bfy)$.  
Define the indicator functions 
\bean
\th_r^{(1)}(\bx,\bfy): = {\bf 1} \{ 
\pi_2(\bfy) \geq \pi_2(\bx) \}
% {~~ \rm and ~~ } 
{\bf 1} \{
\# ( \partial B(\bx,r) \cap \partial B(\bfy,r) \cap \bF ) =2 \}
\\
\times {\bf 1} 
\{ \pi_2(\tp_{r}(\bx,\bfy)) > \pi_2(\bx) \}
; \\
\th_r^{(2)}(\bx,\bfy): = {\bf 1} \{ 
\pi_2(\bfy) \geq \pi_2(\bx) \}
% {~~ \rm and ~~ } 
{\bf 1} \{
\# ( \partial B(\bx,r) \cap \partial B(\bfy,r) \cap \bF ) =2 \}
\\
\times {\bf 1} 
\{ \pi_2(\tq_{r}(\bx,\bfy)) > \pi_2(\bx) \}.
\eean
%If $\th_r(x,y) =1$,
%    Let $\th_r^{(1)} (x,y)$, 
%respectively
%$\th_r^{(2)} (x,y)$, 
% be the indicator of the statement that
%in addition to having $\th_r(x,y) =1$,   we have
%$\pi_1(\tp_{r}(x,y)) > \pi_1(x)$, respectively  
%$\pi_1(\tq_{r}(x,y)) > \pi_1(x)$.

Let $E_t$ here denote the (exceptional) event that there exist
two distinct points $\bx,\bfy \in \cW_{\alpha,t}$, such that $\partial B(\bx,r_t)
\cap \partial B(\bfy,r_t) \cap \bW_{\alpha,0} \neq \emptyset$.
Then $\Pr[E_t]=0$.

Suppose $F_t(\bW_{\alpha,0}, \cW_{\alpha,t}) \setminus 
F_t(\bF \cap \bW_{\alpha,a r_t},\cW_{\alpha,t}) $  
 occurs, and $E_t$ does not.
 Let %$V_t$ be the uncovered region  and let 
	$\bw$ be a location in 
	$\overline{V_t(\cW_{\alpha,t})} \cap \bF$ of minimal depth (i.e., minimal second coordinate).
Then $\bw$ must lie at the intersection of the boundaries of  balls of
radius $r_t$ centred on points $\bx,\bfy \in \cW_{\alpha,t}$,
 that is, 
at $\tp_{r_t}(\bx,\bfy)$ or at $\tq_{r_t}(\bx,\bfy)$. Moreover $\bw$
must be covered by at most $k-1$ other balls centred on points of
$\cW_{\alpha,t}$. Also $\pi_2(\bw) > 
\min(\pi_2(\bx),\pi_2(\bfy)) $; otherwise, for sufficiently small $\delta$ the 
location
	$\bw+ (0,-\delta,0)$ would be in $\overline{  V_t ( \cW_{\alpha,t}})
	\cap \bF$
 and have a smaller depth than $\bw$.
	Finally, to have $\bw \in \bW_{\alpha, ar_t}$ we need
	$\pi_2(\bw) \leq a r_t$ and hence $\min(\pi_2(\bx),\pi_2(\bfy)) \leq a$.
Therefore
\bea
\Pr[ F_t(\bW_{\alpha,0}, \cW_{\alpha,t}) \setminus 
F_t(\bF \cap \bW_{\alpha,a r_t},\cW_{\alpha,t})  ] \leq  
\Pr[N_t^{(1)} \geq 1 ] + 
\Pr[N_t^{(2)} \geq 1 ], 
\label{0524a}
\eea 
where, \textcolor{\blue}{with $\sum_{\bx,\bfy \in \cW_{\alpha,t}}^{\neq}$
	denoting summation over ordered pairs of distinct points
	of $\cW_{\alpha,t}$,} 
we set
$$
N_t^{(1)} := \sum_{\bx,\bfy \in \cW_{\alpha,t}}^{\neq}
\th_{r_t}^{(1)}(\bx,\bfy) {\bf 1}
 \{ \cW_{\alpha,t}(B(\tp_{r_t}(\bx,\bfy), r_t  ) )
 < k+2 \} {\bf 1}\{\pi_2(\bx) \leq ar_t\};
$$
$$
N_t^{(2)} := \sum_{\bx,\bfy \in \cW_{\alpha,t}}^{\neq}
\th_{r_t}^{(2)}(\bx,\bfy) {\bf 1} \{ \cW_{\alpha,t}(B(\tq_{r_t}(\bx,\bfy), r_t
 ) ) < k+2 \} {\bf 1}\{\pi_2(\bx) \leq ar_t\}.
$$
By the Mecke formula, and 
(\ref{wdglb}) from 
Lemma \ref{wdglblem},
with $c':= c'(\alpha,a+1)$ we have that
$\E[N_t^{(1)}]$
is bounded by a constant times
\bean
 t^2 (t r_t^3)^{k-1} \int_0^{a r_t}  du \int_0^{r_t} dv
 \int_{\R^3} d \bfy
\th^{(1)}_{r_t} ((0,u,v),\bfy)
\exp( - (2 \alpha/3) f_0t r_t^3 
\nonumber \\
- c' f_0 t r_t^2 \pi_{2} (\tp_{r_t}((0,u,v),\bfy) ) 
),
%\label{0523a}
\eean
and there is a similar bound for $\E[N_t^{(2)}]$,
 involving $\th_{r_t}^{(2)}$ and $\tq_{r_t}(\bx,\bfy)$.

Consider  $\E[ N_t^{(1)}]$ (we can treat $\E[N_t^{(2)} ]$ similarly),
and write $c''$ for $c'f_0$. By (\ref{0528b}),
%We now use whichever of (\ref{rt3a}) and
%(\ref{rt3a}) is applicable. There are constants $b,b'$ depending
%on $\alpha, \alpha_1$, with $(b,b')= (0,0) $ 
%if $\alpha = \alpha_1 \leq \pi/2$ and $b>0$ otherwise,
% such that
  $\E[N_t^{(1)}]$   is bounded by
a constant times
\bea
 t^2 (tr_t^3)^{k-1}  t^{-1/3}
(\log t)^{(1/3)-k}
\int_0^{a r_t}  du
\int_0^{r_t} dv
 \int_{\R^3} d\bfy
\th^{(1)}_{r_t} ((0,u,v),\bfy)
%\exp( - c t r_t^2 u )
\nonumber \\
\times  
\exp( - c'' t r_t^2 \pi_{2} (\tp_{r_t}((0,u,v),\bfy) ) ).
% -u ]) 
\label{0523b}
\eea
Set $\bu := (0,u,v)$ and $\bv:= (0,0,v)$. 
We shall now make the changes of variable 
 $u'= r_t^{-1} u$, $v'= r_t^{-1} v$ and 
$\bfy' = r_t^{-1} (\bfy - \bu)$. Also write
$\bu':= (0,u',v') = r_t^{-1} \bu$ and $\bv' := (0,0,v')$.
Then 
\bean
\tp_{r_t}(\bu,\bu + r_t \bfy') 
%= \tp_{r_t}(r_t \bu', r_t(\bu'+ y')) 
= r_t \tp_1(\bu',\bu' + \bfy') = r_t \tp_1(\bv',\bv' + \bfy') + r_t(0,u',0).
\eean
Also
\bean
\th_{r_t}^{(1)} (\bu,\bu + r_t \bfy') = 
%\th_{r_t}^{(1)} (r_t  \bu',r_t (\bu' + y')) = 
\th_{1}^{(1)} (\bu' ,\bu' + \bfy')  =
\th_{1}^{(1)} (\bv' ,\bv' + \bfy').  
\eean

By our changes of variable,
 the integral in (\ref{0523b})
comes to
\bean
r_t^5
\int_0^{a }  du'
\int_0^{1} dv'
 \int_{\R^3} d \bfy'
\th^{(1)}_{1} (\bv',\bv' +   \bfy')
\exp( - c'' t r_t^3 u'
 - c'' t r_t^3 \pi_{2} (\tp_{1}(\bv',\bv' + \bfy') ) ). 
\eean
Therefore since $\int_0^a e^{- c'' t r_t^3 u'} du' = O((tr_t^3)^{-1})$, 
\textcolor{\blue}{and $tr_t^3 = O(\log t)$ by (\ref{rt3a}),}
the expression in (\ref{0523b}) is bounded by a constant times
\bean
 t^{5/3}
 (\log t)^{-2/3} 
r_t^5 (t r_t^3)^{-1} 
 \int_0^1 dv'
\int_{\R^3}  d\bfy' \th^{(1)}_{1} (\bv',\bv' +\bfy') 
\exp(  -
 c'' tr_t^3   \pi_{2} (\tp_{1}(\bv',\bv' + \bfy') ) ) . 
\eean
For each $v' \in [0,1]$ the function
$\bfy' \mapsto \th_1^{(1)} (\bv',\bv' + \bfy')$
has bounded support and is zero whenever $\pi_2(\tp_1(\bv',\bv' + \bfy')) 
\leq 0$
(because $\pi_2(\bv')=0$). 
Therefore 
in the last displayed expression the integral tends to zero by dominated 
convergence, while the prefactor tends to a finite constant by (\ref{rt3a}). 

Thus  $\E[N_t^{(1)}] \to 0$, and one can show similarly that 
  $\E[N_t^{(2)}] \to 0$. Hence by Markov's inequality,
the expression  on the left hand side
of (\ref{0524a}) tends to zero. 
\qed
\end{proof}

\textcolor{\blue}{The next lemma enables us to
reduce the limiting behaviour of the
probability that a region near a given edge of
$A$ is covered, to that of the corresponding probability
for the edge itself.}

\begin{lemm}
	\label{corowedge}
	Let $a_0 >1$. If $\alpha \in [\alpha_1,\pi/2]$, then
	$\Pr[F_t(\bW_{\alpha,0},\cW_{\alpha,t})
	\setminus F_t(\bW_{\alpha, a_0 r_t},\cW_{\alpha,t})] \to 0$
	as $t \to \infty$.
	Moreover, if $\alpha \in (\pi/2,2\pi)$ then 
	$\Pr[F_t(\bW_{\alpha,a_0 r_t}, \cW_{\alpha,t})] \to 1$
	as $t \to \infty$.
\end{lemm}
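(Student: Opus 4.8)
The plan is to deduce both statements from results already established for the wedge $\bW_\alpha$ in this subsection, namely Lemmas~\ref{lemrind}, \ref{lemwedge} and \ref{wdglblem}, together with the general upper bound Lemma~\ref{lem2meta}.

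\textbf{First statement ($\alpha\in[\alpha_1,\pi/2]$).} Here $\bW_{\alpha,0}$ is the edge segment $I\times\{o\}$, and since $\alpha<\pi$ the boundary $\partial\bW_\alpha$ is the union of two half-planes: one is $\bF\cap\bW_\alpha$ (with $\bF=\R^2\times\{0\}$ as in Lemma~\ref{lemrind}), and the other, call it $\bF'$, is its image under the reflection $\sigma$ of $\R^3$ in the plane spanned by the edge and the internal bisector of the wedge. I would first apply Lemma~\ref{lemrind} with $a=a_0$ to get $\Pr[F_t(\bW_{\alpha,0},\cW_{\alpha,t})\setminus F_t(\bF\cap\bW_{\alpha,a_0r_t},\cW_{\alpha,t})]\to0$. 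Since $\sigma$ is an isometry that maps $\bW_\alpha$ onto itself, maps $\bW_{\alpha,b}$ onto itself for every $b\ge0$ (hence fixes $\bW_{\alpha,0}$ and the family of radius-$r_t$ balls), carries $\bF\cap\bW_\alpha$ onto $\bF'$, and leaves the law of the homogeneous Poisson process $\cW_{\alpha,t}$ invariant, the same bound holds with $\bF\cap\bW_{\alpha,a_0r_t}$ replaced by $\bF'\cap\bW_{\alpha,a_0r_t}$. Next I would invoke Lemma~\ref{lemwedge} with $a=a_0$, giving $\Pr[F_t((\partial\bW_\alpha)\cap\bW_{\alpha,a_0r_t},\cW_{\alpha,t})\setminus F_t(\bW_{\alpha,a_0r_t},\cW_{\alpha,t})]\to0$. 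Because $(\partial\bW_\alpha)\cap\bW_{\alpha,a_0r_t}$ is the union of $\bF\cap\bW_{\alpha,a_0r_t}$ and $\bF'\cap\bW_{\alpha,a_0r_t}$, the event $F_t((\partial\bW_\alpha)\cap\bW_{\alpha,a_0r_t},\cW_{\alpha,t})$ equals the intersection of $F_t(\bF\cap\bW_{\alpha,a_0r_t},\cW_{\alpha,t})$ and $F_t(\bF'\cap\bW_{\alpha,a_0r_t},\cW_{\alpha,t})$, so a routine inclusion shows that $F_t(\bW_{\alpha,0},\cW_{\alpha,t})\setminus F_t(\bW_{\alpha,a_0r_t},\cW_{\alpha,t})$ is contained in the union of the three events estimated above, and the first statement follows by the union bound.

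\textbf{Second statement ($\alpha\in(\pi/2,2\pi)$).} Here I would appeal to Lemma~\ref{lem2meta} directly, taking $d=3$, $W_t=\bW_{\alpha,a_0r_t}$, $\mu_t(\cdot)=f_0|\cdot\cap\bW_\alpha|$ and $\cR_t=\cW_{\alpha,t}$. The hypotheses of that lemma have to be checked. First, $\bW_{\alpha,a_0r_t}$ lies within distance $a_0r_t$ of the bounded segment $I\times\{o\}$, so a thickening argument like the one in the proof of Lemma~\ref{lemlimsupb} (around~(\ref{0320b2})) gives $\kappa(\bW_{\alpha,a_0r_t},r_t)=O(r_t^{-1})$, i.e.\ one may take $b=1$. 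Second, there is a constant $c_\alpha>0$, depending only on $\alpha$, with $|B(\bx,s)\cap\bW_\alpha|\ge c_\alpha s^3$ for all $\bx\in\bW_\alpha$ and $s>0$: when $\pi/2<\alpha<\pi$ this follows from Lemma~\ref{wdglblem} after translating $\bx$ along the edge so that it lies over the apex, with $c_\alpha=2\alpha/3$; when $\pi\le\alpha<2\pi$, every point of $\bW_\alpha$ lies in some half-space contained in $\bW_\alpha$, which gives $c_\alpha=2\pi/3$. Third, by~(\ref{rt3a}), $tr_t^3\sim c\log t$ with $c=\bigl((\pi\wedge2\alpha_1)f_0\bigr)^{-1}$. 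Since $\alpha>\pi/2$, in both cases $c_\alpha>\pi/3\ge\tfrac13(\pi\wedge2\alpha_1)$, so with $a$ taken just below $f_0c_\alpha$ one has $ac>1/3=b/d$; choosing $\eps$ small enough that $(b/d)-ac+\eps<0$, Lemma~\ref{lem2meta} yields $\Pr[F_t(\bW_{\alpha,a_0r_t},\cW_{\alpha,t})^c]=O(t^{(b/d)-ac+\eps})\to0$, which is the claim.

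The main obstacle is the geometric estimate in the second part, together with the strict inequality $c_\alpha>\tfrac13(\pi\wedge2\alpha_1)$: this is exactly where the hypothesis $\alpha>\pi/2$ enters (in conjunction with $\alpha\ge\alpha_1$, or with $\alpha_1>\pi/2$), and it is what forces the exponent in Lemma~\ref{lem2meta} to be negative, so that the vacancy probability decays polynomially. Everything else --- the reflection-symmetry reduction in the first statement and the covering-number bound in the second --- is routine bookkeeping with the coverage events $F_t(\cdot,\cdot)$.
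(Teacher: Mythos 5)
Your proposal is correct and follows essentially the same route as the paper: the case $\alpha\in[\alpha_1,\pi/2]$ is handled by combining Lemma \ref{lemrind} (with the reflection symmetry of the wedge supplying the second boundary face, which the paper leaves implicit) with Lemma \ref{lemwedge}, and the case $\alpha\in(\pi/2,2\pi)$ by Lemma \ref{lem2meta} with $b=1$. Your volume constant for the reflex range $\pi\le\alpha<2\pi$ (taking $c_\alpha=2\pi/3$ via an inscribed half-space rather than $2\alpha/3$) and your value $c=1/(f_0(\pi\wedge2\alpha_1))$ are in fact slightly more careful than the constants quoted in the paper's proof, though, as you note, only the inequality $ac>1/3$ matters, so the conclusion is the same.
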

\begin{proof}
	First suppose $\alpha \leq \pi/2$.
	It follows from Lemma \ref{lemrind} that 
	$\Pr[F_t(\bW_{\alpha,0},\cW_{\alpha,t})
	\setminus F_t( (\partial \bW_\alpha) \cap
	\bW_{\alpha, a_0 r_t},\cW_{\alpha,t})] \to 0$. Combined
	with Lemma \ref{lemwedge}, this shows that
	\linebreak
	$\Pr[F_t(\bW_{\alpha,0},\cW_{\alpha,t})
	\setminus F_t(\bW_{\alpha, a_0 r_t},\cW_{\alpha,t})] \to 0$

	Now suppose $\alpha > \pi/2$.
	\textcolor{\blue}{In this case
	some of the geometrical arguments
	used in proving Lemmas \ref{lemwedge} and \ref{lemrind} do not
	work. Instead we can use Lemma \ref{lem2meta},
	taking $\mu_t = f_0 | \cdot \cap \bW_{\alpha}|$, 
	taking $W_t $ of that result to be $ \bW_{\alpha, a_0 r_t}$,
	with $a = 2f_0 \alpha/3$ and $b=1$. By (\ref{rt3a}),
	we have $tr_t^d \sim c \log t$ with $c = 1/(f_0 \pi)$.
	Hence $a c  = 2 \alpha / (3 \pi) > 1/3 = b/d$,
	so application of Lemma \ref{lem2meta} yields the claim.} 
	\qed
\end{proof}

Using Lemma \ref{lemangle}, choose $K >4$ such that 
for any edge or face $\varphi$ of $A$, and any other face $\varphi'$
of $A$ with $\varphi \setminus \varphi' \neq \emptyset$,
and all $x \in \varphi^o$,
we have $\dist(x, \partial \varphi) \leq (K/3) \dist (x, \varphi')$.
 Denote the vertices of $ A$ by $q_1,\ldots,q_\nu$,
and the faces of $A$ by $H_1,\ldots,H_m$.
 Recall that we denote the edges of $A$ by $e_1,\ldots,e_\kappa$
 with the angle subtended by $A$ at $e_i$ denoted $\alpha_i$ for each $i$.
For $ 1 \leq i \leq \kappa$,
denote the length of 
 the %1-dimensional 
 edge  $e_i$ by $|e_i|$.

  Define the `corner regions' 
\bea
Q_t := \cup_{i=1}^\nu B(q_i,K(K+7)r_t) \cap A; ~~~~~
Q_t^+ := \cup_{i=1}^\nu B(q_i,K (K+9)r_t) \cap A.
\label{eqcorner}
\eea
\begin{lemm} 
\label{lemcornpol3}
It is the case that
  $\Pr[F_t(Q_t^+,\Po_t) ] \to 1$ as $t \to \infty$.
\end{lemm}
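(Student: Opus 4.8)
The plan is to mimic the proof of Lemma \ref{lemQ} (the polygonal analogue), applying the general upper bound Lemma \ref{lem2meta} to the corner region. Recall that $\Po_t$ is the Poisson process with intensity measure $t\mu_t$, where $\mu_t(\cdot):=f_0|\cdot\cap A|$ does not depend on $t$, and that (\ref{rt3a}) forces $tr_t^3\sim c\log t$ with $c:=((\pi\wedge 2\alpha_1)f_0)^{-1}\in(0,\infty)$; thus the hypothesis of Lemma \ref{lem2meta} on $(r_t)_{t>t_0}$ is met. I shall take $W_t:=Q_t^+$ there.

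First I would check the covering-number bound $\kappa(Q_t^+,r_t)=O(1)$ as $t\to\infty$. Since $Q_t^+$ is contained in the union of the $\nu$ balls $B(q_i,K(K+9)r_t)$, each of which can be covered by $c_0:=\kappa(B(o,2K(K+9)),1)$ balls of radius $r_t/2$ by scaling, the same relocation-of-centres argument as in the derivation of (\ref{0320b2}) shows that $Q_t^+$ is covered by at most $\nu c_0$ balls of radius $r_t$ centred in $Q_t^+$. Hence one may take $b=0$ in Lemma \ref{lem2meta}.

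Next I would establish the measure lower bound: there exist $a>0$ and $t_0$ such that $\mu_t(B(x,s))=f_0|B(x,s)\cap A|\ge a s^3$ for all $t\ge t_0$, all $x\in Q_t^+\subset A$ and all $s\in[r_t/2,r_t]$. Here the geometry of the polyhedron enters. Because $A$ is a compact finite polyhedron it obeys an interior cone (Lipschitz) condition: there are $\eta>0$ and $s_1>0$ with $|B(x,s)\cap A|\ge\eta s^3$ for every $x\in A$ and every $s\in(0,s_1]$. One way to see this is that every $x\in A$ has a neighbourhood in which $A$ coincides with $x'+\cK_\varphi$ for some face $\varphi$ and some $x'\in\varphi$, and there are only finitely many cone-shapes $\cK_\varphi$, each of strictly positive angular volume $\rho_\varphi$, so a compactness argument produces a uniform $\eta$ (this is precisely the fact used implicitly in the proof of Lemma \ref{lemQ}). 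Taking $a:=f_0\eta$, slightly reduced to obtain strict inequality, and $t_0$ large enough that $r_t<s_1$ for $t\ge t_0$, gives the required bound.

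With these two inputs, Lemma \ref{lem2meta} applied with $W_t=Q_t^+$, the measure $\mu_t$ above, $b=0$ and this value of $a$ yields, for any $\eps>0$, $\Pr[(F_t(Q_t^+,\Po_t))^c]=O(t^{-ac+\eps})$; choosing $\eps\in(0,ac)$ this is $O(t^{-\delta})$ with $\delta:=ac-\eps>0$, so $\Pr[F_t(Q_t^+,\Po_t)]\to 1$. The only step requiring genuine care is the uniform-in-$x$ lower bound on $|B(x,s)\cap A|$ near the vertices, i.e.\ making precise that the solid angle of $A$ subtended from any point close to a sharp corner is bounded below by a positive constant; the remainder is bookkeeping identical to the polygonal case.
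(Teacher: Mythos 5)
Your proof is correct and follows the same route as the paper: the paper's own proof is a one-line invocation of Lemma~\ref{lem2meta} with $W_t = Q_t^+$, $\mu_t(\cdot) = f_0|\cdot\cap A|$ and $b=0$, referring back to Lemma~\ref{lemQ}. You have supplied exactly the details that the paper leaves implicit, namely the $O(1)$ covering-number bound for the finite union of balls $Q_t^+$ and the uniform interior-cone lower bound $f_0|B(x,s)\cap A|\ge a s^3$ valid for a compact finite polytope.
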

\begin{proof}
	\textcolor{\blue}{
	As in the proof of Lemma \ref{lemQ},
		we can apply Lemma \ref{lem2meta}, taking $W_t= Q_t^+$,
	and $\mu_t= f_0 | \cdot \cap A|$, with $b=0$}.
\qed
\end{proof}
\textcolor{\blue}{We now determine the limiting
probability of coverage for any edge of $A$.}

\begin{lemm}
\label{lemportions}
Let $i \in \{1,\ldots,\kappa\}$. 
If  $\alpha_i= \alpha_1 \leq \pi/2$ then
 \bean
\lim_{t \to \infty}( \Pr[F_t(e_i\setminus Q_t,\Po_t) ] ) 
= \exp ( - (|e_i|/(k-1)!)  ( \alpha_1/32)^{1/3}3^{1-k}  e^{-  \beta/3} ),
\eean
while if $\alpha_i > \min(\alpha_1,\pi/2)$ then
 $\lim_{t \to \infty} (\Pr[F_t(e_i \setminus Q_t, \Po_t) ] ) = 1$.
\end{lemm}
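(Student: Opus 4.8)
The plan is to treat the two cases of Lemma \ref{lemportions} separately, reducing the first (the one with a nontrivial limit) to a one-dimensional spherical Poisson Boolean model via Lemma \ref{lemHall} with $d=1$, and handling the second via Lemma \ref{lem2meta}. For the case $\alpha_i=\alpha_1\le\pi/2$, first I would localise near the edge. For large $t$ the event $F_t(e_i\setminus Q_t,\Po_t)$ depends only on the points of $\Po_t$ within distance $r_t$ of $e_i\setminus Q_t$, and by the choice of $K$ (via Lemma \ref{lemangle}), which guarantees $\dist(x,\varphi')\ge(3/K)\dist(x,\partial e_i)$ for $x\in e_i^o$ and any face $\varphi'$ of $A$ not containing $e_i$, together with $Q_t\supset\cup_j B(q_j,K(K+7)r_t)\cap A$ from (\ref{eqcorner}), for such $x$ the ball $B(x,r_t)$ meets no face of $A$ other than the two faces meeting at $e_i$. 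Hence, applying a rigid motion carrying $e_i^o$ into the first coordinate axis and $A$ locally onto the wedge $\bW_{\alpha_i}$, one gets $\Pr[F_t(e_i\setminus Q_t,\Po_t)]=\Pr[F_t(\tilde I_t,\cW_{\alpha_i,t})]$, where $\tilde I_t$ is the subinterval of the axis corresponding to $e_i\setminus Q_t$, with $|\tilde I_t|\to|e_i|$ and $\tilde I_t\uparrow e_i^o$.

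Next I would pass to one dimension. A point of $\cW_{\alpha_i,t}$ at distance $\rho<r_t$ from the axis covers the sub-interval of half-length $\sqrt{r_t^2-\rho^2}$ about its projection, and a point of the axis is covered $k$ times by the three-dimensional balls iff it is covered $k$ times by these sub-intervals. Computing the push-forward of the Poisson intensity measure (in coordinates position $\times$ half-length), the induced marked process on the axis is a one-dimensional SPBM with homogeneous centre-intensity $\lambda:=\alpha_i f_0 t r_t^2/2$ and radii $r_t Y$, where $Y=(1-V^2)^{1/2}$ with $V$ of density $2v$ on $(0,1)$; one checks that $Y$ itself has density $2y$ on $(0,1)$, so $\E[Y]=2/3$ and $\E[Y^0]=1$. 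I would then apply Lemma \ref{lemHall} with $d=1$: here $\theta_1\E[Y]=4/3$, so the quantity in (\ref{0315c}) is $(4/3)r_t\lambda-\log\lambda-(k-1)\lglg\lambda$ with $(4/3)r_t\lambda=(2\alpha_i f_0 t r_t^3)/3$, which by (\ref{rt3a}) (using $\pi\wedge 2\alpha_1=2\alpha_1$ here) equals $\tfrac13(\log(tf_0)+\beta+(3k-1)\lglg t)+o(1)$. The point requiring care is that $\lambda\sim c\,t^{1/3}(\log t)^{2/3}$, so $\log\lambda=\tfrac13\log t+\tfrac23\lglg t+O(1)$ and hence $\lglg\lambda=\lglg t-\log 3+o(1)$; it is this shift by $-\log 3$ that produces the factor $3^{1-k}$. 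Carrying the substitution through — the $\log t$ and $\lglg t$ coefficients cancel, as they must — (\ref{0315c}) holds with limiting constant $\tfrac{\beta}{3}-\tfrac13\log\alpha_1+\tfrac53\log 2+(k-1)\log 3$, and since the prefactor $c_1(\E[Y^0])^1/((k-1)!\,(\E[Y^1])^0)$ in Lemma \ref{lemHall} equals $1/(k-1)!$ and $|B|=|e_i|$, this gives the stated limit $\exp(-(|e_i|/(k-1)!)(\alpha_1/32)^{1/3}3^{1-k}e^{-\beta/3})$ (using $2^{-5/3}=32^{-1/3}$).

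For the case $\alpha_i>\min(\alpha_1,\pi/2)$ the coverage probability should tend to $1$, and here I would invoke Lemma \ref{lem2meta} directly with $W_t=e_i\setminus Q_t$, $\mu_t(\cdot)=f_0|\cdot\cap A|$, $b=1$ and $d=3$. Then $\kappa(W_t,r_t)=O(r_t^{-1})$; and, by the localisation above, for $x\in e_i\setminus Q_t$ and $s\le r_t$ we have $B(x,s)\cap A=B(x,s)\cap\bW_{\alpha_i}$, of volume $(2\alpha_i/3)s^3$, so $\mu_t(B(x,s))\ge a s^3$ for any $a<2\alpha_i f_0/3$. By (\ref{rt3a}), $tr_t^3/\log t\to c$ with $c=1/(2\alpha_1 f_0)$ if $\alpha_1\le\pi/2$ and $c=1/(\pi f_0)$ if $\alpha_1>\pi/2$; in either sub-case $\alpha_i>\min(\alpha_1,\pi/2)$ forces $ac>1/3=b/d$ once $a$ is chosen close enough to $2\alpha_i f_0/3$. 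Lemma \ref{lem2meta} then yields $\Pr[(F_t(W_t,\Po_t))^c]=O(t^{(b/d)-ac+\eps})=O(t^{-\delta})$ for some $\delta>0$, so $\Pr[F_t(e_i\setminus Q_t,\Po_t)]\to1$.

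The main obstacle is the bookkeeping in the first case: identifying the reduced one-dimensional SPBM correctly (the push-forward computation and the exact moments of $Y$), and above all tracking the iterated logarithms through the substitution $\lambda=\lambda(t)$ — it is exactly the discrepancy $\lglg\lambda=\lglg t-\log 3+o(1)$, which would be absent if $\lambda$ grew linearly in $t$, that accounts for the $3^{1-k}$ (in the same way the analogous shift by $-\log 2$ underlies the $2^{1-k}$ in $c_{2,k}$ obtained via Lemma \ref{lemhalf3a}). The localisation near the edge, given the choice of $K$, is routine, as is the verification that $\tilde I_t$ is a single interval of length tending to $|e_i|$.
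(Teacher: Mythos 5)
Your proposal is correct and, for the main case $\alpha_i=\alpha_1\le\pi/2$, it is essentially the paper's own argument: the paper likewise views the ball slices meeting $e_i$ as a one-dimensional SPBM with $\lambda=f_0(\alpha_i/2)tr_t^2$, $\delta=r_t$, $\alpha=4/3$, and applies Lemma \ref{lemHall} with $d=1$, with exactly your bookkeeping (including the shift $\lglg\lambda=\lglg t-\log 3+o(1)$ that produces the $3^{1-k}$), your localisation via the choice of $K$ from Lemma \ref{lemangle} being spelled out rather than left implicit. The only departure is the second case: you invoke Lemma \ref{lem2meta} with $b=1$ and $a$ close to $2\alpha_i f_0/3$, whereas the paper simply observes that in the same Lemma \ref{lemHall} computation the quantity $\alpha\delta\lambda-\log\lambda-(k-1)\lglg\lambda$ tends to $+\infty$ under (\ref{rt3a}) and concludes the coverage probability tends to one; both routes are valid.
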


\begin{proof}
The portions of balls
$B(x,r_t), x \in \Po_t$ that intersect the edge $e_i$
  form a spherical Poisson Boolean model
in 1 dimension with (in the notation of Lemma \ref{lemHall})
$$
\lambda = f_0 (\alpha_i/2) t  r_t^2; ~~~ \delta = r_t; ~~~ 
\alpha = \frac{\theta_3 \alpha_i/(2 \pi)}{(\alpha_i/(2\pi)) \pi} = 4/3.
$$

By   (\ref{rt3a}), as $t \to \infty$ we have
$
t f_0 r_t^2 = 
 (2 \alpha_1 \wedge \pi )^{-2/3} (t f_0)^{1/3} (\log t)^{2/3}(1+o(1)),
$ 
so that
\bean
\log \lambda = (1/3)
\log( \alpha_i^3/(8(2\alpha_1 \wedge \pi)^2) ) + (1/3) \log (t f_0) + (2/3) \lglg t + o(1),
\eean
and $\lglg \lambda = \lglg t - \log 3 + o(1)$.
 Therefore
\bean
\alpha \delta \lambda - \log \lambda 
- (k-1) \lglg \lambda =
(2/3) f_0 \alpha_i t r_t^3 
- (1/3) \log ( \alpha_i^3/(8(2 \alpha_1 \wedge \pi)^2)) 
\\
- (1/3) \log ( t f_0) - (2/3) \lglg t - (k-1) \lglg t + (k-1) \log 3 + o(1), 
\eean
so that by (\ref{rt3a}) again, 
\bean
\lim_{t \to \infty} ( \alpha \delta \lambda - \log \lambda - (k-1)
 \lglg \lambda ) 
~~~~~~~~~~~~~~~~~~~~~~~~~~~~~~~~~~~~~~~
\\
=
\begin{cases} 
\beta/3 - (1/3) \log ( \alpha _1/32) + (k-1) \log 3 & \mbox{ if }
\alpha_i = \alpha_1 \leq \pi/2
\\
+ \infty  & \mbox{ otherwise. }
\end{cases}
\eean
We then obtain the stated results by application of Lemma \ref{lemHall}.
\qed
\end{proof}

Recall that $H_1,\ldots,H_m$ are the faces of $\partial A$.
For $t >0$ we define the `edge regions'
\bea
W_t &:=  &\cup_{i=1}^\kappa (e_i \oplus B(\bfo,(K+1)r_t) )\cap A ;
~~~~~
W_t^- := \cup_{i=1}^\kappa (e_i \oplus B(\bfo,Kr_t) ) \cap A;
\nonumber
\\
W_t^+  & :=  &\cup_{i=1}^\kappa (e_i \oplus B(\bfo,(K+4)r_t) ) \cap A.
\label{eqedge}
\eea
The next lemma provides the limiting probability that
the `interior regions' of all of the faces of $A$ are covered.
\textcolor{\blue}{Recall that
$|\partial_1 A|$ denotes the total area of all faces of $A$.}

\begin{lemm}
\label{lemintface}
Define event
 $G_t:= F_t(\partial A \setminus W_t^+,\Po_t).$
	It is the case that
\bea
\lim_{t \to \infty} (
\Pr[G_t] ) = \begin{cases}
	\exp \left(- c_{3,k}    |\partial_1 A| 
	e^{- 2 \beta/3} \right)  & \mbox{\rm if } 
\alpha_1 > \pi/2,   ~
 \mbox{\rm or } \alpha_1 = \pi/2, k=1
\\
1 & \mbox{\rm otherwise. }
\end{cases}
\label{0412a}
\eea
\end{lemm}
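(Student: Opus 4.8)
The plan is to write the reduced boundary as the disjoint union of the \emph{reduced faces}, show these are far enough apart that their coverage events are independent, compute the limiting coverage probability of each reduced face by reducing to the half-space estimate of Lemma~\ref{lemhalf3}, and then multiply the limits. Recall that $W_t^+ = \cup_i(e_i\oplus B(\bfo,(K+4)r_t))\cap A$ and $W_t^-=\cup_i(e_i\oplus B(\bfo,Kr_t))\cap A$ from (\ref{eqedge}), where $K>4$ is the constant (fixed just before Lemma~\ref{lemcornpol3}) with $\dist(x,\partial\varphi)\le (K/3)\dist(x,\varphi')$ for any face $\varphi$ and any other face $\varphi'$ with $\varphi\setminus\varphi'\ne\emptyset$ and any $x$ in the relative interior of $\varphi$. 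Since $\partial A=\cup_{j=1}^m H_j$ we have $\partial A\setminus W_t^+=\cup_{j=1}^m(H_j\setminus W_t^+)$, and $G_t=F_t(\partial A\setminus W_t^+,\Po_t)=\cap_{j=1}^m F_t(H_j\setminus W_t^+,\Po_t)$.

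First I would establish independence. If $x\in H_j\setminus W_t^+$ then $\dist(x,e_i)>(K+4)r_t$ for every edge $e_i$, so $\dist(x,\partial H_j)>(K+4)r_t$ (hence $x$ lies in the relative interior of $H_j$); applying the defining property of $K$ to the pair $(H_j,H_{j'})$ for $j'\ne j$ (which satisfies $H_j\setminus H_{j'}\ne\emptyset$) gives $\dist(x,H_{j'})\ge(3/K)\dist(x,\partial H_j)>3r_t$. Thus for $j\ne j'$ the sets $(H_j\setminus W_t^+)$ and $(H_{j'}\setminus W_t^+)$ are at Euclidean distance $>3r_t$, so their $r_t$-neighbourhoods are disjoint; since $F_t(H_j\setminus W_t^+,\Po_t)$ depends only on $\Po_t$ restricted to the $r_t$-neighbourhood of $H_j\setminus W_t^+$, the events $F_t(H_1\setminus W_t^+,\Po_t),\dots,F_t(H_m\setminus W_t^+,\Po_t)$ are mutually independent and $\Pr[G_t]=\prod_{j=1}^m\Pr[F_t(H_j\setminus W_t^+,\Po_t)]$.

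Next I would compute $\lim_{t\to\infty}\Pr[F_t(H_j\setminus W_t^+,\Po_t)]$ for each fixed $j$. The same distance estimates show that for $x\in H_j\setminus W_t^-$ and $t$ large, $B(x,r_t)$ meets $\partial A$ only in the relative interior of $H_j$, so $B(x,r_t)\cap A=B(x,r_t)\cap\bH_j$, where $\bH_j$ is the closed half-space bounded by $\mathrm{aff}(H_j)$ on the side of $A$ (such a half-space exists because near the relative interior of a face the cone $\cK_{H_j}$ of Section~\ref{secLLN} is a half-space). Hence, after a rigid motion $\rho_j$ of $\R^3$ carrying $\bH_j$ onto $\bH$ and $H_j$ onto a closed Riemann-measurable polygon $\Omega_j\subset\R^2\times\{0\}$, the coverage of any subset $D\subseteq H_j\setminus W_t^-$ by balls of radius $r_t$ about $\Po_t$ has the same law as $F_t(\rho_j(D),\cU_t)$ for a homogeneous Poisson process $\cU_t$ of intensity $tf_0$ in $\bH$. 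Fixing a compact set $\Omega'$ in the relative interior of $H_j$ (so $\Omega'\subseteq H_j\setminus W_t^-$ for $t$ large) and using monotonicity (covering a larger set is harder), one has, for $t$ large,
\[
\Pr[F_t(\Omega_j,\cU_t)]\ \le\ \Pr[F_t(H_j\setminus W_t^+,\Po_t)]\ \le\ \Pr[F_t(\rho_j(\Omega'),\cU_t)],
\]
since $\Omega'\subseteq H_j\setminus W_t^+\subseteq H_j\setminus W_t^-$ and $\rho_j(H_j\setminus W_t^-)\subseteq\Omega_j$. By (\ref{0517c}) the outer bounds converge to $\exp(-c_{3,k}|H_j|e^{-2\beta/3})$ and $\exp(-c_{3,k}|\Omega'|e^{-2\beta/3})$ respectively (or both to $1$); letting $|\Omega'|\uparrow|H_j|$ gives $\lim_t\Pr[F_t(H_j\setminus W_t^+,\Po_t)]=\exp(-c_{3,k}|H_j|e^{-2\beta/3})$ when $\alpha_1>\pi/2$ or $(\alpha_1=\pi/2,k=1)$, and $=1$ otherwise. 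Multiplying over $j$ and using $\sum_j|H_j|=|\partial_1 A|$ then yields (\ref{0412a}). The main obstacle is the localisation step: making rigorous that, near a reduced face and for all large $t$ simultaneously, $\Po_t$ is indistinguishable from a half-space Poisson process, so that Lemma~\ref{lemhalf3} applies — this is exactly where the constant $K$ and the angle bound of Lemma~\ref{lemangle} are used, in the same spirit as the proof of Lemma~\ref{LemRpp} but one dimension higher.
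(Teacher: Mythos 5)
Your proposal is correct and follows essentially the same route as the paper: mutual independence of the per-face coverage events obtained from the choice of $K$ (via Lemma \ref{lemangle}), then reduction of each reduced face to the half-space estimate (\ref{0517c}) of Lemma \ref{lemhalf3} by a rigid motion, and finally the product over faces using $\sum_j |H_j| = |\partial_1 A|$. Your explicit sandwich between a fixed compact $\Omega'$ in the relative interior and $\Omega_j = \rho_j(H_j)$ merely spells out the handling of the $t$-dependent sets $H_j \setminus W_t^+$, which the paper absorbs into ``an obvious rotation'' (and the monotone-family formulation underlying Lemma \ref{lemHall}), so it is a welcome but not essentially different refinement.
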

\begin{proof}
We claim that
 the events $F_t(H_1 \setminus W_t^+ ,\Po_t),\ldots,F_t(H_m 
\setminus W_t^+, \Po_t)$ are
	mutually independent. Indeed, for $i,j \in \{1,\ldots,m\}$ with
	$i \neq j$, if $x \in H_i \setminus
	W_t^+$ then $\dist(x, \partial H_i) \geq Kr_t$ so by our choice of $K$, 
	$\dist(x, H_j) \geq 3 r_t$, so the $r_t$-neighbourhoods
	of $H_1 \setminus W_t^+, \ldots,$ $ H_m \setminus W_t^+$ are 
	disjoint,
	and the independence follows.
	Therefore
$$
\Pr[ G_t ] = \prod_{i=1}^m \Pr[ F_t(H_i \setminus W_t^+, \Po_t) ].
$$
By (\ref{0517c}) from
 Lemma \ref{lemhalf3}
and an obvious rotation, for $1 \leq i \leq m$ we have
%that if (\ref{rt3}) holds  then 
\bean
\lim_{t \to \infty} (\Pr[ F_t(H_i \setminus W_t^+,\Po_t) ] ) 
= \begin{cases}
	\exp \left( - c_{3,k}  |H_i| e^{-2 \beta/3} 
  \right) & \mbox{ if } \alpha_1 > \pi/2, ~ 
\\ & \mbox{ or } 
 \alpha_1 = \pi/2, k=1
\\
1 & \mbox{ otherwise, }
\end{cases}
\eean
and the result follows. 
\qed
\end{proof}

Next we estimate the probability that there is an uncovered region 
  near to a  face of $A$ but
 not touching that face, and not close to any edge of $A$.

\begin{lemm}
\label{lemEt2}
Define event
	$E_t^{(2)} = F_t(\partial A \cup W^+_t ,\Po_t) \setminus
F_t(A \setminus A^{(3r_t)}, \Po_t)$. 
	Then $\Pr[E_t^{(2)}] \to 0$ as $t \to \infty$.
\end{lemm}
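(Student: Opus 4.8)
The plan is to show that, on $E_t^{(2)}$, every point of $A\setminus A^{(3r_t)}$ that is \emph{not} covered $k$ times must lie in a thin slab lying over the relative interior of one of the faces $H_1,\dots,H_m$ of $A$, and then to read off from (\ref{0517b}) in Lemma \ref{lemhalf3} that such slabs are covered with probability tending to $1$. Indeed, on $E_t^{(2)}$ the event $F_t(\partial A\cup W^+_t,\Po_t)$ holds, so every point of $W^+_t$ and every point of $\partial A\supseteq H_j$ is covered $k$ times; hence if some $x\in A\setminus A^{(3r_t)}$ is uncovered it can be neither in $W^+_t$ nor in $\partial A$, and the task is to place such an $x$ in a controllable region near the interior of a face.

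Concretely, fix $c:=K-2$ (so $c>2$, since $K>4$), and for each face $H_j$ and each $t>0$ put $H_{j,t}:=\{w\in H_j:\dist(w,\partial H_j)\ge cr_t\}$, the \emph{reduced face}; let $\phi_j$ be a rigid motion of $\R^3$ carrying $H_j$ into $\R^2\times\{0\}$ and carrying $A$, near $H_j^o$, into $\bH$, write $\pi_{12}(\bx):=(\pi_1(\bx),\pi_2(\bx))$, set $\Omega_{t,j}:=\pi_{12}(\phi_j(H_{j,t}))$ and $\Omega_j:=\pi_{12}(\phi_j(H_j))$ (polygons, hence closed and Riemann measurable, with $\Omega_{t,j}\subset\Omega_j^o$), and define the slab $S_{t,j}:=\phi_j^{-1}(\Omega_{t,j}\times[0,3r_t])$. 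The key geometric inclusion, valid for all large $t$, is
$$A\setminus A^{(3r_t)}\ \subseteq\ W^+_t\ \cup\ \bigcup_{j=1}^{m}S_{t,j}.$$
To see this, take $x\in A\setminus A^{(3r_t)}$ and a nearest boundary point $z\in\partial A$, so $\|x-z\|\le 3r_t$. If $\dist(z,e_i)<(c+3)r_t$ for some edge $e_i$ then $\dist(x,e_i)\le(c+6)r_t=(K+4)r_t$, so $x\in W^+_t$. Otherwise $z$ lies in the relative interior of some $H_j$ at distance $\ge(c+3)r_t$ from $\partial H_j$; writing $\phi_j(x)=(\bar x,h)$, the isometry property gives $0\le h\le\|x-z\|\le 3r_t$ and $\|\bar x-\phi_j(z)\|\le\|x-z\|\le 3r_t$, whence $\dist(\bar x,\partial\Omega_j)\ge cr_t$, so $\bar x\in\Omega_{t,j}$ and $x\in S_{t,j}$.

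Now on $E_t^{(2)}$: since $F_t(A\setminus A^{(3r_t)},\Po_t)$ fails, the inclusion above and the coverage of $W^+_t$ force $S_{t,j}$ (equivalently, $\Omega_{t,j}\times[0,3r_t]$ for the point process $\phi_j(\Po_t)$) to contain an uncovered point for some $j$; moreover $F_t(\partial A,\Po_t)$ gives coverage of $\Omega_{t,j}\times\{0\}$ (as $H_{j,t}\subset H_j$), and, by the choice of $K$, $\phi_j^{-1}\big((\partial\Omega_{t,j}\oplus B_{(2)}(\bfo,r_t))\times[0,3r_t]\big)\subseteq W^+_t$ because those points lie within $3$-dimensional distance $\sqrt{(c+1)^2+9}\,r_t\le(K+4)r_t$ of an edge, so this collar is covered as well. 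Hence $E_t^{(2)}$ is contained in the union over $j=1,\dots,m$ of the events
$$F_t\big((\Omega_{t,j}\times\{0\})\cup((\partial\Omega_{t,j}\oplus B_{(2)}(\bfo,r_t))\times[0,3r_t]),\,\phi_j(\Po_t)\big)\setminus F_t\big(\Omega_{t,j}\times[0,3r_t],\,\phi_j(\Po_t)\big).$$
Since $H_{j,t}$ stays at distance bounded below by a positive multiple of $r_t$ from every face of $A$ other than $H_j$, the restriction of $\phi_j(\Po_t)$ to an $O(r_t)$-neighbourhood of $\Omega_{t,j}\times[0,3r_t]$ has the law of the corresponding restriction of a homogeneous Poisson process $\cU_t$ of intensity $tf_0$ on $\bH$; so, exactly as in the proof of Lemma \ref{lemintface} (``an obvious rotation''), each of these events has the same probability as the corresponding event for $\cU_t$, which tends to $0$ by (\ref{0517b}) applied with $\Omega=\Omega_j$, $\Omega_t=\Omega_{t,j}$, $a=3$ and $\delta_t=r_t$. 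A union bound over the finitely many faces gives $\Pr[E_t^{(2)}]\to 0$.

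The main obstacle here is not probabilistic — all of the randomness is packaged into (\ref{0517b}) — but is the bookkeeping of the three length scales $3r_t$, $cr_t$ and $(K+4)r_t$: one must check that the reduced faces are large enough for their surrounding collars to sit inside $W^+_t$, small enough that the decomposition of $A\setminus A^{(3r_t)}$ closes up, and far enough from the remaining faces of $A$ that the half-space coupling with $\cU_t$ is valid. These are the same kind of estimates already used for $W_t,W^-_t,W^+_t$ and the corner regions in Section \ref{secpolypf}, and they go through for the constant $K$ fixed via Lemma \ref{lemangle}.
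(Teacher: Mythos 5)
You follow the same route as the paper's proof (slabs of thickness $3r_t$ over reduced faces, coverage of the face and of a collar inherited from $F_t(\partial A\cup W_t^+,\Po_t)$, then (\ref{0517b}) of Lemma \ref{lemhalf3} and a union bound), but the constant bookkeeping you defer to the end does not close with your choice $c=K-2$. The defining property of $K$ only gives $\dist(z,\varphi')\ge(3/K)\dist(z,\partial H_j)$ for the other faces $\varphi'$, so above a point of $H_j$ at horizontal distance $s$ from an edge of dihedral angle $\alpha<\pi/2$ the set $A$ has height only $s\tan\alpha$. Your collar sits over points with $s\ge(c-1)r_t=(K-3)r_t$ and reaches height $3r_t$, and $(K-3)\tan\alpha\ge 3$ is implied neither by $K\ge 3/\sin\alpha$ (since $(3/\sin\alpha-3)\tan\alpha=3(1-\sin\alpha)/\cos\alpha<3$) nor by $K>4$: for instance with $\alpha=\pi/3$ one may legitimately take $K$ slightly above $4$, and then the wedge height at $s=(K-3)r_t\approx r_t$ is about $1.73\,r_t<3r_t$. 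Thus near such an edge the box $\Omega_{t,j}\times[0,3r_t]$ and its collar genuinely poke out of $A$. This breaks the argument at two points: (i) the collar is then not contained in $W_t^+$, which by definition is a subset of $A$, so on $E_t^{(2)}$ you cannot conclude the collar is covered and your event inclusion fails (your estimate $\sqrt{(c+1)^2+9}\,r_t\le(K+4)r_t$ is correct but only controls distance to an edge, not membership in $A$, which is the real content of ``collar $\subset W_t^+$''); and (ii) $A$ no longer agrees with the half-space on the $r_t$-neighbourhood of the slab, so $\phi_j(\Po_t)$ restricted there is not a homogeneous Poisson process on the corresponding part of $\bH$, and the transfer to (\ref{0517b}) via $\cU_t$ is not justified.

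The paper avoids this by not routing through the nearest boundary point of $x$: from $x\notin W_t^+$ it deduces directly that any $y\in H_i$ with $\|x-y\|\le 3r_t$ satisfies $\dist(y,\partial H_j)>(K+1)r_t$ for all $j$, so its slabs are taken over the strongly reduced faces $H_i\setminus W_t$ (collar over $H_i\setminus W_t^-$), defined as subsets of $A$; the property of $K$ then yields distance at least $3(1+1/K)r_t>3r_t$ from every other face, which is exactly what keeps slab and collar inside $A$, inside $W_t^+$, and inside the region where the half-space comparison applies. Your dichotomy via the nearest boundary point forces the weaker reduction $c=K-2$ (so that your first case lands in $W_t^+$), and that is where the constants clash; to repair the proof you must either reorganise the case analysis as the paper does, or enlarge $K$ beyond what Lemma \ref{lemangle} alone provides (e.g.\ insisting $K\ge 3+3\cot\alpha_1$ when $\alpha_1<\pi/2$), which would amount to changing the global setup of Section \ref{secpolypf}.
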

\begin{proof}
For $1 \leq i \leq m$, let $S_{t,i}$, respectively $S_{t,i}^+$,
 denote the slab of thickness
$3 r_t$ consisting of all locations in  $A$ lying at a perpendicular
distance at most $3 r_t$ from the reduced face $H_i \setminus W_t$,
respectively from  $H_i \setminus W_t^-$.

	We claim that $S_{t,i}^+ \setminus S_{t,i} 
	\subset W_t^+$, for each $i \in \{1,\ldots,m\}$.
	Indeed, given $w \in S_{t,i}^+ \setminus
	S_{t,i}$, let $z$ be the nearest point in $H_i$ to $w$.
	Then $\|w-z\| \leq 3r_t$. Also
	%$z \in H_i \setminus W_{t}^-$ but 
	$z \notin H_i \setminus W_t$, so $z \in W_t$.
	Therefore for some $j \leq \kappa$
	we have $\dist(z,e_j) \leq (K+1) r_t$, so $\dist(w,e_j)
	\leq (K+4) r_t$, so $w \in W_t^+$, justifying the claim.

	Suppose $E_t^{(2)} $ occurs.
	Let   $x \in V_t (\Po_t) \cap A \setminus A^{(3 r_t)}$.
	Choose $i \in \{1,\ldots,m \}$ and $y \in H_i$ such
	that $\|x-y \| = \dist(x,H_i) \leq 3 r_t$.
	Then since $x \notin W_t^+$, for all $j$ we have
	$$
	\dist (y, \partial H_j) \geq \dist(x,\partial H_j) 
	- 3r_t > (K+1) r_t,
	$$
	so $y \in H_i \setminus W_t$ and $x \in S_{t,i}$.
	Thus $V_t (\Po_t) $
intersects 
	 the slab $S_{t,i}$.
However, it does not intersect the face $H_i$, and nor does
it intersect 
	the set $S_{t,i}^+ \setminus S_{t,i},$ 
 (since by the earlier claim this set is contained in $W_t^+$ 
which is fully covered).  Hence by the union bound
$$
	\Pr[ E_t^{(2)} ] \leq \sum_{i=1}^m \Pr[ F_t((H_i \setminus W_t^{-})
\cup (S_{t,i}^+ \setminus S_{t,i}), \Po_t )  
\setminus F_t( S_{t,i}, \Po_t ) ].
$$
By (\ref{0517b}) from Lemma \ref{lemhalf3}, along with
an obvious rotation, each term in the above sum tends to zero.
	This gives us the result. 
	\qed
\end{proof}

Next we estimate the probability that there is an uncovered
region within distance 
$Kr_t$ of a $1$-dimensional edge of $\partial A$
 but not including any of that edge itself.
 %$\partial A$.

\begin{lemm}
\label{lemflaps}
It is the case that $\Pr[F_t( \cup_{i=1}^\kappa e_i \cup Q_t^+,\Po_t) 
\setminus F_t (W_t^+, \Po_t) ] \to 0$ as $t \to \infty$.
\end{lemm}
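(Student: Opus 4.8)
The plan is to reduce the claim, one edge at a time, to the wedge estimates of Lemmas~\ref{lemwedge}, \ref{lemrind} and \ref{corowedge}, with the corner estimate of Lemma~\ref{lemcornpol3} taking care of the vertex regions. First I would fix the decomposition. For $1\le i\le\kappa$ let $W_{t,i}$ denote the set of $x\in A$ with $\dist(x,e_i)\le(K+4)r_t$ but $x\notin Q_t^+$, where $Q_t^+$ is as at (\ref{eqcorner}). Since $(e_i\oplus B(\bfo,(K+4)r_t))\cap A=\{x\in A:\dist(x,e_i)\le(K+4)r_t\}$, it follows from (\ref{eqedge}) that $W_t^+\subseteq Q_t^+\cup\bigcup_{i=1}^\kappa W_{t,i}$, whence $F_t(Q_t^+,\Po_t)\cap\bigcap_i F_t(W_{t,i},\Po_t)\subseteq F_t(W_t^+,\Po_t)$. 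Using also $e_i\subseteq W_t^+$ and $Q_t^+\subseteq\bigcup_i e_i\cup Q_t^+$, one gets $F_t(\bigcup_i e_i\cup Q_t^+,\Po_t)\subseteq F_t(Q_t^+,\Po_t)\cap\bigcap_i F_t(e_i,\Po_t)$, and combining the two inclusions shows that the event whose probability we must bound is contained in $\bigcup_{i=1}^\kappa\bigl(F_t(e_i,\Po_t)\setminus F_t(W_{t,i},\Po_t)\bigr)$. By the union bound over the finitely many edges it therefore suffices to prove $\Pr[F_t(e_i,\Po_t)\setminus F_t(W_{t,i},\Po_t)]\to0$ for each $i$.

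Next, fix $i$, and recall that $K$ was chosen (via Lemma~\ref{lemangle}) much larger than $1$. Every $x\in W_{t,i}$ has $\dist(x,q_j)>K(K+9)r_t$ for all vertices $q_j$ and $\dist(x,e_i)\le(K+4)r_t$, so its nearest point of $e_i$ lies well inside the relative interior of $e_i$; hence for all large $t$ the ball $B(x,(K+5)r_t)$ meets no face or edge of $A$ except those incident to $e_i$, and $B(x,(K+5)r_t)\cap A=\phi_i\bigl(B(\phi_i^{-1}(x),(K+5)r_t)\cap\bW_{\alpha_i}\bigr)$ for a fixed rigid motion $\phi_i$ carrying $e_i$ onto the edge-axis of $\bW_{\alpha_i}$ and carrying $A$, locally near the interior of $e_i$, onto $\bW_{\alpha_i}$. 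I would then couple $\Po_t$, restricted to a bounded neighbourhood $N_i$ of the relative interior of $e_i$ that lies at distance $\ge 2r_t$ from every vertex and on which $A$ agrees with $\phi_i(\bW_{\alpha_i})$, with a homogeneous Poisson process $\cW_{\alpha_i,t}$ of intensity $tf_0$ on $\bW_{\alpha_i}$, so that balls of radius $r_t$ about points of $\Po_t$ and of $\cW_{\alpha_i,t}$ correspond under $\phi_i$ inside $\phi_i(N_i)$ (legitimate since $\phi_i$ preserves volume). Under this coupling, the event $F_t(W_{t,i},\Po_t)$ is equivalent to coverage by $\cW_{\alpha_i,t}$ of $\phi_i^{-1}(W_{t,i})$, and $F_t(e_i,\Po_t)$ forces coverage by $\cW_{\alpha_i,t}$ of the axis segment $\bW_{\alpha_i,0}$ taken over a $t$-dependent but uniformly bounded interval $I_i$; choosing $I_i$ slightly larger than the $\phi_i^{-1}$-image of $e_i\setminus Q_t^+$ but still at distance $\gg r_t$ from the endpoints of $e_i$ (possible because $K$ is large) one arranges moreover that $\phi_i^{-1}(W_{t,i})\subseteq\bW_{\alpha_i,(K+4)r_t}$ over this same $I_i$. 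Thus $F_t(e_i,\Po_t)\setminus F_t(W_{t,i},\Po_t)$ is contained, up to the coupling, in $F_t(\bW_{\alpha_i,0},\cW_{\alpha_i,t})\setminus F_t(\bW_{\alpha_i,(K+4)r_t},\cW_{\alpha_i,t})$.

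It remains to bound the probability of that wedge event. When $\alpha_i\le\pi/2$ (so $\alpha_i\in[\alpha_1,\pi/2]$) it tends to $0$ by Lemma~\ref{corowedge}; the proof of that lemma, via Lemmas~\ref{lemwedge} and \ref{lemrind} — a Mecke-formula and Markov-inequality estimate on the expected number of uncovered corner points, using the wedge volume bound of Lemma~\ref{wdglblem} — goes through unchanged when the fixed interval is replaced by a $t$-dependent interval of bounded length. When $\alpha_i>\pi/2$ the second part of Lemma~\ref{corowedge} (a direct application of Lemma~\ref{lem2meta}) gives the stronger $\Pr[F_t(\bW_{\alpha_i,(K+4)r_t},\cW_{\alpha_i,t})]\to1$. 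Either way $\Pr[F_t(e_i,\Po_t)\setminus F_t(W_{t,i},\Po_t)]\to0$, and summing over $i$ proves the lemma.

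The main obstacle is the middle step: making the decomposition $W_t^+\subseteq Q_t^+\cup\bigcup_i W_{t,i}$ and the rigid-motion coupling precise near the endpoints of the edges, where $\bW_{\alpha_i}$ ceases to be a faithful local model of $A$. The point is that the $\Theta(r_t)$-long portion of each edge tube that is lost there is absorbed into $Q_t^+$, whose radius $K(K+9)r_t$ dwarfs the tube radius $(K+4)r_t$ and the coupling margin $2r_t$; hence the $t$-dependent shrinking of $I_i$ still leaves ample room — quantified by the choice of $K$ guaranteed by Lemma~\ref{lemangle} — to transfer the fixed-interval wedge estimates, and the rest is bookkeeping with the constants $K$, $K+4$, $K+5$ and $K(K+9)$.
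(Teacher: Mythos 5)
Your proof is correct and follows essentially the same route as the paper's: the paper likewise first establishes the per-edge event inclusion (using a slightly different tube $W^*_{i,t}$, defined via perpendicular distance from $e_i \setminus Q_t$), shows via the choice of $K$ that points in the relevant tube are at distance more than $r_t$ from all faces not meeting $e_i$, and then invokes Lemma~\ref{corowedge} after a rotation. The one place where you are more careful than the published proof is in noting explicitly that the wedge estimates must be applied with a $t$-dependent (but uniformly bounded) interval and checking that the Mecke-formula bounds in Lemmas~\ref{lemwedge} and~\ref{lemrind} are insensitive to this; the paper passes over this subtlety with the phrase ``by Lemma~\ref{corowedge} and an obvious rotation.''
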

\begin{proof}
Let $i \in \{1,\ldots, \kappa\}$.
	%and $ j \in \{1,\ldots,m\}$ with $e_i \subset H_j$.
For $t >0$, let 
	%$S_{i,j,t}$ 
	$W^*_{i,t}$
denote the set of locations in $A$ at perpendicular distance at most
	$(K+4)r_t$ from the reduced edge $e_i \setminus Q_t$. 
	We claim that
	\bea
	(e_i  \oplus B(o,(K+4)r_t)) \cap A \setminus Q_t^+ \subset 
	W_{i,t}^*.
	\label{0104a}
	\eea
	Indeed, suppose 
	$x \in (e_i  \oplus B(o,(K+4)r_t)) \cap A \setminus Q_t^+$.
	Let $y \in e_i$ with $\|x-y\|= d(x, e_i)$.
	Then $\|x-y\| \leq (K+4)r_t$, and since $x \notin Q_t^+$,
	for all $j \in \{1\ldots,\nu\}$
	\bean
	\dist(y,q_j) & \geq & \dist(x,q_j) - (K+4)r_t
	\\
	& > & [K(K+9) - (K+4)] r_t 
	%\\
	\geq K(K+7) r_t.
	\eean
	Therefore $y \notin Q_t$,
	so $x \in W^*_{i,t}$, demonstrating
	the claim.
Hence
\bean
\Pr[F_t (e_i \cup Q_t^+,\Po_t) \setminus F_t( 
	(e_i \oplus B(o,(K+4)r_t) ) \cap A ,\Po_t) ] 
	\\
\leq \Pr[F_t(e_i,\Po_t) \setminus F_t(W^*_{i,t} \setminus Q_t^+ ,\Po_t)],
\eean
	and \textcolor{\blue}{we claim that this probability tends to zero.
	Indeed,  if 
	 $x \in W_{i,t}^*
	\setminus Q_t^+$, then taking $x' \in e_i$
	 with $\|x-x'\| = \dist(x,e_i)$,
	 we have $\dist(x',\partial e_i) \geq 
	 \dist(x,\partial e_i) - (K+4)r_t \geq K(K+7)r_t$,
	 and so by the choice of $K$, for any face 
	 $\varphi' $ of $A$, other than the two faces
	 meeting at $e_i$, we have
	 $\dist(x',\varphi') > 3(K+7)r_t$, and hence
	 $\dist(x,\varphi') \geq (2K+17) r_t$.
	Then the claim follows by Lemma \ref{corowedge} and an obvious rotation.}
 Since 
\bean
F_t( \cup_{i=1}^\kappa e_i \cup Q_t^+,\Po_t) 
\setminus F_t ( W_t^+, \Po_t) 
~~~~~~~~~~~~~~~~~
~~~~~~~~~~~~~~~~~
~~~~~~~~~~~~~~~~~
\\
\subset  \cup_{i=1}^{\kappa} 
[F_t (e_i \cup Q_t^+,\Po_t)  \setminus F_t( 
	(e_i \oplus B(o,(K+4)r_t)) \cap A ,\Po_t) ],
\eean
the result follows by the union bound.
	\qed
\end{proof}

\begin{proof}[Proof of Theorem \ref{thwkpol3}]
First we estimate the probability 
of event  $F_t(A^{(3r_t)},\Po_t)^c $, that there is an uncovered
 region in $A$ distant more than $3r_t$ from $\partial A$.
	\textcolor{\blue}{
	We apply  Lemma \ref{lem2meta},
	with $\mu_t = \mu$, $W_t = A \setminus A^{(3r_t)}$,
	 $b=d$,  $a= f_0 \theta_3$, and  
	$c= 1/(f_0 \min(\pi, 2 \alpha_1))$.
	Then $(b/d) - ac = 1 - (4 \pi /(3 \min(\pi,2 \alpha_1))) <0$,
	so by Lemma \ref{lem2meta}},
\bea
\lim_{t \to \infty} ( \Pr[F_t(A^{(3r_t)},\Po_t)] ) = 1.
\label{0519a}
\eea

	Let $\partial^*A := ([(\partial A) \setminus W^+_t]
	\cup \cup_{i=1}^\kappa e_i ) \setminus Q_t$ (note that the definition
	depends on $t$ but we omit this from the notation).
	This  is the union of reduced faces and reduced edges  of $\partial A$.

We  have the event inclusion 
$
 F_t(A,\Po_t) 
\subset
 F_t(\partial^* A,\Po_t)  
$,  and
by the union bound 
\bean
\Pr[ F_t(\partial^* A,\Po_t)  \setminus F_t(A,\Po_t) ]  
%&
	\leq  \Pr[F_t(Q_t^+)^c] +
	\Pr[ F_t( \cup_{i=1}^\kappa e_i \cup Q_t^+,\Po_t)  \setminus
	F_t(W^+_t,\Po_t) ]
%~~~~~~~~~~~~~~~~~~~~~~~~
 \nonumber \\
+
 \Pr[ F_t(\partial A \cup W_t^+,\Po_t)  \setminus F_t(A \setminus A^{(3r_t)},
\Po_t) ]  
+  \Pr[ F_t(A^{(3r_t)},\Po_t)^c ],
\eean
and the four probabilities on the right tend to zero 
by Lemma \ref{lemcornpol3}, Lemma \ref{lemflaps},
Lemma \ref{lemEt2} and (\ref{0519a}) respectively.
Therefore
\bea
\lim_{t \to \infty} \Pr[ F_t(A,\Po_t) ] =
\lim_{t \to \infty} \Pr[F_t(\partial^* A, \Po_t)],
\label{limlim1}
\eea
provided the limit on the right exists.

Next we claim that the events
$F_t(e_i \setminus Q_t,\Po_t)$, $1 \leq i \leq \kappa$,
are mutually independent. 
	Indeed, for distinct $i,j \in \{1,\ldots,\kappa\}$,
	if $x \in e_i \setminus
	Q_t$ then 
	$\dist(x, \partial e_i) \geq K(K+7)r_t$ 
	by (\ref{eqcorner}),
	so
	by our choice of $K$, 
	$\dist(x, e_j) \geq 3 r_t$ for $t$ sufficiently large.
	Therefore  the $r_t$-neighbourhoods
	of $e_1 \setminus Q_t, \ldots, e_\kappa \setminus Q_t$ 
	are disjoint,
	and the independence follows.
Hence by Lemma
\ref{lemportions},
\bea
\lim_{t \to \infty}
 (\Pr [ F_t(\cup_{i=1}^\kappa e_i \setminus Q_t, \Po_t) ] ) 
\nonumber \\
= \begin{cases}
\exp \left( - \sum_{\{i: \alpha_i = \alpha_1\}} 
(|e_i|/(k-1)!) (\alpha_1/32)^{1/3} 3^{1-k} e^{- \beta/3} \right)
&~ {\rm if} ~ \alpha_1 \leq \pi/2  
\\
1 & ~ {\rm otherwise}. 
\end{cases}
~~~
\label{0525b}
\eea
Observe next that
	by the definition of $W_t^+$ at (\ref{eqedge}),
	the set $\partial A \setminus W_t^+$
	is at Euclidean distance at least $K r_t$ from all of the
	edges $e_i$. Therefore the events 
$F_t( \cup_{i=1}^\kappa e_i \setminus Q_t, \Po_t) $ and
$ F_t (\partial A \setminus W_t^+,\Po_t) $ are independent. Therefore  
using (\ref{limlim1})
we have
\bean
\lim_{t \to \infty} \Pr[ F_t(A,\Po_t) ] 
= \lim_{t \to \infty} \Pr 
[F_t( \cup_{i=1}^\kappa e_i \setminus Q_t, \Po_t) ]
\times
 \lim_{t \to \infty} \Pr 
[ F_t (\partial A \setminus W_t^+,\Po_t) ],
\eean
provided the two limits on the right exist. But we know from (\ref{0525b}) 
and Lemma \ref{lemintface} that these two limits do indeed exist,
and what their values are. Substituting these two
values, 
we obtain the
result stated for $R'_{t,k}$ in (\ref{0529a}).
Then we obtain the 
result stated for $R_{n,k}$ in (\ref{0529a}) by applying Lemma \ref{depolem}.
\qed
\end{proof}

\subsection{{\bf Proof of Theorem \ref{thsmoothgen}: first steps}}

\label{secfirststeps}

In this subsection we assume that $d \geq 2$ and  $A$ has $C^2$ boundary.
Let $\zeta \in \R $, and
assume   that $(r_t)_{t >0}$ satisfies  (\ref{rt3c}).
Let $k \in \N$.
Given any point process $\X$ in $\R^d$, and any $t >0$,
define the `vacant' region $V_t(\X)$ by (\ref{Vtdef}),
and given also $D \subset \R^d$, define $F_t(D,\X)$,
the event that
$D$ is  `fully covered' $k$ times, by
(\ref{Ftdef00}).

\textcolor{\blue}{
Given $x \in \partial A$ we can
express $\partial A$
locally in a neighbourhood of $x$,
after a rotation, as the graph of a $C^2$ function with
zero derivative at $x$.
As outlined  in earlier  in Section \ref{secstrategy},
we shall approximate to that function
by the graph of a piecewise affine function (in $d=2$, a piecewise
linear function).}

For each $x \in \partial A$, we can find an open neighbourhood
$\NN_x$ of $x$, a number $r(x) >0$ such that
$B(x, 3r(x)) \subset \NN_x$ and  a rotation $\rho_x$ about $x$
such that
$\rho_x(\partial A \cap \NN_x)$ is the graph of
a real-valued $C^2$ function $f$ defined on an open \textcolor{\blue}{ball} 
 $D \subset \R^{d-1}$, with
$\langle f'(u),e\rangle \leq 1/9$ for
all $u \in D$ and
all unit vectors $e$ in $\R^{d-1}$, where $\langle \cdot,\cdot \rangle$
denotes the Euclidean inner product 
in $\R^{d-1}$ and $f'(u):= (\partial_1 f(u),\partial_2f(u), \ldots, \partial_{d-1} f(u) )$ is the derivative of $f$ at $u$.
Moreover, by taking a smaller neighbourhood if necessary, 
we can also assume that there exists $\eps>0$ and
$a  \in \R$ such that 
$f(u) \in [a+ \eps,a+ 2 \eps]$ for all $u \in D$
and also  $\rho_x(A) \cap (D \times [a,a + 3 \eps])
= \{(u,z): u \in D, a \leq z \leq f(u)\}$.

By a compactness argument, we can and do take a finite collection
of points $x_1,\ldots, x_J \in \partial A$ such that
\bea
\partial A \subset  \cup_{j=1}^J B(x_j,r(x_j)). 
\label{bycompactness}
\eea
Then there are  constants $\eps_j >0$, and
rigid motions $\rho_j, 1 \leq j \leq J$, 
such that for each $j$ 
the set $\rho_j(\partial A \cap \NN_{x_j}) $ is
 the graph of a $C^2 $ function $f_j$ 
defined on a ball $I_j$ in $\R^{d-1}$, with
$\langle f'_j(u), e \rangle \leq 1/9$ for all $u \in I_j$ and
all unit vectors
$e \in \R^{d-1}$, and also with $\eps_j \leq f_j(u) \leq 2 \eps_j $
for all $u \in I_j$ and $\rho_j(A) \cap (I_j \times [0,3\eps_j]) =
\{(u,z):u \in I_j, 0 \leq z \leq f(u)\}$.

 Let $\Gamma \subset \partial A$ be
 a closed set such that $\Gamma \subset  B(x_j,r(x_j))$
 for some $j \in \{1,\ldots,J\}$, and such that
 \textcolor{\blue}{
 $\kappa(\partial \Gamma,r) = O(r^{2-d})$ as $r \downarrow 0$,
 where in this section we set
 $\partial \Gamma: = \Gamma \cap \overline{\partial A \setminus \Gamma} $,
 the boundary of $\Gamma$ relative to
 $\partial A$ (the $\kappa$ notation was given at (\ref{covnumdef})).} 
 To simplify notation we shall assume that 
 $\Gamma \subset B(x_1,r(x_1))$, and moreover that $\rho_1$ is the identity map. 
 Then $\Gamma= \{(u,f_1(u)): u \in U\}$ for some bounded set $U \subset \R^{d-1}$.
 Also,
 writing $\phi(\cdot)$ for $f_1(\cdot)$ from now on, we assume
 \bea
 \phi(U) \subset [\eps_1,2 \eps_1]
 \label{0901b}
 \eea
 and
 \bea
 A \cap (U \times [0,3 \eps_1]) = \{(u,z): u \in U, 0 \leq z \leq \phi(u) \}.
 \label{0901a}
 \eea

Note that for any $u,v \in U$, by the mean
value theorem we have for some $w \in [ u,v]$ that
\bea
|\phi(v) - \phi(u) | = | \langle v-u, \phi'(w)\rangle | \leq
(1/9) \|v-u\|.
\label{philip}
\eea

\textcolor{\blue}{
Choose (and keep fixed for the rest of this paper) constants $ \gamma_0, \gamma, \gamma'$
with
\bea
1/(2d) < \gamma_0 < \gamma < \gamma'  < 1/d.
\label{eqgamma}
\eea
The use of these will be for $t^{-\gamma}$,  $t^{-\gamma'}$ and $t^{-\gamma_0}$
to provide length scales that are 
different from each other and from that of $r_t$}.

When $d=2$, we approximate to $\Gamma$ by a polygonal line $\Gamma_t$ with
edge-lengths that are $\Theta(t^{-\gamma})$.
When $d=3$,
we approximate to $\Gamma$ by a polyhedral surface $\Gamma_t$ with all of its
vertices in $\partial A $,  and face diameters
 that are $\Theta(t^{-\gamma})$,  taking all the faces of
 $\Gamma_t$ to be triangles.  
 For general $d$, we wish to approximate to $\Gamma$ by a set $\Gamma_t$ given
 by a union of the $(d-1)$-faces in a certain simplicial complex of
 dimension $d-1$ embedded in $\R^d$.

 To do this,
 divide $\R^{d-1}$ into 
 cubes of dimension $d-1$ and side $t^{-\gamma}$, and divide each of these cubes
 into $(d-1)!$ simplices (we take these simplices to be closed).
 Let $U_t$ be the union  of all those
 simplices in the resulting tessellation of 
 $\R^{d-1}$ into simplices,  that are
 contained within $U$, and let $U_t^-$ be the
 union of those simplices in the tessellation which are contained
 within $U^{(3dt^{-\gamma})}$, where for $r>0$ we
 set $U^{(r)}$ 
 to be the set of $x \in U$ 
 at a Euclidean distance 
 more than $r$ from $\R^{d-1} \setminus U$.
 If $d=2$, the simplices
 are just intervals. 
 See Figure 3.

 \textcolor{\blue}{
 Let $\sigma^-(t)$, respectively
 $\sigma(t)$,} denote the number of simplices making up
 $U_t^-$, respectively $U_t$. 
 Choose $t_0 >0$ such that $\sigma^-(t) >0$ for all $t \geq t_0$.

\begin{figure}[!h]
\label{fig0}
\center
\includegraphics[width=8cm]{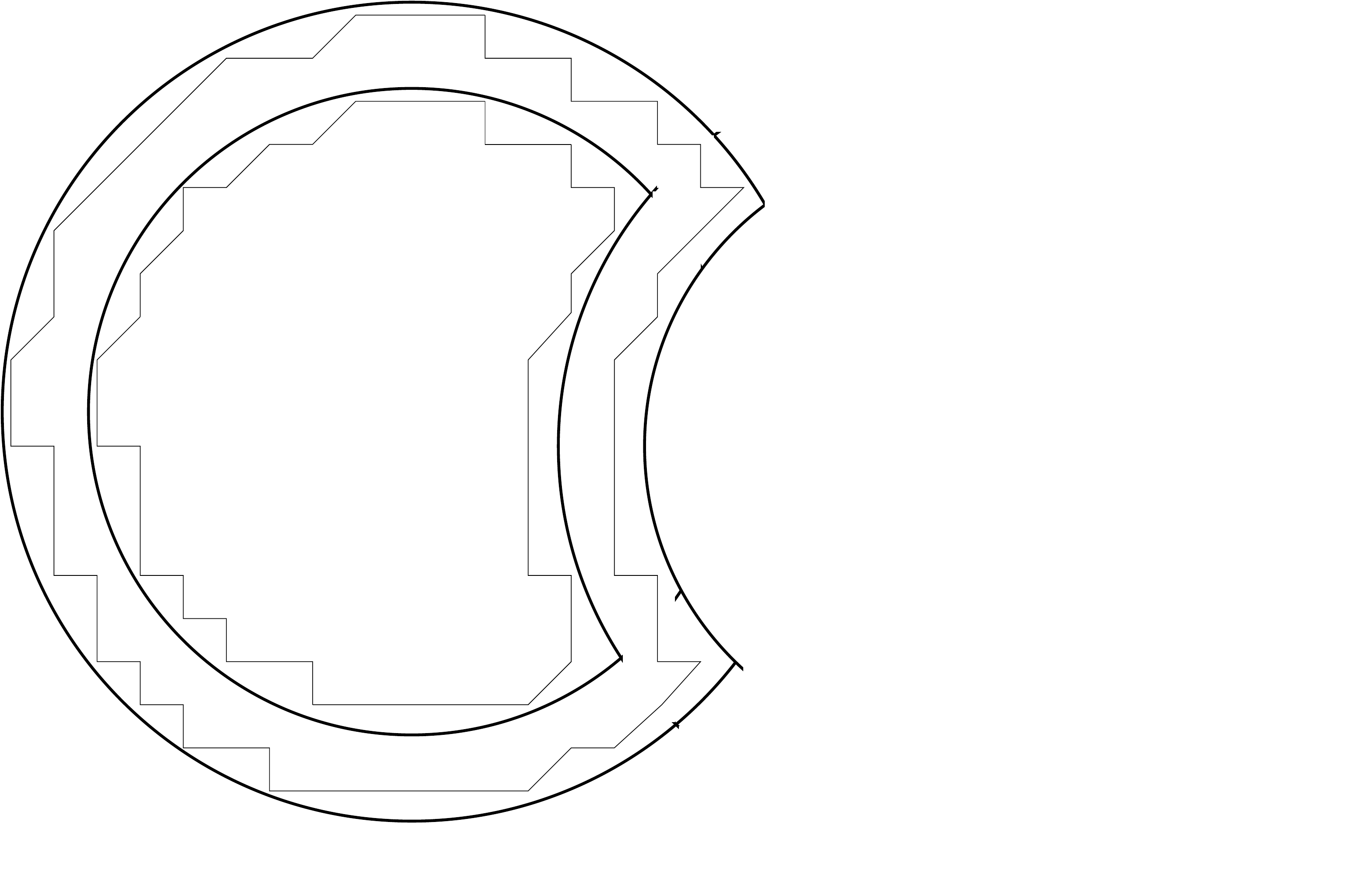}
\caption{Example in $d=3$.
	The outer crescent-shaped region is $U$,
	while the inner crescent is $U^{(3dt^{-\gamma})}$
	(the annular region is not to scale: its thickness should
	be 9 times the length of the shortest edges of the polygons).
	%should be thicker).
	The outer polygon is $U_t$, while
	the inner polygon is $U_t^-$
	}
\end{figure}
 
  Let  $\psi_t: U_t \to \R$ be the function that is
  affine on each of the simplices
 making up $U_t$, and agrees with the function $\phi$ on each
 of the vertices of these simplices.
 Our approximating  surface (or polygonal line if $d=2$)
 will be defined by
  $\Gamma_t := \{(x, \psi_t(x)-K t^{-2 \gamma}): x \in U^-_t\}$, with 
  the constant $K$ given by the following lemma.
This lemma uses Taylor expansion to show that $\psi_t$
a good approximation to $\phi$. 

\begin{lemm}[\textcolor{\blue}{Polytopal approximation of
	$\partial A$}]
\label{lemtaylor3}
Set 
	$K :=
	\sup_{t \geq t_0, u \in U_t} ( t^{2 \gamma} 
	|\phi(u) - \psi_t(u) |)$.
	Then $K < \infty$. 
\end{lemm}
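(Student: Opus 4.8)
The plan is to show the supremum is finite by a uniform local Taylor estimate on each simplex of the tessellation, exploiting the $C^2$ smoothness of $\phi$ together with the fact that $\psi_t$ is the piecewise-affine interpolant of $\phi$ at the vertices. First I would note that $\phi$ is $C^2$ on the open ball containing $\overline{U}$, hence (after shrinking slightly to a compact set containing all the simplices that ever appear in $U_t$ for $t\ge t_0$) the Hessian of $\phi$ is bounded in operator norm by some constant $M<\infty$, uniformly. Since the simplices making up $U_t$ all have diameter $\Theta(t^{-\gamma})$ — in fact diameter at most $\sqrt{d-1}\,t^{-\gamma}$ — the key is the classical fact that the linear interpolant of a $C^2$ function at the vertices of a simplex of diameter $h$ differs from the function by at most $C M h^2$ on that simplex, where $C$ depends only on $d$.

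The main work is to prove that interpolation bound without appealing to a black box. On a fixed simplex $S$ with vertices $v_0,\ldots,v_{d-1}$ and a point $u\in S$ written as $u=\sum_i \lambda_i v_i$ with $\lambda_i\ge 0$, $\sum_i\lambda_i=1$, one has $\psi_t(u)=\sum_i \lambda_i \phi(v_i)$. Applying Taylor's theorem with the integral (or Lagrange) form of the remainder to each $\phi(v_i)$ expanded about $u$ gives $\phi(v_i)=\phi(u)+\langle \phi'(u),v_i-u\rangle + \tfrac12\langle v_i-u, \mathrm{Hess}\,\phi(\xi_i)(v_i-u)\rangle$ for some $\xi_i\in[u,v_i]\subset S$. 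Multiplying by $\lambda_i$ and summing, the zeroth-order terms give $\phi(u)$, the first-order terms cancel because $\sum_i\lambda_i(v_i-u)=0$, and what remains is $\psi_t(u)-\phi(u)=\tfrac12\sum_i\lambda_i\langle v_i-u,\mathrm{Hess}\,\phi(\xi_i)(v_i-u)\rangle$, which is bounded in absolute value by $\tfrac12 M\max_i\|v_i-u\|^2\le \tfrac12 M\,(\mathrm{diam}\,S)^2\le \tfrac12 M (d-1)\,t^{-2\gamma}$. Hence $t^{2\gamma}|\phi(u)-\psi_t(u)|\le \tfrac12 M(d-1)$ for every $u\in U_t$ and every $t\ge t_0$, so $K\le\tfrac12 M(d-1)<\infty$.

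The one genuine subtlety — and the step I'd expect to need the most care — is uniformity in $t$: one must make sure the Hessian bound $M$ can be chosen independently of $t$. This is fine because for every $t\ge t_0$ the simplices in $U_t$ are contained in $U$, which in turn is contained in a slightly larger compact subset $\overline{U'}$ of the open ball on which $\phi$ is $C^2$ (recall $\Gamma\subset B(x_1,r(x_1))$ and $\phi=f_1$ is $C^2$ on all of $I_1\supset U$); so $M:=\sup_{u\in\overline{U'}}\|\mathrm{Hess}\,\phi(u)\|_{\mathrm{op}}$ works for all $t$ at once. (Strictly, the argument above only ever evaluates $\phi$ and its derivatives at points of $U_t\subset U\subset\overline{U'}$, so even $M=\sup_{\overline U}\|\mathrm{Hess}\,\phi\|$ suffices.) Thus $K$ is finite, as claimed. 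I would remark in passing that the $-Kt^{-2\gamma}$ shift in the definition of $\Gamma_t$ is precisely so that $\Gamma_t$ lies (weakly) inside $A$, which is why the bound is stated as a one-sided supremum but the two-sided estimate just proved is what is actually needed later.
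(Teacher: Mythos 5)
Your proof is correct, and it takes a genuinely different route from the paper's. The paper avoids a full second-order Taylor expansion: it decomposes the point $u$ via the telescoping zigzag $v_0=u_0$, $v_k=u_0+\sum_{j\le k}\alpha_j(u_j-u_0)$, applies the first-order mean value theorem separately to $\phi(v_k)-\phi(v_{k-1})$ and to the affine increment $\psi_t(v_k)-\psi_t(v_{k-1})$ (writing the latter as $\alpha_k\langle u_k-u_0,\phi'(\tilde w_k)\rangle$ via a second MVT along $[u_0,u_k]$), and then bounds the resulting difference of gradients by a third application of the MVT to each component of $\phi'$. Your argument is the textbook interpolation-error estimate: Taylor-expand each $\phi(v_i)$ about $u$ with Lagrange remainder, take the barycentric combination, and observe that the first-order terms vanish identically because $\sum_i\lambda_i(v_i-u)=0$; the error is then a single Hessian quadratic form, giving the $O(h^2)$ bound in one step. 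The two approaches produce slightly different explicit constants ($K_0$ built from max-norm Hessian entries versus $\tfrac12 M(d-1)$ built from the operator norm), but that is immaterial. Your version is shorter and isolates the structural reason the linear interpolant is accurate to second order (barycentric cancellation), whereas the paper's version trades the second-order Taylor remainder for three first-order MVTs; both are sound, and your parenthetical remark about why $M$ can be taken $t$-independent (all evaluation points lie in the compact set $\overline U$ inside the chart) correctly addresses the only uniformity issue.
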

\begin{proof}
	Set $K_0 :=  d^3 \sup\{
	|\phi''_{\ell m}(v)| 
	, \ell, m \in \{1,\ldots, d-1\}, v \in U\}$, i.e. $d^3$ times
the supremum over all $v \in U$ of the max-norm of the Hessian of $\phi$ at $v$.
Then $K_0 < \infty$.

Given $t \geq t_0$,
	%with $U_t^- \neq \emptyset$,
	denote the simplices making up $U_t$ 
	by $T_{t,1},\ldots,T_{t,\sigma(t)}$.
	Let 
	\linebreak 
	$i \in \{1,\ldots,\sigma(t)\}$.
	Let  $u_0,u_1,\ldots, u_{d-1} \in \R^{d-1}$ be the vertices
	of $T_{t,i}$.

	Let $u \in T_{t,i}$. Then $u$ is a convex combination
	of $u_0,\ldots, u_{d-1}$;
	write $u= u_0 + \sum_{j=1}^{d-1} \alpha_j(u_j - u_0) $
	with $\alpha_j \geq 0$ for each $j$ and
	$\sum_{j=1}^{d-1} \alpha_j \leq 1$.
	Set $v_0 = u_0$, and for $1 \leq k \leq d-1$, 
	set $v_k :=  u_0 + \sum_{j=1}^k  \alpha_j(u_j-u_0)$.
	Then by the mean value theorem, for $1 \leq k \leq d-1$,
	 there exists $w_k \in T_{t,i}$
	 (in fact, $w_k \in [v_{k-1}, v_k]$), such
	that 
	\bean
	\phi(v_k) - \phi(v_{k-1}) = \langle v_k-v_{k-1} , \phi'(w_k) \rangle
	= \alpha_k \langle u_k-u_0 , \phi'(w_k) \rangle . 
	\eean
	Also, since $\psi_t$ is affine on $T_{t,i}$ and  agrees with
	$\phi$ on $u_0, u_1, \ldots, u_{d-1}$,
	there exists $\tiw_k \in T_{t,i}$ (in fact, $\tiw_k \in [u_0,u_k]$) 
	such that
	\bean
	\psi_t (v_k) - \psi_t (v_{k-1}) & = &
	%\psi_t(u_0 + \alpha_k (u_k-u_0) ) - \psi_t(u_0) 
	 \alpha_k (\psi_t(u_k) - \psi_t(u_0) ) 
	 \\
	 & = & \alpha_k (\phi(u_k) - \phi(u_0) ) 
	= \alpha_k \langle u_k-u_0 , \phi'(\tiw_k) \rangle . 
	\eean
	Hence,
	\bean
	\phi(v_k) - \phi(v_{k-1}) - 
	(\psi_t(v_k) - \psi_t(v_{k-1}) ) 
	= \alpha_k \langle u_k-u_0, \phi'(w_k) - \phi'(\tiw_k) \rangle.
	\eean
	By the mean value theorem again, each
	component of $\phi'(w_k) -\phi'(\tiw_k)$ is bounded
	by $
	K_0 d^{-2}\|\tiw_k- w_k\|.
	$
	Therefore $\|\phi'(w_k) - \phi'(\tiw_k)\| \leq
	K_0  d^{-1} \|\tiw_k - w_k\|$.
	Since $\diam(T_{t,i}) = (d-1)^{1/2} t^{-\gamma}$,
	and since $u = v_{d-1}$ and $\phi(u_0) = \psi_t(u_0)$,
\bean
|\phi(u)- \psi_t(u)| =  \left| \sum_{k=1}^{d-1}
[\phi(v_k) - \phi(v_{k-1} ) - 
(\psi_t(v_k) - \psi_t(v_{k-1} ))] \right|
\\
\leq 
\sum_{k=1}^{d-1} \alpha_k \|u_k - u_0\| K_0 d^{-1} \|\tiw_k - w_k\|  
\\
\leq  \diam(T_{t,i})^2 K_0 d^{-1} 
\leq K_0  t^{-2 \gamma},
\eean
and therefore $K \leq K_0 < \infty$,
as required.
 \qed
 \end{proof}
 \textcolor{\blue}{We now subtract a constant from $\psi_t$ to obtain a
 piecewise affine function
 $\phi_t$ that approximates $\phi$ from below.}
 For $t \geq t_0$ and $u \in U_t$, define $\phi_t(u) := \psi_t(u) - 
 K t^{-2 \gamma}$, with $K$ given by  
 Lemma \ref{lemtaylor3}. Then
 for all $t \geq t_0, u \in U_t$ we have 
 $|\psi_t(u)-\phi(u)|\leq Kt^{-2\gamma}$ so that
 \bea 
	\phi_t(u) \leq  \phi(u) \leq \phi_t(u) + 2K t^{-2 \gamma}.
	\label{0126a}
\eea
 Define the set $\Gamma_t: =
 \{(u,\phi_t(u)): u \in U_t^-\}$.
 We refer to each $(d-1)$-dimensional face of $\Gamma_t$
 (given by the graph of $\phi_t$ restricted to one
 of the simplices in our triangulation of $\R^{d-1}$)
 as simply a {\em face} of $\Gamma_t$. 
 Denote these faces
 of $\Gamma_t$ by $H_{t,1}, \ldots, H_{t,\sigma^-(t)}$.
The number of faces, $\sigma^-(t)$,  
 is $\Theta(t^{(d-1)\gamma})$ as $t \to \infty$.
The perimeter (i.e., the $(d-2)$-dimensional Hausdorff measure of the boundary)
of each individual face is $ \Theta(t^{-(d-2)\gamma})$.

For $t \geq t_0$,
define subsets $A_t,A_t^-,
\tA_t,
A_t^{*},
A_t^{**}$
of
$ \R^d$
and Poisson processes $\Po'_t$ and $\tPo_t$ in $\R^d$  
by
\bea
A_t := \{(u,z): u \in U_t, 0
\leq z \leq \phi(u)\}, 
 ~~~
 \tA_t := \{(u,z): u \in U_t, % \phi(u) - \eps_1 
 0 \leq z \leq \phi_t(u)\},
 \nonumber \\
A_t^- := \{(u,z): u \in U_t^-, %\phi(u) - \eps_1 
0 \leq z \leq \phi(u)\}, 
~~~~~ ~~~~~ ~~~~~
 ~~~~~
 ~~~~~
 ~~~~~
 ~~~~~
 ~
 \nonumber \\
A_t^* := \{(u,z): u \in U_t^-, \phi_t(u) - (3/2) r_t \leq z \leq \phi(u)\},
 ~~~~~ 
 ~~~~~ 
 ~~~~~ 
 ~~
 ~~
 \nonumber \\
A_t^{**} := \{(u,z): u \in U_t^-, \phi_t(u) - (3/2) r_t \leq z \leq \phi_t(u)\},
 ~~~~~~~~ ~~~~~~~~
 ~
 \nonumber \\
 \Po'_t := \Po_t \cap A_t, ~~~ \tPo_t := \Po_t \cap \tA_t.
 ~~~~~~~~ ~~~~~~~~
 ~~~~~~~~ ~~~~~~~~ 
 ~~~~~~~ ~~~~~~~~ 
 \label{Pdtdef}
 \eea

Thus $A_t$ is a `thick slice' of $A$  near the boundary region $\Gamma$,
$\tA_t$ is an approximating region
having $\Gamma_t$ as  its upper boundary,
and $A_t^{*}$,
$A_t^{**}$
 are `thin slices' of $A$ also 
having $\Gamma$, respectively $\Gamma_t$, as upper boundary.
 By (\ref{0126a}), (\ref{0901b}) and  (\ref{0901a}),
$A_t^{**} \subset A_t^* \subset A_t^- \subset A_t \subset A$,
and $A_t^{**} \subset \tA_t \subset A_t$.
The rest of this
subsection, \textcolor{\blue}{and the next subsection,}
are devoted to proving the following
intermediate step towards a proof of Theorem \ref{thsmoothgen}.

\begin{prop}[Limiting coverage probability for approximating
	\textcolor{\blue}{polytopal} surface]
\label{lemsurf}
It is the case that
$
	\lim_{t \to \infty} \Pr[F_t(A_t^{**},\tPo_t) ] =
	\exp (-  c_{d,k} |\Gamma|  e^{-  \zeta} ), 
	$
	\textcolor{\blue}{where $|\Gamma|$ denotes the $(d-1)$-dimensional
	Hausdorff measure of $\Gamma$}.
\end{prop}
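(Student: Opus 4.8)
The plan is to reduce the coverage probability for the approximating polytopal surface $\Gamma_t$ to a lower-dimensional Boolean model calculation, exactly as in the polygonal case (Theorem \ref{thmwksq}) but carried out simultaneously across all $\sigma^-(t)$ faces and in general dimension. First I would observe that, since $A_t^{**}$ is the thin slice lying directly beneath $\Gamma_t$, the event $F_t(A_t^{**},\tPo_t)$ is governed by those grains $B(x,r_t)$, $x \in \tPo_t$, whose centres lie within $r_t$ of $\Gamma_t$; the part of $\tPo_t$ within a fixed distance of $\Gamma_t$ is, locally near each face $H_{t,i}$, a homogeneous Poisson process of intensity $tf_0$ in a half-space bounded by the affine hyperplane containing $H_{t,i}$ (after the rigid motion $\rho_1$, this is literally the graph of the affine function $\phi_t$ restricted to a simplex). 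Slicing the $r_t$-grains with the affine hyperplane gives a $(d-1)$-dimensional SPBM with intensity $\lambda = tf_0 r_t$ and the radius law $Y=(1-U^2)^{1/2}$, $U$ uniform on $[0,1]$ — precisely the set-up already analysed in Lemma \ref{lemhalf3a}. So the contribution of each face $H_{t,i}$ to the log-probability of non-coverage is asymptotically $c_{d,k}|H_{t,i}|e^{-\zeta}$ (the $(d-1)$-area of the face), and summing over $i$ gives $c_{d,k}(\sum_i |H_{t,i}|)e^{-\zeta}$, which converges to $c_{d,k}|\Gamma|e^{-\zeta}$ since the polytopal surface area converges to the $C^2$-surface area of $\Gamma$ (this last convergence follows from $|\phi_t - \phi| = O(t^{-2\gamma})$ together with $|\phi'_t - \phi'| \to 0$ on each simplex, by Lemma \ref{lemtaylor3} and its proof, so the Jacobian factors $\sqrt{1+\|\nabla\phi_t\|^2}$ converge uniformly).

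The key steps, in order, would be: \textbf{(i)} Partition $\Gamma_t$ into \emph{reduced faces} (removing an $(K+\text{const})r_t$-neighbourhood of each face boundary from $H_{t,i}$) and the \emph{seam region} near the union of face boundaries. \textbf{(ii)} Show the seam region near $\partial H_{t,i}$'s is covered with probability tending to $1$: here one covers the seam by $O(r_t^{-b})$ small balls with $b = d-2$ (the seam has codimension $2$ within $\R^d$), each of which has $\mu_t$-content $\gtrsim a s^d$, and applies Lemma \ref{lem2meta}; one checks $b/d - ac < 0$ using $tr_t^d \sim c\log t$ with $c$ read off from (\ref{rt3c}), noting the exponent $(1-1/d)$ in front of $\log(tf_0)$ there makes $ac$ comfortably exceed $b/d = (d-2)/d$. \textbf{(iii)} Establish mutual (asymptotic) independence of the events ``reduced face $H_{t,i}$ is covered'': because the reduced faces are separated by at least $2r_t$ after removing the seam, the relevant grains are disjoint, so these events are genuinely independent, and the probability of $F_t(A_t^{**},\tPo_t)$ factorises over reduced faces up to the seam-coverage correction. \textbf{(iv)} For a single reduced face, apply Lemma \ref{lemhalf3a}: the first display (\ref{0517c2}) gives the limiting coverage probability of the flat piece $\Omega \times\{0\}$, and the second display (\ref{0517b2}) handles the fact that we must cover the thin \emph{slab} $A_t^{**}$ beneath the face rather than just the face itself (this is the $[0,ar_t]$-thickening with $a$ chosen from the $(3/2)r_t$ depth of $A_t^{**}$). \textbf{(v)} Sum the exponents over $i$ and pass to the limit using the surface-area convergence.

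One technical point to handle carefully is that $\Gamma$, and hence $\Gamma_t$, may not lie within a single chart for general $d$ — but by our reduction $\Gamma \subset B(x_1,r(x_1))$ is assumed to lie in one chart with $\rho_1$ the identity, so $\Gamma_t$ is globally the graph of the piecewise-affine $\phi_t$ over $U_t^-$, and the rigid motions needed in step (iv) are just the local ones carrying each affine face into a horizontal hyperplane. Another point: the faces $H_{t,i}$ are simplices, not cubes, so one cannot literally reassemble them into one $(d-1)$-cube and quote Lemma \ref{lemhalf3a} once; instead one applies Lemma \ref{lemhalf3a} to each simplicial face separately (the lemma is stated for an arbitrary closed Riemann-measurable $\Omega \subset \R^{d-1}$, so a simplex is fine) and then multiplies. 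The number of faces grows like $t^{(d-1)\gamma}$, so one must make sure the seam-coverage estimate in step (ii) beats this growth — it does, since the failure probability per small ball is a negative power of $t$ and there are only polynomially many balls, and the Borel–Cantelli-free version (Markov/union bound) in Lemma \ref{lem2meta} already yields a bound $O(t^{b/d - ac + \eps})$ that absorbs the $t^{(d-1)\gamma}$ factor for $\gamma < 1/d$ small enough relative to the gap $ac - b/d$.

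\textbf{The main obstacle} I expect is step (iii)–(iv) bookkeeping: verifying that the \emph{aggregate} error from all $\sigma^-(t) = \Theta(t^{(d-1)\gamma})$ faces — both the seam corrections and the slab-versus-face discrepancy from (\ref{0517b2}) — is $o(1)$ rather than merely $o(1)$ per face. Per-face the error terms from Lemma \ref{lemhalf3a} are $o(1)$, but one needs a uniform (over faces, i.e.\ over translates/rotations and over the slowly-varying face geometry) version, which means re-examining the Mecke-formula integral bound in the proof of Lemma \ref{lemhalf3a} to confirm the implied constants depend only on $d,k,a$ and not on the particular face, and then checking that (number of faces) $\times$ (per-face $o(1)$) still tends to zero — this forces a quantitative rate in (\ref{0517b2}), of the form $O(t^{-\delta})$ for some $\delta>0$, which a closer look at that proof (the factor $(tr_t^d)^{d-2+1/d}(\log t)^{2-d-1/d}$ times a dominated-convergence integral) does provide, with room to spare once $\gamma<1/d$. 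Managing this uniformity, and confirming the polytopal-to-smooth surface-area convergence with the $-Kt^{-2\gamma}$ vertical shift included (which only translates faces and does not affect areas), is where the real work lies; everything else is an application of Lemmas \ref{lemhalf3a}, \ref{lem2meta} and \ref{lemtaylor3} already in hand.
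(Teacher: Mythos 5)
The route you propose is genuinely different from the paper's, and it contains a real gap that you touch on at the end but underestimate. You want to apply Lemma~\ref{lemhalf3a} \emph{separately to each face} $H_{t,i}$ and then multiply over the $\sigma^-(t)=\Theta(t^{(d-1)\gamma})$ faces. But each face shrinks: its $(d-1)$-area is $\Theta(t^{-(d-1)\gamma})$, tending to zero. Lemma~\ref{lemhalf3a} (and the underlying Lemma~\ref{lemHall}) gives the $t\to\infty$ limit for a \emph{fixed} Riemann-measurable $\Omega$ (or an increasing family $\Omega_t$ exhausting a fixed $\Omega$); it says nothing about a domain that contracts to a point, and the per-face contribution $c_{d,k}|H_{t,i}|e^{-\zeta}$ is itself $o(1)$, so the ``limit'' you are invoking per face is $1$. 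What you actually need is a quantitative rate for the coverage probability of a domain of diameter $t^{-\gamma}$, with error bounds uniform enough to survive multiplication over $t^{(d-1)\gamma}$ factors, together with a second-order expansion showing the log-probability per face is $-c_{d,k}|H_{t,i}|e^{-\zeta}(1+o(1))$ with the $o(1)$ uniform in $i$. This is a substantial strengthening of Lemma~\ref{lemhalf3a}, not a bookkeeping refinement, and the Mecke-integral estimate in its proof does not obviously deliver it.

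The paper's proof sidesteps this entirely with the \emph{induced coverage process} (Section~\ref{secinduced}): for $d\ge 3$ the faces are subdivided into $(d-1)$-cubes of the smaller scale $t^{-\gamma'}$, the corresponding blocks $S_{t,i}^+\subset\tA_t$ are shown to be pairwise disjoint (Lemma~\ref{lemblox}), and they are carried by rigid motions into a single flat macroscopic slab $S_t^+=D_t\times[0,4r_t]$ with $|D_t|\to|\Gamma|$. The Poisson restriction/mapping/superposition theorems make the reassembled process $\cU'_t$ a homogeneous Poisson process on $\bH$, so Lemma~\ref{lemhalf3a} is applied \emph{once} to a domain of fixed macroscopic size. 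The shrinking-domain problem never arises. The exceptional regions (near the simplex boundaries, near the cube boundaries within each simplex, and near the reassembly seams $\tQ_t$) are handled separately by Lemmas~\ref{lemcorn3}, \ref{lemtartan} and \ref{lemtQ3}, all via the crude bound of Lemma~\ref{lemcov3}/\ref{lem2meta} — roughly the role you assign to your step (ii), though the paper needs the two scales $t^{-\gamma}$ and $t^{-\gamma'}$ precisely because cubes, not simplices, are what can be reassembled into a flat slab, a point your sketch elides.

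A secondary remark: your surface-area convergence argument (Jacobian factors $\sqrt{1+\|\nabla\phi_t\|^2}\to\sqrt{1+\|\nabla\phi\|^2}$) is the natural one, but the paper does not need it in this form, because $|D_t|$ is constructed directly as the total $(d-1)$-Lebesgue measure of the cubes $I_{t,i}$, and $|D_t|\to|\Gamma|$ is established at (\ref{0912a}) by simpler volume accounting of the removed boundary strips. If you pursue your route, your area-convergence step is fine, but it does not repair the central gap above.
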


 The following corollary
 of Lemma \ref{lemtaylor3}
is a first step towards proving this.

	\begin{lemm}
		\label{corotaylor}
		(a) It is the case that
	$|A_t \setminus \tA_t| = O(t^{-2 \gamma})$ as  $t \to \infty$.

(b)
		Let $K$ be as given in Lemma \ref{lemtaylor3}. Then
	for all $t \geq t_0$ and 
	$x \in U_t^{(r_t)} \times \R$,
		$|B(x,r_t) \cap A_t \setminus \tA_t| \leq 2K \theta_{d-1} r^{d-1}_t t^{- 2 \gamma}$. 
\end{lemm}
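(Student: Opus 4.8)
The plan is to deduce both parts directly from the pointwise estimate $\phi_t(u) \le \phi(u) \le \phi_t(u) + 2Kt^{-2\gamma}$ recorded at (\ref{0126a}) (which is exactly the content of Lemma \ref{lemtaylor3}), combined with Fubini's theorem applied by slicing the relevant subsets of $\R^d$ into vertical fibres over $U_t \subset \R^{d-1}$. Throughout, both $A_t$ and $\tA_t$ are clearly Lebesgue measurable, being the regions under the graphs of $\phi$ and $\phi_t$ over $U_t$.

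For part (a): since $\phi_t \le \phi$ on $U_t$ we have $\tA_t \subset A_t$, and for each $u \in U_t$ the vertical fibre $\{z : (u,z) \in A_t \setminus \tA_t\}$ equals $[0,\phi(u)] \setminus [0,\phi_t(u)]$, which has one-dimensional Lebesgue measure at most $\phi(u) - \phi_t(u) \le 2Kt^{-2\gamma}$ by (\ref{0126a}) (for $t \ge t_0$ one in fact has $\phi_t \ge 0$, but in any case the fibre length is bounded by $\phi(u)-\phi_t(u)$). Fubini then gives $|A_t \setminus \tA_t| = \int_{U_t} |[0,\phi(u)] \setminus [0,\phi_t(u)]|\,du \le 2Kt^{-2\gamma}|U_t|$, and since $U_t \subset U$ with $|U| < \infty$ (recall $U$ is bounded) this is $2K|U|\,t^{-2\gamma} = O(t^{-2\gamma})$, as required.

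For part (b): fix $t \ge t_0$ and write $x = (x',x_d)$ with $x' \in U_t^{(r_t)} \subset \R^{d-1}$ and $x_d \in \R$. Any $(u,z) \in B(x,r_t)$ satisfies $\|u - x'\|^2 + (z-x_d)^2 \le r_t^2$, hence $\|u-x'\| \le r_t$, so the admissible values of $u$ all lie in the closed $(d-1)$-dimensional ball $\overline{B_{(d-1)}(x',r_t)}$, of $(d-1)$-dimensional volume $\theta_{d-1} r_t^{d-1}$. Slicing $B(x,r_t) \cap (A_t \setminus \tA_t)$ over $u$ exactly as in part (a), each vertical fibre has length at most $2Kt^{-2\gamma}$, so Fubini yields $|B(x,r_t) \cap A_t \setminus \tA_t| \le 2K\theta_{d-1} r_t^{d-1} t^{-2\gamma}$, which is the claimed bound.

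There is no substantive obstacle here: the proof is a one-line Fubini computation in each part, with the only genuine inputs being the already-established bound of Lemma \ref{lemtaylor3} and the boundedness of $U$. The mild point deserving a sentence of care in the full write-up is the identification of the vertical fibre of $A_t \setminus \tA_t$ over $u$ with $[0,\phi(u)] \setminus [0,\phi_t(u)]$ and the estimate of its length via (\ref{0126a}). I also note that the hypothesis $x \in U_t^{(r_t)} \times \R$ in part (b) is not strictly needed for the stated upper bound — one may always intersect the $u$-range with $U_t$ at no cost — but it is presumably recorded because it is the regime in which the lemma is subsequently applied.
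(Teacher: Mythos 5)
Your proof is correct and follows essentially the same route as the paper's: both parts are obtained by slicing $A_t\setminus\tA_t$ into vertical fibres over $\R^{d-1}$, bounding each fibre length by $\phi(u)-\phi_t(u)\le 2Kt^{-2\gamma}$ via (\ref{0126a}), and applying Fubini (over $U_t$ for part (a), over $B_{(d-1)}(x',r_t)$ for part (b)). Your side remark that the hypothesis $x\in U_t^{(r_t)}\times\R$ is not logically needed for the stated bound is also accurate.
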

\begin{proof}
	Since $|A_t \setminus \tA_t | = \int_{U_t} (\phi(u) - \phi_t(u)) du$, 
 where this is a $(d-1)$-dimensional Lebesgue integral,
	part (a)
	comes from %the second inequality of 
	(\ref{0126a}).

For (b), let $x \in U_t^{(r_t)} \times \R$, and
let $u \in U_t^{(r_t)}$ be the projection of $x $ onto
the first $d-1$ coordinates. Then if $y \in B(x,r_t) \cap
A_t \setminus \tA_t$, we have $y = (v,s)$ with
$\|v-u\| \leq r_t$ and  $\phi_t(v) < s \leq \phi(v)$.
	Therefore using (\ref{0126a})  yields
$$
	|B(x,r_t) \cap A_t \setminus \tA_t| \leq \int_{B_{(d-1)}(u,r_t)}
	(\phi(v) - \phi_t(v)) dv \leq 2K \theta_{d-1} t^{-2\gamma} r_t^{d-1},
$$
where the integral is a $(d-1)$-dimensional Lebesgue integral.
 This gives part (b). 
	\qed
\end{proof}

\textcolor{\blue}{
The next lemma says that small balls centred in $A$, or very near to $A$,
have almost half of their volume in $A$.}

\begin{lemm}
\label{lemhs}
Let $\eps >0$.
	Then
	for all large enough $t$, all $x \in A_t^*$,
	and all $s \in [r_t/2,r_t]$,
	we have
$|B(x,s)  \cap  \tA_t|> (1- \eps) (\theta_d/2) s^d$.
\end{lemm}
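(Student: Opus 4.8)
The plan is to show that, locally near any point $x\in A_t^*$ and on scale $s\in[r_t/2,r_t]$, the set $\tA_t$ is indistinguishable from $A$ up to a volume correction of strictly lower order, and then to invoke Lemma~\ref{lemMyBk}. Fix $\eps>0$, $x=(u,z)\in A_t^*$ and $s\in[r_t/2,r_t]$. By the definition of $A_t^*$ in (\ref{Pdtdef}) we have $u\in U_t^-\subset U^{(3dt^{-\gamma})}$ and $\phi_t(u)-(3/2)r_t\le z\le\phi(u)$, so combining (\ref{0126a}) with (\ref{0901b}) (and using $r_t=\Theta(((\log t)/t)^{1/d})\to0$ and the finiteness of $K$ from Lemma~\ref{lemtaylor3}) gives $z\in[\eps_1/2,\,2\eps_1]$ for all $t$ large, uniformly in $x$. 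Hence every point of $B(x,s)$ has last coordinate in $(0,3\eps_1)$; and, as explained next, horizontal part in $U_t\subset U$, so that $B(x,s)\subset U\times(0,3\eps_1)$. Within that region, (\ref{0901a}) says $A$ is the subgraph of $\phi$ and $\tA_t$ is the subgraph of $\phi_t$; in particular $x\in A$, and $\tA_t\cap B(x,s)\subset A\cap B(x,s)$ since $\phi_t\le\phi$ by (\ref{0126a}).

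The containment $B_{(d-1)}(u,s)\subset U_t$ uses the separation $r_t=o(t^{-\gamma})$, which holds because $\gamma<1/d$ by (\ref{eqgamma}): for $t$ large (so that $r_t<t^{-\gamma}$), any simplex of the tessellation meeting $B_{(d-1)}(u,s)$ has all of its points within distance $s+(d-1)^{1/2}t^{-\gamma}\le(1+(d-1)^{1/2})t^{-\gamma}<3dt^{-\gamma}$ of $u$, hence at positive distance from $\R^{d-1}\setminus U$, hence inside $U$, hence is one of the simplices composing $U_t$. With this in hand, and since $\phi-\phi_t\le 2Kt^{-2\gamma}$ on $U_t$ by (\ref{0126a}),
\[
|B(x,s)\cap A|-|B(x,s)\cap\tA_t|
=|B(x,s)\cap(A\setminus\tA_t)|
\le\int_{B_{(d-1)}(u,s)}\big(\phi(v)-\phi_t(v)\big)\,dv
\le 2K\,\theta_{d-1}\,t^{-2\gamma}\,s^{d-1}.
\]
(Equivalently one could cite Lemma~\ref{corotaylor}(b), after observing that $B(x,s)\cap A=B(x,s)\cap A_t$ and that the above argument also gives $u\in U_t^{(r_t)}$.)

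To finish I would apply Lemma~\ref{lemMyBk} with $\eps/2$ in place of $\eps$: for all $t$ large enough that $r_t$ is below the $\delta$ it furnishes, $|B(x,s)\cap A|>(1-\eps/2)(\theta_d/2)s^d$. Since $s\ge r_t/2$, the correction term obeys $2K\theta_{d-1}t^{-2\gamma}s^{d-1}\le(4K\theta_{d-1}\,t^{-2\gamma}/r_t)\,s^d$, and $t^{-2\gamma}/r_t=\Theta\big(t^{1/d-2\gamma}(\log t)^{-1/d}\big)\to0$ because $2\gamma>2\gamma_0>1/d$ by (\ref{eqgamma}); hence for all large $t$ it is at most $(\eps/2)(\theta_d/2)s^d$, and therefore $|B(x,s)\cap\tA_t|>(1-\eps)(\theta_d/2)s^d$, uniformly over $x\in A_t^*$ and $s\in[r_t/2,r_t]$, which is the assertion. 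The proof is essentially bookkeeping; the only genuine point — and the one to treat carefully — is the simultaneous use of the two scale separations $t^{-2\gamma}\ll r_t\ll t^{-\gamma}$, which is precisely what the chain $1/(2d)<\gamma_0<\gamma<1/d$ in (\ref{eqgamma}) provides. In particular the lower bound $\gamma>1/(2d)$ is forced here: the $O(t^{-2\gamma})$ vertical error from replacing $\partial A$ by the piecewise-affine graph $\Gamma_t$ must be negligible against the ball volume, which is of order $r_t^d\asymp(\log t)/t$.
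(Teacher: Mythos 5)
Your proof is correct and follows essentially the same route as the paper: lower-bound $|B(x,s)\cap A|$ via Lemma \ref{lemMyBk}, bound the discrepancy $|B(x,s)\cap (A\setminus \tA_t)|$ by the $O(t^{-2\gamma}s^{d-1})$ estimate of Lemma \ref{corotaylor}(b) (which you re-derive inline), and use the scale separation $t^{-2\gamma}=o(r_t)$ from (\ref{eqgamma}). The only difference is presentational: you spell out the containments ($B_{(d-1)}(u,s)\subset U_t$, hence $B(x,s)\cap A=B(x,s)\cap A_t$) that the paper asserts without detail.
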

\begin{proof}
	For all large enough $t$, all
	$x \in A_t^*$ and $s \in [r_t/2,r_t]$,
	we have $B(x,s )\cap A \subset A_t$,
	so $B(x,s) \cap A_t = B(x,s) \cap A$, 
	and hence by Lemma \ref{lemMyBk} and Lemma \ref{corotaylor}(b),
\bea
|B(x,s) \cap  \tA_t| 
	& = & |B(x,s) \cap A_t | - 
 |B(x,s ) \cap A_t \setminus \tA_t  | 
\nonumber 	\\
 & \geq & (1 - \eps/2) (\theta_d /2) s^d   
	- O( r_t^{d-1} t^{-2 \gamma}). 
	\nonumber 
\eea
	Since $t^{-2 \gamma} = o(r_t)$, this
	  gives us the result.
	  \qed
\end{proof}

Recall that $\bH$ and $\cU_t$ were defined just before Lemma \ref{lemhalf3a}.
\textcolor{\blue}{The next lemma provides a bound on the probability
that a region of diameter $O(r_t)$ within $A$ or $A_t^*$
is not fully covered. The bound
is relatively crude, but very useful for dealing 
with `exceptional' regions such as those near the boundaries
of faces in the polytopal approximation.}

\begin{lemm}
\label{lemcov3}
	Let $\delta \in (0,1)$, $K_1 >0$. Then as $t \to \infty$, 
\bea
	\sup_{z \in \R^d}
	\Pr[ F_t(B(z,K_1r_t) \cap A_t^* ,  \tPo_t)^c ]
	= O(t^{\delta - (d-1)/d}),
	\label{0816b}
	\\
\sup_{z \in \R^d} \Pr[ F_t(B(z,K_1r_t) \cap \bH, \cU_t )^c ]
	= O(t^{\delta - (d-1)/d}),
	\label{0816a}
\eea 
and
\bea
\sup_{z \in \R^d}
	\Pr[ F_t(B(z,K_1r_t) \cap A, \Po_t )^c ]
	= O(t^{\delta - (d-1)/d}).
	\label{0816c}
\eea
\end{lemm}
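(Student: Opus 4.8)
The plan is to obtain all three bounds from the general upper-bound lemma, Lemma~\ref{lem2meta}, by choosing the measure and the region appropriately and then simply reading off the exponent. First I would record two facts used throughout: by (\ref{rt3c}) we have $t r_t^d \sim c \log t$ with $c = 2(d-1)/(d f_0 \theta_d) > 0$ (so the basic hypothesis of Lemma~\ref{lem2meta} is met), and in this section $k$ is a fixed natural number. Next, for any $z \in \R^d$ and any set $D$ the region $B(z, K_1 r_t) \cap D$ has diameter at most $2 K_1 r_t$, so it can be covered by at most $N = N(d, K_1)$ balls of radius $r_t$ with centres in $B(z, K_1 r_t) \cap D$; thus $\kappa(B(z, K_1 r_t) \cap D, r_t) = O(1)$ uniformly in $z$, i.e.\ we are always in the case $b = 0$ of Lemma~\ref{lem2meta}, with an implied constant depending only on $d$ and $K_1$.

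For (\ref{0816b}) I would apply Lemma~\ref{lem2meta} with $\mu_t(\cdot) := f_0 |\cdot \cap \tA_t|$ (so that $\tPo_t$ is a Poisson process with intensity measure $t \mu_t$) and $W_t := B(z, K_1 r_t) \cap A_t^*$. Given $\eps > 0$, Lemma~\ref{lemhs} provides $\mu_t(B(x, s)) \ge (1-\eps) f_0 (\theta_d/2) s^d$ for all large $t$, all $x \in A_t^*$ and all $s \in [r_t/2, r_t]$, so hypothesis (ii) holds with $a = (1-\eps) f_0 \theta_d/2$, whence $a c = (1-\eps)(d-1)/d$. Lemma~\ref{lem2meta} then gives $\Pr[F_t(W_t, \tPo_t)^c] = O(t^{-(1-\eps)(d-1)/d + \eps'})$ for any further $\eps' > 0$; the implied constant depends only on $N$, $a$ and $c$, hence is uniform in $z$, and choosing $\eps, \eps'$ small in terms of $\delta$ yields (\ref{0816b}). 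Statements (\ref{0816a}) and (\ref{0816c}) follow by the identical argument: for (\ref{0816a}) take $\mu_t(\cdot) = f_0 |\cdot \cap \bH|$ and $W_t = B(z, K_1 r_t) \cap \bH$, using the elementary bound $|B(x, s) \cap \bH| \ge \theta_d s^d/2$ valid for every $x \in \bH$ (so no $\eps$ is needed, $a = f_0 \theta_d/2$, $a c = (d-1)/d$); for (\ref{0816c}) take $\mu_t = \mu$, $\cR_t = \Po_t$ and $W_t = B(z, K_1 r_t) \cap A$, using Lemma~\ref{lemMyBk} to get $\mu(B(x, s)) > (1-\eps) f_0 \theta_d s^d/2$ for $x \in A$ and $s \in [r_t/2, r_t]$ once $t$ is large enough that $r_t$ is below the threshold $\delta$ supplied by that lemma. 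In each case the exponent comes out to $\delta - (d-1)/d$ after the final choice of constants.

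The one point requiring care — and the only real obstacle — is the uniformity over $z \in \R^d$ asserted by the three suprema. This rests on the observations above that both the covering-number bound (depending only on $d$ and $K_1$) and the ball-volume lower bounds (depending only on $d$, $f_0$ and, via Lemmas~\ref{lemMyBk} and~\ref{lemhs}, the fixed geometry of $A$ and the construction of $A_t^*$, $\tA_t$) are independent of $z$, together with the fact — visible from the proof of Lemma~\ref{lem2meta} — that its implied constant is controlled by $N$, $a$, $b$, $c$ alone. With that in hand the three estimates are routine applications of the machinery already developed, and I would present them as a single argument instantiated three times rather than repeating the computation.
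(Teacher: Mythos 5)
Your proof is correct and takes essentially the same approach as the paper: reduce uniformity in $z$ to a pointwise bound, then apply Lemma~\ref{lem2meta} with $b=0$, $c=2(d-1)/(d f_0 \theta_d)$ from (\ref{rt3c}), and $a$ close to $f_0\theta_d/2$ supplied by Lemma~\ref{lemhs}, the half-space bound, or Lemma~\ref{lemMyBk} respectively. The paper spells out only the case (\ref{0816b}) and says the others are similar; you have simply filled in the two omitted instantiations with the same choices of $\mu_t$, $W_t$, $a$ as the paper would use.
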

\begin{proof}
	\textcolor{\blue}{
	For (\ref{0816b}), it suffices to prove,  for any
	$(z_t)_{t >0}$ with $z_t \in \R^d $ for each $t $, that
	\bea
	\Pr[ F_t(B(z_t,K_1r_t) \cap A, \Po_t )^c ] = O(t^{\delta - (d-1)/d})
	~~~ {\rm as} ~ t \to \infty.
	\label{1128a}
	\eea
	To see this, we apply Lemma \ref{lem2meta}, 
	taking $\mu_t= f_0 | \cdot \cap 
	\tA_t|$, taking $W_t= B(z_t,K_1 r_t) \cap A_t^*$, with 
	$c= 2 (d-1)/(d \theta_d f_0)$ (using (\ref{rt3c})), $b=0$, and
	$a= (\theta_d f_0/2)(1-\delta)$
	(using Lemma \ref{lemhs}). Then
	$(b/d)- ac = - (1- \delta)(d-1)/d$. Application of Lemma \ref{lem2meta},
	taking $\eps = \delta/d$,
	shows that (\ref{1128a}) holds, and hence
	(\ref{0816b}).}

	The proofs of (\ref{0816a}) and (\ref{0816c})  are
	similar; for (\ref{0816c})
	we  have to use (\ref{lemMyBk}) directly, rather than using
	Lemma \ref{lemhs}. We omit the details. 
	\qed
\end{proof}

Let $\partial_{d-2} \Gamma_t  := \cup_{i=1}^{\sigma^-(t)} \partial_{d-2}
H_{t,i}$,
the union of all $(d-2)$-dimensional faces  in the boundaries 
of the faces making up
 $\Gamma_t$
\textcolor{\blue}{(the $H_{t,i}$ were defined just after (\ref{0126a})).
	Recall from (\ref{eqgamma}) that}
$\gamma' \in (\gamma,1/d)$.
For each $t >0$ 
define the sets $Q_t, Q^+_t \subset \R^d$  by
 \bea
Q_t  :=  ( \partial_{d-2} \Gamma_t \oplus B(o,7r_t)) 
;
~~~~~~~
Q^+_t  :=   
 (\partial_{d-2} \Gamma_t \oplus B(o,8dt^{- \gamma'}))
 \cap A_t^{*}.
 \label{Qplusdef}
\eea
\textcolor{\blue}{
Thus $Q_t^+$ is
a region near the corners of our polygon approximating
$\partial A$ (if $d=2$) or near the boundaries of the
faces of our polytopal surface approximating
$\partial A$ (if $d  \geq 3$). In the next lemma 
we show that $Q_t^+$ is  fully covered with high probability.} 
\begin{lemm}
\label{lemcorn3}
It is the case that
	$\Pr[F_t( Q^+_t, 
	 \tPo_t)] \to 1$ 
	as $t \to \infty$.
\end{lemm}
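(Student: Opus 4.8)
The plan is to derive Lemma \ref{lemcorn3} as an instance of the general upper bound of Lemma \ref{lem2meta}. I would apply that lemma with $\cR_t := \tPo_t = \Po_t \cap \tA_t$ (a Poisson process on $\R^d$ with intensity measure $t\mu_t$, where $\mu_t(\cdot) := f_0 |\cdot \cap \tA_t|$, by (\ref{Pdtdef})), with $W_t := Q_t^+$, and with the present $r_t$, which by (\ref{rt3c}) satisfies $tr_t^d \sim c \log t$ for $c := 2(d-1)/(d\theta_d f_0)$. Two estimates are required: the lower volume bound of hypothesis (ii), and the covering-number bound of hypothesis (i).

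Hypothesis (ii) is immediate from what is already proved. Since $Q_t^+ \subset A_t^*$ by (\ref{Qplusdef}), Lemma \ref{lemhs} tells us that, for any preassigned $\eps \in (0,1)$, for all large enough $t$, all $x \in Q_t^+$ and all $s \in [r_t/2,r_t]$ we have $\mu_t(B(x,s)) = f_0|B(x,s)\cap\tA_t| > (1-\eps)(f_0\theta_d/2)s^d$. So hypothesis (ii) holds with $a := (1-\eps)f_0\theta_d/2$, giving $ac = (1-\eps)(d-1)/d$.

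The substantive step is the covering-number bound, and this is the step I expect to be the main obstacle, because it is here that the particular exponents $\gamma, \gamma'$ of (\ref{eqgamma}) have to be used carefully. I would show $\kappa(Q_t^+, r_t) = O(r_t^{-b})$ with $b := (d-1) + d(\gamma - \gamma')$; the essential point is that $b < d-1$. Note that $\partial_{d-2}\Gamma_t$ is a union of $\Theta(t^{(d-1)\gamma})$ faces of dimension $d-2$, each of diameter $O(t^{-\gamma})$ and $(d-2)$-dimensional measure $O(t^{-(d-2)\gamma})$, so its $8dt^{-\gamma'}$-neighbourhood in $\R^d$ has volume of order $t^{(d-1)\gamma}t^{-(d-2)\gamma}(t^{-\gamma'})^2 = t^{\gamma-2\gamma'}$; used naively this would force the constraint $2\gamma'-\gamma > 1/d$, which is not part of (\ref{eqgamma}). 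The remedy is the intersection with $A_t^*$: since $\gamma > \gamma_0 > 1/(2d)$ we have $t^{-2\gamma} = o(r_t)$, so (\ref{0126a}) shows that for large $t$ the set $A_t^*$ lies in an $O(r_t)$-thick slab about the graph of the uniformly Lipschitz piecewise-affine function $\phi_t$. Hence, for each $(d-2)$-face $G$ of $\Gamma_t$, the piece $(G\oplus B(o,8dt^{-\gamma'}))\cap A_t^*$ lies in an $O(r_t)$-neighbourhood of a Lipschitz graph over a subset of $\R^{d-1}$ of volume $O(t^{-(d-2)\gamma}t^{-\gamma'})$, so it can be covered by $O(t^{-(d-2)\gamma}t^{-\gamma'}r_t^{-(d-1)})$ balls of radius $r_t$; summing over the $\Theta(t^{(d-1)\gamma})$ faces $G$ gives $\kappa(Q_t^+,r_t) = O(t^{\gamma-\gamma'}r_t^{-(d-1)})$, and since $tr_t^d \to \infty$ (so $t \ge r_t^{-d}$ for large $t$) this is $O(r_t^{d(\gamma'-\gamma)-(d-1)}) = O(r_t^{-b})$. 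Thus hypothesis (i) holds, with this $b \in (0,d-1)$.

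Feeding these into Lemma \ref{lem2meta} gives $\Pr[F_t(Q_t^+,\tPo_t)^c] = O(t^{(b/d)-ac+\eps'})$ for every $\eps' > 0$, where $(b/d)-ac = (\gamma-\gamma') + \eps(d-1)/d$. Since $\gamma < \gamma'$, one may fix $\eps$ (in the application of Lemma \ref{lemhs}) and then $\eps'$ small enough that $\eps(d-1)/d + \eps' < \gamma'-\gamma$, so that the exponent is negative and $\Pr[F_t(Q_t^+,\tPo_t)^c] \to 0$, i.e. $\Pr[F_t(Q_t^+,\tPo_t)] \to 1$, as required.
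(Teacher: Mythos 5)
Your proof is correct and is essentially the paper's argument: the paper covers $Q_t^+$ by $O(t^{\gamma-\gamma'}r_t^{1-d})$ balls of radius $3r_t$ (the same covering geometry as your bound $\kappa(Q_t^+,r_t)=O(t^{\gamma-\gamma'}r_t^{1-d})$, obtained by covering each $(d-2)$-face's $t^{-\gamma'}$-neighbourhood within the $O(r_t)$-thick slab $A_t^*$) and then applies the per-ball bound of Lemma \ref{lemcov3} (itself a $b=0$ instance of Lemma \ref{lem2meta} resting on Lemma \ref{lemhs}) together with a union bound, arriving at the same $O(t^{\eps+\gamma-\gamma'})$ decay. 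Feeding the covering count directly into Lemma \ref{lem2meta} with $b=(d-1)+d(\gamma-\gamma')$, as you do, is just a repackaging of the same two ingredients, so the argument goes through as you describe.
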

\begin{proof}
	Let $\eps \in (0,  \gamma'- \gamma)$. 
	For each face $H_{t,i}$ of $\Gamma_t$,  $1 \leq i \leq \sigma^-(t)$,
	we claim that we can
	take $x_{i,1},\ldots, x_{i,k_{t,i}} \in  H_{t,i}$
	 with $\max_{1 \leq i \leq \sigma^-(t)} k_{t,i}
	= O(t^{-\gamma'} t^{-(d-2)\gamma} /r_t^{d-1} )$,
	such that
	\bea
	( (\partial_{d-2} H_{t,i})
	\oplus B(o, 9dt^{-\gamma'}) )
	\cap H_{t,i}
	\subset \cup_{j=1}^{k_{t,i}} 
	B(x_{i,j},r_t).
	\label{0206a}
	\eea
	Indeed, we can cover $\partial_{d-2} H_{t,i}$
	by $O(t^{(d-2)(\gamma'-\gamma)})$ balls of radius
	$t^{-\gamma'}$, denoted $B_{i,\ell}^{(0)}$ say.
	\textcolor{\blue}{ 
	Replace each  ball $B_{i,\ell}^{(0)}$
	with a %concentric
	ball $B'_{i,\ell}$ with the same centre as
	$B_{i,\ell}^{(0)}$ and with 
	 radius  $ 10 d t^{-\gamma'}$}. Then cover $B'_{i,\ell} \cap
	 H_{t,i}$
	 by $O((t^{-\gamma'} /r_t)^{d-1})$ balls of radius $r_t$.
	 Every point in the set on the left side of (\ref{0206a})
	 lies in one of these balls of radius $r_t$, and the claim
	 follows.

	 Given $x \in Q_t^+$, write $x=(u,h)$ with $u \in U_t^-$, $h \in \R$,
	 and set $y := (u, \phi_t(u))$. Then $\|y-x\| \leq 2 r_t$ and
	 there exists $i$ with $y \in H_{t,i}$. Take such an $i$.
	 Then $\dist(y, \partial_{d-2}H_{t,i})= \dist
	 (y, \partial_{d-2}\Gamma_t) \leq 9d t^{- \gamma'}$,
	 so $\|y- x_{i,j} \| \leq r_t$ for some  
	 $j \leq k_{t,i}$ by (\ref{0206a}). Therefore
	$Q_t^+ 
	\subset \cup_{i=1}^{\sigma^-(t)} \cup_{j=1}^{k_{t,i}}
	B(x_{i,j},3r_t)$, so by the union bound,
\bean
	\Pr [ F_t( Q^+_t ,  \tPo_t) ^c]
	\leq 
		\sum_{i=1}^{\sigma^-(t)} \sum_{j=1}^{k_{t,i}}
	\Pr[
	 F_t( B(x_{i,j}, 3 r_t) \cap A_t^*  ,
	  \tPo_t )^c]. 
	 \eean
	By Lemma \ref{lemcov3} 
	and the fact that $\sigma^-(t) = O(t^{(d-1) \gamma})$, 
	 the estimate in the last display  is 
	 $
	O(t^{\gamma - \gamma'} r_t^{1-d} 
	t^{\eps - (d-1)/d}) = O(t^{\eps +  \gamma -\gamma'}),
$
which tends to zero.
\qed
\end{proof}

\subsection{{\bf \textcolor{\blue}{
Induced coverage process and
	proof of Proposition \ref{lemsurf}}}}
\label{secinduced}

We shall conclude the proof of Proposition \ref{lemsurf} by means of
a device we refer to as the {\em induced coverage process.}
\textcolor{\blue}{This is obtained by taking the parts of $\tA_t$
near the flat parts of $\Gamma_t$, along with any  Poisson
points therein, and rearranging them into a flat
region of macroscopic size.}
The definition of this is somewhat simpler for $d=2$,
so 
for presentational purposes,
we shall  consider this case first. 

Suppose that $d=2$.  The closure of the set $\Gamma_t \setminus Q_t$
is a union of closed  line segments (`intervals') 
with total length $|\Gamma_t| - 14 r_t \sigma^-(t)$,
which tends to $| \Gamma|$ as $t \to \infty$
 because  $ \sigma^-(t) = O(t^{\gamma})$ and
$\gamma < 1/2$. Denote these intervals by $I_{t,1},\ldots,I_{t,\sigma^-(t)}$,
 taking $I_{t,i}$ to be a 
sub-interval of $H_{t,i}$ for $1 \leq i \leq \sigma^-(t)$.

For $1 \leq i \leq \sigma^-(t)$ define the closed
 rectangular strip (which we call a `block') 
$S_{t,i}$ (respectively  $S^+_{t,i}$)
of dimensions $|I_{t,i} | \times 2 r_t$ (resp. 
$|I_{t,i}| \times 4 r_t$)
 to consist  
of those locations inside $A_t$ lying within perpendicular distance at most
 $2r_t$ (resp., at most $4 r_t$) of $I_{t,i}$. That is,
\bean
S_{t,i} := I_{t,i} \oplus [o,2 r_t e_{t,i}],
~~~~
S^+_{t,i} := I_{t,i} \oplus [o,4 r_t e_{t,i}],
\eean
where $e_{t,i}$ is a unit vector perpendicular to $I_{t,i}$
pointing inwards into $\tA_t$ from $I_{t,i}$. 
Define the long interval $D_t := [0,  |\partial A_t| - 14 r_t  \sigma^-(t) ]$,
and the
long horizontal rectangular strips 
\bean
S_t:= D_t 
\times [0,2 r_t]; ~~~~~~
S^+_t:= D_t 
\times [0, 4r_t].
\eean
Denote the lower boundary of $S_t$ (that is, the set $D_t \times \{0\}$)
by $L_t$.

We shall verify in Lemma \ref{lemblox}
that $S_{t,1}^+,\ldots, S_{t,\sigma^-(t)}^+$ are disjoint.
Now choose   rigid motions $\rho_{t,i}$
of the plane, $1 \leq i \leq \sigma^-(t)$, such that
after applications of these rigid motions the blocks $S_{t,i}$ 
are lined up end to end to form the strip $S_t$, with the long edge $I_{t,i}$
of the block transported to part of the lower boundary $L_t$  of $S_t$.
In other words, choose the rigid motions so that the sets 
 $ \rho_{t,i}(S_{t,i})$, 
$ 1 \leq i \leq \sigma^-(t)$,
have pairwise disjoint interiors and their union is 
$S_t$, and also  $\rho_{t,i}(I_{t,i}) \subset L_t$ for 
$1 \leq i \leq \sigma^-(t)$ (see Figure 4).  This also implies that
$\cup_{i=1}^{\sigma^-(t)} \rho_{t,i}(S^+_{t,i}) = S_t^+$.

 \begin{figure*}[!h]
\label{fig1}
\center
\includegraphics[width=8cm]{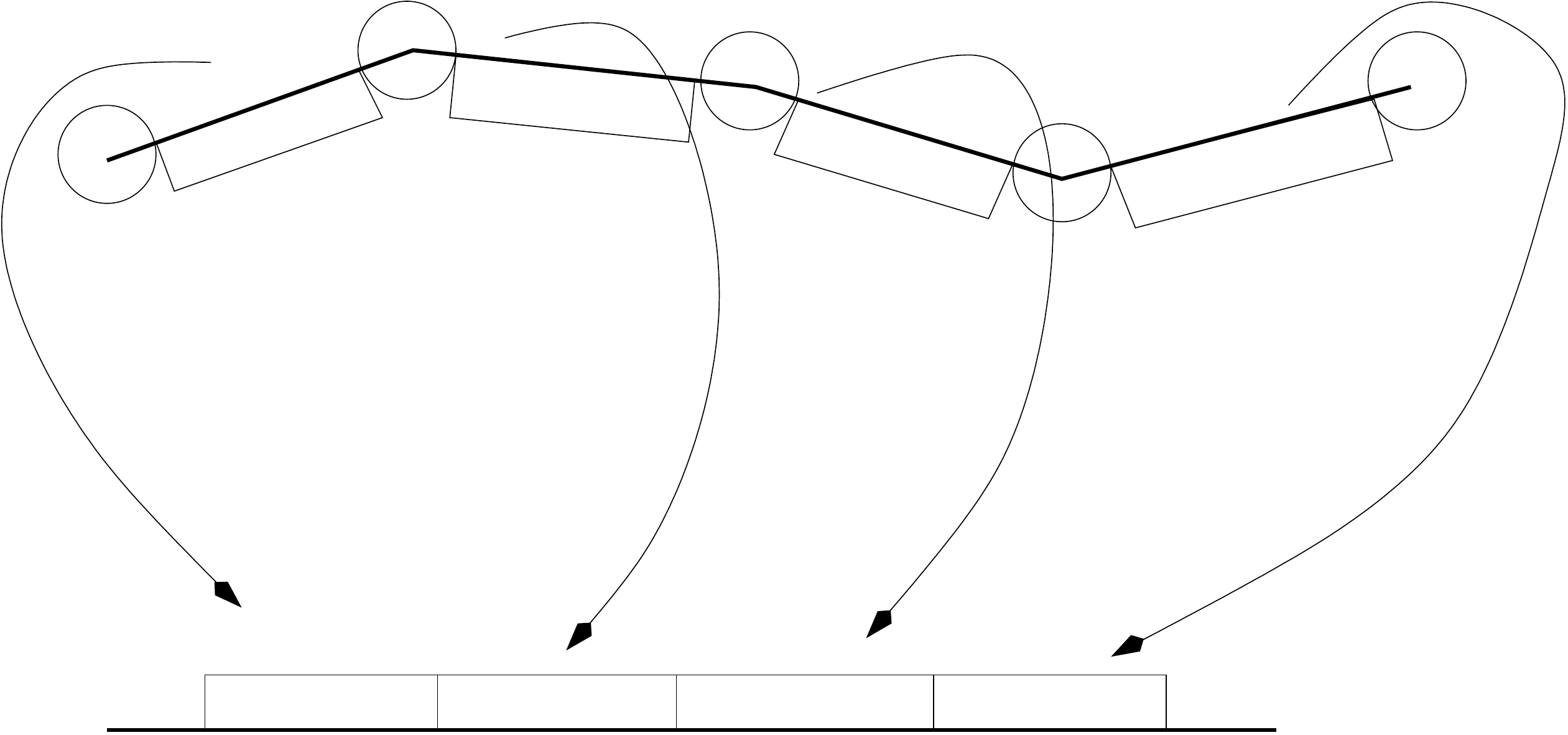}
\caption{The induced coverage process in $2$ dimensions.
	 The upper thick line is part of the set $\Gamma_t$,
	 while the lower thick line is part of the line $L_t$.
	 We show four of
	 the blocks $S_{t,i}$ next to the upper thick  line,
	 and four of the blocks
	 making up $S_t$ next to the lower thick line.
	 The arrows represent the rigid motions $\rho_{t,i}$.
	 The disks shown are part of $Q_t$.}
\end{figure*}

By the restriction, mapping and superposition theorems for Poisson
processes (see e.g. \cite{LP}),
the point process 
$\Po''_t:= \cup_{i=1}^{\sigma^-(t)} \rho_{t,i} (\Po'_t \cap S^+_{t,i}) $ is 
 a homogeneous Poisson process of intensity $t f_0$ in the
long strip $S_t^+$.

Now suppose   $d \geq 3$. To describe 
the induced coverage process in this case, we first
 define a `tartan' (plaid) region $T_t$ as follows.
Recall that  $\gamma' \in (\gamma,1/d)$.
Partition each face $H_{t,i}$, $1 \leq i \leq \sigma^-(t)$ into
a collection of
$(d-1)$-dimensional cubes of side $t^{- \gamma'}$ contained in
$H_{t,i}$, 
%dented $I_{t,1},I^{t,2}, \ldots, I_{t,\kapp
together with a boundary region contained within $\partial_{d-2} H_{t,i}
\oplus B(o,d t^{- \gamma'})$.
   Let $T_t$ be
the union (over all faces)  of the boundaries of the $(d-1)$-dimensional
cubes in this partition (see Figure 5).
Set
%$T'_t := T_t \oplus B(o,7r_t) \cap B_t$, and
$T^+_t := [ T_t \oplus B(o,11r_t) ] \cap A_t^{**}$.

\begin{figure*}[!h]
\label{fig2}
\center
\includegraphics[width=8cm]{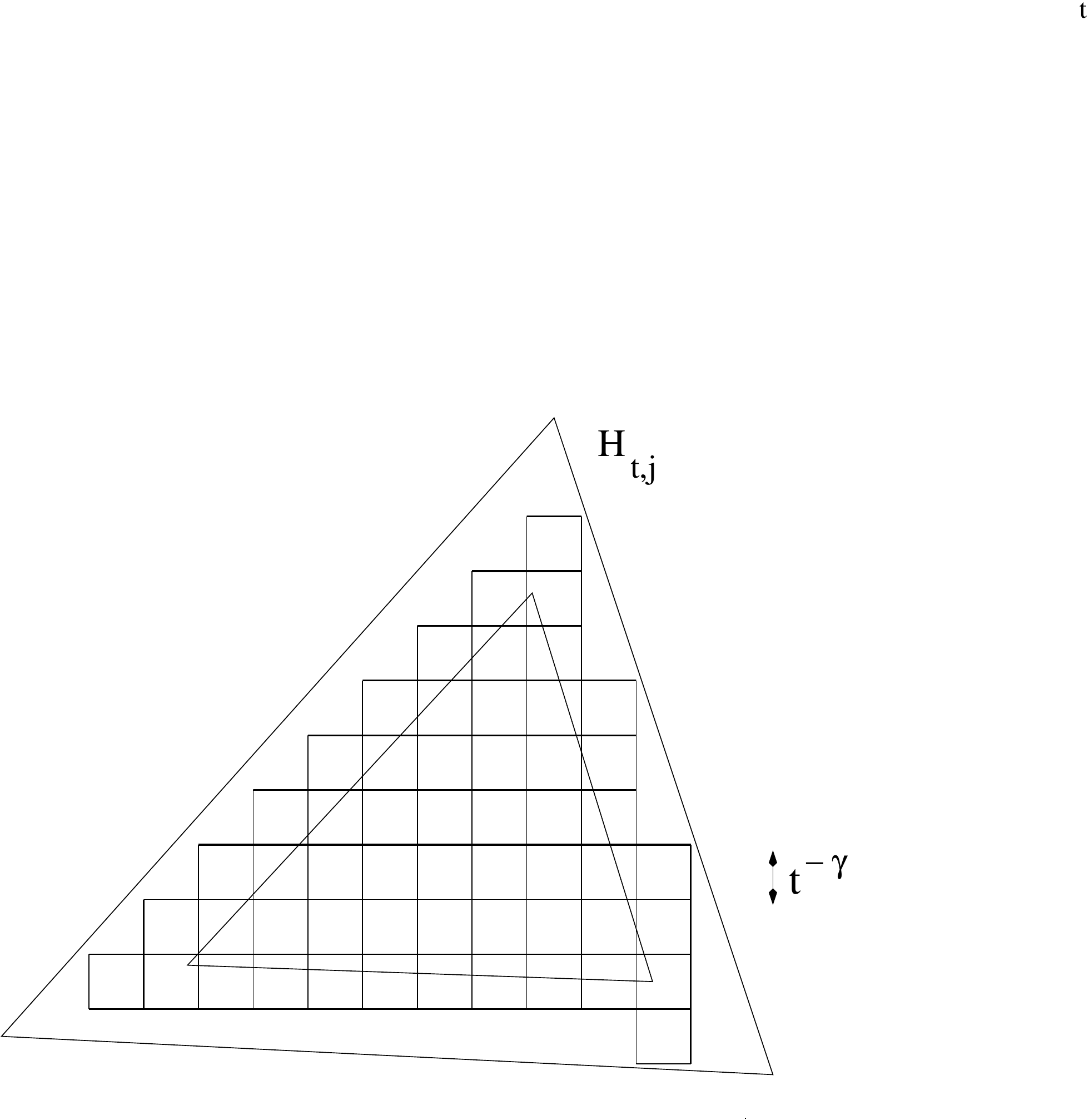}
\caption{Part of the `tartan' region $T_t$ when $d=3$.
	The outer triangle represents one face $H_{t,i}$,
	and the part of $T_t$ within $H_{t,i}$ is
	given by the union of the boundaries of the
	squares. The triangle has sides of length
	$\Theta(t^{-\gamma})$, while the squares
	have sides of length $t^{-\gamma'}$. The region
	between the two triangles is part of $Q_t^+$. It
	has thickness $8dt^{-\gamma'}$ (the constant $8d$ is
	not drawn to scale),
	and covers the whole boundary region
	not covered by the squares.}
\end{figure*}

  Enumerate the $(d-1)$-dimensional cubes in the above subdivision
  of the faces $H_{t,i}, 1 \leq i \leq \sigma^-(t)$, as
  $I^+_{t,1},\ldots, I^+_{t,\lambda(t)}$. For $1 \leq i \leq \lambda(t)$
  let $I_{t,i}:= I^+_{t,i} \setminus (T_t \oplus B(o, 7r_t))$, which is a 
  $(d-1)$-dimensional cube 
of side length $t^{-\gamma'} -14r_t$
  with the same centre and orientation as $I_{t,i}^+$.
 We claim that the total $(d-1)$-dimensional Lebesgue measure of these
 $(d-1)$-dimensional cubes
 satisfies
\bea
\lim_{t \to \infty}( | 
%(Q_t \cup T_t^+)|  )
 \cup_{i=1}^{\lambda(t)} I_{t,i} | )
= 
|\Gamma|.
\label{0912a}
\eea
Indeed,  for $1 \leq i \leq \lambda(t)$ we have
$|I_{t,i}|/|I_{t,i}^+| = ((t^{-\gamma'} - 14r_t)/t^{-\gamma'})^{d-1}$,
which tends to one 
since $r_t = O( ( (\log t)/t )^{1/d })$ by (\ref{rt3c}), 
and  $\gamma' < 1/d$, so the proportionate amount removed
near the boundaries of the $(d-1)$-dimensional cubes $I^+_{t,i}$ to
give $I_{t,i}$ vanishes.
Also  the 
`boundary part' of a face
$H_{t,i}$ that is not contained in any of the 
of the $I^+_{t,j}$s has $(d-1)$-dimensional Lebesgue measure 
that is $O(t^{-(d-2)\gamma} t^{-\gamma'})$, so that
the total $(d-1)$-dimensional measure of 
the removed  regions near the boundaries of the faces is
$O(t^{(d-1)\gamma} \times t^{-(d-2)\gamma} t^{-\gamma'})=
O(t^{\gamma -\gamma'} )$, which tends to zero.
Thus the claim (\ref{0912a}) is justified.

For $1 \leq i \leq \lambda(t)$ define closed,
 rectilinear (but not aligned with the axes),
 $d$-dimensional cuboids  (which we call `blocks') 
$S_{t,i}$ (respectively  $S^+_{t,i}$),
as follows. 
Let $S_{t,i}$ (respectively $S^+_{t,i}$)
be the closure of the set of those locations $x \in \tA_t$ such
that $\dist(x,I_{t,i}) \leq  2 r_t $ (resp.,
$\dist(x,I_{t,i}) \leq  4 r_t $) and such that there exists $y \in I_{t,i}^o$
(the relative interior of $I_{t,i}$) 
satisfying $\|y-x\| = \dist(x, I_{t,i})$. For example, if $d=3$ then 
$S_{t,i}$ (resp. $S^+_{t,i}$) is a cuboid 
of dimensions $(t^{- \gamma'} - 14r_t)  \times (t^{-\gamma'} -14r_t) 
 \times 2 r_t$ 
 (resp.  $ (t^{- \gamma'} - 14r_t)  \times (t^{-\gamma'} -14r_t) \times 4 r_t
$)
 with $I_{t,i}$ as its base.
We shall verify in Lemma \ref{lemblox}
that $S_{t,1}^+,\ldots, S_{t,\sigma^-(t)}^+$ are disjoint.

Define a  region $D_t \subset \R^{d-1}$ that is approximately a rectilinear
hypercube with lower
left corner at the origin, and obtained as the union of $\lambda(t)$
disjoint $(d-1)$-dimensional 
cubes of  side $t^{-\gamma'} - 14 r_t$. We can and do arrange
that 
$D_t \subset [0,|\Gamma_t|^{1/(d-1)}+ t^{- \gamma}]^{d-1}$ for each $t$,
and $|D_t| \to |\Gamma|$ as $t \to \infty$.
Define the flat slabs
$$
S_t:= D_t  \times [0, 2 r_t]; ~~~~~~
S^+_t:= D_t  \times [0, 4r_t],
$$
and denote the lower boundary of $S_t$ (that is, the set
$D_t \times \{0\}$) by $L_t$.

Now choose   rigid motions $\rho_{t,i}$
of $\R^d$, $1 \leq i \leq \lambda(t)$, such that
under applications of these rigid motions the blocks $S_{t,i}$ 
are reassembled to form the slab $S_t$, with the square face
$I_{t,i}$
of the $i$-th block transported to part of the lower boundary $L_t$  of $S_t$.
In other words, choose the rigid motions so that the sets 
 $ \rho_{t,i}(S_{t,i})$, 
$ 1 \leq i \leq \lambda(t)$,
have pairwise disjoint interiors and their union is 
$S_t$, and also  $\rho_{t,i}(I_{t,i}) \subset L_t$ for 
$1 \leq i \leq \lambda(t)$.  This also implies that
$\cup_{i=1}^{\lambda(t)} \rho_{t,i}(S^+_{t,i}) = S_t^+$.

By the restriction, mapping and superposition theorems,
the point process $\Po''_t:= \cup_{i=1}^{\lambda(t)} \rho_{t,i} 
(\tPo_t \cap S^+_{t,i}) $ is 
 a homogeneous Poisson process of intensity $t$ in the flat slab
 $S_t^+$. 

 In both cases ($d=2$ or $d \geq 3$),
we extend $\Po''_t$ to a Poisson process $\cU'_t$
 in $ \bH := \R^{d-1} \times [0,\infty)$ as follows.
Let $\Po'''_t$ be a Poisson process of intensity $t f_0$ in
 $\bH \setminus S^+_t$,
independent of $\tPo_t$, and  set 
\bea
\cU'_t:= \Po''_t \cup \Po'''_t.
\label{Uprdef}
\eea
Then
$\cU'_t$ is a homogeneous Poisson process of intensity $tf_0$ in 
$\bH$.
 We call the collection of balls of radius $r_t$
centred on the points of this point process the {\em induced coverage process}.

 \textcolor{\blue}{The next lemma says that in $d \geq 3$,
 the `tartan' region
 $T_t^+$ is covered with high probability. It is needed because locations
 in $T_t^+$ lie near the boundary of blocks $S_{t,i}$, so that
 coverage of these locations by $\tPo_t $ does not necessarily
 correspond to coverage of their images in the induced coverage process.}

\begin{lemm}
	\label{lemtartan}
 Suppose $d \geq 3$. Then
	$ \lim_{t \to \infty} \Pr[ F_t(T^+_t, \tPo_t) ] =1.$  
\end{lemm}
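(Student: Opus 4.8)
The plan is to mimic the proof of Lemma \ref{lemcorn3}, using Lemma \ref{lemcov3} together with a covering argument, but now accounting for the fact that $T_t^+$ is a union of thin slabs above the `grid lines' $T_t$ within the faces $H_{t,i}$. First I would observe that $T_t \cap H_{t,i}$ is the union of the boundaries of $O((t^{-\gamma}/t^{-\gamma'})^{d-1}) = O(t^{(d-1)(\gamma'-\gamma)})$ small $(d-1)$-dimensional cubes of side $t^{-\gamma'}$, and each such $(d-2)$-dimensional boundary piece has $(d-2)$-measure $O(t^{-(d-2)\gamma'})$. Hence $T_t$ itself has total $(d-2)$-dimensional measure $O(t^{(d-1)\gamma} \cdot t^{(d-1)(\gamma'-\gamma)} \cdot t^{-(d-2)\gamma'}) = O(t^{\gamma'(d-1) + \gamma - \gamma'})$. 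Wait---I should be more careful: there are $\sigma^-(t) = O(t^{(d-1)\gamma})$ faces, each contributing $(d-2)$-measure $O(t^{(d-1)(\gamma'-\gamma)} \cdot t^{-(d-2)\gamma'}) = O(t^{\gamma' - \gamma(d-1) - \gamma'(d-2)}\cdot t^{\gamma'(d-1)})$; in any case the relevant bound I actually need is on the \emph{number of balls of radius $r_t$} needed to cover $T_t^+$, which I will compute directly.

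Next I would cover $T_t^+$ by balls of radius $r_t$: since $T_t^+ \subset (T_t \oplus B(o,11r_t)) \cap A_t^{**}$, and $T_t$ is, within each face, a union of $(d-2)$-dimensional cube-boundaries, I can cover each such $(d-2)$-face of $(d-2)$-measure $\Theta(t^{-(d-2)\gamma'})$ by $O((t^{-\gamma'}/r_t)^{d-2})$ balls of radius $r_t$, and then thicken: the slab of thickness $\Theta(r_t)$ above each such $(d-2)$-piece is covered by $O((t^{-\gamma'}/r_t)^{d-2})$ balls of radius $3r_t$ (the extra $r_t$-thickness in the normal direction is a single layer). Multiplying: per face we need $O(t^{(d-1)(\gamma'-\gamma)} \cdot (t^{-\gamma'} r_t^{-1})^{d-2})$ balls, and summing over $\sigma^-(t) = O(t^{(d-1)\gamma})$ faces gives $O(t^{(d-1)\gamma'} \cdot (t^{-\gamma'}r_t^{-1})^{d-2}) = O(t^{\gamma'} r_t^{-(d-2)} \cdot t^{\gamma'(d-2)} \cdot t^{-\gamma'(d-2)}) = O(t^{\gamma'} r_t^{2-d})$. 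Since $r_t = \Theta(((\log t)/t)^{1/d})$ by (\ref{rt3c}), this is $O(t^{\gamma' + (d-2)/d} (\log t)^{(2-d)/d})$. Then by the union bound and Lemma \ref{lemcov3} (the estimate (\ref{0816b}), valid since $T_t^+ \subset A_t^*$ for large $t$),
\[
\Pr[F_t(T_t^+,\tPo_t)^c] = O\!\left( t^{\gamma' + (d-2)/d} (\log t)^{(2-d)/d} \cdot t^{\delta - (d-1)/d} \right)
= O\!\left( t^{\gamma' - 1/d + \delta} (\log t)^{(2-d)/d} \right),
\]
and since $\gamma' < 1/d$ by (\ref{eqgamma}), choosing $\delta \in (0, 1/d - \gamma')$ makes the exponent of $t$ negative, so this tends to zero.

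The main obstacle---or rather, the step requiring the most care---is getting the two-scale counting of balls right: I must correctly track the interplay between the coarse scale $t^{-\gamma}$ (number and size of faces $H_{t,i}$), the intermediate scale $t^{-\gamma'}$ (size of the grid cells making up the tartan), and the fine scale $r_t$ (radius of the covering balls, hence the per-cell ball count), and verify that the product of these counts, multiplied by the Lemma \ref{lemcov3} probability bound $t^{\delta - (d-1)/d}$, still gives a quantity tending to zero. The key inequality that makes it work is $\gamma' < 1/d$, exactly as in the proof of Lemma \ref{lemcorn3}; the tartan was constructed with cells of side $t^{-\gamma'}$ precisely so that its total $(d-2)$-content is small enough for this argument. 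The case $d=2$ is vacuous (there is no tartan), so I would simply note that the hypothesis $d \geq 3$ is assumed and the above covering argument applies; one could also phrase the whole thing so that for $d=2$ the statement is trivially interpreted, but since the lemma explicitly restricts to $d \geq 3$ there is nothing to check there.
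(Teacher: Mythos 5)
Your proposal is correct and follows essentially the same route as the paper: cover $T_t^+$ by $O(t^{\gamma'}r_t^{2-d})$ balls of radius $O(r_t)$ (counting $O(t^{(d-1)\gamma'})$ small $(d-1)$-cubes, each needing $O((t^{-\gamma'}r_t^{-1})^{d-2})$ balls for its boundary), then apply the bound \eqref{0816b} from Lemma~\ref{lemcov3} with a union bound and use $\gamma'<1/d$ from \eqref{eqgamma}. The abandoned opening digression about the $(d-2)$-measure of $T_t$ is irrelevant noise, but the argument you actually execute matches the paper's proof.
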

\begin{proof}
	The total number of the $(d-1)$-dimensional cubes
	 (in the whole of $\Gamma_t$) in the partition described above
	is $O(t^{(d-1) \gamma'})$, and for each of these $(d-1)$-dimensional
	cubes the number of balls of
radius $r_t$ required
	to cover the boundary of the cube
	is $O((t^{-\gamma'} r_t^{-1})^{d-2})$.
	Thus we can take $x_{t,1}, \ldots, x_{t,k_t} \in \R^d$, with
	$k_t = O(t^{\gamma'} r_t^{2-d})$,  such that
	 $T_t \subset \cup_{i=1}^{k_t} B(x_{t,i},r_t)$.

	Then $T_t^+ \subset \cup_{i=1}^{k_t}
	B(x_{t,i},12r_t) \cap A_t^*$.  
	Let $\eps \in (0,(1/d)- \gamma')$.
	By Lemma \ref{lemcov3},
\bean
\Pr [ F_t(T^+_t,  \tPo_t) ^c] \leq
	\sum_{i=1}^{k_t} \Pr[F_t(B(x_{t,i},12r_t) \cap A_t^*, 
	 \tPo_t )^c]
	\\
	=
	O(t^{ \gamma'}  r_t^{2-d}  
	t^{\eps - (d-1)/d}) = O(t^{\eps + \gamma' -1/d}),
\eean
which tends to zero.
\qed
\end{proof}

So as to be able to treat the cases $d=2$ and $d \geq 3$ together,
for $d=2$
we define $\lambda(t):= \sigma^-(t)$ and $T_t^+ := T_t := \emptyset$
 (these were previously defined only for $d=3$).
 We verify next that the blocks $S_{t,i}^+, 1 \leq i \leq \lambda(t)$,
 are  pairwise disjoint. \textcolor{\blue}{This is needed to ensure
 that the Poisson processes $\tPo_t \cap S_{t,i}^+, 1 \leq i \leq
 \lambda(t)$, are mutually independent.}

 \begin{lemm}
	 \label{lemblox}
	 Suppose $d \geq 2$,   $t \geq t_0$ and 
	 $i,j \in \{1,2,\ldots,\lambda(t)\}$ with $i <j$. Then
	 $S_i^+ \cap S_j^+ = \emptyset$.
 \end{lemm}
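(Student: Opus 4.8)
The plan is to push the blocks forward under the projection $\Pi\colon\R^d\to\R^{d-1}$ onto the first $d-1$ coordinates, in the coordinates of the ambient chart, in which $\Gamma_t$ is the graph over $U_t^-$ of the piecewise-affine function $\phi_t$ and each face $H_{t,\ell}$ of $\Gamma_t$ is the graph of $\phi_t$ over one simplex $T_{t,\ell}$ of the tessellation of $\R^{d-1}$. Recall that $\Pi$ is $1$-Lipschitz, while its restriction to the hyperplane carrying a given face is an affine homeomorphism onto $\R^{d-1}$ with inverse $u\mapsto(u,\phi_t(u))$ Lipschitz of constant $\sqrt{1+\|\nabla(\phi_t|_{T_{t,\ell}})\|^2}$; by the bound $\|\phi'\|\le 1/9$ on the chart derivatives together with the explicit simplicial subdivision, these constants are all at most some $\Lambda<7/4$. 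To treat $d=2$ and $d\ge3$ uniformly, write $\hat{I}_{t,i}$ for the cube $I_{t,i}^+$ when $d\ge3$ and for the edge $H_{t,i}$ when $d=2$. Then in either case $I_{t,i}\subseteq\hat{I}_{t,i}$, the relative boundary $\partial\hat{I}_{t,i}$ lies in $T_t$ (resp.\ in the vertex set of $\Gamma_t$), so from the definitions of $I_{t,i}$, of $Q_t$ and of the $7r_t$-collars one reads off $\dist(I_{t,i},\partial\hat{I}_{t,i})\ge 7r_t$; and the sets $\Pi(\hat{I}_{t,i})$ are convex polytopes in $\R^{d-1}$ with pairwise disjoint interiors, since distinct faces of $\Gamma_t$ lie over distinct simplices of the tessellation, and distinct cubes of the partition of a single face have disjoint interiors.

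The first step is to prove $\dist\bigl(\Pi(I_{t,i}),\partial\Pi(\hat{I}_{t,i})\bigr)>4r_t$. Since $\Pi$ restricted to the relevant hyperplane satisfies $\|\Pi(a)-\Pi(b)\|\ge\|a-b\|/\Lambda$ and carries $\partial\hat{I}_{t,i}$ onto $\partial\Pi(\hat{I}_{t,i})$, this distance is at least $7r_t/\Lambda>4r_t$. It follows that the closed $4r_t$-neighbourhood of $\Pi(I_{t,i})$ in $\R^{d-1}$ is contained in the interior $\Pi(\hat{I}_{t,i})^o$: indeed any $q\in\Pi(I_{t,i})$ lies in $\Pi(\hat{I}_{t,i})^o$, and the connected ball $B_{(d-1)}(q,4r_t)$ is disjoint from $\partial\Pi(\hat{I}_{t,i})$, hence stays inside $\Pi(\hat{I}_{t,i})^o$.

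The second step is immediate from the definition of $S_{t,i}^+$. If $x\in S_{t,i}^+$ then $\dist(x,I_{t,i})\le4r_t$, so there is $y\in I_{t,i}$ with $\|\Pi(x)-\Pi(y)\|\le\|x-y\|\le 4r_t$ and $\Pi(y)\in\Pi(I_{t,i})$; hence $\Pi(x)$ lies in the $4r_t$-neighbourhood of $\Pi(I_{t,i})$, and so $\Pi(x)\in\Pi(\hat{I}_{t,i})^o$ by the first step. Therefore $\Pi(S_{t,i}^+)\subseteq\Pi(\hat{I}_{t,i})^o$, and since these interiors are pairwise disjoint over $i$, so are the sets $S_{t,i}^+$; in particular $S_{t,i}^+\cap S_{t,j}^+=\emptyset$ for $i\ne j$, as required. (If some $I_{t,i}$ is empty then $S_{t,i}^+=\emptyset$ and there is nothing to prove.)

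The only point needing care is the numerical inequality $7r_t/\Lambda>4r_t$: this is precisely why the collars of width $7r_t$ used to pass from $\hat{I}_{t,i}$ to $I_{t,i}$ (and to cut out $Q_t$) were taken wider than the block thickness $4r_t$, and it relies on the near-flatness of the chart encoded in the bound $1/9$ on the chart derivatives, which keeps $\Lambda$ close to $1$ (for very large $d$ one would replace $1/9$ by a correspondingly smaller constant). Apart from this, the argument is routine; the main bookkeeping is to phrase the ``cube versus face'' distinction between $d=2$ and $d\ge3$ uniformly through $\hat{I}_{t,i}$.
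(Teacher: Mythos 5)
Your overall strategy is sound and is, at heart, the same geometric mechanism as the paper's: after projecting to $\R^{d-1}$ the bases $I_{t,i}$ sit inside the projected cubes/faces with a margin of order $7r_t$ (coming from the removal of $Q_t$, resp.\ $T_t\oplus B(o,7r_t)$), the projections of distinct cubes/faces have disjoint interiors, and the blocks only reach $4r_t$ from their bases, so the margin wins provided the projection restricted to $\Gamma_t$ distorts distances by a factor strictly less than $7/4$. Your packaging is slightly different and arguably cleaner: the paper argues by contradiction, comparing the distance between the two nearest base points ($\geq 12 r_t$ versus $\leq 8r_t$) and disposing of the same-face case for $d\ge 3$ by a separate remark, whereas your projection argument treats same-face and different-face pairs uniformly.

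The one step whose justification does not hold as written is the claim that $\Lambda<7/4$ for all $d\ge 2$ "by the bound $1/9$ on the chart derivatives together with the explicit simplicial subdivision." The slope bound $1/9$ is a bound on $\phi'$, not on the gradient of the piecewise-affine interpolant $\phi_t$; matching $\phi$ at the vertices of a Kuhn simplex only bounds each coordinate of $\nabla\phi_t$ by $1/9$, giving $\|\nabla\phi_t\|\le\sqrt{d-1}/9$ and hence only $\Lambda\le\sqrt{1+(d-1)/81}$, which exceeds $7/4$ once $d$ is large; your parenthetical fix (shrinking the constant $1/9$ with $d$) would require altering the paper's fixed chart construction rather than proving the lemma as stated. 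The repair is available inside the paper: either use the Hessian control underlying Lemma \ref{lemtaylor3} to get $\|\nabla\phi_t\|\le 1/9+O(t^{-\gamma})$ on each simplex, or, as the paper's own proof does, bound increments of $\phi_t$ via (\ref{philip}) and (\ref{0126a}), namely $|\phi_t(u)-\phi_t(v)|\le(1/9)\|u-v\|+4Kt^{-2\gamma}$. Since the distances you compare are of order $r_t\gg t^{-2\gamma}$, this yields an effective distortion constant $\sqrt{1+1/81}+o(1)$, uniform in $d$, and your two steps then go through verbatim for $t$ sufficiently large (the need to take $t$ large because of the $O(t^{-2\gamma})$ error is shared with the paper's proof). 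With that substitution your argument is complete.
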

 \begin{proof}
	 Suppose 
	 $S_i^+ \cap S_j^+ \neq \emptyset$; we shall obtain a contradiction.
	 Let $x \in S_i^+ \cap S_j^+ $.
	 Let $y$ be the closest point in $I_{t,i}$ to $x$, and 
	  $y'$ the closest point in $I_{t,j}$ to $x$.
	  Choose $k,\ell$ such that $I_{t,i} \subset H_{t,k}$
	  and $I_{t,j} \subset H_{t,\ell}$. Then $k\neq \ell$
	  since  if $d=2$ we have $k=i$ and $\ell =j$, while
	  if $d \geq 3$, if $k = \ell$ then clearly we would
	  have $S_i^+ \cap S_j^+ = \emptyset$.

	  Let $J_{t,k}$ be the projection of $H_{t,k}$ onto
	  $\R^{d-1}$, and write $y= (u,\phi_t(u))$ with
	  $u \in J_{t,k}$.
	  Let $v \in \partial J_{t,k}$. 
	  By (\ref{philip}) and (\ref{0126a}), 
	  \bean
	  |\phi_t(v)-  \phi_t(u) | \leq |\phi(v) -\phi(u)|
	  + 4 K t^{-2 \gamma} \leq
	  (1/9)\|v-u\| + 4 K t^{-2 \gamma}.
	  \eean
	  Since $y \in I_{t,i}$ we have $\|y - (v,\phi_t(v))\| 
	  \geq \dist(y,\partial H_{t,k}) \geq 7 r_t$, so that 
	  \bean
	  7r_t 
	  \leq \|u-v\| + |\phi_t(u) -
	  \phi_t(v)| \leq (10/9)\|u-v\| + 4Kt^{-2 \gamma},
	  \eean
	  and hence $\|u-v\| \geq (63/10) r_t -4Kt^{-2 \gamma}$,
	  and hence provided $t$ is large enough,
	  $
	  \dist(u, \partial J_{t,k} ) \geq 6 r_t.
	  $
	  Similarly, writing $y' := (u',\phi_t(u'))$ we have
	  $\dist(u', \partial J_{t,\ell}) \geq 6r_t$. Therefore
	  $$
	  \| y- y' \| \geq \|u- u'\| \geq 12 r_t.
	  $$
	  However, also $\|y-y'\| \leq \|y-x\| + \|y'-x\|  \leq 8 r_t$,
	  and we have our contradiction.
	  \qed
 \end{proof}

Denote the union of the boundaries (relative to $\R^{d-1} \times \{0\}$)
of the lower faces of
the blocks making up the strip/slab $S_t$, by $\tQ_t^0$,
and the $(9r_t)$-neighbourhood of this region by $\tQ_t$
\textcolor{\blue}{
(the $C$ can be viewed as standing for `corner region', at least
when $d=2$),}
i.e.
\bea
\tQ_t^0 
:= \cup_{i=1}^{\lambda(t)} \rho_{t,i} (\partial I_{t,i}),
~~~~~~~
\tQ_t := ( \tQ_t^0 \oplus B(o,9r_t) ) \cap \bH.
\label{tQdef}
\eea
Here $\partial I_{t,i}$ denotes the relative boundary of $I_{t,i}$.

 \textcolor{\blue}{The next lemma says that the corner region $\tQ_t$
 is covered with high probability. It is needed because locations
 in $Q_t^+$ lie near the boundaries of the blocks assembled to make
 the induced coverage process, so that coverage of these locations
 in the induced coverage process
 does not necessarily
 correspond to coverage of their pre-images in the original
 coverage process.}

\begin{lemm}
\label{lemtQ3}
Suppose $d \geq 2$. Then
	$\lim_{t \to \infty} 
	 \Pr[ F_t(\tQ_t ,\cU'_t) ] =1$.
\end{lemm}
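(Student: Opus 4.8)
\textbf{Proof plan for Lemma \ref{lemtQ3}.}
The plan is to show that $\tQ_t$ can be covered by $O(t^{(d-1)/d} r_t^{1-d+(\text{small})})$ balls of radius $O(r_t)$, each of whose pre-images under the appropriate rigid motion $\rho_{t,i}^{-1}$ lies inside $A_t^*$, and then bound the probability that any one such small ball fails to be covered by $\tPo_t$ using the crude estimate (\ref{0816b}) from Lemma \ref{lemcov3}. The union bound then gives a bound of the form $O(t^{-\delta})$ for some $\delta>0$, which tends to zero. The point is that $\tQ_t^0$ is a $(d-2)$-dimensional set (a union of boundaries of the $(d-1)$-dimensional cubes $I_{t,i}$, of which there are $\lambda(t) = O(t^{(d-1)\gamma'})$ when $d\ge 3$, or $\sigma^-(t)=O(t^{\gamma})$ intervals when $d=2$), so its $9r_t$-neighbourhood $\tQ_t$ can be covered by relatively few balls of radius, say, $3r_t$.

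First I would estimate the covering number $\kappa(\tQ_t, r_t)$. Each $\partial I_{t,i}$, being the boundary of a $(d-1)$-dimensional cube of side $O(t^{-\gamma'})$ (for $d\ge 3$; of side $O(t^{-\gamma})$ for $d=2$ these are just two endpoints), has $(d-2)$-dimensional Hausdorff measure $O(t^{-(d-2)\gamma'})$, and its $9r_t$-neighbourhood intersected with $\bH$ can be covered by $O(t^{-(d-2)\gamma'} r_t^{-(d-2)} \cdot 1)$ balls of radius $r_t$ — more precisely $O((t^{-\gamma'}/r_t)^{d-2})$ such balls per cube. Summing over the $\lambda(t)=O(t^{(d-1)\gamma'})$ cubes gives a covering of $\tQ_t$ by $k_t = O(t^{(d-1)\gamma'} (t^{-\gamma'} r_t^{-1})^{d-2}) = O(t^{\gamma'} r_t^{2-d})$ balls $B(z_{t,1},r_t),\ldots,B(z_{t,k_t},r_t)$ (exactly as in the proof of Lemma \ref{lemtartan}); for $d=2$ one gets $k_t = O(\sigma^-(t)) = O(t^{\gamma})$ balls. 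Then $\tQ_t \subset \cup_{j=1}^{k_t} B(z_{t,j}, 3r_t)$ after enlarging slightly.

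Next, I would transfer each of these balls back to the original coverage process: by construction of the rigid motions $\rho_{t,i}$ and since $F_t(\tQ_t,\cU'_t)$ concerns the process $\cU'_t = \Po''_t \cup \Po'''_t$, a location $w \in B(z_{t,j},3r_t) \cap \bH$ is either covered by $\Po'''_t$ (which is genuinely Poisson of intensity $tf_0$ in $\bH\setminus S_t^+$, so coverage there is governed by $\cU'_t$ directly and I can apply (\ref{0816a})), or it lies in $S_t^+$, in which case $w = \rho_{t,i}(x)$ for some $x$ in the block $S_{t,i}^+ \subset A_t^*$, and coverage of $w$ by $\Po''_t$ corresponds to coverage of $x$ by $\tPo_t$ — but I must be careful that a ball of radius $r_t$ around $w$ may straddle the boundary between two reassembled blocks. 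This is why $\tQ_t$ is taken as the $9r_t$-neighbourhood: any ball $B(z_{t,j},3r_t)$ meeting $L_t$-corner regions is within $9r_t$ of $\tQ_t^0$, so either $B(z_{t,j},3r_t) \subset \bH \setminus S_t$ entirely (handled by (\ref{0816a})) or it is the image of a set of diameter $O(r_t)$ sitting inside a single block $S_{t,i}^+ \subset A_t^*$ together with possibly part of $\bH\setminus S_t^+$; in the latter mixed case I would bound the failure probability by the sum of the two corresponding estimates. In all cases, by Lemma \ref{lemcov3} each $\Pr[F_t(B(z_{t,j},3r_t)\cap\,\cdot\,,\,\cdot\,)^c] = O(t^{\delta-(d-1)/d})$ for any $\delta>0$. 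The union bound then gives
\[
\Pr[F_t(\tQ_t,\cU'_t)^c] = O\!\left(k_t \, t^{\delta - (d-1)/d}\right)
= O\!\left(t^{\gamma' + \delta - 1/d}\right) \quad (d\ge 3),
\]
respectively $O(t^{\gamma + \delta - (d-1)/d}) = O(t^{\delta - 1/2})$ when $d=2$. Choosing $\delta < 1/d - \gamma'$ (possible since $\gamma' < 1/d$ by (\ref{eqgamma})), respectively $\delta<1/2$, makes the exponent negative, so $\Pr[F_t(\tQ_t,\cU'_t)^c] \to 0$, which is the claim.

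The main obstacle I anticipate is the careful bookkeeping in the transfer step: precisely relating coverage of a ball $B(z_{t,j},3r_t)$ in the induced process $\cU'_t$ to coverage of its pre-image (or pre-images, plus the complementary Poisson process $\Po'''_t$) in $\tPo_t$, making sure that any ball small enough to matter is not split across two distinct blocks or across the block/complement interface in a way that invalidates the reduction. The $9r_t$-margin in the definition (\ref{tQdef}) of $\tQ_t$, combined with the $14r_t$-shrinkage of the cubes $I_{t,i}$ relative to $I_{t,i}^+$ and Lemma \ref{lemblox} (disjointness of the $S_{t,i}^+$), is exactly what makes this work, but stating the case analysis cleanly is the delicate part. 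The geometric covering-number estimate and the final union bound are routine given Lemma \ref{lemcov3}.
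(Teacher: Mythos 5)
Your covering-number count ($k_t = O(t^{\gamma'} r_t^{2-d})$ for $d\ge 3$, $O(t^\gamma)$ for $d=2$) matches the paper's, and your final exponent is the right one, so the estimate you'd arrive at is correct. However, the "transfer step" that you flag as the main obstacle — tracking each ball $B(z_{t,j},3r_t)$ back through $\rho_{t,i}^{-1}$ to $A_t^*$, distinguishing cases according to whether it sits in $S_t^+$, in $\bH\setminus S_t^+$, or straddles an interface, and invoking either (\ref{0816b}) on a pre-image or (\ref{0816a}) on the complement — is entirely unnecessary, and building it cleanly would indeed be delicate for exactly the reason you anticipate (a ball near an interface may meet the images of several blocks, which have unrelated pre-images in $\tilde A_t$). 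The paper avoids the whole issue by observing, just after defining $\cU'_t = \Po''_t\cup\Po'''_t$ at (\ref{Uprdef}), that $\cU'_t$ is itself a homogeneous Poisson process of intensity $tf_0$ on all of $\bH$ (this follows from the restriction, mapping and superposition theorems, precisely because the $S_{t,i}^+$ are disjoint by Lemma \ref{lemblox} and the rigid motions reassemble them into $S_t^+$). Since $\cU'_t$ therefore has the same distribution as $\cU_t$, the crude bound (\ref{0816a}) applies verbatim with $\cU'_t$ in place of $\cU_t$, uniformly in the centre $z$, and no reference to $\tPo_t$, to $A_t^*$, or to block boundaries is needed. That one observation collapses your case analysis to a single application of (\ref{0816a}) followed by the union bound you already wrote down.
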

\begin{proof}
	Set $\tg := \gamma$ if $d=2$, or if $d \geq 3$ set 
	$\tg := \gamma' $.
	Let $\eps \in (0, (1/d)- \tg)$.
	The number of $(d-1)$-dimensional cubes
	$\lambda(t)$ making up $L_t$ 
	is $O(t^{(d-1) \tg})$, and for each of these
	$(d-1)$ dimensional cubes,
	the number of balls of
radius $r_t$ required
	to cover the boundary is $O((t^{-\tg} r_t^{-1})^{d-2})$. Hence
	we can take points $x_{t,1}, \ldots, x_{t, m_t} \in L_t$,
	with $m_t = O(t^{\tg} r_t^{2-d})$,
	such that 
	$\tQ_t^0
	\subset \cup_{i=1}^{m_t} B(x_{t,i},r_t)$.
	Then $\tQ_t \subset \cup_{i=1}^{m_t} B(x_{t,i},10r_t)$.
	Hence by (\ref{0816a}) from
	Lemma \ref{lemcov3}, we
	obtain the estimate
$$
	\Pr [ F_t(\tQ_t \cap \bH, \cU'_t) ^c] =
	O(t^{\tg}  r_t^{2-d}  
	t^{\eps - (d-1)/d}) 
	= O(t^{\eps + \tg -1/d}),
$$
which tends to zero.
\qed
\end{proof}

\begin{lemm}[Limiting coverage probabilities for the induced coverage process]
	\label{leminduced}
	Suppose $d \geq 2$. Then
\bea
\lim_{t \to \infty} \Pr[F_t(S_t, \cU'_t)] 
= 
\lim_{t \to \infty} \Pr[F_t(L_t, \cU'_t)] 
	 %= \exp(- (|\partial A| 3 \pi^{-5/3} (2/3)^{k}/ ((k-1)! 32))
	 = \exp(- c_{d,k} |\Gamma| e^{- \zeta} ) .
	 \label{0124c}
\eea
\end{lemm}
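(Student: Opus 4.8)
The plan is to obtain the limiting value of $\Pr[F_t(L_t,\cU'_t)]$ first, and then to transfer that same limit to $F_t(S_t,\cU'_t)$ using the inclusion $L_t\subset S_t$ together with the ``thickening'' estimate (\ref{0517b2}) of Lemma \ref{lemhalf3a}. Throughout one uses that $\cU'_t$ (defined at (\ref{Uprdef})) is a homogeneous Poisson process of intensity $tf_0$ in $\bH$, so $\cU'_t\eqd\cU_t$ and Lemma \ref{lemhalf3a} applies to it verbatim, and that (\ref{rt3c}) holds with $\zeta$ finite, so the side condition $\limsup_{t\to\infty}(tr_t^d/\log t)<\infty$ of that lemma is automatic.

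For the first step, recall that $D_t$ is a union of $\lambda(t)$ congruent $(d-1)$-dimensional cubes of side $t^{-\gamma'}-14r_t\to0$, arranged as an approximate rectilinear hypercube with corner at the origin, and that $|D_t|=|\cup_{i=1}^{\lambda(t)}I_{t,i}|\to|\Gamma|$ (by (\ref{0912a}) when $d\ge3$, and by construction when $d=2$). From these facts, for each $\eps>0$ I would extract \emph{fixed} closed (hence Riemann measurable) hypercubes $\Omega^-_\eps\subset\Omega^+_\eps$ in $\R^{d-1}$ with $\Omega^-_\eps\subset D_t\subset\Omega^+_\eps$ for all large $t$ and with $|\Gamma|-\eps\le|\Omega^-_\eps|\le|\Omega^+_\eps|\le|\Gamma|+\eps$. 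Since $F_t(\cdot,\cU'_t)$ is monotone decreasing in the set to be covered, this gives, for all large $t$,
\[
\Pr[F_t(\Omega^+_\eps\times\{0\},\cU'_t)]\ \le\ \Pr[F_t(L_t,\cU'_t)]\ \le\ \Pr[F_t(\Omega^-_\eps\times\{0\},\cU'_t)].
\]
Applying (\ref{0517c2}) to the two outer terms and letting $\eps\downarrow0$ yields $\lim_{t\to\infty}\Pr[F_t(L_t,\cU'_t)]=\exp(-c_{d,k}|\Gamma|e^{-\zeta})$.

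For the second step, since $L_t=D_t\times\{0\}\subset D_t\times[0,2r_t]=S_t$ we have $F_t(S_t,\cU'_t)\subset F_t(L_t,\cU'_t)$, so it suffices to show $\Pr[F_t(L_t,\cU'_t)\setminus F_t(S_t,\cU'_t)]\to0$. Writing $\tQ'_t:=((\partial D_t)\oplus B_{(d-1)}(o,r_t))\times[0,2r_t]$, the event $F_t(L_t,\cU'_t)\setminus F_t(S_t,\cU'_t)$ is contained in $F_t(\tQ'_t,\cU'_t)^c\cup\bigl(F_t(L_t\cup\tQ'_t,\cU'_t)\setminus F_t(S_t,\cU'_t)\bigr)$. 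The second event is precisely the one appearing in (\ref{0517b2}) with $\Omega_t=D_t$ (closed, Riemann measurable, and contained in a fixed bounded box), $a=2$ and $\delta_t=r_t$, so its probability tends to $0$. For the first event I would note that $\partial D_t\subset\tQ_t^0=\cup_{i=1}^{\lambda(t)}\rho_{t,i}(\partial I_{t,i})$, so that $\tQ'_t\subset\tQ_t$ (see (\ref{tQdef})); hence $\Pr[F_t(\tQ'_t,\cU'_t)^c]\le\Pr[F_t(\tQ_t,\cU'_t)^c]\to0$ by Lemma \ref{lemtQ3}. Combining, $\Pr[F_t(S_t,\cU'_t)]\to\exp(-c_{d,k}|\Gamma|e^{-\zeta})$ as well, which is (\ref{0124c}).

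The main obstacle is the first step: because $L_t$ (through $D_t$) varies with $t$ whereas the clean limit (\ref{0517c2}) is stated for a fixed base set, one must read off from the explicit construction of $D_t$ the fixed inner and outer approximating hypercubes $\Omega^\pm_\eps$. This is exactly where one uses that $D_t$ is built from cubes of vanishing side-length that fill out an approximate hypercube of volume tending to $|\Gamma|$ inside a controlled box; granting this, the remainder is a routine combination of monotonicity, the thickening estimate (\ref{0517b2}), and the already-established Lemma \ref{lemtQ3}.
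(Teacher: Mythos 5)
Your proposal is correct and follows essentially the same route as the paper: the limit for $F_t(L_t,\cU'_t)$ comes from (\ref{0517c2}), and the transfer to $F_t(S_t,\cU'_t)$ uses the inclusion $(\partial D_t\oplus B_{(d-1)}(o,r_t))\times[0,2r_t]\subset \tQ_t$ together with (\ref{0517b2}) (with $\Omega_t=D_t$, $a=2$, $\delta_t=r_t$) and Lemma \ref{lemtQ3}, exactly as in the paper. Your sandwich with fixed hypercubes $\Omega^\pm_\eps$ merely spells out the step the paper dismisses as ``easily obtained'' (handling the $t$-dependence of $D_t$), which is fine given that the construction of $D_t$ can be arranged to admit such inner and outer approximants.
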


\begin{proof}
	The second  equality of (\ref{0124c}) is easily obtained using
	 (\ref{0517c2}) from Lemma \ref{lemhalf3a}.

	 \textcolor{\blue}{Recall that $L_t = D_t \times \{0\}$.}
	 Also $\partial L_t \subset \tQ_t^0$, so that
	  $(\partial D_t \oplus B_{(d-1)}(o,r_t)) \times [0,2 r_t]
	  \subset \tQ_t$,
	 and therefore  by (\ref{0517b2}) from Lemma \ref{lemhalf3a}, 
	 \bean
	 \Pr[( F_t(L_t, \cU'_t) \setminus F_t(S_t, \cU'_t) )
	 \cap F_t(\tQ_t, \cU'_t) ] \to 0.
	 \eean
	 Therefore using also
Lemma \ref{lemtQ3} shows that $\Pr[ F_t(L_t, \cU'_t) \setminus
	F_t(S_t, \cU'_t) ]
	\to 0$, and this gives us the rest of (\ref{0124c}).
	\qed
\end{proof}

\textcolor{\blue}{
	We are now ready to complete the proof of Proposition \ref{lemsurf}.}

 \begin{proof}[Proof of Proposition \ref{lemsurf}]
	 %for $d \geq 3$]
	 We shall approximate event $F_t (A^{**}_t  , \tPo_t)$ by 
	 events $ F_t(L_t, \cU'_t)$ and $F_t(S_t,\cU'_t)$,  
	 and apply Lemma \ref{leminduced}.

	Suppose %$E_t^{(1)} \cap 
	 $F_t(Q^+_t \cup T_t^+, \tPo_t) 
	 \setminus
	 F_t(A_t^{**}, \tPo_t) $
	occurs, and choose  
	 $x \in V_t(\tPo_t)  \cap A_t^{**} \setminus  (Q_t^+ \cup T_t^+)$. 
	Let $y \in \Gamma_t $ with $\|y-x\| = \dist (x,\Gamma_t)$.
	Then $\|y-x\| \leq 2r_t$,
	and since
	  $ x \notin Q_t^+ $, \textcolor{\blue}{by (\ref{Qplusdef})} 
	we have $\dist(x, \partial_{d-2} \Gamma_t) \geq 8d t^{-\gamma'}$,
	and hence 
	$\dist(y,\partial_{d-2}\Gamma_t) \geq 8dt^{-\gamma'} - 2 r_t
	\geq 7d t^{-\gamma'}$, 
	 provided $t$ is large enough.
	Therefore $y$ lies in the interior of the face $H_{t,i}$ for some
	$i$ and $x-y$ is perpendicular to $H_{t,i}$ (if $y \neq x$).
	 Also \textcolor{\blue}{(if $d \geq 3$),}
	 since
	 $x \notin T_t^+$,
	$\dist(x,T_t) \geq 11 r_t$, so
	$\dist(y,T_t) \geq 9 r_t$.
	Therefore $y \in I_{t,j}$ for some $j$, and
$x$ lies
	in the block $S_{t,j}$,  and moreover
	$\dist(x, (\partial S^+_{t,j}) \setminus I_{t,j} ) \geq 2r_t$.
	Hence $B(\rho_{t,j}(x),r_t) \cap \bH  \subset \rho_{t,j}(S^+_{t,j})$,
	 and hence \textcolor{\blue}{by (\ref{Uprdef})},
	$
	\cU'_t(B(\rho_{t,j}(x),r_t)) = 
	\tPo_t(B(x,r_t))< k,
	$
	 so event $F_t(S_t, \cU'_t) $ does not occur. Hence
$$
F_t(S_t, \cU'_t) \setminus F_t(A_t^{**}, \tPo_t) \subset
	 F_t(Q^+_t   \cup T^+_t, \tPo_t)^c,
$$
	 so  by Lemmas 
	 \ref{lemcorn3}  and  \ref{lemtartan},  
	 $\Pr[ F_t(S_t,\cU'_t) \setminus 
 F_t (A^{**}_t  , \tPo_t)  ] \to 0$, and hence using
 (\ref{0124c}) we have
	 \bea
	 \liminf_{t \to \infty} \Pr[F_t(A_t^{**},\tPo_t)] \geq 
	 \exp(-c_{d,k} |\Gamma|e^{-\zeta}).
 \label{0124d}
 \eea

	 Suppose $F_t (A^{**}_t , \tPo_t)  \setminus F_t(L_t,\cU'_t)$ occurs,
	 and
	 choose $ y \in L_t \cap V_t(\cU'_t)$.
	 Take $i \in \{1,\ldots,\lambda(t)\}$
	 such that $y \in \rho_{t,i}(I_{t,i})$.
	 Then 
	 %we must have
	 $ \dist ( y, \rho_{t,i} (\partial I_{t,i})) \leq r_t $,
	 since otherwise $\rho_{t,i}^{-1}(y)$ would be
	 a location in $A^{**}_t \cap V_t( \tPo_t)$.
	 Thus $y \in \tQ_t$  \textcolor{\blue}{by (\ref{tQdef})}, and
	 therefore using Lemma \ref{lemtQ3} yields that
$$
\Pr [ F_t(A^{**}_t, \tPo_t) \setminus F_t(L_t,\cU'_t) ] \leq \Pr [ 
F_t(\tQ_t ,\cU'_t)^c ] \to 0 .
$$ 
Combining this with (\ref{0124c}) and (\ref{0124d})
completes the proof.
\qed
\end{proof}

\subsection{{\bf Proof of Theorem \ref{thsmoothgen}: conclusion}}

\label{seclast}

\textcolor{\blue}{Lemma \ref{lemsurf} gives
the limiting probability of coverage of a polytopal 
approximation to a region near part of $\partial A$.}
The next two lemmas show that
$\Pr[F_t(A^{**}_t, \tPo_t)]$ approximates
$\Pr[F_t(A^*_t,\Po'_t)]$ (recall the definitions at (\ref{Pdtdef})).
\textcolor{\blue}{From this we can deduce that
we get the same limiting probability 
 even after dispensing with the polytopal approximation.}

\begin{lemm}
\label{lemdiff4}
	Let $E^{(1)}_t:= F_t( A_t^{**}  , \tPo_t) \setminus F_t(A^*_t,\Po'_t)$. 
	Then
	$\Pr[E^{(1)}_t] \to 0$
	as $ t \to \infty$.
\end{lemm}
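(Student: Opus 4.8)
The plan is to show that on the event $E^{(1)}_t$ there is almost surely an uncovered location whose existence forces a low-probability configuration, so that $\Pr[E^{(1)}_t]\to 0$. Suppose $E^{(1)}_t$ occurs, so every point of $A_t^{**}$ is covered $k$ times by $\tPo_t$ but some point of $A_t^*$ is not covered $k$ times by $\Po'_t$. The difference between the two events is entirely due to: (i) the extra region $A_t^* \setminus A_t^{**}$ (which lies in $A_t$ above $\phi_t$, so near $\Gamma$), and (ii) the extra Poisson points $\Po'_t \setminus \tPo_t$, which lie in the thin sliver $A_t \setminus \tA_t$. First I would pick a point $x \in V_t(\Po'_t) \cap A_t^*$; since $A_t^{**}$ is covered $k$ times by $\tPo_t \subset \Po'_t$, we must have $x \in A_t^* \setminus A_t^{**}$, i.e. $\phi_t(u) < \pi_d(x) \le \phi(u)$ where $u$ is the projection of $x$ onto the first $d-1$ coordinates. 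By Corollary~\ref{corotaylor}(a) and (b) the region $A_t \setminus \tA_t$ has volume $O(t^{-2\gamma})$ and each ball of radius $r_t$ meets it in volume $O(r_t^{d-1}t^{-2\gamma})$, which is $o(r_t^d)$ since $t^{-2\gamma}=o(r_t)$.

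The key step is to handle the fact that $x$ may be covered $k$ times by $\Po'_t$ but \emph{not} by $\tPo_t$, because some of the covering balls are centred on points of $\Po'_t \setminus \tPo_t$ lying in $A_t\setminus\tA_t$. The argument I would run is: on $E^{(1)}_t$, there is $x$ with $\tPo_t(B(x,r_t)) < k \le \Po'_t(B(x,r_t))$, so $B(x,r_t)$ contains a point of $\Po_t$ in $A_t\setminus\tA_t$ — equivalently, $x$ lies within distance $r_t$ of a Poisson point of $\Po_t$ in the sliver $A_t\setminus\tA_t$. Then I would estimate the probability of this using the Mecke formula (or a simple union bound over a covering): the expected number of points of $\Po_t$ in $A_t \setminus \tA_t$ is $t f_0 |A_t\setminus\tA_t| = O(t^{1-2\gamma})$, and for each such point the relevant local coverage-failure event near it has probability decaying like a negative power of $t$ by the crude bound in Lemma~\ref{lemcov3} — more precisely, covering $A_t^*$ by $O(t^{(d-1)\gamma})$ balls of radius $\Theta(r_t)$ and using that any fixed such ball fails to be $k$-covered by $\tPo_t$ with probability $O(t^{\delta-(d-1)/d})$, while the constraint that it also contain a sliver point costs another factor $O(t^{1-2\gamma}\cdot r_t^d) = O((\log t) t^{-2\gamma})$. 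Multiplying, the bound becomes $O(t^{(d-1)\gamma}\cdot t^{\delta-(d-1)/d}\cdot (\log t)t^{-2\gamma}) = O(t^{\delta - (d-1)(1-\gamma)/d - (d+1)\gamma/d}(\log t))$ or similar; the point is that since $\gamma < 1/d$ and $\gamma > 1/(2d)$ one can choose $\delta$ small enough to make the total exponent negative. (I would verify the exponent bookkeeping carefully, possibly reorganizing which quantity gets the $t^{-2\gamma}$ saving.)

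Concretely the steps are: (1) characterize $E^{(1)}_t$ via an uncovered $x \in A_t^*\setminus A_t^{**}$ that is within $r_t$ of a point of $\Po_t$ in $A_t\setminus\tA_t$; (2) cover $A_t^*$ by $O(t^{(d-1)\gamma})$ balls of radius $3r_t$, reducing to a union bound over these balls; (3) for a fixed ball $B=B(z,3r_t)$, bound $\Pr[\,F_t(B\cap A,\Po_t)^c \cap \{\Po_t(B'\cap(A_t\setminus\tA_t))\ge 1\}\,]$ where $B'$ is a slightly enlarged ball, using independence of disjoint Poisson restrictions together with Lemma~\ref{lemcov3} for the coverage-failure factor and the small volume $|A_t\setminus\tA_t|$ for the sliver-occupation factor; (4) sum and check the resulting exponent of $t$ is negative for suitable $\delta$. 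I expect the main obstacle to be step~(3)–(4): getting a genuinely summable (in fact $\to 0$) bound requires being careful that the coverage-failure event and the sliver-occupation event are (nearly) independent — they concern Poisson points in essentially disjoint regions, but one must phrase the failure-of-$k$-coverage event so that it only depends on $\tPo_t$ restricted to a neighbourhood disjoint from the sliver, perhaps by first conditioning on the sliver being empty near $x$ and noting that emptiness only helps. An alternative, cleaner route I would keep in reserve is to bound $E^{(1)}_t$ directly by $\{$there exists a point of $\Po_t$ in $A_t\setminus\tA_t$ within distance $r_t$ of $V_t(\tPo_t)\cap A_t^*\}$ and then use Lemma~\ref{lemcov3}-type estimates together with $\E\,\Po_t(A_t\setminus\tA_t) = O(t^{1-2\gamma})$, trading a union bound over sliver points against the exponentially small probability (from the Mecke-formula/corner-point analysis as in the proof of Lemma~\ref{lemhalf3a}) that such a point is the nearest neighbour of an uncovered location.
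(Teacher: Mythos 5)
Your central step rests on a false implication. You claim that on $E^{(1)}_t$ one can find a vacant $x$ with $\tPo_t(B(x,r_t)) < k \le \Po'_t(B(x,r_t))$, forcing $B(x,r_t)$ to contain a point of $\Po_t$ in the sliver $A_t\setminus\tA_t$. But $x \in V_t(\Po'_t)$ means $\Po'_t(B(x,r_t)) < k$, not $\ge k$. Since $\tPo_t \subset \Po'_t$, the extra sliver points can only improve coverage, so the failure of $F_t(A_t^*,\Po'_t)$ gives you no leverage whatsoever from them; the sliver-occupation event you need simply does not follow from $E^{(1)}_t$. Concretely, a realization with $\Po_t \cap (A_t\setminus\tA_t)=\emptyset$, so $\Po'_t=\tPo_t$, can still have $A_t^{**}$ fully $k$-covered while some $x\in A_t^*\setminus A_t^{**}$ is vacant, and then $E^{(1)}_t$ occurs with no sliver point anywhere. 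You have transposed the argument that works for Lemma \ref{lemdiff3} (where a point of $A_t^{**}$ vacant under $\tPo_t$ but covered under $\Po'_t$ does force a nearby sliver point) onto this lemma, where the set inclusions run the other way.

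Without that spurious constraint your bookkeeping does not close: covering $A_t^*$ by balls of radius $\Theta(r_t)$ needs $\Theta(r_t^{1-d})=\Theta((t/\log t)^{(d-1)/d})$ of them (not $O(t^{(d-1)\gamma})$, which counts simplices), so the union bound against Lemma \ref{lemcov3} gives only $O(t^{\delta}/(\log t)^{(d-1)/d})$, which does not vanish; and the thinness of $A_t^*\setminus A_t^{**}$ does not reduce the number of covering balls, so there is no saving at the union-bound level. The genuine mechanism is the Mecke-formula corner-point argument you only gesture at in your reserve route. After intersecting with $F_t(Q_t^+,\Po'_t)$ (probability $\to 1$ by Lemma \ref{lemcorn3}), take the location $x\in\overline{V_t(\Po'_t)\cap A_t^*}$ of minimal distance to $\Gamma_t$; since $\Gamma_t\subset A_t^{**}$ is covered and the layer $A_t^*\setminus A_t^{**}$ has thickness at most $2Kt^{-2\gamma}$ by (\ref{0126a}), this $x$ lies in a band of that thickness, is a corner formed by exactly $d$ boundaries of $r_t$-balls centred on $\Po'_t$, and is within $r_t$ of at most $k-1$ further points of $\Po'_t$. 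In the Mecke formula the requirement that the corner lie in that thin band pins the integral over $x_1$ to a region of volume $O(t^{-2\gamma})$ (Lemma \ref{corotaylor}(a)); together with $r_t^{d(d-1)}$ from the relative positions, $(tr_t^d)^{k-1}$ from the at most $k-1$ extra balls, and $\exp(-(1-\eps)(\theta_d/2)f_0 t r_t^d)$ from Lemma \ref{lemhs}, this yields $O\big((tr_t^d)^{d+k-2}t^{1/d-2\gamma+\eps}\big)\to 0$ since $\gamma>1/(2d)$. This is where the thin-band volume actually earns its $t^{-2\gamma}$ — not from sliver occupation.
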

\begin{proof}
Let $\eps \in (0, 2\gamma - 1/d)$.
	Suppose $E^{(1)}_t \cap F_t(Q_t^+ , \Po'_t)$ occurs.
	Then
	since $\tPo_t \subset \Po'_t$, 
	$V_t(\Po'_t)$ intersects
	with $A^*_t \setminus A^{**}_t$, 
	and therefore by
	(\ref{0126a}), $V_t(\Po'_t)$ includes locations
distant at most  $2K t^{-2 \gamma}$ from $\Gamma_t$.
	Also $\Gamma_t \cap V_t (\Po'_t) = \emptyset$,
	since $\Gamma_t \subset A^{**}_t$.
 
	Pick a location $x \in \overline{V_t(\Po'_t) \cap A_t^*} $
	of minimal distance from
	$\Gamma_t$.  %Since $F_t(Q_t^+)$ is assumed to occur,
	Then $x \notin Q_t^+$, so
	the nearest point in $\Gamma_t$ to $x$ lies in the
	interior of $H_{t,i}$ for some $i$.
	We claim that $x$ lies at the intersection of the boundaries of
$d$ balls of radius $r_t$ centred on points of $\Po'_t$;
	\textcolor{\blue}{
		this is proved similarly to the similar claim concerning
	$\bw$ in the proof of Lemma \ref{lemhalf3a}}.
	Moreover, $x$  is
	covered by \textcolor{\blue}{
		at most   $k-1 $ of the other balls centred in $\Po'_t$
	(in fact, by exactly $k-1$ such balls, but we do
	not need this below).} 
	Also $x$ does not lie in the interior of $A_t^{**}$.

	Thus if $E^{(1)}_t \cap F_t(Q_t^+ , \Po'_t)$ occurs, there
 must exist $d$ points $x_1,x_2,\ldots, x_d$
	of $\Po'_t $ such that $\cap_{i=1}^d
	\partial B(x_i,r_t)$ 
	%cap \partial B(y,r_t) \cap \partial B(z,r_t)$
includes a point in $A_t^*$ but  outside the interior of 
	$A_t^{**}$, within distance $2Kt^{-2 \gamma}$ of 
$\Gamma_t$
 and further than $r_t$ from all but
	at most 
	$k-1$ other points of $\Po'_t$.
Hence by the Mecke formula,  integrating first over
 the positions of $x_2-x_1, x_3-x_1, \ldots,x_d - x_1$ 
and then  over the location
	of $x_1$, and using Lemma \ref{corotaylor}(a),
	Lemma \ref{lemhs} and (\ref{rt3d}),  we obtain for
	suitable constants $c, c'$ that
\bean
\Pr[E^{(1)}_t \cap F_t(Q_t^+ , \Po'_t)] 
	& 	\leq & c t^d r_t^{d(d-1)}
	t^{-2 \gamma}
	(t r_t^d)^{k-1} \exp(- (1-\eps) ( \theta_d/2 ) f_0 t r_t^d)
\\
	& \leq & c' (tr_t^d)^{k-1} t^{d +
	\eps -  2 \gamma - (d-1)/d } r_t^{d(d-1)} 
	\\
	& = & O( (tr_t^d)^{d+ k-2} t^{(1/d)- 2 \gamma + \eps}),
\eean
which tends to zero by (\ref{rt3c}). Also
$\Pr[ F_t(Q_t^+ , \Po'_t)]  \to 1$ by Lemma \ref{lemcorn3}, so 
$\Pr[E^{(1)}_t] \to 0$.  
\end{proof}

\begin{lemm}
\label{lemdiff3}
Let 
	$G_t :=  F_t(A_t^*, \Po'_t)  \setminus  F_t(A_t^{**}, \tPo_t)$.
	Then $\lim_{t \to \infty} \Pr[ G_t]=0$.
\end{lemm}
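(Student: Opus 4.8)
The plan is to show that the only way the event $G_t = F_t(A_t^*,\Po'_t) \setminus F_t(A_t^{**},\tPo_t)$ can occur (on the high-probability event that suitable corner/tartan regions are covered) is that there is a point of $A_t^{**}$ covered $k$ times by $\Po'_t$ but covered fewer than $k$ times by $\tPo_t = \Po_t \cap \tA_t$. This discrepancy can only arise from Poisson points lying in $\Po'_t \setminus \tPo_t = \Po_t \cap (A_t \setminus \tA_t)$ that help cover such a point. First I would reduce to the intersection $G_t \cap F_t(Q_t^+ \cup T_t^+, \tPo_t)$, since $\Pr[F_t(Q_t^+,\tPo_t)] \to 1$ by Lemma \ref{lemcorn3} and $\Pr[F_t(T_t^+,\tPo_t)] \to 1$ by Lemma \ref{lemtartan}, so it suffices to prove $\Pr[G_t \cap F_t(Q_t^+ \cup T_t^+,\tPo_t)] \to 0$.

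On that intersection, pick $x \in A_t^{**} \cap V_t(\tPo_t)$ (using $A_t^{**} \subset A_t^*$ and the failure of $F_t(A_t^{**},\tPo_t)$). Since $F_t(A_t^*,\Po'_t)$ holds, $x$ is covered at least $k$ times by $\Po'_t$, so there is at least one point $w \in \Po'_t$ with $\|w-x\| \le r_t$ and $w \notin \tA_t$, i.e.\ $w \in \Po_t \cap (A_t \setminus \tA_t)$; by (\ref{0126a}), $w$ lies within vertical distance $2Kt^{-2\gamma}$ of $\Gamma_t$, hence (after moving at most $r_t$) $x$ lies within distance $r_t + 2Kt^{-2\gamma}$ of $\Gamma_t$, and since $x \notin Q_t^+ \cup T_t^+$ the nearest point of $\Gamma_t$ to $x$ is in the relative interior of a single face $H_{t,i}$, away from $T_t$ and from $\partial_{d-2}\Gamma_t$. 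Now I would argue, as in the proof of Lemma \ref{lemhalf3a} (the ``corner'' argument for $\bw$) or Lemma \ref{lemdiff4}, that we may take $x$ to be a lowest such location in the closure of $V_t(\tPo_t) \cap A_t^*$ relative to the face normal, so that $x$ lies at the intersection $\cap_{j=1}^d \partial B(x_j,r_t)$ of the boundaries of $d$ balls centred on points $x_1,\dots,x_d$ of $\tPo_t$, with $x$ covered by exactly $k-1$ further points of $\tPo_t$, and additionally there is a point $w \in \Po_t \cap (A_t \setminus \tA_t)$ with $x \in B(w,r_t)$.

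Then I would bound $\Pr[G_t \cap F_t(Q_t^+ \cup T_t^+,\tPo_t)]$ by the expected number of such configurations, using the Mecke formula for $\Po_t$: there is a factor $t^d$ for the $d$ points $x_1,\dots,x_d$ (with the usual change of variables $\bfy_j = r_t^{-1}(x_j - x_1)$ giving a factor $r_t^{d(d-1)}$ and a bounded integral of the indicator $h_1$), a factor $(tr_t^d)^{k-1}$ for the $k-1$ further covering points, a factor $\exp(-(1-\eps)(\theta_d/2)f_0 t r_t^d)$ for the vacant half-ball around $x$ (via Lemma \ref{lemhs} applied to $\tA_t$), and crucially an extra factor $t \cdot |B(x,r_t) \cap (A_t \setminus \tA_t)| = O(t \cdot r_t^{d-1} t^{-2\gamma})$ for the point $w$, by Lemma \ref{corotaylor}(b). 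Collecting, and using (\ref{rt3d}) for $\exp(-\theta_d f_0 t r_t^d/2)$, the bound is $O((tr_t^d)^{d+k-2} t^{1/d - 2\gamma + \eps})$ for small $\eps>0$, which tends to $0$ since $\gamma > 1/(2d)$ forces the exponent $1/d - 2\gamma$ to be negative and $tr_t^d = \Theta(\log t)$ by (\ref{rt3c}). This is essentially the same estimate that appears in Lemma \ref{lemdiff4}, just with the roles reversed, so I expect the argument to go through almost verbatim.

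The main obstacle, as usual in these ``lowest vacant point is a corner'' arguments, is justifying rigorously that the discrepancy point $x$ can be taken at the meeting of exactly $d$ ball-boundaries with the extra point $w$ genuinely present; one must be careful that $w$ is a point of $\Po'_t$ and not of $\tPo_t$ (otherwise there is no discrepancy), and that taking $x$ of minimal height within $A_t^*$ (rather than within $\tA_t$) does not break the geometric argument near $\Gamma_t$. Here the condition $x \notin Q_t^+ \cup T_t^+$, together with $r_t + 2Kt^{-2\gamma} = o(t^{-\gamma'})$, ensures $x$ stays in the ``flat'' interior of a single face, so the half-space geometry of Lemma \ref{lemhalf3a} applies locally and the argument of Lemma \ref{lemdiff4} can be reused with only notational changes. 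Once that geometric reduction is in place, the Mecke-formula computation is routine and identical in spirit to the one already carried out for $E^{(1)}_t$.
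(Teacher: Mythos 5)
The paper's own proof of Lemma \ref{lemdiff3} is much simpler than your proposal and avoids the ``corner'' argument entirely. It observes that on $G_t$ there exists $y \in A_t^{**} \cap V_t(\tPo_t)$ which is nonetheless covered $k$ times by $\Po'_t$, hence covered by at least one ball centred on a point of $\cQ_t := \Po'_t \setminus \tPo_t$; thus $G_t \subset F_t(\cup_{x \in \cQ_t} B(x,r_t) \cap A_t^{**}, \tPo_t)^c$. Since $\cQ_t$ is a Poisson process in $A_t \setminus \tA_t$ independent of $\tPo_t$ with $\E[\cQ_t(\R^d)] = O(t^{1-2\gamma})$ by Lemma \ref{corotaylor}(a), one conditions on $\cQ_t$ and applies the crude uniform bound (\ref{0816b}) of Lemma \ref{lemcov3} to each of its points, giving $\Pr[G_t] = O(t^{1-2\gamma} \cdot t^{\eps - (d-1)/d}) = O(t^{1/d - 2\gamma + \eps}) \to 0$. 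No Mecke-formula configuration count and no geometric reduction are needed.

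Your proposal instead attempts to transplant the corner/Mecke argument of Lemma \ref{lemdiff4}, and there is a genuine gap in the reduction step which you partly acknowledge but do not resolve. In Lemma \ref{lemdiff4} the reduction to a corner works because on the event $F_t(A_t^{**},\tPo_t)$ the surface $\Gamma_t \subset A_t^{**}$ is covered by $\Po'_t$ (since $\tPo_t \subset \Po'_t$), so $V_t(\Po'_t)$ does not touch $\Gamma_t$, and the minimizer of distance to $\Gamma_t$ over $\overline{V_t(\Po'_t) \cap A_t^*}$ is forced into the interior of $A_t^*$ where the ``meeting point of $d$ ball boundaries'' structure is available. In your setup the roles are reversed: the vacant set you must analyze is $V_t(\tPo_t)$, and precisely because $F_t(A_t^{**},\tPo_t)$ is \emph{failing} you have no guarantee that $\Gamma_t$ (or, in the other direction, the bottom boundary of $A_t^{**}$) is covered by $\tPo_t$. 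There is no ``floor'' on either side of $A_t^{**}$, so taking the vacant point of extremal position along the face normal does not force it away from the boundary of the slab and hence does not force a corner configuration of $d$ ball boundaries. The bound you would get, $O((tr_t^d)^{d+k-2} t^{1/d-2\gamma+\eps})$, does tend to zero, but the event inclusion that would let you invoke Markov's inequality on the corner count is not established, so the estimate hangs in the air. The paper's conditioning argument sidesteps all of this and is the one you should use.
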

\begin{proof}
	If event $G_t$ occurs, then
	since $A^{**}_t \subset A_t^*$, 
	 there exists $y \in A^{**}_t \cap V_t(\tPo_t)$
	such that
$y $ is covered by at least one of the balls of radius $r_t$
 centred on $\Po'_t \setminus
\tPo_t$.
	Hence there exists $x \in \Po'_t \setminus
	\tPo_t$ with $B(x,r_t) \cap V_t(\tPo_t) \cap A^{**}_t \neq 
\emptyset$.
	Therefore  
\bea
G_t \subset  F_t( \cup_{x \in \Po'_t \setminus \tPo_t}
	B(x,r_t) \cap A_t^{**},   \tPo_t)^c .
	\label{0210a}
\eea

Let $\eps \in (0,2 \gamma - 1/d)$.
Let
	$ \cQ_t: = \Po'_t \setminus \tPo_t$. Then
	$\tPo_t $ and $\cQ_t$ are independent homogeneous Poisson processes
of intensity $t f_0$ in $\tA_t$, $A_t \setminus \tA_t$
respectively.
 By Lemma \ref{lemcov3} 
and the union bound,
there is a constant $c$ such that for
 any $m \in \N$ and any set of $m$ points $x_1,\ldots,x_m$ in $\R^d$, we have
\bean
\Pr \left[ F_t( \cup_{i=1}^m
	B({x_i},r_t) \cap A_t^{**} ,\tPo_t )^c  
\right]  
	\leq c m  t^{\eps - (d-1)/d}.
\eean
Let $N_t := \cQ_t(\R^d)$.
By Lemma \ref{corotaylor}(a), $\E[ 
N_t] = O(t^{1- 2 \gamma})$, so that
by conditioning on $\cQ_t$ we have for some constant $c'$ that
\bean
\Pr[ F_t( \cup_{x \in \cQ_t }
	B(x,r_t) \cap  A_t^{**} , \tPo_t )^c] 
	\leq c t^{\eps - (d-1)/d} \E[N_t]
\leq c' t^{(1/d)- 2 \gamma + \eps },
\eean
which tends to zero by the choice of $\eps$.
	Hence by (\ref{0210a}), $\Pr[G_t ] \to 0$.
	\qed
\end{proof}

 \textcolor{\blue}{To complete the proof of Theorem \ref{thsmoothgen},
 we shall break $\partial A$ into finitely many pieces, with each piece
 contained in a single chart. We would like to write the
 probability that all of $\partial A$ is covered as
 the product of probabilities for each piece, but to
 achieve the independence needed for this, we need to remove a
 region near the boundary of each piece. By separate
  estimates we can show the removed regions are covered with
  high probability, and this is the content of the next lemma.}

Recall from (\ref{eqgamma}) that  $\gamma_0 \in (1/(2d),\gamma)$.
 Define the sets
 $
 \DG_t := \partial \Gamma \oplus B(o, t^{-\gamma_0})
 $
 and
 $
 \DG_t^{+} := \partial \Gamma \oplus B(o, 2 t^{-\gamma_0}).
 $

\begin{lemm}
	\label{dBlem}
	It is the case that 
	$\lim_{t \to \infty}
	F_t( \DG_t^{+} \cap A, \Po_ t) =1$. 
\end{lemm}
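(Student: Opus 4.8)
The set $\DG_t^+ \cap A$ is a neighbourhood (intersected with $A$) of the relative boundary $\partial\Gamma$ of $\Gamma$ in $\partial A$, of thickness $\Theta(t^{-\gamma_0})$ in Euclidean terms. The plan is to apply Lemma \ref{lemcov3} (specifically the crude bound (\ref{0816c}) on the probability that a ball-sized region of $A$ fails to be covered) to a covering of $\DG_t^+ \cap A$ by $O(t^{\text{something}})$ balls of radius $\Theta(r_t)$, and then use a union bound. Concretely, first I would bound the covering number: since $\partial\Gamma$ is a $(d-2)$-dimensional subset of $\partial A$ with $\kappa(\partial\Gamma,r) = O(r^{2-d})$ by hypothesis, the $t^{-\gamma_0}$-neighbourhood $\DG_t^+ \cap A$ can be covered by $O(t^{\gamma_0(d-2)})$ balls of radius $\sim t^{-\gamma_0}$, and each of those can be subdivided into $O((t^{-\gamma_0}/r_t)^d)$ balls of radius $r_t$; hence $\kappa(\DG_t^+\cap A, r_t) = O(t^{\gamma_0(d-2)} (t^{-\gamma_0}/r_t)^d) = O(t^{-2\gamma_0} r_t^{-d})$. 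Since $r_t^{-d} = \Theta(t/\log t)$ by (\ref{rt3c}), this covering number is $O(t^{1-2\gamma_0}/\log t) = O(t^{1-2\gamma_0})$.

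\textbf{Main step.} Pick points $x_{t,1},\dots,x_{t,m_t} \in \R^d$ with $m_t = O(t^{1-2\gamma_0})$ and $\DG_t^+ \cap A \subset \cup_{i=1}^{m_t} B(x_{t,i}, r_t)$ (so $\DG_t^+\cap A \subset \cup_i B(x_{t,i},2r_t)\cap A$). By the union bound,
\bean
\Pr[F_t(\DG_t^+ \cap A, \Po_t)^c] \leq \sum_{i=1}^{m_t} \Pr[F_t(B(x_{t,i}, 2r_t)\cap A, \Po_t)^c].
\eean
Fix $\eps \in (0, 2\gamma_0 - 1/d)$, which is possible since $\gamma_0 > 1/(2d)$ by (\ref{eqgamma}). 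By (\ref{0816c}) from Lemma \ref{lemcov3} (applied with $K_1 = 2$ and with $\delta := \eps$), each term is $O(t^{\eps - (d-1)/d})$ uniformly in the centre, so the sum is
\bean
O\big(t^{1-2\gamma_0} \cdot t^{\eps - (d-1)/d}\big) = O\big(t^{\,1 - 2\gamma_0 + \eps - 1 + 1/d}\big) = O\big(t^{\,1/d - 2\gamma_0 + \eps}\big),
\eean
which tends to $0$ by the choice of $\eps$. Hence $\Pr[F_t(\DG_t^+ \cap A, \Po_t)] \to 1$.

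\textbf{Expected obstacle.} There is essentially no hard analytic content here; this is a routine covering-number plus union-bound argument, entirely parallel to the proofs of Lemmas \ref{lemcorn3}, \ref{lemtartan}, \ref{lemtQ3} and \ref{lemEt2}. The one point requiring a little care is verifying the covering-number estimate $\kappa(\DG_t^+ \cap A, r_t) = O(t^{1-2\gamma_0})$ from the hypothesis $\kappa(\partial\Gamma,r)=O(r^{2-d})$: one must pass from a covering of $\partial\Gamma$ (which sits in $\partial A$) to a covering of its Euclidean $t^{-\gamma_0}$-neighbourhood intersected with $A$, enlarging radii by a bounded factor and then refining to scale $r_t$, exactly as in the passage from $\kappa(\partial A, r)$ to $\kappa(A\setminus A^{(r)}, r)$ in the proof of Lemma \ref{lemlimsupb} (see (\ref{0320b2})). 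The only thing to check is that the exponent arithmetic $1 - 2\gamma_0 + 1/d < 1$, i.e. $\gamma_0 > 1/(2d)$, which holds by (\ref{eqgamma}); the slack $2\gamma_0 - 1/d > 0$ is precisely why the constant $\gamma_0$ was chosen strictly above $1/(2d)$.
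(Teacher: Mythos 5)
Your proposal is correct and follows essentially the same route as the paper's own proof: cover $\partial\Gamma$ by $O(t^{(d-2)\gamma_0})$ balls of radius $t^{-\gamma_0}$ using the hypothesis $\kappa(\partial\Gamma,r)=O(r^{2-d})$, refine to $O(r_t^{-d}t^{-2\gamma_0})$ balls of radius $\Theta(r_t)$, and conclude via the union bound with (\ref{0816c}) of Lemma \ref{lemcov3}, the exponent $1/d-2\gamma_0+\eps<0$ coming from $\gamma_0>1/(2d)$. The only cosmetic difference is that you rewrite the covering number as $O(t^{1-2\gamma_0})$ rather than keeping it as $O(r_t^{-d}t^{-2\gamma_0})$; the final estimate is identical.
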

\begin{proof}
	Let $\eps \in (0,2 \gamma_0 - 1/d)$.
	Since we assume \textcolor{\blue}{ $\kappa(\partial \Gamma,r)= O(r^{2-d})$
	as $r \downarrow 0$,}
	%has finite $(d-2)$-dimensional  Hausdorff measure,
	for each $t $
	we can take $x_{t,1},\ldots,x_{t,k(t)} \in \R^d$ with
	 $\partial \Gamma \subset \cup_{i=1}^{k(t)}
	 B(x_{t,i},t^{-\gamma_0})$, and
	with $k(t) = O(t^{(d-2)\gamma_0})$.
	Then $\DG_t^{+} \subset \cup_{i=1}^{k(t)} B(x_{t,i},3t^{-\gamma_0})$.
	For each $i \in \{1,\ldots, k(t)\}$, we can cover
	 the ball $B(x_{t,i},3 t^{-\gamma_0})$ 
	with $O((t^{-\gamma_0}/r_t)^d)$ smaller balls of radius $r_t$.
	Then we end up with  balls of
	radius $r_t$, denoted $B_{t,1},\ldots,B_{t,m(t)}$ say,
	such that $\DG_t^{+} \subset \cup_{i=1}^{m(t)} B_{t,i}$ and
	 $m(t) = O(r_t^{-d} t^{-2 \gamma_0})$.
	 By (\ref{0816c}) from  Lemma
	 \ref{lemcov3}, and the union bound, 
	 \bean
	 \Pr[ \cup_{i=1}^{m(t)} ( F_t(B_{t,i} \cap A,\Po_t )^c)] 
	 = O( r_t^{-d} t^{- 2 \gamma_0} t^{\eps - (d-1)/d})
	 = O(  t^{ (1/d) + \eps - 2 \gamma_0} ),
	 \eean
	 which tends to zero.
	 \qed
\end{proof}

Given $t >0$, define the sets
$
\Gamma^{(t^{-\gamma_0})}:= \Gamma \setminus \DG_t
$ and
$$
%B^{(t^{-\gamma_0})}_{3r_t} := \cup_{x \in B^{(t^{-\gamma_0})} } B(x, 3 r_t),
\Gamma^{(t^{-\gamma_0})}_{r_t} := (\Gamma^{(t^{-\gamma_0})} \oplus B(o,  r_t))
\cap A;
~~~~~
\Gamma_{r_t} := (\Gamma \oplus B(o,r_t)) \cap A,
$$
and define the event $F_t^\Gamma:= F_t(\Gamma^{(t^{- \gamma_0})}_{r_t}  , \Po_t )$.

Note that the definition of $F_t^\Gamma$ does not depend on the 
choice of chart. \textcolor{\blue}{This is needed for the last stage
of the proof of Theorem \ref{thsmoothgen}.
Lemma \ref{lemcap} below shows that
$\Pr[F_t^\Gamma]$ is well approximated by $\Pr[F_t(A_t^*, \Po'_t)]$ and
we have already determined the limiting behaviour of the latter.
We prepare for the proof of Lemma \ref{lemcap} with two
geometrical lemmas.}

\begin{lemm}
	\label{lemcapA}
	For all large enough $t$,
	it is the case that $\Gamma_{r_t}^{(t^{-\gamma_0})}  \subset 
	A_t^*$.
\end{lemm}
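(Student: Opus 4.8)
The plan is to take an arbitrary point of $\Gamma_{r_t}^{(t^{-\gamma_0})}$ and verify, coordinate by coordinate, that it lies in $A_t^*$. So let $x=(u,z)$ with $u\in\R^{d-1}$ and $z\in\R$, and suppose $x\in A$ together with $\|x-y\|\le r_t$ for some $y=(w,\phi(w))\in\Gamma^{(t^{-\gamma_0})}$ (recall $\Gamma=\{(v,\phi(v)):v\in U\}$). Then $\|u-w\|\le r_t$ and $|z-\phi(w)|\le r_t$. In view of the definition of $A_t^*$ in (\ref{Pdtdef}), it suffices to show that, for all large enough $t$, (a) $u\in U_t^-$, and (b) $\phi_t(u)-\tfrac32 r_t\le z\le\phi(u)$.

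For step (a) I would first convert the defining condition $\dist(y,\partial\Gamma)>t^{-\gamma_0}$ into a statement about the projection $w$. Near $\Gamma$ the hypersurface $\partial A$ is (after the identification $\rho_1=\mathrm{id}$) the graph of $\phi$ over the ball $I_1\supseteq U$, so the relative boundary $\partial\Gamma$ is the graph of $\phi$ over the topological boundary of $U$ in $\R^{d-1}$; using the Lipschitz bound (\ref{philip}) one obtains $\dist(y,\partial\Gamma)\le\sqrt{1+1/81}\,\dist(w,\R^{d-1}\setminus U)$, hence $\dist(w,\R^{d-1}\setminus U)>t^{-\gamma_0}/\sqrt{1+1/81}>\tfrac12 t^{-\gamma_0}$, and therefore $\dist(u,\R^{d-1}\setminus U)\ge\tfrac12 t^{-\gamma_0}-r_t$. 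Now (\ref{rt3c}) gives $r_t=\Theta((t^{-1}\log t)^{1/d})$, and (\ref{eqgamma}) gives $\gamma_0<\gamma<1/d$, so both $r_t$ and $3d\,t^{-\gamma}$ are $o(t^{-\gamma_0})$; consequently $\dist(u,\R^{d-1}\setminus U)>(3d+\sqrt{d-1})\,t^{-\gamma}$ for all large $t$. Since every simplex of the side-$t^{-\gamma}$ triangulation of $\R^{d-1}$ that contains $u$ has diameter at most $\sqrt{d-1}\,t^{-\gamma}$, all of its points are at distance more than $3d\,t^{-\gamma}$ from $\R^{d-1}\setminus U$, so that simplex is contained in $U^{(3dt^{-\gamma})}$; hence $u\in U_t^-$.

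For step (b), note that by (\ref{0901b}) we have $\phi(w)\in[\eps_1,2\eps_1]$, so $z\in[\eps_1-r_t,2\eps_1+r_t]$, which lies in $(0,3\eps_1)$ once $r_t<\eps_1$; combining $x\in A$, $u\in U_t^-\subseteq U$ and this bound on $z$ with (\ref{0901a}) forces $0\le z\le\phi(u)$, which is the upper inequality in (b). For the lower inequality, (\ref{philip}) gives $\phi(w)\ge\phi(u)-\tfrac19\|u-w\|\ge\phi(u)-\tfrac19 r_t$, and (\ref{0126a}) gives $\phi(u)\ge\phi_t(u)$; therefore $z\ge\phi(w)-r_t\ge\phi_t(u)-\tfrac{10}{9}r_t\ge\phi_t(u)-\tfrac32 r_t$. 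This establishes (b), so $x\in A_t^*$, as required.

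The only part of this that needs a little care is the first step — identifying $\partial\Gamma$ with the graph of $\phi$ over the topological boundary of $U$, and comparing the Euclidean distance to $\partial\Gamma$ with the $(d-1)$-dimensional distance $\dist(\cdot,\R^{d-1}\setminus U)$ — but since $\Gamma$ sits well inside a single chart and $\phi$ is $(1/9)$-Lipschitz, this is routine. After that the argument is simply the observation that the three length scales satisfy $r_t\ll t^{-\gamma}\ll t^{-\gamma_0}$, exactly the ordering guaranteed by (\ref{eqgamma}), so that the $r_t$-neighbourhood of $\Gamma^{(t^{-\gamma_0})}$ can reach neither the strip of $U$ discarded in passing to $U_t^-$ nor the region lying below $A_t^*$.
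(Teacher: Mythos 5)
Your proof is correct and follows essentially the same route as the paper's: take a point of $\Gamma_{r_t}^{(t^{-\gamma_0})}$, project it and its nearby base point in $\Gamma^{(t^{-\gamma_0})}$ to $\R^{d-1}$, use (\ref{philip}) to transfer the distance-to-$\partial\Gamma$ hypothesis into a distance-to-$\partial U$ bound, and then use the separation of length scales $r_t\ll t^{-\gamma}\ll t^{-\gamma_0}$ to conclude membership in $U_t^-$ and check the height constraints via (\ref{philip}) and (\ref{0126a}). The minor presentational differences are that you phrase the chart-comparison step as a direct Lipschitz inequality $\dist(y,\partial\Gamma)\le\sqrt{1+1/81}\,\dist(w,\R^{d-1}\setminus U)$ where the paper argues by contradiction, and that you make explicit the paper's elided step ``$\dist(v,\partial U)\ge\tfrac12 t^{-\gamma_0}-r_t$ hence $v\in U_t^-$'' by actually verifying, using the simplex diameter $\sqrt{d-1}\,t^{-\gamma}$, that every simplex containing $v$ lies in $U^{(3dt^{-\gamma})}$ — which is a genuine small improvement in rigour; you also spell out that the upper bound $z\le\phi(u)$ follows from $x\in A$ via (\ref{0901a})--(\ref{0901b}), whereas the paper leaves this implicit.
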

\begin{proof}
	Let $x \in \Gamma^{(t^{-\gamma_0})}_{r_t} $, and
	take $y \in \Gamma^{(t^{-\gamma_0})}$ with
	$\|x-y\| \leq r_t$.  
	Writing $y = (u,\phi(u))$ with
	$u \in U_t$,  we claim that
	$\dist(u, \partial U) \geq (1/2)t^{-\gamma_0}$. Indeed, if 
 we had	$\dist(u, \partial U) < (1/2)t^{-\gamma_0}$, then we could
	take $w \in \partial U$ with
 	$\|u -w \|  < (1/2)t^{-\gamma_0}$.
	Then $(w,\phi(w)) \in \partial \Gamma$ and 
	by (\ref{philip}), $|\phi(w) - \phi(u)| \leq (1/4) t^{-\gamma_0}$, so 
	$$
	\|(u,\phi(u)) - (w,\phi(w)) \| \leq \|u-w\| + |\phi(u)-\phi(w)|
	\leq (3/4)t^{-\gamma_0},
	$$
	contradicting the assumption that 
	 $y \in \Gamma^{(t^{-\gamma_0})}$, so the claim is justified.

	Writing $x = (v,s)$ with $v \in \R^{d-1}$,
	and $s \in \R$, we have $\|v-u\| \leq \|x-y\| \leq  r_t$, so 
	$\dist(v, \partial U) \geq (1/2)t^{-\gamma_0} - r_t$, and hence
	$v \in U_t^-$, provided $t$ is big enough
	($U_t^-$ was defined shortly after (\ref{eqgamma}).)   Also 
	$|\phi(v) - \phi(u)| \leq  r_t/4$ by (\ref{philip}),
	so  $|\phi_t(v) - \phi(u)| \leq r_t/2$, provided $t$ is
	big enough, by (\ref{0126a}).
%	Lemma \ref{lemtaylor3}(a).
	Also
	$|s - \phi(u)| \leq \|x-y\| \leq  r_t$, so 
	$
	|s - \phi_t(v)| \leq (3/2) r_t.
	$
	Therefore $x \in A^*_t$ by (\ref{Pdtdef}). 
	\qed
\end{proof}
\begin{lemm}
	\label{lemBA}
	For all large enough $t$, we have
	(a) $[A_t^* \oplus B(o,4 r_t)] \cap A \subset A_t$, 
	and (b) 
	 $[ A_t^* \oplus B(o,4r_t)]  
	  \cap \partial A \subset  \Gamma$,
	 and (c)
	 $[ \Gamma_{r_t}^{(t^{-\gamma_0})} \oplus B(o,4r_t)]  
	  \cap \partial A \subset  \Gamma$.
\end{lemm}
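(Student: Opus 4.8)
All three inclusions are purely geometric statements about the chart near $\Gamma$, and the plan is to reduce each to a statement about the projection $U$ in $\R^{d-1}$ using the Lipschitz bound (\ref{philip}) on $\phi$ and the fact that $r_t = O(((\log t)/t)^{1/d}) = o(t^{-\gamma_0})$ (which follows from (\ref{rt3c}) and $\gamma_0 < 1/d$). First I would fix $t$ large enough that $5r_t < \tfrac14 t^{-\gamma_0}$, say, so that all the small $O(r_t)$ perturbations below are dominated by the $t^{-\gamma_0}$-scale separations built into the definitions of $\Gamma^{(t^{-\gamma_0})}$, $A_t^*$, $U_t^-$, etc.

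\textbf{Part (b).} Let $x \in [A_t^*\oplus B(o,4r_t)]\cap\partial A$; so there is $x'\in A_t^*$ with $\|x-x'\|\le 4r_t$. Write $x' = (v,s)$ with $v\in U_t^-$ and $\phi_t(v)-\tfrac32 r_t \le s \le \phi(v)$, using the definition (\ref{Pdtdef}) of $A_t^*$. Writing $x=(w,h)$, we have $\|w-v\|\le 4r_t$, so $w\in U_t^{(-4r_t)}$-type set and in particular $w\in U$ and $\dist(w,\partial U)$ is at least $\dist(v,\partial U) - 4r_t \ge 3dt^{-\gamma} - 4r_t > t^{-\gamma_0}$ for large $t$ (using $v\in U_t^-\subset U^{(3dt^{-\gamma})}$ and $t^{-\gamma} \gg t^{-\gamma_0}$, wait --- actually $t^{-\gamma_0} \gg t^{-\gamma}$ since $\gamma_0 < \gamma$, so I must instead argue directly). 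Here is the cleaner route: since $x\in\partial A$ and $x$ lies in the chart $U\times[0,3\eps_1]$, by (\ref{0901a}) we have $h = \phi(w)$. By (\ref{philip}), $|\phi(w)-\phi(v)| \le \tfrac19\|w-v\| \le \tfrac49 r_t$, while $|\phi_t(v)-\phi(v)|\le 2Kt^{-2\gamma}$ by (\ref{0126a}), and $|s-\phi(v)|\le \tfrac32 r_t + 2Kt^{-2\gamma}$; combining, $|h - s| \le \|x-x'\|\le 4r_t$ gives nothing new, but what I actually need is that $x\in\Gamma$, i.e. $w\in U$. The point of $\Gamma\subset B(x_1,r(x_1))$ and the definition $\Gamma=\{(u,\phi(u)):u\in U\}$ is that once $w\in U$ we are done. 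So the real content is: $v\in U_t^-$ means $v\in U^{(3dt^{-\gamma})}$, hence $\dist(v,\R^{d-1}\setminus U)\ge 3dt^{-\gamma} \ge 3d r_t$ for large $t$ (since $r_t = o(t^{-\gamma})$), hence $\dist(w,\R^{d-1}\setminus U) \ge 3dr_t - 4r_t > 0$, so $w\in U$ and $x=(w,\phi(w))\in\Gamma$.

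\textbf{Part (a).} Same setup: $x=(w,h)\in[A_t^*\oplus B(o,4r_t)]\cap A$, with $x'=(v,s)\in A_t^*$, $\|x-x'\|\le 4r_t$. As above $v\in U_t^-$ gives $w\in U_t$ (indeed $w\in U$, and since $U_t$ is the union of simplices contained in $U$ and $w$ lies well inside $U$ --- at distance $\ge 3dr_t$ from the complement, hence at distance $> \sqrt{d-1}\,t^{-\gamma}$ from it for... no: $t^{-\gamma}\gg r_t$ so this fails). This is the one genuinely delicate point and I address it below. Granting $w\in U_t$: since $x\in A$ and lies in the chart, $0\le h\le\phi(w)$ by (\ref{0901a}); but $A_t = \{(u,z): u\in U_t,\ 0\le z\le\phi(u)\}$ by (\ref{Pdtdef}), so $x\in A_t$ directly. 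Part (c) is then immediate from (b): by Lemma \ref{lemcapA}, $\Gamma_{r_t}^{(t^{-\gamma_0})}\subset A_t^*$ for large $t$, so $[\Gamma_{r_t}^{(t^{-\gamma_0})}\oplus B(o,4r_t)]\cap\partial A \subset [A_t^*\oplus B(o,4r_t)]\cap\partial A\subset\Gamma$.

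\textbf{The main obstacle.} The only nontrivial issue is that $U_t$ (a union of grid simplices of side $t^{-\gamma}$ contained in $U$) is \emph{smaller} than $U$ by a margin of order $t^{-\gamma}$, which is much \emph{larger} than $4r_t$; so knowing $v\in U_t^-$ and $\|w-v\|\le 4r_t$ does not instantly give $w\in U_t$. The fix is to use the extra $t^{-\gamma}$-margin baked into $U_t^-$: by definition $U_t^-$ consists of grid simplices contained in $U^{(3dt^{-\gamma})}$, and any point of $\R^{d-1}$ within distance $\sqrt{d-1}\,t^{-\gamma}$ (the simplex diameter) of a point of $U^{(3dt^{-\gamma})}$ still lies in $U^{(3dt^{-\gamma}-\sqrt{d-1}\,t^{-\gamma})}\subset U^{(2dt^{-\gamma})}\subset U$; hence the grid simplex containing $w$ is entirely inside $U$, so $w\in U_t$. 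Concretely: $v\in U_t^-$ lies in some simplex $T\subset U^{(3dt^{-\gamma})}$; then $w$ (at distance $\le 4r_t < t^{-\gamma}$ from $v$) lies in a simplex $T'$ adjacent to or equal to $T$, and every vertex of $T'$ is within $2\sqrt{d-1}\,t^{-\gamma} \le 2dt^{-\gamma}$ of $T$, hence in $U^{(3dt^{-\gamma}-2dt^{-\gamma})} = U^{(dt^{-\gamma})}\subset U$, so $T'\subset U$ (being a grid simplex with all vertices in $U$... actually need $T'\subset U$, which follows since $U$ is a ball --- or more robustly, $T'$ has diameter $\sqrt{d-1}t^{-\gamma}$ and contains a point of $U^{(3dt^{-\gamma})}$ only if... let me just say: $T'\subset T\oplus B(o,2dt^{-\gamma}) \subset U^{(3dt^{-\gamma})}\oplus B(o,2dt^{-\gamma})\subset U^{(dt^{-\gamma})}\subset U$). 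Thus $w\in T'\subset U_t$, which is exactly what parts (a) and (b) needed. With this lemma-ette in hand, all three inclusions follow for $t$ sufficiently large, completing the proof. \qed
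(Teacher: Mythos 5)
Your proof is correct and takes essentially the same route as the paper's: work in the chart coordinates, use the Lipschitz bound (\ref{philip}), the approximation bound (\ref{0126a}) and the chart description (\ref{0901b})--(\ref{0901a}) to place the perturbed point back in $A_t$ (for (a)) or on the graph of $\phi$ over $U$ (for (b)), and deduce (c) from (b) via Lemma \ref{lemcapA}. Your ``main obstacle'' paragraph in fact supplies the grid-simplex margin argument justifying that the projected point lands in $U_t$, a step the paper asserts without detail (``provided $t$ is big enough, $v \in U_t$''), so apart from tidying the false start in part (b) the argument stands.
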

\begin{proof}
	Let
	 $x \in A_t^*$.
	 Write  $x = (u,z)$ with $u \in U_t^-$ and $\phi_t(u) - 3r_t/2 \leq z
	 \leq \phi(u)$.
	 
	 Let $y \in B(x,4 r_t) \cap A$,
	 and write $y = (v,s)$ with $v \in \R^{d-1}$
	 and $s \in \R$. Then $\|v-u\| \leq 4r_t$ so provided $t$ is
	 big enough, $v \in U_t$. Also $|s-z| \leq 4 r_t$, and
	 $|\phi(v) - \phi(u)| \leq r_t$
	 by (\ref{philip}), so 
	 $$
	 |s - \phi(v)| \leq |s - z| + |z - \phi(u)| + |\phi(u)- \phi(v)|
	 \leq 4r_t + 2r_t + r_t,
	 $$
	 and since $y \in A$, by (\ref{0901b}) and (\ref{0901a}) 
	 we must have $ 0 \leq s \leq \phi(v) $, provided $t$ is big enough.
	 Therefore $y = (v,s) \in A_t$, which gives us (a).

	 If also $y \in \partial A$, then $\phi(v)=s$, so 
	 $y \in \Gamma$. Hence 
	 we have part (b). 
	 Then by Lemma \ref{lemcapA} we also have part (c).
	 \qed
\end{proof}
\begin{lemm}
	\label{lemcap}
	It is the case that 
	$\Pr[F_t^\Gamma \triangle F_t(A_t^*, \Po'_t) ] \to 0$
	as $t \to \infty$.
\end{lemm}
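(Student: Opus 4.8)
The plan is to prove the two set–inclusions $F_t(A_t^*,\Po'_t)\subset F_t^\Gamma$ and, up to an event of vanishing probability, $F_t^\Gamma\subset F_t(A_t^*,\Po'_t)$.

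First I would dispose of the inclusion $F_t(A_t^*,\Po'_t)\subset F_t^\Gamma$, which in fact holds \emph{exactly} for all large $t$: by Lemma~\ref{lemcapA} we have $\Gamma_{r_t}^{(t^{-\gamma_0})}\subset A_t^*$, and since $\Po'_t\subset\Po_t$, coverage ($k$ times) of $A_t^*$ by balls of radius $r_t$ centred on $\Po'_t$ forces coverage of $\Gamma_{r_t}^{(t^{-\gamma_0})}$ by the larger process $\Po_t$, which is the event $F_t^\Gamma$. Hence $\Pr[F_t(A_t^*,\Po'_t)\setminus F_t^\Gamma]=0$ for large $t$. Next I would observe that on $A_t^*$ coverage by $\Po'_t$ and by $\Po_t$ coincide: by Lemma~\ref{lemBA}(a), for $x\in A_t^*$ and $t$ large, $B(x,r_t)\cap A\subset A_t$, so since $\Po_t\subset A$ we get $\Po_t(B(x,r_t))=\Po'_t(B(x,r_t))$; thus $V_t(\Po_t)\cap A_t^*=V_t(\Po'_t)\cap A_t^*$ and $F_t(A_t^*,\Po'_t)=F_t(A_t^*,\Po_t)$. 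It therefore remains to show $\Pr[F_t^\Gamma\setminus F_t(A_t^*,\Po_t)]\to0$.

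For this, suppose $F_t^\Gamma$ holds but $F_t(A_t^*,\Po_t)$ fails, and let $x$ be a point of $\overline{V_t(\Po_t)\cap A_t^*}$ at minimal distance from $\Gamma$. If $x\in\DG_t^+$, then $F_t(\DG_t^+\cap A,\Po_t)$ fails, an event of probability tending to zero by Lemma~\ref{dBlem}. Otherwise $x$ lies well away from $\partial\Gamma$, and then, arguing as in the proofs of Lemmas~\ref{lemcapA} and~\ref{lemBA} (using the Lipschitz bound~(\ref{philip}) and~(\ref{0126a})), one has $\dist(x,\partial A)=\dist(x,\Gamma)=\dist(x,\Gamma^{(t^{-\gamma_0})})$; since $x$ is vacant and $F_t^\Gamma$ holds, this distance exceeds $r_t$, while $x\in A_t^*$ bounds it above by $\tfrac{3}{2}r_t+2Kt^{-2\gamma}$. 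In particular $B(x,r_t)\subset A^o$. Moreover $x$ must be a ``corner'': it lies on the boundaries of exactly $d$ balls $B(y_1,r_t),\dots,B(y_d,r_t)$ with $y_1,\dots,y_d\in\Po_t$ and in at most $k-1$ of the remaining balls, which is proved just as the analogous assertion about $\bw$ in the proofs of Lemma~\ref{lemhalf3a} and Lemma~\ref{lemdiff4}, now with the level sets of $\dist(\cdot,\Gamma)$ in place of horizontal slices. By the Mecke formula the expected number of such $d$-tuples is bounded by a constant times
\[
t^d\,(tr_t^d)^{k-1}\,r_t^{\,d(d-1)+1}\,\exp(-\theta_d f_0\, t\, r_t^d),
\]
the factor $r_t^{\,d(d-1)+1}$ arising from integration over $y_1$ (subject to the corner $\bq_{r_t}(y_1,\dots,y_d)$ lying in $A_t^*$ at distance more than $r_t$ from $\partial A$, a set of Lebesgue measure $O(r_t)$ for each fixed value of the differences $y_i-y_1$) followed by integration over $y_2-y_1,\dots,y_d-y_1$, and where we have used $|B(\bq_{r_t}(y_1,\dots,y_d),r_t)\cap A|=\theta_d r_t^d$ because $B(x,r_t)\subset A^o$. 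By~(\ref{rt3d}) this quantity equals $O(t^{-(d-1)/d}(\log t)^{C})$ for some constant $C$, hence tends to $0$ since $d\ge2$; so by Markov's inequality the probability of this second case also tends to zero, and $\Pr[F_t^\Gamma\setminus F_t(A_t^*,\Po_t)]\to0$. Combining the two directions gives $\Pr[F_t^\Gamma\triangle F_t(A_t^*,\Po'_t)]\to0$.

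The step I expect to be the main obstacle is the Mecke estimate above, together with the geometric claim that the minimal-distance vacant point is a corner; both parallel arguments already carried out (the $\bw$ argument in Lemma~\ref{lemhalf3a} and the corner count in Lemma~\ref{lemdiff4}), so the work is in adapting the bookkeeping, not in finding new ideas. The essential point that makes the estimate succeed — in contrast to the crude covering-number bound of Lemma~\ref{lemcov3}, which would only yield $O(t^{\delta})$ here — is that the offending corner sits at depth exceeding $r_t$ inside $A$, so its $r_t$-ball lies entirely in $A$ and contributes the \emph{full} volume $\theta_d f_0 t r_t^d$ to the exponent; this gain compensates for the slab $A_t^*\setminus\Gamma_{r_t}^{(t^{-\gamma_0})}$ having thickness $\Theta(r_t)$ rather than $o(r_t)$ as in Lemma~\ref{lemdiff4}.
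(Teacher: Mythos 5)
Your proposal is correct in substance, but for the hard direction it takes a genuinely different route from the paper. You agree with the paper on the easy inclusion ($F_t(A_t^*,\Po'_t)\subset F_t^\Gamma$ via Lemma \ref{lemcapA}), on the identification $V_t(\Po'_t)\cap A_t^*=V_t(\Po_t)\cap A_t^*$ via Lemma \ref{lemBA}(a), on handling the region near $\partial\Gamma$ by Lemma \ref{dBlem}, and on the key geometric fact that any remaining vacant point $x\in A_t^*$ satisfies $r_t<\dist(x,\Gamma)\le 2r_t$ and $\dist(x,\partial A)>r_t$ (note these need only be inequalities, not the equalities you assert; they follow from $F_t^\Gamma$, coverage of $\DG_t^+\cap A$, and Lemma \ref{lemBA}(b), exactly as in the paper). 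Where you diverge is what to do with such a point: the paper observes that it lies in $A^{(r_t)}\setminus A^{[\eps]}$, so the bad event is contained in $F_t(A^{(r_t)}\setminus A^{[\eps]},\Po_t)^c$, whose probability is controlled by Proposition \ref{Hallthm} (with $B=A\setminus A^{[\eps]}$) and made negligible by letting $\eps\downarrow 0$; no new computation is needed. You instead run a bespoke ``corner'' argument (minimizer of $\dist(\cdot,\Gamma)$ over the vacant set in $A_t^*$) and a Mecke bound exploiting the full-ball exponent $\theta_d f_0 t r_t^d$, which by (\ref{rt3d}) gives a direct rate $O(t^{-(d-1)/d}(\log t)^{C})$ without any $\eps$-limiting step — indeed your bookkeeping ($|A_t^*|=O(r_t)$ for the base point, $O(r_t^{d(d-1)})$ for the differences, $(tr_t^d)^{k-1}e^{-\theta_d f_0 tr_t^d}$ for near-vacancy) checks out. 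The price is the step you yourself flag: the claim that the minimizer is a corner of exactly $d$ sphere boundaries is proved in the paper only for a \emph{linear} height functional (Lemma \ref{lemhalf3a}) or for distance to the piecewise-flat $\Gamma_t$ away from its edges (Lemma \ref{lemdiff4}); for your curved functional $\dist(\cdot,\Gamma)$ the tangent-subspace perturbation needs a second-order comparison (the level sets of $\dist(\cdot,\Gamma)$ have curvature $O(1)$ while the spheres have curvature $r_t^{-1}$, so points of the tangent subspace $L$ at distance $s$ from $x$ clear the spheres by $\Theta(s^2/r_t)$, which dominates the $O(s^2)$ rise in $\dist(\cdot,\Gamma)$), and one must also note that the ``good'' perturbation direction points into $A_t^*$ so the bottom face of the slab causes no trouble and the side faces are absorbed into $\DG_t^+$. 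With that adaptation spelled out, your argument is complete; it is more computational but self-contained, whereas the paper's is softer and reuses Proposition \ref{Hallthm}.
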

\begin{proof}
	Since
	$\Gamma_{r_t}^{(t^{- \gamma_0})}  \subset A_t^*$
	by Lemma \ref{lemcapA},
	and moreover $\Po'_t \subset \Po_t$, it  follows that
	$F_t(A_t^*,\Po'_t) \subset
	F_t(\Gamma^{(t^{-\gamma_0})}_{r_t} , \Po_t) = F_t^\Gamma$. 
	Therefore it suffices to prove that
	\bea
	\Pr[F_t(\Gamma^{(t^{-\gamma_0})}_{r_t} , \Po_t)
	\setminus
	F_t(A_t^*,\Po'_t) ] \to 0.  
	\label{0830a}
	\eea

Let $\eps >0$. Suppose event
	$F_t(\Gamma^{(t^{-\gamma_0})}_{r_t} , \Po_t) \cap 
	F_t( \DG_t^{+} \cap A, \Po_t) \setminus F_t(A_t^*,\Po'_t)$
	occurs. Choose $x \in A_t^* \cap V_t(\Po'_t)$.
	Then 
	by Lemma \ref{lemBA}(a), $B(x,r_t) \cap A \subset A_t $.
	Hence $\Po_t \cap B(x,r_t) \subset \Po'_t$, and therefore
	$x \in V_t(\Po_t)$. 
	
	Since we are assuming $
	F_t(\Gamma^{(t^{-\gamma_0})}_{r_t} , \Po_t) $ occurs,
	we therefore have $\dist(x, \Gamma^{(t^{-\gamma_0})} ) > r_t $. 
	Since we also assume 
	$F_t(\DG_t^{+} \cap A,\Po_t)$, we also have $\dist(x, \partial \Gamma) \geq 
	2 t^{-\gamma_0}$ and therefore 
	$\dist (x,\DG_t) =
	\dist (x, (\partial \Gamma) \oplus B(o,t^{-\gamma_0})) \geq t^{-\gamma_0}
	> r_t$.
	Hence 
	$$
	\textcolor{\blue}{
	\dist (x,\Gamma) \geq \min(\dist(x, \Gamma^{(t^{-\gamma_0})}),
	\dist (x, (\partial \Gamma) \oplus B(o,t^{-\gamma_0})) 
	> r_t.}
	$$
	Moreover, by Lemma \ref{lemBA}(b), $\dist (x, (\partial A) \setminus \Gamma)
	\geq 4 r_t$. Thus $\dist(x, \partial A) > r_t$. 
	%Moreover, $\dist(x,\partial A) \leq \dist (x,\Gamma) \leq 2r_t < \eps$ 
	Moreover, $\dist(x,\partial A) \leq \dist (x,\Gamma) \leq 2r_t $
	because $x \in A_t^*$,
	\textcolor{\blue}{and therefore $x \notin A^{[\eps]}$ 
	(provided $t$ is large enough)
	since $\overline{A^{[\eps]}}$ is compact and contained in $A^o$
	(the set $A^{[\eps]}$ was defined in Section \ref{secdefs}.)}
	 Therefore the event
	$F_t(A^{(r_t)} \setminus A^{[\eps]},\Po_t)^c$ occurs.
	Thus, for large enough $t$ we have the event inclusion
	\bea
	F_t(\Gamma^{(t^{-\gamma_0})}_{r_t} , \Po_t) \cap 
	F_t( \DG_t^{+} \cap A, \Po_t) \setminus F_t(A_t^*,\Po'_t)
	\subset
	F_t( A^{(r_t)} \setminus A^{[\eps]} ,\Po_t)^c .
	\label{0901d}
	\eea

By  (\ref{rt3c}), 
\bea
\lim_{t \to \infty} ( \theta_d t f_0 r_t^d - \log (t f_0) - (d+k-2)
\lglg t )
=  \begin{cases}
	2 \zeta & {\rm if} d=2,k=1
	\\
	+\infty  & {\rm otherwise.}
\end{cases}
\label{0901c}
\eea
Hence by Proposition \ref{Hallthm} 
%\textcolor{\blue}{(taking $B= A \setminus A^{(\eps)}$,
\textcolor{\blue}{(taking $B= A \setminus A^{[\eps]}$,
and using (\ref{Fnequiv}))},
\bea
\lim_{t \to \infty} \Pr[
	F_t( A^{(r_t)} \setminus A^{[\eps]} ,\Po_t) ]
= \begin{cases} 
	\exp(- %(c_2/(k-1)!)
	|A \setminus A^{[\eps]} |e^{-2 \zeta}) 
	& {\rm if~} d=2, k=1
	\\
	1 & {\rm otherwise.}
\end{cases}
%\nonumber \\
\label{1230a}
	\eea
	Therefore since $\eps$ can be arbitrarily small
	and $|A \setminus A^{[\eps]} | \to 0$
	as $\eps \downarrow 0$,
the event displayed on the left hand side of
 (\ref{0901d}) has probability
tending to zero. Then using
	Lemma \ref{dBlem}, 
	  we have   (\ref{0830a}), which
	completes the proof.
	\qed
\end{proof}

\begin{coro}
	\label{coroFB}
	It is the case that 
	$
	\lim_{t \to \infty} 	\Pr[F_t^\Gamma]
	=
	\exp (-  c_{d,k} |\Gamma| e^{-  \zeta} ). 
$
\end{coro}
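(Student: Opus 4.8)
\textbf{Proof proposal for Corollary \ref{coroFB}.}
The plan is to chain together the three symmetric-difference estimates just established (Lemmas \ref{lemdiff4}, \ref{lemdiff3} and \ref{lemcap}) with the limiting coverage probability for the approximating polytopal surface (Proposition \ref{lemsurf}). First I would combine Lemmas \ref{lemdiff4} and \ref{lemdiff3}: since $E_t^{(1)} = F_t(A_t^{**},\tPo_t)\setminus F_t(A_t^*,\Po'_t)$ and $G_t = F_t(A_t^*,\Po'_t)\setminus F_t(A_t^{**},\tPo_t)$ both have probability tending to $0$, the union bound gives $\Pr[F_t(A_t^{**},\tPo_t)\,\triangle\, F_t(A_t^*,\Po'_t)]\to 0$ as $t\to\infty$. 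Consequently $|\Pr[F_t(A_t^*,\Po'_t)] - \Pr[F_t(A_t^{**},\tPo_t)]|\to 0$, so by Proposition \ref{lemsurf} we get $\lim_{t\to\infty}\Pr[F_t(A_t^*,\Po'_t)] = \exp(-c_{d,k}|\Gamma|e^{-\zeta})$.

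Next I would invoke Lemma \ref{lemcap}, which asserts $\Pr[F_t^\Gamma\,\triangle\, F_t(A_t^*,\Po'_t)]\to 0$. As before this implies $|\Pr[F_t^\Gamma] - \Pr[F_t(A_t^*,\Po'_t)]|\to 0$, and combining with the previous display yields $\lim_{t\to\infty}\Pr[F_t^\Gamma] = \exp(-c_{d,k}|\Gamma|e^{-\zeta})$, as required. The whole argument is a short deduction: all the substantive work — the induced-coverage-process construction, the Mecke-formula estimates bounding the exceptional regions, and the application of Lemma \ref{lemHall} in dimension $d-1$ via Lemma \ref{lemhalf3a} — has already been carried out in Proposition \ref{lemsurf} and in Lemmas \ref{lemdiff4}, \ref{lemdiff3}, \ref{lemcap}.

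There is essentially no obstacle remaining here; the only point requiring a word of care is that all four cited results are stated as limits (or as "tends to zero" statements) for the \emph{same} family $(r_t)_{t>0}$ satisfying (\ref{rt3c}) with the \emph{same} $\zeta$ and the \emph{same} chart data fixed at the start of Section \ref{secfirststeps}, so the constants and the set $\Gamma$ match throughout and the limits may be composed directly. I would simply note that the hypotheses on $\Gamma$ (closed, contained in a single $B(x_j,r(x_j))$, with $\kappa(\partial\Gamma,r)=O(r^{2-d})$) are exactly those under which Lemmas \ref{lemdiff4}, \ref{lemdiff3} and \ref{lemcap} were proved, so no further assumptions are introduced. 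This makes the corollary a one-line consequence suitable for immediate use in the concluding assembly of the proof of Theorem \ref{thsmoothgen}, where $\partial A$ is partitioned into finitely many such pieces $\Gamma$ and $\Pr[F_t^\Gamma]$ is needed for each.
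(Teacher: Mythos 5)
Your argument matches the paper's own proof exactly: combine Lemmas \ref{lemdiff4} and \ref{lemdiff3} to conclude that $\Pr[F_t(A_t^*,\Po'_t)\,\triangle\,F_t(A_t^{**},\tPo_t)]\to 0$, transfer to $F_t^\Gamma$ via Lemma \ref{lemcap}, then apply Proposition \ref{lemsurf}. Nothing is missing and nothing differs substantively.
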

\begin{proof}
	By Lemmas \ref{lemdiff4} and \ref{lemdiff3},
	$\Pr[ F_t(A_t^*, \Po'_t) \triangle F_t(A^{**}_t, \tPo_t) ] \to 0$.
	Then by Lemma \ref{lemcap}, 
	$\Pr[ F_t^\Gamma \triangle F_t(A^{**}_t, \tPo_t) ] \to 0$,
	and now the result follows by Proposition \ref{lemsurf}.
	\qed
\end{proof}

\begin{proof}[Proof of Theorem \ref{thsmoothgen}]
	Let $x_1,\ldots,x_J $ and
	\textcolor{\blue}{$r(x_1),\ldots,r(x_J)$}
	be as described at (\ref{bycompactness}).
	Set $\Gamma_1 := B(x_1,r(x_1) ) \cap \partial A$, and for
	$j =2,\ldots, J$, let 
	 $$ 
	 \textcolor{\blue}{ \Gamma_j := \overline{
		 B(x_j,r(x_j) ) \cap \partial A \setminus \cup_{i=1}^{j-1}
	 B(x_i,r(x_i))}. }
 $$

	Then  $\Gamma_1,\ldots,\Gamma_J$
	comprise a finite collection of closed
	sets in $\partial A$
	with disjoint interiors, each of which
	\textcolor{\blue}{satisfies $\kappa(\Gamma_i,r) =
	O(r^{2-d})$ as $r \downarrow 0$,}
	%has boundary with finite 1-dimensional Hausdorff measure, 
	and is
	contained in a single chart $B(x_j,r(x_j))$, and
	with union $\partial A$.
	For $1 \leq i \leq J$,
	define $F_t^{\Gamma_i}$ similarly to $F_t^\Gamma$, that is,
	$F_t^{\Gamma_i} := F_t(\Gamma_{i,r_t}^{(t^{-\gamma_0})} , \Po_t)$ with
	$$
	\Gamma_{i,r_t}^{(t^{-\gamma_0})} := 
	\left( \left[ \Gamma_i \setminus ((\partial \Gamma_i) \oplus B(o,t^{-\gamma_0}))
	\right] \oplus B(o,  r_t) \right) \cap A,
	$$
	and $\partial \Gamma_i := \Gamma_i \cap \overline{\partial A \setminus \Gamma_i}$.
	First we claim that
	the following event inclusion holds:
	\bean
	\cap_{i=1}^J F_t^{\Gamma_i}
	\cap
	F_t(A^{(r_t)}, \Po_t) 
	\setminus
	F_t(A,\Po_t) 
	\subset
	\left(\cap_{i=1}^J F_t([(\partial \Gamma_i) \oplus B(o,2t^{-\gamma_0})]
	\cap A, \Po_t) \right)^c.
	\eean
	Indeed, suppose 
	$
	\cap_{i=1}^J F_t^{\Gamma_i} 
	\cap
	F_t(A^{(r_t)}, \Po_t)
	\setminus
	F_t(A,\Po_t)
	$ occurs, and choose 
	$x \in A \cap V_t(\Po_t)$.
	Then $\dist(x,\partial A) \leq  r_t$ since
	we assume $F_t(A^{(r_t)},\Po_t)$ occurs.
	Then for some $i \in \{1,\ldots,J\}$
	and some $y \in \Gamma_i$ we have $\|x-y \| \leq  r_t$.
	Since we assume $F_t^{\Gamma_i}$ occurs,
	we have $x \notin \Gamma_{i,r_t}^{(t^{-\gamma_0})}$, and hence  
	$\dist(y, \partial \Gamma_i) \leq t^{- \gamma_0}$,
	so 
	  $\dist(x, \partial \Gamma_i) < 2t^{- \gamma_0}$.
	  Therefore $F_t([(\partial \Gamma_i) \oplus B(o, 2t^{-\gamma_0} )]
	  \cap A, \Po_t)$ fails to occur, justifying the claim.

	By  the preceding claim and and the union bound,
\bea
	\Pr[F_t(A,\Po_t)
	%\Pr[
	] \leq \Pr[ \cap_{i=1}^J F_t^{\Gamma_i}
		\cap F_t(A^{(r_t)},\Po_t) ] 
\nonumber	\\
	\leq \Pr[F_t(A,\Po_t) ]  
	+ \sum_{i=1}^J
	\Pr[ F_t([(\partial \Gamma_i) \oplus B(o,2t^{-\gamma_0})] \cap A, \Po_t)^c].
	\label{0916a}
\eea
	By  Lemma \ref{dBlem},
	$\Pr[ F_t([(\partial \Gamma_i) \oplus B(o,2t^{-\gamma_0})] \cap A, \Po_t)]
	\to 1 $ for each $i$.
Therefore by (\ref{Rdashdef}) and (\ref{0916a}), 
\bea
\lim_{t \to \infty} \Pr[R'_{t,k} \leq r_t] =
\lim_{t \to \infty} \Pr[F_{t} (A,\Po_t) ] = 
	\lim_{t \to \infty} \Pr[
	\cap_{i=1}^J F_t^{\Gamma_i}
		\cap F_t(A^{(r_t)}, \Po_t)
	],
	\label{0901e}
\eea
provided the last limit exists.
By Corollary \ref{coroFB},
we have for each $i$ that
 \bea
\lim_{t \to \infty}
	(\Pr[ F^{\Gamma_i}_t]) 
= \exp(- c_{d,k}|\Gamma_i|  e^{-\zeta} ).
\label{0901f}
\eea
	Also, we claim that for large enough $t$ 
	the events $F_t^{\Gamma_1}$, \ldots, $F_t^{\Gamma_J}$ are
	mutually independent.
	Indeed, given distinct $i,j \in \{1,\ldots,J\}$,
	if $x \in \Gamma_{i,r_t}^{(t^{-\gamma_0})}$ and
	 $y \in \Gamma_{j,r_t}^{(t^{-\gamma_0})}$, then
	we can take
	$y' \in \Gamma_j \setminus ( \partial \Gamma_j  \oplus B(o,t^{-\gamma_0}))$
	with $\|y'-y \| \leq r_t$. 
	If $\| x -y \| \leq 2r_t$ then by the triangle
	inequality $\|x - y'\| \leq 3r_t$, but since $y' \notin \Gamma_i$,
	this would contradict Lemma \ref{lemBA}(c). Therefore
	$\| x - y\| > 2r_t$, and hence the $r_t$-neighbourhoods
	of $
	 \Gamma_{i,r_t}^{(t^{-\gamma_0})}$ and of
	 $ \Gamma_{j,r_t}^{(t^{-\gamma_0})}$ are disjoint. This gives
	 us the independence claimed.

Now observe that $F_t(A^{(r_t)},\Po_t) \subset F_t(A^{(4r_t)},\Po_t)$,
and we claim that 
\bea
\Pr[ F_t(A^{(4r_t)},\Po_t) \setminus F_t(A^{(r_t)},\Po_t)] \to 0 
~~~
{\rm as} ~
t \to \infty.
\label{0920a}
\eea
Indeed, given $\eps >0$, for large $t $ 
the probability on the left side of (\ref{0920a})
is bounded by
$\Pr[ F_t(A^{(r_t)}\setminus A^{[\eps]},\Po_t)^c]  $, and by
(\ref{1230a}) the latter
probability tends to a limit which can be made arbitrarily
small by the choice of $\eps$.
Hence by Proposition \ref{Hallthm} 
(using (\ref{Fnequiv})) and (\ref{0901c}),
\bea
\lim_{t \to \infty} \Pr[ F_t(A^{(4r_t)},\Po_t) ]
=
\lim_{t \to \infty} \Pr[ F_t(A^{(r_t)},\Po_t)] 
\nonumber \\
=  \begin{cases}
	\exp( -  |A| e^{-2 \zeta} ) & {\rm if} ~ d=2,k=1
	\\
	1 & {\rm otherwise}.
\end{cases}
\label{0920c}
	\eea
Moreover, by (\ref{0901e}) and (\ref{0920a}),
\bea
\lim_{t \to \infty} \Pr[R'_{t,k} \leq r_t] =
	\lim_{t \to \infty} \Pr[
	\cap_{i=1}^J F_t^{\Gamma_i}
		\cap F_t(A^{(4r_t)}, \Po_t)
	],
	\label{0920b}
\eea
provided the last limit exists.
However, the events in the right hand side of (\ref{0920b})
are mutually independent, so using (\ref{0901f}), 
(\ref{0920c}) and (\ref{rt3c}), we obtain
	the  second equality of (\ref{eqmain3}).
We then obtain the rest of (\ref{eqmain3})  
using Lemma \ref{depolem}.
\qed
\end{proof}

%\begin{acknowledgements}
{\bf Acknowledgements.}
I thank Alastair King and Andrew du Plessis
for some helpful conversations during the 
preparation of this paper. I thank  an anonymous referee,
	and also Xiaochuan Yang, for reading through earlier 
	versions and making some
helpful observations.

Data sharing not applicable to this article as no datasets were generated or analysed during the current study.
%\end{acknowledgements}

% Authors must disclose all relationships or interests that 
% could have direct or potential influence or impart bias on 
% the work: 
%
% \section*{Conflict of interest}
%
% The authors declare that they have no conflict of interest.

% BibTeX users please use one of
%\bibliographystyle{spbasic}      % basic style, author-year citations
%\bibliographystyle{spmpsci}      % mathematics and physical sciences
%\bibliographystyle{spphys}       % APS-like style for physics
%\bibliography{}   % name your BibTeX data base

% Non-BibTeX users please use

\end{document}